\let\over\@@over\makeatother
\numberwithin{equation}{section}
\theoremstyle{plain} 
\newtheorem{theorem}{Theorem}[section] 
\newtheorem{proposition}[theorem]{Proposition} 
\newtheorem{corollary}[theorem]{Corollary}
\newtheorem{lemma}[theorem]{Lemma}
\theoremstyle{remark}
\newtheorem{remark}[theorem]{Remark}
\theoremstyle{definition}
\newtheorem{definition}[theorem]{Definition}
\newcommand{\be}{\begin{equation}}
\newcommand{\ee}{\end{equation}}%
\newcommand{\bse}{\begin{subequations}}
\newcommand{\ese}{\end{subequations}}
\newcommand{\dist}{\operatorname{dist}}
\newcommand{\realpart}{\operatorname{Re}}
\newcommand{\signum}[1]{\operatorname{sgn}{#1}}
\newcommand{\sech}{\operatorname{sech}} 
\newcommand{\jump}[1]{\left\llbracket{#1}\right\rrbracket}
\newcommand{\range}{\operatorname{rng}}
\newcommand{\kernel}{\operatorname{ker}}
\newcommand{\linspan}{\operatorname{span}}
\newcommand{\ind}{\operatorname{ind}}
\newcommand{\FSproj}{\mathcal{Q}}
\newcommand{\R}{\mathbb{R}} 
\newcommand{\placeholder}{\,\cdot\,}
\newcommand{\maps}{\colon}
\newcommand{\n}[2][]{#1\lVert #2 #1\rVert}
\newcommand{\abs}[2][]{#1\lvert #2 #1\rvert}
\newcommand{\dell}{\partial}
\newcommand{\ina}{\textup{~in~}}
\newcommand{\ona}{\textup{~on~}}
\newcommand{\asa}{\textup{~as~}}
\newcommand{\bdd}{\mathrm{b}}       
\newcommand{\loc}{{\mathrm{loc}} }     
\newcommand\A{\mathcal A}    
\newcommand\B{\mathcal B}    
\providecommand{\G}{}        
\renewcommand\G{\mathcal G}  
\newcommand\F{\mathscr F}    
\newcommand\Xspace{\mathscr X}
\newcommand\Yspace{\mathscr Y}
\newcommand{\genU}{\mathcal U}
\newcommand{\cm}{{\mathscr C}}  
\newcommand{\iftcm}{{\mathscr C}}       
\newcommand{\borderedcm}{{\mathscr K}}            
\newcommand\flowforce{\mathscr{H}}
\newcommand\limL{\mathscr{L}}
\newcommand\prineigenvaluepm{\sigma_{0}^\pm}
\newcommand\prineigenvalue{\sigma_{0}}
\newcommand\essspec{\sigma_{\mathrm{ess}}}
\newcommand\fluidD{\mathscr{D}}
\newcommand\fluidS{\mathscr{S}}
\newcommand\Lip{\operatorname{Lip}}
\newcommand\lambdastar{\lambda_+} 
\newcommand\hplus{h_+} 
\newcommand\fbu{w} 
\newcommand\invec{\mu}
\begin{document}

\title[Global bifurcation of fronts]{Global bifurcation for monotone fronts of elliptic equations}
\date{\today}

\author[R. M. Chen]{Robin Ming Chen}
\address{Department of Mathematics, University of Pittsburgh, Pittsburgh, PA 15260} 
\email{mingchen@pitt.edu}  

\author[S. Walsh]{Samuel Walsh}
\address{Department of Mathematics, University of Missouri, Columbia, MO 65211} 
\email{walshsa@missouri.edu}

\author[M. H. Wheeler]{Miles H. Wheeler}
\address{Department of Mathematical Sciences, University of Bath, Bath BA2 7AY, United Kingdom}
\email{mw2319@bath.ac.uk}

\begin{abstract} In this paper, we present two results on global continuation of monotone front-type solutions to elliptic PDEs posed on infinite cylinders.  This is done under quite general assumptions, and in particular applies even to fully nonlinear equations as well as quasilinear problems with transmission boundary conditions.  Our approach is rooted in the analytic global bifurcation theory of Dancer \cite{dancer1973globalsolution,dancer1973globalstructure} and Buffoni--Toland \cite{buffoni2003analytic}, but extending it to unbounded domains requires contending with new potential limiting behavior relating to loss of compactness.     We obtain an exhaustive set of alternatives for the global behavior of the solution curve that is sharp, with each possibility having a direct analogue in the bifurcation theory of second-order ODEs.

As a major application of the general theory, we construct global families of internal hydrodynamic bores.  These are traveling front solutions of the full two-phase Euler equation in two dimensions. The fluids are confined to a channel that is bounded above and below by rigid walls, with incompressible and irrotational flow in each layer.  Small-amplitude fronts for this system have been obtained by several authors.   We give the first large-amplitude result in the form of continuous curves of elevation and depression bores.  Following the elevation curve to its extreme, we find waves whose interfaces either overturn (develop a vertical tangent) or become exceptionally singular in that the flow in both layers degenerates at a single point on the boundary.   For the curve of depression waves, we prove that either the interface overturns or it comes into contact with the upper wall. 
\end{abstract}

\maketitle

\setcounter{tocdepth}{1}
\tableofcontents

\section{Introduction}\label{sec_intro}

Let $\Omega = \mathbb{R} \times \Omega^\prime$ be an unbounded cylinder whose base $\Omega^\prime \subset \mathbb{R}^{n-1}$ is bounded.  For simplicity, assume that $\Omega$ is connected with a $C^{k+2+\alpha}$ boundary $\partial \Omega = \Gamma_0 \cup \Gamma_1$, for a fixed $k \geq 0$ and $\alpha \in (0,1)$ and such that $\Gamma_0 \cap \Gamma_1 = \emptyset$.   Note that $\Gamma_0 = \mathbb{R} \times \Gamma_0^\prime$ and $\Gamma_1 = \mathbb{R} \times \Gamma_1^\prime$, for some $\Gamma_0^\prime, \Gamma_1^\prime \subset \partial \Omega^\prime$.  
Points in $\Omega$ will be denoted $(x,y)$, where $x \in \mathbb{R}$ and $y \in \Omega^\prime$.  

We will study nonlinear PDEs set on $\Omega$ taking the quite general form
\begin{equation}
 \label{fully nonlinear elliptic pde}
  \left\{ \begin{aligned}
    \mathcal{F}(y, u, \nabla u, D^2 u,  \Lambda)  & = 0  \qquad \textrm{in } \Omega, \\
    \G(y, u, \nabla u, \Lambda) & = 0 \qquad  \textrm{on } \Gamma_1, \\
    u & = 0 \qquad \textrm{on } \Gamma_0,
  \end{aligned} \right.
\end{equation}
where $\Lambda \in \R^m$ is a collection of parameters, and $\mathcal{F}$ and $\mathcal{G}$ are regular enough that  
\begin{equation}
  \label{gen regularity elliptic coef} 
  \begin{aligned}
    (z,\xi,r,\Lambda) &\longmapsto \left(   \mathcal F(\placeholder, z, \xi, r, \Lambda), \, \mathcal G(\placeholder, z, \xi, \Lambda) \right)  \\
    \mathcal{V}    & \longrightarrow     C^{k+\alpha}(\overline{\Omega^\prime}) \times C^{k+1+\alpha}(\Gamma_1^\prime)
  \end{aligned}
  \qquad  \textrm{is real analytic,}
\end{equation}
for some open set $\mathcal{V} \subset \mathbb{R} \times \mathbb{R}^n \times \mathbb{S}^{n\times n} \times \mathbb{R}^m$. We assume that \eqref{fully nonlinear elliptic pde} is uniformly elliptic with a uniformly oblique boundary condition on $\Gamma_1$: there exist $c_1, c_2 > 0$ such that
\begin{align}
  \label{ellipticity assumption}
  \mathcal F_{r^{ij}}(\placeholder ,z,\xi,r,\Lambda) \eta_i \eta_j \ge c_1 \abs\eta^2 \quad \textrm{on } \overline{\Omega}, \qquad
    \mathcal G_{\xi^i}(\placeholder,z,\xi,\Lambda) \nu_i > c_2 \quad \textrm{on } \Gamma_1^\prime 
\end{align}
for all $(z, \xi, r, \Lambda) \in \mathcal{V}$ and $\eta \in \mathbb{R}^n$.  Here  $\nu = \nu(y)$ is the outward pointing normal to $\Omega$ at $(x,y) \in \Gamma_1$.

We call a solution $(u, \Lambda)$ of \eqref{fully nonlinear elliptic pde} a \emph{front} if $u \in C^{k+2+\alpha}_\bdd(\overline{\Omega})$ has well-defined point-wise limits as $x \to -\infty$ and $x \to +\infty$. Anticipating applications to water waves, we call these limits the upstream and downstream states, respectively.  From the structure of the equation, one can further infer that $\partial^\beta u$ will have uniform limits as $x \to\pm\infty$ for any $|\beta| \leq k+2$, and hence that the upstream and downstream states are $x$-independent solutions of \eqref{fully nonlinear elliptic pde}. We say a front is \emph{monotone} if $\partial_x u \leq 0$ (or $\partial_x u \geq 0$) in $\Omega$, and \emph{strictly monotone} if $\partial_x u < 0$ (or $\partial_x u > 0$) in $\Omega \cup \Gamma_1$.   
Fronts are studied in a vast array of physical settings, including the spread of invasive species or alleles in biology, and, more broadly, phase transitions in reaction-diffusion equations.  

Our purpose in this work is two-fold.  First, we develop a systematic approach to constructing large monotone fronts through analytic global bifurcation theory.  For this, substantial new analysis is needed to overcome a host of issues stemming from the unboundedness of the domain.  Ultimately, we obtain a list of alternatives for the limiting behavior of the bifurcation curve that is sharp, with each alternative having an analogue for second-order ODEs.  At the same time, we impose only minimal conditions on the structure of the equation, making the resulting machinery quite robust.

The paper's second part concerns a longstanding open problem in water waves.  Using the general theory, we construct many curves of large-amplitude hydrodynamic bores of elevation and depression.  These are, respectively, strictly monotone increasing and decreasing front solutions of the full two-phase free boundary Euler equations.  Of special significance is that,  in the limit along the curve of elevation bores, the waves overturn (the free surface develops a vertical tangent) or else develop a highly degenerate singularity.  Overhanging steady gravity water waves were observed numerically nearly 40 years ago, but a proof of their existence continues to be one of the most sought after results in the field.   Following the curve of depression bores, we find that the free boundary either overturns or contacts the upper wall.  In the latter case, the flow is expected to approach a \emph{gravity current} --- a type of traveling wave that has been investigated extensively in fluid mechanics, both experimentally and computationally \cite{simpson1982gravity}.  While formal analytical studies date back at least to the famous work of von K\'arm\'an \cite{vonkarman1940engineer} in 1940, gravity currents have never been constructed rigorously.
 
We begin in the next section by describing the global continuation theorems in the general setting.  The application to water waves is then discussed in Section~\ref{intro bores section}.  

\subsection{Statement of abstract  results} \label{statement abstract results section}

To simplify the notation, we introduce the spaces 
\[ 
\Xspace := \left\{ u \in C^{k+2+\alpha}(\overline{\Omega}) : u|_{\Gamma_0} = 0 \right\}, \qquad 
\Yspace = \Yspace_1 \times \Yspace_2 := C^{k+\alpha}(\overline{\Omega}) \times C^{k+1+\alpha}(\Gamma_1).
\]
Note that elements of $\Xspace$ and $\Yspace$ are \emph{locally} H\"older continuous.  By convention, we say $u_n \to u$ in $C^{\ell+\beta}_\loc$ provided $u_n \to u$ in $C^{\ell+\beta}$ on any compact subset of $\overline{\Omega}$.  On the other hand, we denote by $\Xspace_\bdd$ and $\Yspace_\bdd$  the Banach spaces of functions whose corresponding H\"older norms are finite.  Finally, let $\Xspace^\prime$ be the subspace of $\Xspace$ consisting of functions that are independent of $x$, and likewise for $\Yspace^\prime$.    

The PDE \eqref{fully nonlinear elliptic pde} can then be rewritten as the abstract operator equation
\[ \mathscr{F}(u, \Lambda) = 0,\]
and from \eqref{gen regularity elliptic coef} it follows that $\mathscr{F}$ is a real-analytic mapping $\genU \subset \Xspace_\bdd \times \mathbb{R}^m \to \Yspace_\bdd$, for some open set $\genU$ determined by $\mathcal{V}$.

For a monotone front $(u,\Lambda)$, we define the \emph{transversal linearized operator at $x = \pm\infty$} to be the bounded linear mapping 
\begin{equation}
  \limL_\pm^\prime(u,\Lambda) \colon \Xspace^\prime \to \Yspace^\prime \qquad w \longmapsto \lim_{x \to \pm\infty} \mathscr{F}_u(u,\Lambda)w.  \label{def limiting transversal lin op} 
\end{equation} 
Note that these limits exist by the discussion above, and, in particular, $\limL_\pm^\prime(u,\Lambda)$ is a linear elliptic operator with mixed boundary conditions posed on $\Omega^\prime$.   One can then show that $\limL_\pm^\prime(u,\Lambda)$ has a principal eigenvalue that we will denote by $\prineigenvaluepm(u,\Lambda)$; see Appendix~\ref{principal eigenvalue appendix}.  

Suppose first that we are given a single strictly monotone front $(u,\Lambda)$.  As the system \eqref{fully nonlinear elliptic pde} is invariant under translation in $x$, a simple elliptic regularity argument shows that $\partial_x u$ lies in $\kernel{\F_u(u,\Lambda)}$.   Let us assume that the kernel is exactly one dimensional:
\begin{equation}
  \ker \F_u(u,\Lambda) = \linspan \{ \partial_x u \}.  \label{kernel assumption}\tag{H1} 
\end{equation}
It is well known that the Fredholm properties of $\F_u(u, \Lambda)$ are determined by the limiting linearized operators upstream and downstream (see, for example, \cite{volpert2011book1,volpert2003degree}).  In this work, we focus on the situation where 
\begin{equation}
  \prineigenvalue^-(u,\Lambda),~\prineigenvalue^+(u,\Lambda) < 0.  \label{spectral assumption}\tag{H2} 
\end{equation}
Loosely speaking, this corresponds to the upstream and downstream states being spectrally stable in a sense to be discussed shortly.    

We will show that  \eqref{spectral assumption} implies in particular that $\F_u(u, \Lambda)$ is Fredholm index $0$. Generically, then, one expects the zero-set of $\mathscr{F}$ to be locally a curve provided that the parameter space is two dimensional ($m = 2$), though the solutions on it need not be monotone nor even fronts.  The next theorem says something much stronger: there exists a \emph{global} curve of strictly monotone fronts.  This curve is  maximal in a certain sense among all (locally) analytic curves containing $(u,\Lambda)$, and its limiting behavior is characterized by a set of four alternatives, all of which are realizable.

\begin{theorem}[Global implicit function theorem] \label{global ift}   Consider the elliptic PDE \eqref{fully nonlinear elliptic pde} with two parameters $\Lambda = (\lambda, \mu)$.  Suppose that $(u_0,\lambda_0,\mu_0) \in \genU$ is a strictly monotone front solution to \eqref{fully nonlinear elliptic pde} satisfying the nondegeneracy condition \eqref{kernel assumption}, spectral condition \eqref{spectral assumption}, and transversality condition
  \begin{equation}
    \mathscr{F}_\mu(u_0, \lambda_0, \mu_0) \not\in \range{\mathscr{F}_u(u_0, \lambda_0, \mu_0)}.\label{transversality condition} \tag{H3} 
  \end{equation}
Then there exists a global curve $\iftcm \subset \genU$ of strictly monotone front solutions with the parameterization
\[ \iftcm := \left\{ ( u(s),  \lambda(s),\mu(s) ) : s \in \mathbb{R}  \right\} \subset \mathscr{F}^{-1}(0)\]
for some continuous $\mathbb{R} \ni s \longmapsto (  u(s), \lambda(s), \mu(s)) \in \genU$ with $(u(0), \lambda(0),\mu(0)) = (u_0, \lambda_0, \mu_0)$.
\begin{enumerate}[label=\rm(\alph*)]
\item \label{gen ift alternatives} \textup{(Alternatives)} As $s \to +\infty$, one of four alternatives must occur:       
\begin{enumerate}[label=\rm(A\arabic*)]
    \item \textup{(Blowup)} \label{gen blowup alternative}
      The quantity 
      \begin{align}
        \label{gen global blowup}
        N(s):= \n{u(s)}_{\Xspace} + |\Lambda(s)| + \frac 1{\dist((u(s),\Lambda(s)), \, \dell \genU)} \longrightarrow \infty.
      \end{align}
    \item \label{gen hetero degeneracy} \textup{(Heteroclinic degeneracy)} There exist a sequence $s_n \to +\infty$ and a sequence $x_n \to \pm \infty$ with
      \[ \left(u(s_n)(\placeholder + x_n,\placeholder),\,  \Lambda(s_n) \right) \longrightarrow (u_*,\Lambda_*) \textrm{ in } C^{k+2}_\loc(\overline\Omega) \times \mathbb{R}^2\] 
      for some monotone front solution $(u_*,\Lambda_*) \in \genU$, but the three limiting states
     \begin{equation*}
       \lim_{x \to \mp\infty} u_*(x, \placeholder),
       \quad 
       \lim_{n \to \infty} \lim_{x \to +\infty} u(s_n)(x, \placeholder),
       \quad 
       \lim_{n \to \infty} \lim_{x \to -\infty} u(s_n)(x, \placeholder),
     \end{equation*}    
     are all distinct.
      \item \label{gen ripples} \textup{(Spectral degeneracy)}    There exists a sequence $s_n \to +\infty$ with $\sup_{n} N(s_n) < \infty$ so that 
      \[
          \prineigenvalue^-(u(s_n), \Lambda(s_n)) \to 0 \qquad \textrm{or} \qquad \prineigenvalue^+(u(s_n), \Lambda(s_n)) \to 0.  
      \]
\item \label{gen loop} \textup{(Loop)}  $\iftcm$ is a closed loop in that  $s\mapsto (u(s), \Lambda(s))$ is $T$-periodic for some $T > 0$.
        \end{enumerate}
\item \label{gen ift other direction alternatives} One of the above alternatives must also occur as $s \to -\infty$.   If in either of these limits the loop alternative \ref{gen loop} happens, then clearly it happens in both limits.  

\item \label{gen ift analyticity} \textup{(Analyticity)} At each parameter value $s \in \mathbb{R}$, $\iftcm$ admits a local real-analytic reparameterization.

\item \label{gen ift maximality}  \textup{(Maximality)} If $\mathscr{J} \subset \genU$ is any locally real-analytic curve of monotone front solutions to \eqref{fully nonlinear elliptic pde} that contains $(u_0,\Lambda_0)$ and along which \eqref{spectral assumption} holds, then $\mathscr{J} \subset \iftcm$. 
\end{enumerate}
\end{theorem}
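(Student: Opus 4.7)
The proof follows the architecture of Dancer's analytic global bifurcation theory as codified in Buffoni--Toland \cite{buffoni2003analytic}, but the unbounded cylinder $\Omega$ forces a substantial reworking of the compactness step. I would organize the argument in four stages: local continuation, compactness on the cylinder, global continuation with identification of the alternatives, and verification of (b)--(d).

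\textbf{Stage 1: local real-analytic curve.} Because the PDE is translation invariant in $x$, the function $\partial_x u_0$ automatically lies in $\kernel \mathscr{F}_u(u_0, \Lambda_0)$, so the classical implicit function theorem cannot be applied directly to $\mathscr{F}$. The standard fix is to break translation symmetry. Fix $(x_*, y_*) \in \Omega \cup \Gamma_1$ at which $\partial_x u_0(x_*, y_*) \neq 0$ and augment the system by the scalar normalization $u(x_*, y_*) = u_0(x_*, y_*)$. On this codimension-$1$ affine subspace of $\Xspace_\bdd$, hypothesis \eqref{kernel assumption} makes $\mathscr{F}_u(u_0, \Lambda_0)$ injective, while \eqref{spectral assumption} combined with the Fredholm theory of elliptic operators on cylinders (whose index is computed from the spectra of $\limL_\pm^\prime$) gives Fredholm index zero, hence bijectivity. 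The transversality \eqref{transversality condition} allows me to then solve for $\mu$ as a real-analytic function of $\lambda$ by the analytic implicit function theorem, producing a local real-analytic arc of strictly monotone fronts through $(u_0, \Lambda_0)$; strict monotonicity of nearby solutions is inherited via the strong maximum principle and Hopf boundary-point lemma applied to $\partial_x u$, which satisfies a uniformly elliptic equation with homogeneous Dirichlet data on $\Gamma_0$ and oblique data on $\Gamma_1$.

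\textbf{Stage 2: compactness on the cylinder.} The crux of the proof is the following properness-type statement: if $(u_n, \Lambda_n)$ is a sequence of strictly monotone front solutions with
\begin{equation*}
\sup_n N_n < \infty, \qquad \limsup_n \prineigenvaluepm(u_n, \Lambda_n) < 0,
\end{equation*}
and no heteroclinic degeneracy in the sense of \ref{gen hetero degeneracy} occurs along any translated subsequence, then $(u_n, \Lambda_n)$ has a subsequence converging in $\Xspace_\bdd \times \R^2$ to a strictly monotone front. The bound on $N_n$ together with Schauder theory yields a uniform $C^{k+2+\alpha}$ bound, hence convergence in $C^{k+2}_\loc(\overline\Omega)$ along a subsequence. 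The strictly negative principal eigenvalues $\prineigenvaluepm$ provide uniform exponential decay rates toward the upstream and downstream states, playing the role that compactness of the resolvent does in bounded domains. Ruling out heteroclinic degeneracy ensures that the endstates of the limit match the limits of the endstates, upgrading $C^{k+2}_\loc$ convergence to convergence in $\Xspace_\bdd$.

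\textbf{Stage 3: global continuation and alternatives.} Starting from the local arc of Stage~1, I would invoke the Dancer--Buffoni--Toland analytic global bifurcation theorem, modified so that the local reparameterization of Stage~1 replaces the ordinary implicit function step, to obtain a maximal real-analytic curve $\iftcm$ through $(u_0,\Lambda_0)$ along which \eqref{kernel assumption}--\eqref{transversality condition} persist and $(u, \Lambda) \in \genU$ is a strictly monotone front. Strict monotonicity is preserved along the curve: as long as \eqref{kernel assumption} and \eqref{spectral assumption} hold, $\partial_x u$ remains (up to scaling) the unique sign-preserving kernel element, and so it cannot change sign without first failing one of the structural hypotheses. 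If $\iftcm$ is not a closed loop (ruling out \ref{gen loop}), then continuation must eventually halt along some sequence $s_n \to +\infty$. Either $N(s_n) \to \infty$, which is precisely \ref{gen blowup alternative}, or by Stage~2 compactness fails through one of exactly two mechanisms: a vanishing principal eigenvalue, giving \ref{gen ripples}, or splitting of the profile into distinct fronts after translation, giving \ref{gen hetero degeneracy}. The symmetric statement for $s \to -\infty$ in (b) follows by reversing the parameterization, analyticity (c) is immediate from the local reparameterization of Stage~1, and maximality (d) follows from the standard analytic-variety identity principle applied along any overlapping locally analytic curve $\mathscr{J}$.

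\textbf{Main obstacle.} The principal difficulty is Stage~2 and its interaction with Stage~3: on the unbounded cylinder the resolvent is no longer compact, so one must show that failure of compactness decomposes cleanly into the two geometric modes captured by \ref{gen hetero degeneracy} and \ref{gen ripples}. I expect this to require a concentration-compactness-style dichotomy in which the only ways a bounded sequence of fronts can fail to converge in $\Xspace_\bdd$ are to redistribute structure toward $x = \pm\infty$ (heteroclinic degeneracy) or to lose the exponential decay rate at one of the endstates (collapse of the spectral gap). All other potential failure modes, including jumps in $\dim \kernel \mathscr{F}_u$, must be shown to be accompanied by one of \ref{gen blowup alternative}--\ref{gen ripples}; this is the technical heart of the argument and where the analysis departs most significantly from \cite{buffoni2003analytic}.
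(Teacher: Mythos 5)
Your high-level architecture (break translation symmetry, establish a compactness dichotomy, run analytic global continuation, identify the alternatives) matches the paper's, but two of your key ingredients would fail as stated. First, the normalization $u(x_*, y_*) = u_0(x_*, y_*)$ pins the translation only \emph{locally}. The constraint involves the extrinsic constant $u_0(x_*, y_*)$, while the range of $x \mapsto u(x, y_*)$ is determined by the upstream and downstream limits of $u$, which change as $(u,\Lambda)$ moves along the curve. Once the front has drifted so that $u_0(x_*,y_*)$ no longer lies in the range of $u(\cdot, y_*)$, \emph{no} translate of $u$ satisfies your constraint and your augmented system has an empty solution set even though a perfectly good front exists. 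The paper avoids this by defining the bordering functional intrinsically, $\mathcal C u := u(0,y_0) - \tfrac12 \lim_{x\to\infty}u(x,y_0)$: its target value ($0$) is the midpoint of the range of $u(\cdot,y_0)$ relative to its own limits, so every strictly monotone front in $\Xspace_\infty$ has exactly one translate with $\mathcal C u = 0$. This is not cosmetic — it is what makes the bordered problem $\mathscr G(w,\lambda)=(\mathscr F(u,\lambda,\mu),\mathcal C u)$ well posed on the entire global curve, and the same functional is also what pins the sequence in the compactness dichotomy.

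Second, your compactness step asserts that $\prineigenvaluepm < 0$ supplies \emph{uniform exponential decay} of $u_n$ to its endstates, upgrading $C^{k+2}_\loc$ to $\Xspace_\bdd$ convergence. The paper never proves such a rate — it explicitly notes that establishing decay rates in the general fully nonlinear setting ``would be very technically challenging'' — and Lemma~\ref{compactness fronts lemma} proceeds instead by an equidecay dichotomy: either $\lim_{x\to\pm\infty}\sup_n\|u_n(x,\cdot)-U_{n\pm}\|_{C^0(\Omega')}=0$, in which case Schauder estimates upgrade local convergence to $C^{k+2+\alpha}_\bdd$ convergence, or a sequence $x_n\to\pm\infty$ carries structure to infinity, and monotonicity together with the constraint $\mathcal C u_n = 0$ forces the translated limit to be a new front whose endstates strictly interpolate between $U_\pm$ — precisely heteroclinic degeneracy. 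Relatedly, you invoke the classical Buffoni--Toland theorem, which assumes $\mathscr F$ is Fredholm index $0$ with locally compact zero-set throughout; neither holds here, and the whole point of the paper is to treat their failure as alternatives. What is actually required is Theorem~\ref{homoclinic global ift} from \cite{chen2018existence}, applied to the bordered operator, with loss of compactness and loss of Fredholmness subsequently identified with \ref{gen hetero degeneracy} and \ref{gen ripples} via Lemma~\ref{compactness fronts lemma} and Corollary~\ref{infinity spaces fredholm corollary}.
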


Global implicit function theorems have been used by many authors, including for instance very recent work~\cite{garcia2022pairs} on pairs of rotating vortex patches. Since a preprint of this article first appeared, Theorem~\ref{global ift} above has been adapted to show the existence of global curves of solutions to the Boussinesq $abcd$ system~\cite{chen2022abcd}. We remark that, at first glance, one might be concerned that the loop alternative \ref{gen loop} occurs with $\cm$ being just a family of translates of $u_0$.  In fact, this degenerate scenario is prevented in our construction through the use of a functional \eqref{def C functional} that breaks the translation symmetry.   

The next theorem addresses the related problem of continuing a given ``local'' curve $\cm_\loc$ of strictly monotone front solutions.  It is most natural in this setting to consider the one-parameter case ($m = 1$), so we write $\Lambda = \lambda \in \mathbb{R}$.   Usually, one obtains $\cm_\loc$ through a preliminary local bifurcation argument.  A common scenario on unbounded domains is that $\cm_\loc$ originates from an $x$-independent solution to \eqref{fully nonlinear elliptic pde} that is singular in the sense that the linearized operator there fails to be Fredholm.  With that in mind, suppose that $\cm_\loc$ admits the $C^0$ parameterization
\[ 
\mathscr{C}_\loc = \left\{ \left( u(\varepsilon), \lambda(\varepsilon) \right) : 0 < \varepsilon < \varepsilon_0 \right\}  \subset \genU,
\]
where 
\begin{equation}
  \begin{gathered}
    (u(\varepsilon), \lambda(\varepsilon)) \to (u_0,\lambda_0) \in \genU \quad \text{as } \varepsilon \to {0+} \\ 
    \textrm{ and } \prineigenvalue^+(u_0,\lambda_0)=0 \text{ or } \prineigenvalue^-(u_0,\lambda_0) = 0.
  \end{gathered}
  \label{local singular assumption} 
  \tag{H4} 
\end{equation}
Our main global bifurcation result is then the following.

\begin{theorem}[Global bifurcation] \label{general global bifurcation theorem}
Consider the elliptic PDE \eqref{fully nonlinear elliptic pde} with a single parameter $\Lambda = \lambda$.  Let $\cm_\loc$ be a curve of strictly monotone front solutions bifurcating from a singular point as in \eqref{local singular assumption}.  Assume that at each $(u,\lambda) \in \cm_\loc$, the nondegeneracy \eqref{kernel assumption} and spectral \eqref{spectral assumption} conditions hold.  

Then, possibly after translation, $\mathscr{C}_\loc$ is contained in a global curve of strictly monotone front solutions $\mathscr{C} \subset \genU$, parameterized as 
\[ \mathscr{C} := \left\{ \left(u(s), \lambda(s) \right) : 0 < s < \infty  \right\} \subset \mathscr{F}^{-1}(0)\]
for some continuous $\mathbb{R}_+ \ni s \longmapsto (u(s), \lambda(s)) \in \genU$ with the  properties enumerated below.
  \begin{enumerate}[label=\rm(\alph*)]
  \item \label{gen alternatives} As $s \to \infty$ either the blowup \ref{gen blowup alternative}, heteroclinic degeneracy \ref{gen hetero degeneracy}, or spectral degeneracy \ref{gen ripples} alternative occurs.  
  \item \label{gen reparam} The curve $\cm$ is locally real analytic and maximal in the sense of Theorem~\ref{global ift}\ref{gen ift maximality}.
  \item \label{gen reconnect} For all $s$ sufficiently large, $(u(s),\lambda(s)) \not\in \cm_\loc$.  In particular, $\cm$ is not a closed loop.
  \end{enumerate}
\end{theorem}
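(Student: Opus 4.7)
My plan is to adapt the one-parameter analytic global bifurcation theorem of Dancer and Buffoni--Toland to the present unbounded-cylinder setting, using $\cm_\loc$ as the starting local object. Fix any small $\varepsilon_* > 0$ and set $(u_*, \lambda_*) := (u(\varepsilon_*), \lambda(\varepsilon_*))$. By hypothesis, \eqref{kernel assumption}--\eqref{spectral assumption} hold at this point, so $\mathscr{F}_u(u_*, \lambda_*) \colon \Xspace \to \Yspace$ is Fredholm of index zero with kernel $\linspan\{\partial_x u_*\}$; modulo the translation symmetry, the linearization is invertible. This puts us in precisely the structural situation needed to invoke, at each point along the curve, either the analytic implicit function theorem modulo translations (at a regular point, where $\mathscr{F}_\lambda \notin \range \mathscr{F}_u$ and $\lambda$ parameterizes the curve) or a Crandall--Rabinowitz/Dancer-type desingularization (at a turning point, where $\mathscr{F}_\lambda \in \range \mathscr{F}_u$). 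Iterating this construction, while preserving strict monotonicity via the maximum principle exactly as in the proof of Theorem~\ref{global ift}, persistence of \eqref{kernel assumption}, and persistence of \eqref{spectral assumption} by the continuity of $\prineigenvaluepm$ in $(u,\lambda)$ established in Appendix~\ref{principal eigenvalue appendix}, yields part (b): local analyticity and maximality of $\cm$.

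To establish the alternatives in (a), I would analyze what forces the continuation to terminate. The classical three possibilities in an analytic one-parameter continuation are blow-up, closure into a loop, or failure of the Fredholm/compactness hypothesis. Blow-up of $N$ in the sense of \eqref{gen global blowup} gives \ref{gen blowup alternative}, while failure of Fredholmness, by the spectral characterization of the limiting operators $\limL^\prime_\pm$, corresponds precisely to $\prineigenvaluepm \to 0$, giving \ref{gen ripples}. The main technical obstacle lies in the compactness analysis: along a sequence $(u(s_n), \lambda(s_n))$ with $N(s_n)$ bounded and $\prineigenvaluepm$ bounded away from zero, standard elliptic Schauder and obliqueness estimates provide $C^{k+2+\alpha}_\bdd$ bounds, but the noncompactness of $\Omega$ in $x$ means convergence requires translating by an $x_n \in \R$ chosen to keep the interesting behavior in a bounded region. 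A compactness dichotomy---the heart of the proof---shows that either the translated limit is a front with upstream/downstream states that differ from those attained along $\cm$, yielding the heteroclinic degeneracy \ref{gen hetero degeneracy}, or it has the same asymptotic states, in which case continuation of $\cm$ could be resumed at the limiting point, contradicting termination.

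For part (c), I would argue by contradiction. Suppose $\cm$ were $T$-periodic in its parameter. Then by the maximality established in (b) together with analyticity, the germ of $\cm$ at $(u_*, \lambda_*)$ must coincide with that of $\cm_\loc$, and real-analytic continuation around the closed loop and back forces $\cm_\loc$---and hence its limit point $(u_0, \lambda_0)$---to lie in the closure of the compact set $\cm$. But on the compact loop one has $\prineigenvaluepm < 0$ uniformly by continuity, whereas $\prineigenvaluepm(u_0, \lambda_0) = 0$ by \eqref{local singular assumption}, a contradiction. The same continuity argument rules out $(u(s), \lambda(s)) \in \cm_\loc$ for arbitrarily large $s$, establishing (c) and completing the proof.
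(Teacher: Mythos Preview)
Your overall strategy---continue analytically from a point on $\cm_\loc$, preserve monotonicity and \eqref{spectral assumption} along the way, and read off the alternatives from what obstructs continuation---matches the paper's. The treatment of parts~(a) and~(c) is essentially the same: spectral degeneracy is exactly loss of Fredholmness of $\F_u$, heteroclinic degeneracy arises from a translation-compactness dichotomy, and the closed loop is ruled out because the singular endpoint $(u_0,\lambda_0)$ from \eqref{local singular assumption} violates \eqref{spectral assumption}.

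There is, however, a genuine gap in how you handle the translation kernel. Saying the linearization is invertible ``modulo the translation symmetry'' hides the main technical device. The paper fixes a specific translate by imposing $\mathcal{C}u=0$, where $\mathcal{C}u := u(0,y_0) - \tfrac12\lim_{x\to\infty}u(x,y_0)$ is a bounded linear functional on $\Xspace_\infty$ that picks out exactly one translate of each strictly monotone front; it then introduces an \emph{artificial} second parameter $\mu$ and studies the bordered operator
\[
\mathscr{G}(u,\lambda,\mu) := \bigl(\F(u,\lambda) + \mu\chi_1,\ \mathcal{C}u\bigr),
\]
where $\chi_1$ spans the cokernel of $\F_u(u_1,\lambda_1)$ at a fixed point on $\cm_\loc$. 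Fredholm bordering makes $\mathscr{G}_w$ index~$0$, and \eqref{kernel assumption} together with $\mathcal{C}(\partial_x u_1)\ne 0$ makes it an isomorphism at that one point. The abstract global IFT (Theorem~\ref{homoclinic global ift}) is applied \emph{once} to $\mathscr{G}$; since $\mu\equiv 0$ on $\cm_\loc$, real analyticity forces $\mu\equiv 0$ on the entire global curve, and projecting away $\mu$ gives $\cm$.

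Your iterative scheme instead requires \eqref{kernel assumption} to persist at every point of the curve (your regular/turning-point dichotomy presupposes a one-dimensional kernel), which is neither assumed nor proved---the kernel of $\F_u$ could jump to higher dimension at isolated points. The bordering trick sidesteps this entirely: only Fredholm index~$0$ (guaranteed by \eqref{spectral assumption} via Corollary~\ref{infinity spaces fredholm corollary}) is needed along the curve, and the analytic structure theorem inside Theorem~\ref{homoclinic global ift} handles any degenerate points. Likewise, your compactness dichotomy implicitly needs a normalization like $\mathcal{C}u_n=0$ to prevent translates from drifting trivially; the paper's Lemma~\ref{compactness fronts lemma} uses exactly this.
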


Some extended discussion of the assumptions and conclusions of Theorem~\ref{global ift} and Theorem~\ref{general global bifurcation theorem} is given below.  

\subsubsection*{On the hypotheses}

In developing this theory we have endeavored to make no structural hypotheses on the system beyond analyticity \eqref{gen regularity elliptic coef} and ellipticity \eqref{ellipticity assumption}.    Moreover, we do not impose compactness requirements on $\F$ except at the given front or local curve.  This contrasts dramatically with the analytic global bifurcation theory in \cite{buffoni2003analytic}, for example, where the zero-set $\F^{-1}(0)$ must be locally compact and the restriction of $\F$ to it must be Fredholm index $0$.  Degree theoretic global bifurcation typically makes the even stronger assumption that $\F$ is Fredholm index $0$ and locally proper throughout its domain of definition (see, for example, \cite{rabinowitz1971some}).  These hypotheses are reasonable for elliptic PDEs set on a compact region; for classical solutions, they usually follow from Schauder theory.  

On unbounded domains, however, it becomes a major analytical challenge to prove that a nonlinear elliptic operator is locally proper.  Indeed, as we see in alternative \ref{gen hetero degeneracy}, the global curve may not be locally pre-compact.  Nor can Fredholmness be taken for granted: as hinted at by \eqref{local singular assumption}, we often wish to continue curves that bifurcate from singular points where $0$ is in the essential spectrum of $\F_u$.    Moreover, the Fredholm index is determined by the spectral properties of the linearized operators at infinity, which will in principle change in physically meaningful ways as we traverse $\cm$.    These considerations argue strongly that the loss of compactness be treated as an \emph{alternative}.  This decision shifts the difficulty from verifying properness and Fredholmness to classifying qualitatively how they might fail.   

To motivate the spectral condition \eqref{spectral assumption}, it is instructive to look at two specific classes of equation.  Consider first the semilinear  Robin problem 
\begin{equation}
 \label{intro toy problem} 
\left\{ 
  \begin{aligned}
    \Delta u & = 0 & \qquad & \textrm{in } \Omega \\
    u_y - u + g(u,\Lambda) & = 0 & \qquad & \textrm{on } \Gamma_1 \\
    u &=  0 & \qquad &\textrm{on } \Gamma_0,
  \end{aligned} \right.
\end{equation}
set on the infinite cylinder $\Omega := \mathbb{R} \times (0,1)$ with upper boundary $\Gamma_1 := \{ y = 1\}$ and lower boundary $\Gamma_0 := \{ y =0 \}$.  Here $g = g(z,\Lambda)$ is a smooth nonlinearity with one or two parameters.  This equation is \emph{reversible} (invariant under reflection in $x$) and \emph{variational} (its solutions are formally critical points of a certain functional).  It can be rewritten as the infinite-dimensional spatial Hamiltonian system 
\begin{equation}
  \partial_x \begin{pmatrix} u \\ v \end{pmatrix}  
    = J \delta \flowforce, \qquad J := \begin{pmatrix} 0 & -1 \\ 1 & 0 \end{pmatrix} \label{example hamiltonian eq} 
\end{equation}
where $v := \partial_x u$, and the Hamiltonian $\flowforce$ is given by 
\[ \flowforce(u,v;x) := \int_0^1 \Big( \frac {u_y^2-v^2}2 - uu_y + g(u,\Lambda)u_y \Big)\, dy. \]

Suppose now that $(u,\Lambda)$ is a monotone front solution to \eqref{intro toy problem} with limiting states $U_\pm$.  For the Hamiltonian formulation \eqref{example hamiltonian eq}, this corresponds to a heteroclinic connection between the rest points $(U_-, 0)$ and $(U_+,0)$.  It is easily seen that $\sigma \in \mathbb{C}$ is an eigenvalue of $J \delta^2 \flowforce(U_\pm)$ if and only if $\sigma^2$ is an eigenvalue of $-\limL_\pm^\prime(u,\Lambda)$.  Thus the spectral condition \eqref{spectral assumption} is equivalent to the hyperbolicity of the equilibria for the spatial dynamical system. 
 
  As a second example, consider the time-dependent reaction-diffusion equation 
 \begin{equation}
   \partial_t u - \Delta u + b(y,\Lambda) \partial_x u = f(u, \Lambda), \label{dynamic reaction diffusion eq} 
 \end{equation}
 where $f = f(z,\Lambda)$ is a smooth, parameter-dependent semilinear term.   For discussion purposes, let $\Gamma_1 = \emptyset$ so that the boundary conditions are simply homogeneous Dirichlet.   A traveling front with wave speed $c \in \mathbb{R}$ will then satisfy the elliptic PDE
 \begin{equation}
   \Delta u + \left(c-b(y,\Lambda)\right) \partial_x u + f(u,\Lambda) = 0. \label{reaction diffusion eq} 
 \end{equation}
Berestycki and Nirenberg \cite{berestycki1992travelling} give a rather comprehensive treatment of this problem in the case that $b$ and $f$ are independent of $\Lambda$ and $c$ serves as the parameter.  This assumption, however, considerably simplifies the global behavior as the set of $x$-independent solutions and the corresponding limiting transversal linearized operators will be fixed.  

Let $(u,\Lambda)$ be a monotone front solution to \eqref{reaction diffusion eq} with $U_-$ and $U_+$ its upstream and downstream states.  As they are necessarily independent of $x$, $U_\pm$ are both stationary solutions of the time-dependent problem \eqref{dynamic reaction diffusion eq}.   The limiting linearized operator at $x = \pm\infty$ is  
\[ 
\limL_\pm(u,\Lambda) =  \Delta + \left(c - b(y, \Lambda) \right) \partial_x + f_z(U_\pm, \Lambda).
\]
In Proposition~\ref{essential spec proposition}, we prove that the spectral assumption \eqref{spectral assumption} is equivalent to the essential spectrum of $\limL_\pm$ being properly contained in the left complex half-plane $\mathbb{C}_-$.  Thus both the upstream and downstream states are spectrally stable as steady state solutions of \eqref{dynamic reaction diffusion eq}.  

As this reasoning shows, for reaction-diffusion equations of the form \eqref{reaction diffusion eq}, our theory is tailored to so-called \emph{bistable} or \emph{Allen--Cahn-type nonlinearities}.  They are referred to as ``Type C'' by Berestycki and Nirenberg \cite{berestycki1992travelling}, who also impose \eqref{spectral assumption} in several of their results regarding this case.  By exploiting more fully the structure of the equation --- especially the damping effect of the first-order term --- they are able to treat other classes of nonlinearity as well.  However, these arguments will not hold in the general context of \eqref{fully nonlinear elliptic pde}.  Indeed, one cannot expect monotonicity to persist without \eqref{spectral assumption}.  We note that the first-order term in \eqref{reaction diffusion eq} plays a similarly important role in the existence and multiplicity results of Bakker, van den Berg, and Vandervorst~\cite{bakker2018homology}, which are based on topological methods from dynamical systems.

\subsubsection*{On the alternatives}

Let us now discuss in somewhat more detail the limiting behavior along the curve.   Blowup \ref{gen blowup alternative} is commonly thought of as the most desirable alternative as it indicates that $\cm$ includes arbitrarily large solutions or else limits to some pathological behavior characterized by the boundary of $\genU$.  

\begin{figure}
  \centering
  \includegraphics[scale=1,page=1]{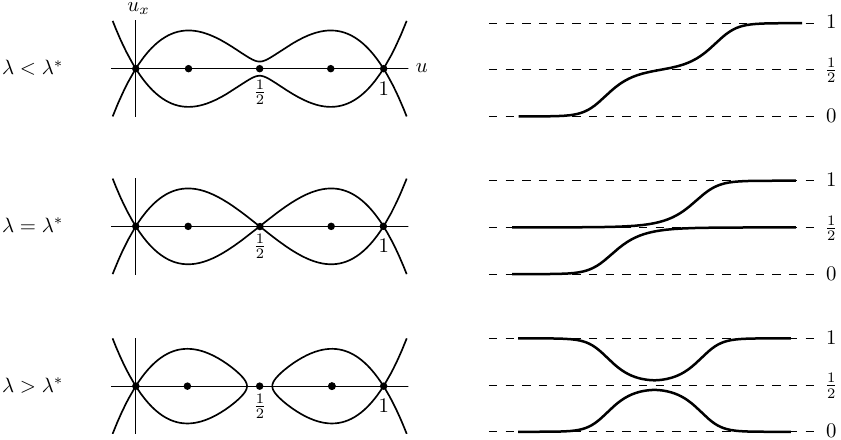}

  \caption{An ODE $u_{xx} = f(u,\lambda)$ which experiences a heteroclinic degeneracy \ref{gen hetero degeneracy}. The left shows phase portraits for fixed values of $\lambda$, and the graphs on the right are selected heteroclinic/homoclinic solutions as functions of the independent variable $x$. For $\lambda < \lambda^*$, there is a monotone increasing heteroclinic orbit connecting the constant solutions $u=0$ and $u=1$. At $\lambda = \lambda^*$, however this orbit degenerates into two, one connecting $u=0$ with $u=1/2$, and another connecting $u=1/2$ to $u=1$. For $\lambda > \lambda^*$, there are instead only homoclinic orbits connecting $u=0$ to itself and similarly for $u=1$.}
  \label{hetero degen phase portrait figure} 
\end{figure}

\begin{figure}
  \centering
  \includegraphics[scale=1,page=2]{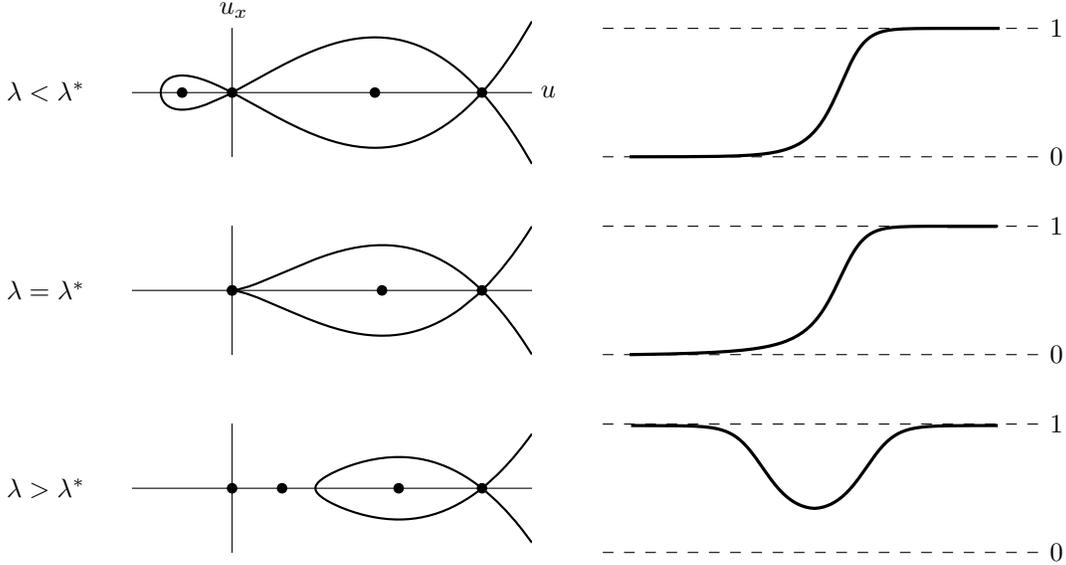}

  \caption{An ODE $u_{xx} = f(u,\lambda)$ which experiences a spectral degeneracy \ref{gen ripples}. The left shows phase portraits for fixed values of $\lambda$, and the graphs on the right are selected heteroclinic/homoclinic solutions as functions of the independent variable $x$. For $\lambda < \lambda^*$, there is a monotone increasing heteroclinic orbit connecting the constant solutions $u=0$ and $u=1$. This orbit persists for $\lambda = \lambda^*$, but the Jacobian matrix at $u=0$ ceases to be invertible and the orbit no longer decays exponentially as $x \to -\infty$.
  For $\lambda > \lambda^*$, the heteroclinic orbit degenerates into a homoclinic orbit to $u=1$, while $u=0$ becomes a center.}
  \label{ripples phase portrait figure} 
\end{figure}

The intuition for the heteroclinic degeneracy alternative \ref{gen hetero degeneracy} is best explained in terms of bifurcation of ODEs.  Consider a scenario where a heteroclinic orbit between two equilibria breaks down and a new heteroclinic is born that connects one of them to an intermediate rest point as in Figure~\ref{hetero degen phase portrait figure}.  This sort of breakdown can also occur for the PDE \eqref{fully nonlinear elliptic pde}, where the upstream and downstream states play the role of the equilibria in the original heteroclinic orbit, and the intermediate equilibrium is a distinct $x$-independent solution. The sequence of translations in \ref{gen hetero degeneracy} shifts this intermediate state off to $\pm\infty$, so that locally we have convergence to a new front.  There is a related phenomenon for solitary waves (that is, homoclinic orbits) wherein the solution broadens into an infinitely long ``table top.'' This has been observed numerically in \cite{turner1988broadening}, for example.  

Next, consider the spectral degeneracy alternative \ref{gen ripples}.  As mentioned above, \eqref{spectral assumption} is equivalent to the essential spectrum of the limiting linearized operators being properly contained in $\mathbb{C}_-$.  By standard elliptic theory, the principal eigenvalues are real and lie strictly to the right of the rest of the spectrum of $\limL_\pm^\prime$.   Thus spectral degeneracy indicates resonance:   the essential spectrum of the linearized problem upstream or downstream moves through the origin.  This results in a loss of semi-Fredholmness and potentially relative compactness of the zero-set.   For reaction-diffusion equations, it corresponds to the onset of ``essential instability'' \cite{sandstede1999essential,sandstede2001essential}.  To continue any further would require a detailed study of this limit and the resulting linearized problem, perhaps using center manifold reduction techniques.  There is little hope of successfully carrying out such an argument without making additional structural hypotheses.  Even for ODEs, heteroclinic orbits may cease to exist beyond spectral degeneracy; see Figure~\ref{ripples phase portrait figure}.   

While the statements of alternatives \ref{gen hetero degeneracy} and \ref{gen ripples} appear somewhat complicated, often they can be drastically simplified.  This is especially true if there are conserved quantities for the problem, such as the Hamiltonian $\flowforce$ in \eqref{intro toy problem},  and if the set of $x$-independent solutions can be completely characterized.  One then obtains a finite set of conditions on $\Lambda$ that are necessary for a heteroclinic condition to exist; these we call the   \emph{conjugate flow equations} in reference to Benjamin's seminal work \cite{benjamin1971unified}.  They play a central role in both of our applications.   

Finally, let us note that there is an important (though subtle) distinction to be made regarding  \ref{gen loop}.  We consider $\iftcm$ to be a closed loop if it has a global $C^0$ parameterization that is periodic and locally real analytic.  Clearly, this cannot be true for $\cm$ in Theorem~\ref{general global bifurcation theorem} due to \eqref{local singular assumption}.  However, it can happen that $\cm$ \emph{reconnects} to the singular point $(u_0,\lambda_0)$ as $s \to +\infty$; this possibility is captured by the spectral degeneracy alternative \ref{gen ripples}.  One can imagine this occurring, for instance,  if there are multiple local bifurcation curves branching from the same singular point --- a scenario that can be ruled out with a complete account of the monotone solutions nearby.  For the applications presented in this paper, we accomplish such a characterization via center manifold reduction.  

\subsection{Large fronts for semilinear Robin problems} \label{intro robin section}

We present two applications of the general theory.  The first is a detailed study of the semilinear Robin problem \eqref{intro toy problem} for a class of nonlinearities $g$ related to double well potentials.   A similar family of equations was studied by Rabinowitz \cite{rabinowitz1994solutions} through global variational techniques.  Using a center manifold reduction, we construct a local curve of perturbative solutions.  The hypotheses of Theorem~\ref{general global bifurcation theorem} are then verified, furnishing a global curve of strictly monotone fronts.  By proving uniform bounds and exploiting the Hamiltonian structure of the problem, we are able to link the qualitative properties of $g$ to each of the alternatives \ref{gen blowup alternative}, \ref{gen hetero degeneracy}, and \ref{gen ripples}.  See Theorem~\ref{toy global bifurcation theorem} and Proposition~\ref{realization proposition} for precise statements of these results.

\subsection{Large-amplitude bores} \label{intro bores section}

\begin{figure}[tb]
  \centering
  \includegraphics[scale=1]{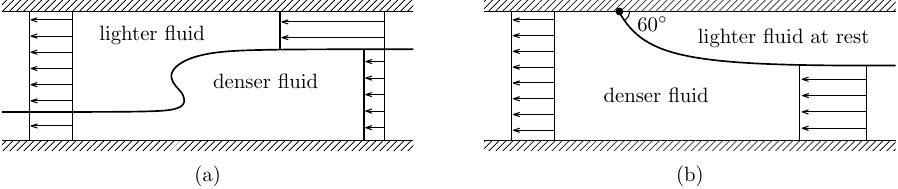}
  \caption{Limiting configurations along the families of monotone bores from Theorem~\ref{global bore theorem}. The strictly increasing bores may overturn, meaning that the interface between the lighter and denser fluids develops a vertical tangent.  Depicted in (a) is the expected state sometime after this has occurred.  On the other hand, the strictly decreasing bores either overturn or else the distance between the interface and upper wall limits to $0$.  The latter case is illustrated in (b).  Both formal analysis \cite{vonkarman1940engineer} and numerical computations \cite{dias2003internal} suggest that the free boundary will meet the wall at an angle of $60^\circ$.}
  \label{bores alternatives figure}
\end{figure}

 Waves in salt water bodies are frequently stratified: they  exhibit large nearly homogeneous density regions that are separated by much thinner regions, called \emph{pycnoclines}, where the density varies rapidly.  It is then reasonable to treat the water as two immiscible fluids, each with constant density and governed by the incompressible Euler equations.  The pycnocline accordingly becomes a surface of discontinuity for the density and a free boundary dividing the layers.  Since the 1960s, oceanographers have known that massive \emph{internal waves} can propagate along this interface, and even remain coherent over long distances.    While this phenomenon has been studied extensively in geophysics, the results are mostly restricted to linear or weakly nonlinear model equations that do not adequately capture the features of very large internal waves \cite{helfrich2006review}.   Front-type solutions in this context are called (smooth) \emph{hydrodynamic bores}.  They are quintessentially a product of stratification in the sense that traveling fronts in homogeneous density water do not exist \cite{wheeler2013solitary}.   

 To focus on the motion of the internal interface, we suppose that the water is bounded above and below by flat rigid boundaries.  We also take the velocity field in each layer to be irrotational and assume there are no (horizontal) stagnation points.  Under these assumptions, the system can be transformed into a quasilinear elliptic PDE with a transmission boundary condition.  The first mathematical construction of heteroclinic solutions for this problem is due to Amick and Turner \cite{amick1989small}, who found small-amplitude bores in a neighborhood of the trivial state where the free boundary is flat.   Later, Mielke \cite{mielke1995homoclinic},  Makarenko \cite{makarenko1992bore}, and Chen, Walsh, and Wheeler \cite{chen2022center} obtained similar results using alternative methods.     
 
 In this paper, we give the first existence theory for genuinely large-amplitude bores; see Theorem~\ref{global bore theorem}.  By means of Theorem~\ref{general global bifurcation theorem}, the local curve of solutions constructed in \cite{chen2022center} is continued globally.  Careful consideration of the conjugate flow equations enables us to rule out heteroclinic degeneracy \ref{gen hetero degeneracy} and spectral degeneracy \ref{gen ripples}, leaving only blowup \ref{gen blowup alternative}.  Then, through a priori bounds, we prove that \ref{gen blowup alternative} leads inexorably to stagnation:  as we follow the global curve, we encounter solutions where at some point the horizontal velocity comes arbitrarily close to the speed of the wave itself.   This type of limiting behavior is well known in the water waves literature.  Most famously, the family of Stokes waves terminates at an ``extreme wave'' with a stagnation point at its crest \cite{amick1982stokes}.  Unlike Stokes waves, the limiting form of the interface along our solution curves is not a corner.  Instead, using a novel free boundary regularity argument, we prove that the elevation bores either overturn (develop a vertical tangent) or become singular in a certain sense to be made clear shortly.  On the other hand, the depression bores will limit either to an overturning front or one whose interface meets the upper wall.  See Figure~\ref{bores alternatives figure} and the precise statements in Theorem~\ref{limit eta theorem} as well as Remarks~\ref{elevation overturning remark} and \ref{depression overturning remark}.
 
To put this into context, recall that a steady water wave is said to be \emph{overhanging} if its free boundary elevation is multi-valued.  Note that these are traveling wave solutions, so the interface will persist in this unusual configuration for all time.  With surface tension but absent gravity (pure capillary waves) explicit solutions of this type were discovered by Crapper \cite{crapper1957exact}.  For different physical regimes, Akers, Ambrose, and Wright \cite{akers2014gravity} and C\'ordoba, Enciso, and Grubic \cite{cordoba2016existence} constructed overhanging periodic capillary-gravity waves using the (local) implicit function theorem at a Crapper wave to introduce a small amount of gravity.  By contrast, Ambrose, Strauss, and Wright \cite{ambrose2016global} obtain periodic capillary-gravity internal waves through global bifurcation; numerical computations by the same authors found that some of these solutions are overhanging.  
 
All of this theory relies crucially on capillarity.  However, a series of remarkable numerical results in the 1980s predicted the existence of overhanging gravity waves \cite{pullin1988finite,turner1988broadening}.  Rather than surface tension, they are able to maintain their shape due to vorticity, either in the form of a background current or a vortex sheet, as is the case for internal waves.  Since that time, constructing overhanging gravity waves has been among the largest open problems in water waves.    

Constantin, Varvaruca, and Strauss \cite{constantin2016global}  made  significant progress in this direction, proving a global bifurcation result for two-dimensional periodic gravity water waves with constant vorticity in a setting that permits overhanging.  Subsequent computations by Dyachenko and Hur \cite{dyachenko2019folds,dyachenko2019stokes} offer overwhelming numerical evidence that this family does indeed contain waves that overhang.  Unfortunately, no rigorous proof is available as they are unable to exclude the possibility that a corner forms prior to overturning.  Indeed, this scenario is also seen numerically for some parameter regimes.  The central issue is that it is exceedingly difficult to track the qualitative features of the waves as one follows the global curve.  This is particularly true when working with the Babenko-like formulation adopted in \cite{constantin2016global}.  Similarly, an earlier work by Sun \cite{sun2001existence} treats periodic internal gravity waves in a two-fluid system where each layer is infinitely deep.   He derives an integral equation formulation in the spirit of Nekrasov that allows overhanging waves, but he does not guarantee that they are present on the global bifurcation curve.  Recent numerics \cite{maklakov2020note} suggest that these wave do not in fact overturn.  Other global bifurcation results for waves which may be overhanging have been established by Haziot~\cite{haziot2021stratified} for linear stratification and constant vorticity, by Haziot and Wheeler~\cite{haziot2023solitary} for solitary waves with constant vorticity, and by Wahl\'en and Weber~\cite{wahlen2022critical} for waves with general vorticity.

Very recently, Hur and Wheeler~\cite{hur2020exact} (see also \cite{hur2020crapper}) found a family of explicit overturning waves with constant vorticity, infinite depth, zero gravity, and zero surface tension. Perhaps surprisingly, the surfaces of these waves are exactly the same as for Crapper's pure capillary waves, although of course the flow pattern beneath the wave is completely different. Hur and Wheeler then perturbed these waves~\cite{hur2022overhanging} to allow for a small amount of gravity, in the same spirit as \cite{akers2014gravity,cordoba2016existence} for capillary-gravity waves. While this rigorously confirms the existence of overturning gravity waves, many related questions remain unresolved, not least about the waves on the global bifurcation curves in~\cite{constantin2016global}.

Our global bifurcation argument is conducted using the Dubreil-Jacotin formulation, which forbids us from having stagnation points.  However, as compensation, it is much simpler to detect overturning in these variables.   For the elevation bores, we are able to eliminate the possibility of the interface meeting the walls.  The only remaining  obstruction to a definitive overturning result is excluding a degenerate scenario where both phases limit to stagnation at exactly the same point on the free boundary. This is the ``singularity'' mentioned above.  In fact, double stagnation would occur at the point on the interface whose elevation coincides with the amplitude of the wave of greatest height in the one-fluid case.  As far as we are aware, none of the numerical studies of this system report seeing this occur.  Indeed, the paper of Dias and Vanden-Broeck \cite{dias2003internal} treats exactly our problem, and they find that the elevation bores invariably  overturn.  On that basis, we conjecture that the singularity alternative can be eliminated with further analysis; see Remark~\ref{elevation overturning remark} for additional comments.   If one can confirm overturning occurs, then in principle one can reformulate the problem in the style of Constantin et al.~to continue into the overhanging regime.  At the same time, we wish to emphasize that the global bifurcation theory needed to construct bores is significantly more subtle than that for periodic waves for all the reasons discussed in Section~\ref{statement abstract results section}.

\subsection{Idea of the proof}

We conclude the section by outlining the proof of Theorems~\ref{global ift} and \ref{general global bifurcation theorem} and its main novelties.  The majority of the argument is contained in Section~\ref{general theory section}.  We begin by fixing an appropriate functional analytic framework for studying fronts.   This is more subtle than it first appears, since we must avoid overreliance on the structural features of the equation.  For example, it is a common practice in reaction-diffusion equations to establish convergence rates for the upstream and downstream limits (this is an integral component of Berestycki and Nirenberg's treatment \cite[Section 3]{berestycki1992travelling}).  However, this would be very technically challenging to carry out in the general case of \eqref{fully nonlinear elliptic pde}.    Another reasonable sounding idea is to subtract a background front, so that the resulting functions vanish at infinity.  Unfortunately, that introduces $x$-dependence into the underlying equation, which is clearly undesirable.    
Our approach is to instead work in a space $\Xspace_\infty$ of H\"older continuous functions with well-defined limits at infinity.  While unconventional for PDEs, this turns out to be a very natural setting for analyzing fronts in the general case. 

In Section~\ref{monotonicity section}, a maximum principle argument is used to confirm that, as long as spectral degeneracy \ref{gen ripples} does not occur, strict monotonicity is both an open and closed property in a relative topology on $\F^{-1}(0)$.  As a consequence, strict monotonicity holds on any connected subset of the zero-set containing $(u,\Lambda)$ or $\cm_\loc$.  On the other hand, in Section~\ref{compactness or hetero degen section}, we show that the zero-set is relatively pre-compact as long as the solutions it contains are strictly monotonic and heteroclinic degeneracy \ref{gen hetero degeneracy} does not occur.

By exploiting monotonicity and the definition of $\Xspace_\infty$, we are able to eliminate in a rather elegant way the kernel direction generated by the translation invariance.   This is accomplished by means of a functional $\mathcal{C}$ that compares the value of $u$ at a near-field point to the average of its values upstream and downstream at the same height.  The kernel of $\mathcal{C}$ thus contains precisely one translate of any monotone front.  Following that approach, we introduce a ``bordered problem'' whose linearization at a monotone front solution is Fredholm index $0$.    Through the global bifurcation theory in \cite{chen2018existence}, we can then continue the zero-set of the augmented nonlinear operator to obtain a global curve that satisfies a larger set of alternatives.    Tying these threads together, we complete the proofs of Theorem~\ref{global ift} and Theorem~\ref{general global bifurcation theorem} in Section~\ref{general theorem proof section}.

In Section~\ref{variational section} we look at the special case where \eqref{fully nonlinear elliptic pde} has a variational structure.  We prove that for such problems, the translation invariance in $x$ generates a conserved quantity, which can then be used to analyze the conjugate flows.   We also discuss how Theorem~\ref{global ift} and Theorem~\ref{general global bifurcation theorem} can be extended to handle problems with transmission boundary conditions.

We then turn to the applications.  Global bifurcation for the semilinear Robin problem is undertaken in  Section~\ref{robin section}, while Section~\ref{bores section} is devoted to the hydrodynamic bore problem.   

Lastly, two appendices are included.  In Appendix~\ref{principal eigenvalue spec condition appendix}, we discuss the principal eigenvalues of elliptic operators on bounded domains.  Some facts from the literature are recalled, and we prove a new result for transmission problems that is needed for the internal waves application.  We then establish the relationship between \eqref{spectral assumption} and the essential spectrum of the linearized operators at infinity.  Appendix~\ref{quoted results appendix} contains a version of the global bifurcation theory in \cite{chen2018existence} reframed in the way most convenient for the present paper.

\section{Abstract global continuation} \label{general theory section}

\subsection{Spaces}\label{spaces section}

The first task is to set down the functional analytic framework.  We want to work in a closed subspace of $\Xspace_\bdd$ that enforces front-type behavior as $x \to \pm\infty$, and then choose an appropriate codomain so that restricting to these spaces does not alter the Fredholm properties of the mapping.  Our strategy is to consider spaces of H\"older continuous functions that have well-defined upstream and downstream limits; this is essentially the largest subspace of $\Xspace_\bdd$ containing all fronts. We then define a linear functional $\mathcal C$ on these spaces which enables us to distinguish between different translates of a strictly monotone front.

We begin by defining
\begin{equation}
  \begin{aligned}
    \Xspace_\infty := \Big\{ u \in \Xspace_\bdd  : &  \lim_{x \to -\infty} \partial^\beta u \;\textrm{ and } \lim_{x \to +\infty} \partial^\beta u\textrm{ exist} && \textrm{ for all $|\beta| \leq k+2$} \Big\}, 
  \end{aligned}
  \label{def X infty space} 
\end{equation}
where all of the above limits are uniform in $y$. Notice that as a consequence of boundedness, we have moreover that if $u \in \Xspace_\infty$, then $\partial^\beta \partial_x u \to 0$ uniformly in $y$ as $x \to \pm\infty$ for all multi-indices $|\beta| \leq k+1$.  For the target space, we take
\begin{equation}
  \begin{aligned}
    \Yspace_\infty := \Big\{ (f_1,f_2) \in \Yspace_\bdd  : &  \lim_{x \to -\infty} \partial^\beta f_1 \;\textrm{ and } \lim_{x \to -\infty} \partial^\beta f_1 \textrm{ exist} && \textrm{ for all $|\beta| \leq k$}, \\
    & \lim_{x \to -\infty} \partial_\tau^\beta f_2 \;\textrm{ and } \lim_{x \to +\infty} \partial_\tau^\beta f_2 \textrm{ exist} &&\textrm{ for all $|\beta| \leq k+1$} \Big\},
  \end{aligned}
  \label{def Y infty space} 
\end{equation}
where $\partial_\tau^\beta$ denotes a generic tangential derivative of order $\beta$ along $\Gamma_1$.  Finally, let 
\[ \genU_\infty := \left\{ (u,\Lambda) \in \genU : u \in \Xspace_\infty \right\}.\]
The following two lemmas are immediate consequences of these definitions.

\begin{lemma} 
  It holds that $\Xspace_\infty$ and $\Yspace_\infty$ are closed subspaces of $\Xspace_\bdd$ and $\Yspace_\bdd$, respectively, and hence Banach spaces.
\end{lemma}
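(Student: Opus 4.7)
The plan is to observe that $\Xspace_\infty$ and $\Yspace_\infty$ are each defined as intersections of finitely many subspaces of $\Xspace_\bdd$ (respectively $\Yspace_\bdd$) cut out by conditions of one of two templates: either (i) $\partial^\beta u(x,\placeholder) \to 0$ uniformly in $y$ as $x \to \pm\infty$, or (ii) $\partial^\beta u(x,\placeholder)$ converges uniformly in $y$ to some (a priori unspecified) function on $\overline{\Omega^\prime}$ as $x \to +\infty$.  Since finite intersections of closed subspaces of a Banach space are closed and themselves Banach, it suffices to verify closedness for each single condition.

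For template (i) the argument is a routine $\varepsilon/2$-splitting.  If $u_n \to u$ in $\Xspace_\bdd$, then $\partial^\beta u_n \to \partial^\beta u$ uniformly on $\overline{\Omega}$ for every $|\beta| \le k+2$, because the H\"older norm on $\Xspace_\bdd$ dominates the sup norm of all derivatives up to order $k+2$.  Given $\varepsilon > 0$, one chooses $n$ so that the sup-norm error is below $\varepsilon/2$ and then invokes the uniform vanishing of $\partial^\beta u_n$ at $\mp\infty$ to obtain $|\partial^\beta u(x,y)| < \varepsilon$ on the appropriate tail, uniformly in $y$.

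Template (ii) is the only step with any subtlety: one must produce the limit function and then show that $u$ converges to it uniformly.  Setting $L_n(y) := \lim_{x\to+\infty} \partial^\beta u_n(x,y)$, the elementary bound
\[
  \n{L_n - L_m}_{\infty} \le \n{\partial^\beta u_n - \partial^\beta u_m}_{\infty},
\]
obtained by passing to the limit $x \to +\infty$ under a sup bound, shows that $\{L_n\}$ is Cauchy in $C(\overline{\Omega^\prime})$; call its uniform limit $L$.  A three-term triangle inequality comparing $\partial^\beta u(x,y)$ successively to $\partial^\beta u_n(x,y)$, then to $L_n(y)$, then to $L(y)$ -- with the middle step controlled for $x$ large by the hypothesis on $u_n$ and the outer two by $n$ being chosen large first -- yields $\partial^\beta u(x,\placeholder) \to L$ uniformly on $\overline{\Omega^\prime}$, which is exactly the required condition for $u$.

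The conditions defining $\Yspace_\infty$ on the two components $f_1$ and $f_2$ (and their $x$-derivatives) are structurally identical, so the same two templates apply verbatim.  I do not anticipate any genuine obstacle here; the only point that merits actual attention is ensuring that the \emph{uniformity} in $y$ of the limit is preserved under uniform convergence in $\Xspace_\bdd$, and the Cauchy step in template (ii) handles this automatically.  Once closedness is in place, the Banach structure is inherited from the ambient complete space.
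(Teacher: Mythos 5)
Your argument is correct, and it is exactly the routine verification that the paper itself omits (the paper explicitly treats this lemma as an ``immediate consequence of the definitions'' and gives no proof). The decomposition into the two templates -- vanishing limits handled by $\varepsilon/2$-splitting, and a priori unspecified limits handled by showing the family of limit functions $L_n$ is Cauchy in $C(\overline{\Omega'})$ followed by a three-term triangle inequality -- is the standard and intended argument, and the Banach structure follows since a closed subspace of a Banach space is Banach.
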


\begin{lemma}  \label{F maps Xinfty to Yinfty lemma} 
  $\F(\genU_\infty) \subset \Yspace_\infty$ and $\F$ is real analytic as a mapping $\genU_\infty \to \Yspace_\infty$. 
\end{lemma}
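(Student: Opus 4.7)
The lemma has two parts: $\F(\genU_\infty) \subset \Yspace_\infty$ and analyticity of the restricted map. My plan is to obtain the inclusion by a direct chain-rule computation, after which analyticity follows automatically.

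For the inclusion, fix $(u,\Lambda) \in \genU_\infty$ and set $(f_1, f_2) := \F(u, \Lambda)$. I would expand each $\partial^\beta f_1$ via the multivariate chain rule as a finite sum of products of partial derivatives of $\mathcal F$, evaluated at $(y, u, \nabla u, D^2 u, \Lambda)$, with derivatives of $u$ of total order at most $|\beta|+2$; since the range of $|\beta|$ appearing for $f_1$ in \eqref{def Y infty space} is $|\beta| \leq k$, all derivatives of $u$ that appear are of order at most $k+2$, which is precisely what $\Xspace_\infty$ controls. Each limit condition in \eqref{def Y infty space} is then verified by inspecting which factors in each summand have vanishing limits. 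The downstream limits exist because summands containing any factor $\partial_x \partial^\gamma u$ go to $0$ by the third clause of \eqref{def X infty space}, while the remaining summands involve only pure $y$-derivatives of $u$, whose own limits define the downstream trace and produce a well-defined value for the whole expression. The upstream limits vanish because every summand either contains a factor $\partial^\alpha u$ with $|\alpha| \geq 1$ that tends to $0$, or --- in the special case of a pure-$y$ multi-index $\beta$ --- is the single summand in which $\partial^\beta$ falls entirely on the $y$-derivatives of $\mathcal F$; the latter limits to $\partial_y^{\beta_y} \mathcal F(y, 0, 0, 0, \Lambda) = 0$ by the normalization \eqref{zero upstream normalization}. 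The conditions $\partial^\beta \partial_x f_2 \to 0$ at $\pm\infty$ for $|\beta| \leq k$ are immediate since every summand of $\partial^\beta \partial_x f_2$ necessarily contains at least one factor of the form $\partial_x \partial^\gamma u$. The argument for $f_2 = \mathcal G(\cdot, u, \nabla u, \Lambda)$ is identical, with the regularity index shifted up by one to match $\mathcal G$'s dependence on one fewer derivative of $u$.

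Analyticity is then automatic. The preceding lemma identifies $\Xspace_\infty$ and $\Yspace_\infty$ as closed subspaces of $\Xspace_\bdd$ and $\Yspace_\bdd$ with the inherited norms, so $\genU_\infty$ is open in $\Xspace_\infty \times \R^m$. The hypothesis \eqref{gen regularity elliptic coef} gives that $\F \maps \genU \to \Yspace_\bdd$ is real analytic, and restriction to the open subset $\genU_\infty$ preserves real analyticity into $\Yspace_\bdd$. Since $\Yspace_\infty \hookrightarrow \Yspace_\bdd$ is an isometric linear embedding, a map with image in the closed subspace $\Yspace_\infty$ is real analytic into $\Yspace_\infty$ if and only if it is real analytic into $\Yspace_\bdd$, and the previous paragraph supplies exactly the image condition.

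The only real obstacle I anticipate is the bookkeeping in the chain-rule expansion, particularly the distinction between pure-$y$ multi-indices $\beta$ --- for which the upstream normalization \eqref{zero upstream normalization} is actually needed --- and mixed multi-indices, for which it is not. Every other ingredient in the argument is essentially definitional.
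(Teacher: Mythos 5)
Your proof is correct. The paper itself offers no argument here --- it records this lemma (together with the one preceding it) as an ``immediate consequence of the definitions'' and leaves the verification to the reader --- so there is nothing in the source to compare against; but your two-step structure (chain-rule bookkeeping for the inclusion $\F(\genU_\infty)\subset\Yspace_\infty$, then restriction-of-codomain for analyticity) is exactly the natural way to fill in the gap. The careful points you anticipated are the right ones: the role of the normalization \eqref{zero upstream normalization} is confined to the pure-$y$ summand where all derivatives hit the explicit $y$-dependence of $\mathcal F$ (including the degenerate $\beta=0$ case), the uniformity-in-$y$ built into \eqref{def X infty space} is what lets the evaluated coefficients $\mathcal F_z,\mathcal F_\xi,\mathcal F_r$ converge uniformly to their limiting values, and the order count $|\beta|\le k$ (resp.\ $k+1$) for $f_1$ (resp.\ $f_2$) keeps every $\partial^\gamma u$ or $\partial_x\partial^\gamma u$ factor inside the range controlled by \eqref{def X infty space}. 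Your abstract argument for analyticity --- that a map into a closed subspace is analytic into that subspace iff its composition with the isometric embedding is analytic into the ambient space, combined with the fact that derivatives of $\F$ in directions from $\Xspace_\infty\times\R^m$ are $\Yspace_\bdd$-limits of $\Yspace_\infty$-valued difference quotients --- is the standard one and is sound.
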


The choice of $\Xspace_\infty$ is justified by the next lemma.

\begin{lemma} 
  \label{fronts are Xinfty lemma} If $(u,\Lambda) \in \genU$ is a front solution to $\F(u,\Lambda)=0$, then there exist $U_\pm \in C^{k+2+\alpha}(\overline{\Omega^\prime})$ such that 
  \begin{equation}
    \lim_{x \to \pm\infty} \partial^\beta u(x, \placeholder) = \partial^\beta U_\pm \qquad \textrm{for all } |\beta| \leq k+2 
  \label{fronts have limits}
  \end{equation}
  uniformly in $y$, and consequently $u \in \Xspace_\infty$ and $\F(U_\pm, \Lambda) = 0$.
\end{lemma}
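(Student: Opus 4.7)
The plan is to exploit translation compactness. Given any sequence $x_n \to +\infty$, I would set $u_n(x,y) := u(x+x_n, y)$ and observe that each $u_n$ has the same $C^{k+2+\alpha}_\bdd$ norm as $u$. Since $C^{k+2+\alpha}$ embeds compactly into $C^{k+2}$ on any bounded subset of $\overline{\Omega}$, an Arzelà--Ascoli diagonal argument over a compact exhaustion of $\overline{\Omega}$ produces a subsequence (not relabeled) converging in $C^{k+2}_\loc(\overline{\Omega})$ to some limit $U$. The pointwise hypothesis $u(x,y) \to U_+(y)$ as $x \to +\infty$ forces $u_n(x,y) = u(x+x_n,y) \to U_+(y)$ for every fixed $(x,y)$, so necessarily $U(x,y) \equiv U_+(y)$ is $x$-independent. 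Since this identification is subsequence-independent, the full translated sequence satisfies $u_n \to U_+$ in $C^{k+2}_\loc(\overline{\Omega})$.

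From here I would extract the claimed uniform convergence by restricting to the compact slice $\{0\} \times \overline{\Omega'}$: the $C^{k+2}_\loc$ convergence above implies
\[
  \partial^\beta u(x_n, \placeholder) \longrightarrow \partial^\beta U_+ \quad \text{uniformly on } \overline{\Omega'} \qquad \text{for every } |\beta| \leq k+2.
\]
Since the sequence $x_n \to +\infty$ was arbitrary, this is exactly \eqref{fronts have limits}; the argument at $-\infty$ is identical and produces $U_-$. The regularity $U_\pm \in C^{k+2+\alpha}(\overline{\Omega'})$ follows from lower semi-continuity of the H\"older seminorm applied to the uniform bound $\n{u(x_n,\placeholder)}_{C^{k+2+\alpha}(\overline{\Omega'})} \leq \n{u}_{C^{k+2+\alpha}_\bdd(\overline{\Omega})}$, and the identity $\F(U_\pm, \Lambda) = 0$ is obtained by passing to the limit in $\F(u_n, \Lambda) = 0$, which holds for every $n$ by translation invariance of the equation in $x$, using pointwise continuity of $\F$ with respect to $C^{k+2}_\loc$ convergence of its argument.

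Finally, I would verify $u - U_- \in \Xspace_\infty$ by checking the three conditions in \eqref{def X infty space}. For any multi-index $\beta$ containing no $\partial_x$, the preceding convergence gives $\partial^\beta(u - U_-) \to 0$ as $x \to -\infty$ and $\partial^\beta(u - U_-) \to \partial^\beta(U_+ - U_-)$ as $x \to +\infty$, both uniformly in $y$. For any $\beta$ involving at least one $\partial_x$, the $x$-independence of $U_\pm$ forces $\partial^\beta U_\pm \equiv 0$, so the translation-compactness argument already applied to $u$ itself yields $\partial^\beta u(x_n, \placeholder) \to 0$ uniformly on $\overline{\Omega'}$ as $x_n \to \pm\infty$. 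This handles both the $\partial_x$-part of the first condition and the third condition in \eqref{def X infty space}. The main technical point throughout is the upgrade from the merely pointwise hypothesis to uniform convergence of all derivatives up to order $k+2$; this is exactly what translation pre-compactness in $C^{k+2+\alpha}_\bdd$ delivers, and no structure is used beyond translation invariance of $\F$ in $x$.
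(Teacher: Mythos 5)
Your proof is correct, and the overall strategy coincides with the paper's: exploit translation precompactness in Hölder spaces and identify the subsequential limit using the pointwise hypothesis. Where you diverge is the mechanism for upgrading from $C^0$ to order-$(k+2)$ convergence. You invoke Arzel\`a--Ascoli directly at the level of $C^{k+2}_\loc$, using only the uniform $C^{k+2+\alpha}_\bdd$ bound and the compactness of the embedding $C^{k+2+\alpha}\hookrightarrow C^{k+2}$ on compact sets; this is all that the stated conclusion and the membership in $\Xspace_\infty$ require, since \eqref{def X infty space} asks for uniform limits of $\partial^\beta u$ only up to $|\beta|\le k+2$, with no Hölder control on the tails. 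The paper instead forms the difference $v_\tau := u_\tau - U_+$, observes that it solves a linear uniformly elliptic system \eqref{vtau equation} with coefficients bounded uniformly in $\tau$ (a quasilinearization trick used repeatedly in the paper), and invokes a translation-independent Schauder estimate to boost $C^0_\bdd$-smallness on $\overline{\Omega_2}$ to $C^{k+2+\alpha}$-smallness on $\Omega_1$. This yields the slightly stronger conclusion that the convergence holds in the \emph{full} H\"older norm $C^{k+2+\alpha}$, not merely in $C^{k+2}$, and it also rehearses a technique the paper reuses elsewhere. Your route is more elementary, requiring nothing from the PDE beyond translation invariance of $\F$ and continuity of $\F$ with respect to $C^2_\loc$ convergence (to pass to the limit in $\F(u_n,\Lambda)=0$), and it fully establishes the lemma as stated.
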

\begin{proof}
  Since $(u,\Lambda)$ is a front, there exist $U_{\pm} = U_{\pm}(y)$ so that $u(x, y) \to U_\pm(y)$ as $x \to \pm\infty$ for each $y \in \Omega^\prime$.  Consider the doubly infinite sequence of translates $\{u_\tau = u(\placeholder + \tau, \placeholder)\}_{\tau \in \mathbb{Z}}$.  Then $(u_\tau, \Lambda)$ is a front solution for each $\tau$, and $\{ u_\tau \}$ is uniformly bounded in $C_\bdd^{k+2+\alpha}$.  It follows that, possibly passing to a subsequence, there exists $u_{\pm} \in \Xspace_\bdd$ such that $u_\tau \to u_\pm$ in $C_\loc^{k+2+\alpha/2}$ as $\tau \to \pm\infty$. Clearly, we must have $u_\pm = U_\pm$, and hence $(U_\pm,\Lambda)$ is an $x$-independent solution of \eqref{fully nonlinear elliptic pde}.  

  Now, let $v_\tau := u_\tau - U_+$ and consider the limit $\tau \to \infty$ on the bounded set $\Omega_2 := (-2,2) \times \Omega'$. For $\tau$ sufficiently large, the translation invariance of \eqref{fully nonlinear elliptic pde} and the fact that $v_\tau$ is the difference of two solutions implies that it satisfies a linear elliptic PDE 
  \begin{equation} \label{vtau equation}
   \left\{     
    \begin{alignedat}{2} 
      a^{ij}_\tau(x,y) \partial_{i}\partial_{j} v_\tau + b^i_\tau(x,y) \partial_i v_\tau + c_\tau(x,y) v_\tau &= 0 &\qquad& \text{in } \Omega_2\\
      \beta_\tau^i(x,y) \partial_i v_\tau + \gamma_\tau(x,y) v_\tau &= 0 && \text{on }  \Gamma_1 \cap \partial\Omega_2\\
      v_\tau &= 0 && \text{on }  \Gamma_0 \cap \partial\Omega_2,
    \end{alignedat} 
    \right. 
  \end{equation}
  where we are using summation convention with the shorthand $\nabla_{(x,y)} =: (\partial_1, \ldots, \partial_n)$. The coefficients are defined in terms of the convex combinations $u_\tau^{(s)} := su_\tau + (1-s)U_+$ by the integrals
  \begin{align*}
    a^{ij}_\tau
    &:= 
    \int_0^1 \mathcal F_{r^{ij}}(y, u_\tau^{(s)}, \nabla u_\tau^{(s)}, D^2 u_\tau^{(s)},\Lambda)\, ds,
    &
    b^i_\tau
    &:= 
    \int_0^1 \mathcal F_{\xi^{i}}(y, u_\tau^{(s)}, \nabla u_\tau^{(s)}, D^2 u_\tau^{(s)},\Lambda)\, ds,
    \\
    c_\tau
    &:= 
    \int_0^1 \mathcal F_z(y, u_\tau^{(s)}, \nabla u_\tau^{(s)}, D^2 u_\tau^{(s)},\Lambda)\, ds,
    &
    \beta^i_\tau
    &:= 
    \int_0^1 \mathcal G_{\xi^{i}}(y, u_\tau^{(s)}, \nabla u_\tau^{(s)},\Lambda)\, ds,
    \\
    \gamma_\tau
    &:= 
    \int_0^1 \mathcal G_z(y, u_\tau^{(s)}, \nabla u_\tau^{(s)},\Lambda)\, ds.
  \end{align*}
  These integrands are well-defined for $\tau$ sufficiently large as
  \begin{align*}
    (u_\tau^{(s)}, \nabla u_\tau^{(s)}, D^2 u_\tau^{(s)})
    \longrightarrow (U_+, \nabla U_+, D^2 U_+) \quad \asa \tau \to \infty,
  \end{align*}
  uniformly in $s \in [0,1]$ and $(x,y) \in \Omega_2$. It is easy to see that the uniform bounds on $u_\tau$ and $U_+$ in $C^{k+2+\alpha}_\bdd(\overline\Omega)$ imply that the $C^{k+\alpha}$ norms of $a^{ij}_\tau,b^i_\tau,c_\tau$ as well as the $C^{k+1+\alpha}$ norms of $\beta^i_\tau,\gamma_\tau$ are bounded uniformly. As \eqref{vtau equation} is uniformly elliptic with a uniformly oblique boundary condition, we therefore have a (linear) Schauder estimate
  \begin{align*}
    \n{v_\tau}_{C^{k+2+\alpha}(\Omega_1)} \le C \n{v_\tau}_{C^0(\Omega_2)} 
  \end{align*}
  where the constant $C > 0$ is independent of $\tau$ and $\Omega_1 := (-1,1) \times \Omega^\prime$. Since $v_\tau = u_\tau - U_+ \to 0$ in $C^0_\bdd(\overline\Omega_2)$, we conclude that $v_\tau \to 0$ in $C_\bdd^{k+2+\alpha}(\overline{\Omega_1})$.  Recalling its definition, this shows that 
  \[    \partial^\beta u(x, \placeholder) \to \partial^\beta U_+ \qquad \textrm{for all } |\beta| \leq k+2,\]
  as $x \to \infty$, uniformly in $y$. Redefining $v_\tau := u_\tau - U_-$ and running the same argument to study the limit $\tau \to -\infty$, we obtain  \eqref{fronts have limits}. The fact that $u \in \Xspace_\infty$ is then clear from its definition in \eqref{def X infty space}. 
\end{proof}

Clearly, translation in $x$ is an isometry on $\Xspace_\infty$. When necessary, we will kill this symmetry by introducing the bounded linear functional
\begin{equation}
  \label{def C functional} 
  \begin{aligned}
    \mathcal C \maps \Xspace_\infty \to \R
    \qquad \quad
    \mathcal{C} u &:=   u(0,y_0) - \frac{1}{2} \Big( \lim_{x \to \infty} u(x,y_0) + \lim_{x \to -\infty} u(x,y_0) \Big),
  \end{aligned}
\end{equation}
where $y_0$ is an arbitrary but fixed point in $\Omega' \cup \Gamma_1'$. If $(u,\Lambda)$ is a strictly monotone front, then exactly one translate of $u$ lies in the kernel of $\mathcal C$, and moreover $\mathcal C u_x = u_x(0,y_0) \ne 0$.

\subsection{Monotonicity} \label{monotonicity section}

An important component of the continuation argument is that the solutions along the global curve inherit the strict monotonicity of the solution at the bifurcation point, in the case of Theorem~\ref{global ift}, or along the local curve in the setting of Theorem~\ref{general global bifurcation theorem}.  We accomplish this by showing that monotonicity is both an open and closed property in an appropriate relative topology on $\F^{-1}(0)$ so long as the spectral condition \eqref{spectral assumption} holds.   The main tools will be the maximum principle and Hopf boundary-point lemma; see Lemma~\ref{Hopf lemma}.    

In particular, we will use many times the idea of ``quasilinearizing'' the equation.  Because the PDE \eqref{fully nonlinear elliptic pde} is translation invariant in the $x$-direction, it follows that, for any solution $(u,\Lambda) \in \genU$,  $\partial_x u$ is in the kernel of the linearized operator at $(u,\lambda)$.  This operator $\mathscr{L} := \F_u(u,\Lambda)$ takes the form 
\begin{equation}
  \begin{aligned}
    \mathscr{L}_1 v & := a^{ij}(x,y) \partial_i \partial_j v + b^i(x,y) \partial_i v + c(x,y) v, \\
    \mathscr{L}_2 v & := \left(  \beta^i(x,y) \partial_i v + \gamma(x,y) v\right)|_{\Gamma_1},
  \end{aligned} \label{def linearized elliptic operator} 
\end{equation}
where the coefficients $a^{ij}, b^i, c  \in C_\bdd^{k+\alpha}(\overline{\Omega})$, $\beta^i, \gamma \in C_\bdd^{k+1+\alpha}(\Gamma_1)$ and, in light of \eqref{ellipticity assumption}, $\mathscr{L}_1$ is a uniformly elliptic operator while $\mathscr{L}_2$ is a uniformly oblique boundary operator.    Elliptic regularity ensures that $\partial_x u \in C^{k+2+\alpha}(\overline{\Omega})$.  If $u \in \Xspace_\infty$, then the coefficients have well-defined limits as $x \to \pm\infty$, and $\mathscr{L}$ limits to a linearized operator at $x = \pm\infty$, which we denote by 
\begin{equation}
  \begin{aligned}
    \limL_{1\pm} v & := a_\pm^{ij}(y) \partial_i \partial_j v + b_\pm^i(y) \partial_i v + c_\pm(y) v, \\
    \limL_{2\pm} v & := \left(  \beta_\pm^i(y) \partial_i v + \gamma_\pm(y) v\right)|_{\Gamma_1}.
  \end{aligned} \label{def linearized lim elliptic operator} 
\end{equation}
 
 The next lemma gives a crucial implication of the spectral condition \eqref{spectral assumption}: the existence of a comparison function $\varphi_\pm$ that permits us to apply the maximum principle in a neighborhood of $x= \pm\infty$. Without loss of generality, here and in the remainder of the section, we present the argument for the decreasing case.
 
\begin{lemma}[Comparison function] \label{supersolution lemma}
Let $(u,\Lambda) \in \genU_\infty$ satisfying \eqref{spectral assumption} be given.  For all $\delta > 0$ sufficiently small, there exists $\varphi_\pm \in C^{k+2+\alpha}(\overline{\Omega'})$ with $\varphi_\pm > 0$ on $\overline{\Omega^\prime}$ and such that
\begin{equation}
 \left\{ 
  \begin{aligned}
    \left( \limL_{1\pm}^\prime  + \delta \right) \varphi_\pm & = 0 &\qquad& \textup{in } \Omega^\prime, \\
    \limL_{2\pm}^\prime  \varphi_\pm &=  1 &\qquad& \textup{on } \Gamma_1^\prime, \\
    \varphi_\pm & = 1 &\qquad& \textup{on } \Gamma_0^\prime.
  \end{aligned} \right. \label{def comparison} 
\end{equation}
\end{lemma}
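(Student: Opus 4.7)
The plan is to solve \eqref{def comparison} by linear elliptic Fredholm theory on the bounded base $\Omega'$, exploiting the spectral gap provided by \eqref{spectral assumption}, and then to extract positivity from the associated maximum principle.

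First, I would observe that the principal eigenvalue of $\limL_{1\pm}' + \delta$ subject to the \emph{homogeneous} boundary conditions $\limL_{2\pm}' \varphi = 0$ on $\Gamma_1'$ and $\varphi = 0$ on $\Gamma_0'$ is simply $\prineigenvaluepm(u,\Lambda) + \delta$. Restricting to $\delta \in (0, -\prineigenvaluepm(u,\Lambda))$, which is an open interval by \eqref{spectral assumption}, keeps this eigenvalue strictly negative, so by the Krein--Rutman-type characterisation of the principal eigenvalue for mixed oblique--Dirichlet BVPs on bounded domains (Appendix~\ref{principal eigenvalue appendix}), the entire spectrum of the shifted operator lies in the open left half-plane. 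In particular, $0$ lies in its resolvent set; combined with Schauder theory on the bounded base $\Omega'$, this furnishes a bounded inverse between the natural H\"older spaces. After homogenising the inhomogeneous Dirichlet data by subtracting a $C^{k+2+\alpha}$-extension of the constant function $1$ from $\Gamma_0'$ into $\overline{\Omega'}$, one obtains a unique classical solution $\varphi_\pm \in C^{k+2+\alpha}(\overline{\Omega'})$ of \eqref{def comparison}.

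For positivity, I would invoke the weak maximum principle that negativity of the principal eigenvalue guarantees for these mixed BVPs (again Appendix~\ref{principal eigenvalue appendix}): any $w$ satisfying $(\limL_{1\pm}' + \delta) w \geq 0$ in $\Omega'$, $\limL_{2\pm}' w \geq 0$ on $\Gamma_1'$, and $w \geq 0$ on $\Gamma_0'$ must be nonnegative on $\overline{\Omega'}$. Applied to $w = \varphi_\pm$, the three data $0$, $1$, $1$ are all nonnegative, giving $\varphi_\pm \geq 0$. Strict positivity then follows from the strong maximum principle together with the Hopf boundary-point lemma (cf.~Lemma~\ref{Hopf lemma}): an interior zero would force $\varphi_\pm \equiv 0$, contradicting $\varphi_\pm = 1$ on $\Gamma_0'$; a zero on $\Gamma_1'$ would force the oblique derivative to be strictly negative there, contradicting $\limL_{2\pm}' \varphi_\pm = 1$; and $\Gamma_0'$ itself carries the value $1$.

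The main delicate point is availability of the mixed oblique--Dirichlet weak maximum principle with the correct sign conventions under the hypothesis $\prineigenvaluepm(u,\Lambda) < 0$, together with the accompanying Hopf-type boundary lemma on $\Gamma_1'$. These are precisely the ingredients that the paper sets up in Appendix~\ref{principal eigenvalue appendix}; once that linear machinery is in hand, the present lemma is essentially a direct corollary of the spectral assumption \eqref{spectral assumption}.
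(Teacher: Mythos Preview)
Your approach is essentially the same as the paper's: both use that the shifted operator $(\limL_{1\pm}'+\delta,\limL_{2\pm}')$ retains a negative principal eigenvalue (you compute it directly as $\prineigenvaluepm+\delta$, the paper invokes continuity via Lemma~\ref{continuity sigma0 lemma}), deduce unique solvability, and then extract strict positivity from the supersolution characterisation in Theorem~\ref{oblique prob eigenvalue theorem}\ref{oblique classification part}. One minor slip: in your weak-maximum-principle formulation the interior inequality should read $(\limL_{1\pm}'+\delta)w \le 0$ rather than $\ge 0$ under the paper's sign conventions (see \eqref{defn supersoln}), but this is harmless here since the interior datum is exactly~$0$.
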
 
\begin{proof} 
  By continuity, it follows that the operator $(\limL_{1\pm}^\prime + \delta, \limL_{2\pm}^\prime)$ has a strictly negative principal eigenvalue for $\delta > 0$ sufficiently small; see Lemma~\ref{continuity sigma0 lemma}.  We may therefore define $\varphi_\pm \in C^{k+2+\alpha}(\overline{\Omega^\prime})$ to be the (unique) solution to \eqref{def comparison}, which in particular ensures $\varphi_\pm > 0$ on $\Gamma_0^\prime$ and $\limL_{2\pm}^\prime \varphi_\pm \geq 0$.  The unique solvability of \eqref{def comparison} follows from \cite[Theorem~4.1]{lopezgomez2003classifying} and a standard homotopy argument similar to the proof of Lemma~\ref{invertibility lemma}. The negativity of the principal eigenvalue then enables us to appeal to Theorem~\ref{oblique prob eigenvalue theorem}\ref{oblique classification part} to conclude further that $\varphi_\pm > 0$ on all of $\overline{\Omega^\prime}$.  
\end{proof}

\begin{lemma}[Asymptotic monotonicity] \label{asymptotic monotonicity lemma}
  For $M > 0$, denote 
  \[ \Omega_+^M := (M,\infty) \times \Omega^\prime, \qquad \Omega_-^M := (-\infty, -M) \times \Omega^\prime, \]
  and let $\Gamma_{0\pm}^M$ and $\Gamma_{1\pm}^M$ be the corresponding boundary components of $\Omega_\pm^M$. For any $(\bar u,\bar \Lambda) \in \mathscr{F}^{-1}(0) \cap \genU_\infty$ satisfying \eqref{spectral assumption}, there exist $\varepsilon > 0$ and $M > 0$ such that any other $(u,\Lambda) \in \mathscr{F}^{-1}(0) \cap \genU_\infty$ with
  \[ \| u - \bar u  \|_{C^{2}(\Omega_\pm^M)} + |\Lambda-\bar \Lambda| < \varepsilon, \quad \textrm{and} \quad  \partial_x u \leq 0~ \textrm{on } \{ x = \pm M \},\]
  satisfies
  \[ \partial_x u = 0 \quad \textrm{in } \overline{\Omega_\pm^M} \qquad \textrm{or} \qquad \partial_x u < 0 \quad \textrm{in } \Omega_\pm ^M \cup \Gamma_{1\pm}^M.\]
\end{lemma}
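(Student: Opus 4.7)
Set $w := \partial_x u$.  Since $u$ solves the translation-invariant equation \eqref{fully nonlinear elliptic pde}, differentiation in $x$ shows that $w$ satisfies the homogeneous linearized equation $\mathscr{L} w = 0$, $\mathscr{L}_2 w = 0$ on $\Gamma_1$, and $w = 0$ on $\Gamma_0$, where $\mathscr{L} = \mathscr{F}_u(u,\Lambda)$ has coefficients close to the limiting coefficients of $\mathscr{F}_u(\bar u,\bar\Lambda)$ in $\Omega_\pm^M$ provided $\varepsilon$ is small.  The hypothesis $u \in \Xspace_\infty$ ensures $w \to 0$ uniformly as $x \to \pm\infty$, and we are told $w \leq 0$ on the slices $\{x = \pm M\}$ and (trivially) on $\Gamma_0$.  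I will treat only the $+$ case, since the $-$ case is symmetric.

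The main device is the comparison function $\varphi_+ = \varphi_+(y)$ from Lemma~\ref{supersolution lemma}.  Substituting $w = v \varphi_+$ into $\mathscr{L} w = 0$ and dividing by $\varphi_+ > 0$ --- which is allowed since $\varphi_+$ depends only on $y$ --- yields an equation of the form $a^{ij}\partial_i \partial_j v + \tilde b^i \partial_i v + \tilde c\, v = 0$ in $\Omega$ together with an oblique boundary condition $\beta^i \partial_i v + \tilde \gamma v = 0$ on $\Gamma_1$.  A short computation using the defining equations of $\varphi_+$ gives
\[
\tilde c \longrightarrow -\delta,
\qquad
\tilde\gamma \longrightarrow \frac{1}{\varphi_+} > 0
\qquad \text{as } x \to +\infty,
\]
uniformly in $y$.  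Hence for $M$ sufficiently large (depending on $\bar u$) and $\varepsilon$ sufficiently small (depending on $M$), we have $\tilde c \le -\delta/2$ on $\Omega_+^M$ and $\tilde\gamma \ge 1/(2\|\varphi_+\|_\infty)$ on $\Gamma_{1+}^M$.

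With these favorable signs in hand, the main claim becomes a standard maximum principle argument.  Since $\varphi_+$ is bounded above and bounded below away from $0$ on $\overline{\Omega'}$, $v$ satisfies the same boundary sign conditions as $w$: $v \le 0$ on $\{x = M\}$ and on $\Gamma_{0+}^M$, and $v \to 0$ uniformly as $x \to \infty$.  Suppose for contradiction that $\sup_{\overline{\Omega_+^M}} v =: S > 0$.  By the decay at infinity this supremum is attained at some $p \in \Omega_+^M \cup \Gamma_{1+}^M$.  If $p$ is interior, then $\nabla v(p) = 0$ and $D^2 v(p) \le 0$, so $a^{ij}\partial_i\partial_j v(p) + \tilde b^i\partial_i v(p) + \tilde c(p) v(p) < 0$, contradicting the equation.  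If $p \in \Gamma_{1+}^M$, Hopf's boundary point lemma (applicable since $\tilde c \le 0$) and the uniform obliqueness of $\beta^i$ force $\beta^i \partial_i v(p) > 0$, whereas the boundary condition gives $\beta^i \partial_i v(p) = -\tilde\gamma(p) v(p) < 0$, again a contradiction.  Thus $v \le 0$, i.e.~$w \le 0$, on all of $\overline{\Omega_+^M}$.

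Finally, the dichotomy follows from a second application of the strong maximum principle and Hopf lemma to $v$: if $v(q) = 0$ at some $q \in \Omega_+^M \cup \Gamma_{1+}^M$ then $v \equiv 0$ on the connected set $\overline{\Omega_+^M}$, so $w \equiv 0$; otherwise $w = v\varphi_+ < 0$ on $\Omega_+^M \cup \Gamma_{1+}^M$.  The main technical point I expect to have to handle carefully is the simultaneous choice of $M$ (so that the coefficients at $\bar u$ are close enough to their limits for $\tilde c$ and $\tilde\gamma$ to have the desired signs) and $\varepsilon$ (so that the corresponding coefficients at the perturbed solution $u$ retain those signs in $\Omega_\pm^M$), but this is a straightforward consequence of the analyticity of $\mathcal F,\mathcal G$ and the $C^2$ closeness hypothesis.
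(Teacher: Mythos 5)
Your proof is correct and follows essentially the same route as the paper: substitute $\partial_x u = v\varphi_+$ with the comparison function from Lemma~\ref{supersolution lemma}, observe that the zeroth-order coefficients of the resulting equation for $v$ converge (for $x \to \pm\infty$ and $\varepsilon \to 0$) to the strictly negative interior and strictly positive boundary values determined by $\bar\varphi_\pm$, and then invoke the maximum principle and Hopf boundary-point lemma to obtain the dichotomy. The one detail worth pinning down: if $\varphi_+$ is taken to be the comparison function for $(u,\Lambda)$ rather than for $(\bar u,\bar\Lambda)$, you must first note (as the paper does) that after shrinking $\varepsilon$ the spectral condition \eqref{spectral assumption} also holds for $(u,\Lambda)$ by continuity of the principal eigenvalue, so that $\varphi_+$ exists and depends continuously on $(u,\Lambda)$; this is covered by the ``straightforward consequence'' you flag at the end, but it should be stated.
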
 
\begin{proof}
  Let $(\bar u,\bar \Lambda)$ be given as above.  By hypothesis \eqref{spectral assumption} and Lemma~\ref{supersolution lemma}, there exists a comparison function $\bar\varphi_\pm$ as in \eqref{def comparison}. After perhaps shrinking $\varepsilon > 0$, $(u,\Lambda)$ also satisfies \eqref{spectral assumption}, and the associated comparison function $\varphi_\pm$ (with the same $\delta$) has $\varphi_\pm \to \bar\varphi_\pm$ in $C^2(\overline{\Omega'})$ as $\varepsilon \to 0$. Taking $u_x =: v \varphi_\pm$, we have that $v \in C_\bdd^{k+2+\alpha}(\overline{\Omega_\pm^M})$, vanishes in the limit $x \to \pm \infty$, and solves 
\begin{equation}
  \label{division tricked}
  \left\{ \begin{aligned}
    a^{ij}(x,y) \partial_i \partial_j v + \left( \frac{a^{ij}(x,y) \partial_j \varphi_\pm}{\varphi_\pm} +  b^i(x,y)  \right) \partial_i v +
    \frac{\limL_1 \varphi_\pm}{\varphi_\pm} v & = 0 && \qquad \textrm{in } \Omega_\pm^M \\
    \beta^i(x,y) \partial_i v + \frac{\limL_{2} \varphi_\pm}{\varphi_\pm} v & = 0 && \qquad \textrm{on } \Gamma_{1\pm}^M \\
    v & = 0 && \qquad \textrm{on } \Gamma_{0\pm}^M.
  \end{aligned} \right.
\end{equation}
By choosing $\varepsilon > 0$ sufficiently small and $M > 0$ sufficiently large, we can make 
\begin{align*}
  \left\| 
  \frac{\limL_1 \varphi_\pm}{\varphi_\pm}
  -
  \frac{\bar\limL_{1\pm}' \bar\varphi_\pm}{\bar\varphi_\pm}
  \right\|_{C^0(\Omega_\pm^M)},
  \qquad 
  \left\| 
  \frac{\limL_2 \varphi_\pm}{\varphi_\pm}
  -
  \frac{\bar\limL_{2\pm}' \bar\varphi_\pm}{\bar\varphi_\pm}
  \right\|_{C^0(\Gamma_{1\pm}^M)}
\end{align*}
arbitrarily small. In particular, since
\begin{align*}
  \frac{\bar\limL_{1\pm}' \bar\varphi_\pm}{\bar\varphi_\pm} = -\delta < 0,
  \qquad 
  \frac{\bar\limL_{2\pm}' \bar\varphi_\pm}{\bar\varphi_\pm} = \frac 1{\bar\varphi_\pm} > 0,
\end{align*}
we can pick $\varepsilon$ and $M$ so that the zeroth order coefficients in \eqref{division tricked} have the strict signs
\begin{align}
  \label{strict signs}
  \frac{\limL_1 \varphi_\pm}{\varphi_\pm} < 0 \ona \Omega^M_\pm,
  \qquad 
  \frac{\limL_2 \varphi_\pm}{\varphi_\pm} > 0 \ona \Gamma^M_{1\pm}.
\end{align}
We can now conclude using the maximum principle and Hopf lemma as follows. 
  By assumption, $v$ vanishes on $\Gamma_{0\pm}^M$ and in the limit $x \to \pm\infty$.  Moreover, from the positivity of $\varphi_\pm$, we also have that $v \leq 0$ on $\{ x = \pm M\}$. So assume for the sake of contradiction that $v$ achieves a nonnegative maximum at some point $(x_0, y_0) \in \Omega_\pm^M \cup \Gamma_{1\pm}^M$. Thanks to the first inequality in \eqref{strict signs}, if $(x_0,y_0) \in \Omega_\pm^M$ then $v$ must vanish identically. Similarly, if $(x_0,y_0) \in \Gamma_{1\pm}^M$, then the Hopf boundary-point lemma implies that either $v$ vanishes identically or else
\[ 0 < \beta^i \partial_i v = -\frac{\limL_2 \varphi_\pm}{\varphi_\pm} v \qquad \textrm{at } (x_0, y_0),\] 
which contradicts $v(x_0,y_0) \ge 0$ and the second inequality in \eqref{strict signs}. The statement now follows by recalling the relation between $v$ and $\partial_x u$.
\end{proof}
 
\begin{lemma}[Open property] \label{open property lemma}
Let $(\bar u,\bar \Lambda) \in \genU_\infty$ be a strictly monotone front solution to \eqref{fully nonlinear elliptic pde} satisfying \eqref{spectral assumption}.  There exists $\delta = \delta(\bar u,\bar \Lambda) > 0$ such that any solution $(u, \lambda) \in \genU_\infty$ of \eqref{fully nonlinear elliptic pde}  with 
\[ \| u - \bar u \|_{C^{2}(\Omega)} + |\Lambda - \bar \Lambda| < \delta, \]
is also strictly monotone.   
\end{lemma}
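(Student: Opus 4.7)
The plan is to decompose $\Omega$ into tails $\Omega_\pm^M$ and a compact middle slab $\Omega_M' := [-M, M] \times \overline{\Omega^\prime}$, applying Lemma~\ref{asymptotic monotonicity lemma} on the tails and a quantitative maximum-principle argument in the middle. Fix $M$ and $\varepsilon$ from Lemma~\ref{asymptotic monotonicity lemma} applied at $(\bar u, \bar\Lambda)$. The key step is to show that for $\delta \in (0, \varepsilon]$ sufficiently small, $\dell_x u < 0$ on $(\Omega \cup \Gamma_1) \cap \Omega_M'$. In particular this gives $\dell_x u < 0$ on the cross-sections $\{x = \pm M\}$, so Lemma~\ref{asymptotic monotonicity lemma} then yields monotonicity on each tail, with the identically zero alternative excluded by the strict negativity at $x = \pm M$.

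Inside the middle slab, the potential trouble occurs only near $\Gamma_0$, where $\dell_x \bar u$ must vanish. For each $\rho > 0$, the set $K_\rho := \{(x,y) \in \Omega_M' : \dist(y, \Gamma_0^\prime) \geq \rho\}$ is a compact subset of $(\Omega \cup \Gamma_1) \cap \Omega_M'$ on which strict monotonicity and continuity yield $\dell_x \bar u \leq -c_\rho < 0$. A straightforward $C^0$-closeness argument then preserves $\dell_x u < 0$ on $K_\rho$ provided $\delta$ is small enough.

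The delicate tubular region $\{\dist(y, \Gamma_0^\prime) < \rho\}$ is handled via the Hopf boundary-point lemma. Indeed, $w := \dell_x \bar u$ lies in the kernel of $\F_u(\bar u, \bar\Lambda)$ by translation invariance, vanishes on $\Gamma_0$, and is strictly negative on $\Omega \cup \Gamma_1$. Because $w \leq 0$ attains its global maximum $0$ on $\Gamma_0$, the version of Hopf's lemma valid for elliptic operators with sign-indefinite zeroth-order coefficient applies, giving a strictly negative inward normal derivative on $\Gamma_0 \cap [-M, M]$. Compactness yields a uniform bound $\dell_\nu \dell_x \bar u \leq -\kappa < 0$ there, and by $C^2$-closeness $\dell_\nu \dell_x u \leq -\kappa/2$ as well. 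Combining this with $\dell_x u|_{\Gamma_0} = 0$ and a Taylor expansion in the normal direction (using Schauder estimates to obtain uniform H\"older bounds on the third derivatives of $u$ in a $C^2$-neighborhood of $\bar u$) produces
\[
  -\dell_x u(x,y) \geq \tfrac{\kappa}{4} \dist(y, \Gamma_0^\prime) \quad \textup{on } \Omega_M' \cap \{\dist(y, \Gamma_0^\prime) \leq \rho\}
\]
for $\rho$ small and $\delta$ adjusted correspondingly.

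The main obstacle is precisely this tubular region near $\Gamma_0$: mere $C^0$-proximity is powerless where $\dell_x \bar u$ vanishes, so the argument must rely on the quantitative decay rate provided by Hopf, together with uniform higher regularity of $u$ coming from Schauder theory applied to the nonlinear PDE. Once these two estimates are in hand, the strict negativity on the middle slab follows, and Lemma~\ref{asymptotic monotonicity lemma} closes the argument on the tails.
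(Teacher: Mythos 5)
Your proposal is correct and follows the same overall strategy as the paper: use Lemma~\ref{asymptotic monotonicity lemma} on the tails, and on a finite middle slab apply the Hopf boundary-point lemma (with maximum value $0$, so the sign of $c$ is irrelevant) to $\partial_x \bar u$ to obtain a uniform positive outward-normal derivative on $\Gamma_0$, which perturbs stably. The one unneeded complication is the appeal to Schauder estimates and third derivatives: since $\partial_x u$ and $\partial_x \bar u$ both vanish on $\Gamma_0$ and $\|u-\bar u\|_{C^2}<\delta$ already gives $C^1$-closeness of $\partial_x u$ to $\partial_x \bar u$, the fundamental theorem of calculus along the inward normal, $\partial_x u(x,y)=\int_0^1 \nabla\partial_x u\bigl(x,y_0+t(y-y_0)\bigr)\cdot(y-y_0)\,dt$, directly gives $\partial_x u<0$ in the tube without invoking any Taylor remainder or third-derivative control; the uniform lower bound $\nu\cdot\nabla\partial_x u\geq c_2/4$ in the tube follows from Hopf on $\Gamma_0$, uniform continuity of $\nabla\partial_x\bar u$ on the compact slab, and $C^0$-closeness of $\nabla\partial_x u$ to $\nabla\partial_x\bar u$.
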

\begin{proof}
  Choose $\varepsilon > 0$ and $M > 0$ as in Lemma~\ref{asymptotic monotonicity lemma}. First, we consider the situation on the finite cylinder $(-2M, 2M) \times \Omega^\prime$.  By assumption $\partial_x \bar u < 0$ in $\Omega \cup \Gamma_1$. Moreover, since $\partial_x \bar u$ vanishes on $\Gamma_0$, the Hopf boundary-point lemma applied to the elliptic equation $\bar{\mathscr L}_1 \partial_x u = 0$ yields
\[ \nu \cdot \nabla \partial_x \bar u \geq c_2 > 0 \qquad \textrm{on } [-2M, 2M] \times \Gamma_0^\prime\]
for some $c_2 = c_2(M)$.  Choosing $\delta > 0$ sufficiently small  we can therefore ensure that 
\[ \partial_x u < 0 \qquad \textrm{on } [-2M, 2M] \times ( \Omega^\prime \cup \Gamma_1^\prime) .\]
In particular, this also implies that $\partial_x u < 0$ on $\{ x = \pm M\}$. Shrinking $\delta$ further if necessary so that $\delta < \varepsilon$, Lemma~\ref{asymptotic monotonicity lemma} then furnishes the strict monotonicity of $u$ on the tail regions $\Omega_{\pm}^M \cup \Gamma_{1\pm}^M$.        
\end{proof}

We also need an analogue of Lemma~\ref{open property lemma} for limiting states.
\begin{lemma}[Open property for limiting states]\label{limiting open property lemma}   
  Let $(\bar u,\bar \Lambda) \in \genU_\infty$ be a front solution to \eqref{fully nonlinear elliptic pde} satisfying \eqref{spectral assumption}, and let $\bar U$ be one of its limiting states. Then all solutions $(u,\Lambda)$ in an open neighborhood of $(\bar U,\bar \Lambda)$ in $\F^{-1}(0) \cap \genU$ depend only the transverse variable $y$.
  \begin{proof}
    By Lemma~\ref{invertibility lemma}, the spectral condition \eqref{spectral assumption} implies that the linearized operator $\F_u(\bar U,\bar \Lambda)$ is invertible $\Xspace_\bdd \to \Yspace_\bdd$. Applying the analytic implicit function theorem, there is therefore an open neighborhood of $(\bar U, \bar \Lambda)$ in $\F^{-1}(0) \cap \genU$ which can be expressed as a graph $\mathscr M = \{ (u(\Lambda), \Lambda) : \Lambda \in \mathcal{I}\}$, where $\mathcal I$ is a neighborhood of $\Lambda$ in $\R^m$ and $u \maps \mathcal I \to \Xspace_\bdd$ is analytic.

    Now let $\F' \maps \genU' \to \Yspace'$ be the restriction of $\F$ to functions of $y \in \Omega'$ alone, written $\F'=\F'(U,\Lambda)$. As in the proof of Lemma~\ref{supersolution lemma}, \eqref{spectral assumption} also implies that the linearized operator $\F'_U(\bar U,\bar \Lambda)$ is invertible. Thus there is an open neighborhood of $(\bar U, \bar \Lambda)$ in $\F^{-1}(0) \cap \genU'$ which is a graph $\mathscr M' = \{ (U(\Lambda), \Lambda) : \Lambda \in \mathcal{I}'\}$. Perhaps after shrinking $\mathcal{I}$, uniqueness forces $\mathscr M \subset \mathscr M'$, and the proof is complete.
  \end{proof}
\end{lemma}

\begin{lemma}[Closed property] \label{closed property lemma}
  Suppose that $\{ (u_n, \Lambda_n) \} \subset \genU_\infty$ is a sequence of strictly monotone front solutions of \eqref{fully nonlinear elliptic pde}.  If $(u_n, \Lambda_n) \to (u,\Lambda)$ in $\Xspace_\bdd \times \mathbb{R}$ for some $(u,\Lambda) \in \genU_\infty$ satisfying \eqref{spectral assumption}, then $(u,\Lambda)$ is strictly monotone.  
\end{lemma}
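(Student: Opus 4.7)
The plan is to first observe that the pointwise inequality $\partial_x u \le 0$ in $\Omega$ follows immediately from the $\Xspace_\bdd$ convergence, and then to establish the following dichotomy via the strong maximum principle: either $\partial_x u < 0$ strictly in $\Omega \cup \Gamma_1$ (in which case we are done), or else $\partial_x u \equiv 0$. The first step here is to differentiate the equation in $x$ and exploit the translation invariance of \eqref{fully nonlinear elliptic pde} to see that $\partial_x u$ lies in the kernel of the linearized operator $\mathscr{L} = \F_u(u,\Lambda)$ from \eqref{def linearized elliptic operator}, so it satisfies a homogeneous linear elliptic PDE with oblique boundary condition on $\Gamma_1$ and vanishes on $\Gamma_0$. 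Applying the strong maximum principle and the Hopf boundary-point lemma to this equation yields the claimed dichotomy.

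Next, I would argue by contradiction, assuming $\partial_x u \equiv 0$. Then $u$ is $x$-independent, and by the normalization $\lim_{x\to-\infty} u = 0$ coming from our restriction to fronts in $\Xspace_\infty$ with zero upstream state \eqref{zero upstream normalization}, we must have $u \equiv 0$. The key leverage now comes from the spectral hypothesis \eqref{spectral assumption} at the limit: since $(0,\Lambda)$ is an $x$-independent solution with $\prineigenvalue^\pm(0,\Lambda)<0$, the transversal linearized operator $\limL_\pm'(0,\Lambda) \maps \Xspace' \to \Yspace'$ is invertible, as recorded in the appendix and reused in Remark~\ref{zero upstream remark}.

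From here I would apply the implicit function theorem to the restriction $\F'\maps \Xspace' \times \R^m \to \Yspace'$ at the point $(0,\Lambda)$ to obtain a neighborhood in which the only $x$-independent solution is the trivial one. The downstream limits $U_{n+}$ furnished by Lemma~\ref{fronts are Xinfty lemma} are $x$-independent solutions of $\F'(U_{n+},\Lambda_n)=0$, and the uniform convergence $u_n \to 0$ in $\Xspace_\bdd$ is easily seen to imply that $U_{n+} \to 0$ in $C^{k+2+\alpha}(\overline{\Omega'})$ (since each seminorm of $U_{n+}$ is dominated by the corresponding seminorm of $u_n$ on $\Omega$). Hence $U_{n+} = 0$ for all $n$ sufficiently large, which is incompatible with $u_n$ being a strictly monotone front whose upstream limit is also $0$, because a strictly monotone function with equal limits at $\pm\infty$ cannot exist. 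This contradiction rules out $\partial_x u \equiv 0$ and completes the proof.

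The main technical obstacle is the second paragraph: it requires identifying precisely why the spectral condition prevents $u_n$ from degenerating into the trivial solution in the limit, and here it is essential that we work in $\Xspace_\infty$ so that fronts have well-defined upstream and downstream limits on which the appendix's invertibility result can be brought to bear. Once this local rigidity of the $x$-independent branch is in hand, the rest of the argument is a short geometric observation about strictly monotone functions.
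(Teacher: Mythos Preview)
Your proof is correct and follows the same overall architecture as the paper's: first establish the dichotomy $\partial_x u < 0$ in $\Omega\cup\Gamma_1$ versus $\partial_x u \equiv 0$ via the strong maximum principle and Hopf lemma applied to $\mathscr L \partial_x u = 0$, then rule out the degenerate case $u\equiv 0$ using the spectral hypothesis together with an implicit function theorem argument.

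The one substantive difference is in how the degenerate case is handled. The paper invokes Corollary~\ref{infinity spaces fredholm corollary} to see that $\F_u(0,\Lambda)\colon \Xspace_\infty\to\Yspace_\infty$ is an isomorphism, applies the implicit function theorem to the \emph{full} operator $\F$ on $\Xspace_\infty$, and concludes directly that $u_n=0$ for large $n$. You instead apply the implicit function theorem only to the transversal restriction $\F'\colon \Xspace'\times\R^m\to\Yspace'$, concluding that the downstream limits $U_{n+}$ must vanish, and then derive the contradiction from monotonicity. Your route is slightly more elementary in that it avoids the Fredholm machinery on $\Xspace_\infty$ developed in Section~\ref{fredholm theory section}, relying only on the invertibility of $\limL'_\pm(0,\Lambda)$ on the bounded cross-section (as in Remark~\ref{zero upstream remark}); the paper's route is a bit more direct once that machinery is in place. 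Both are valid.
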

\begin{proof}
  First, observe that $\Xspace_\infty$ is closed under the $C^{k+2+\alpha}$ norm, and hence $(u,\Lambda) \in \Xspace_\infty \times \mathbb{R}$.  Moreover, the continuity of $\mathscr{F}$ ensures that $(u,\Lambda)$ is also a solution. 

  Suppose that $u_x$ does not vanish identically. It is immediately clear that
  \[ u_x \leq 0 \qquad \textrm{in } \overline{\Omega},\]
  so suppose for the sake of contradiction that $u_x$ vanishes at some point $(x_0,y_0) \in \Omega \cup \Gamma_1$. Since $\mathscr L_1 u_x = 0$, the strong maximum principle implies that $(x_0,y_0) \notin \Omega$. Unlike in the proof of Lemma~\ref{asymptotic monotonicity lemma} above, we can ignore the sign of the zeroth-order coefficient $c$ because the maximum in question is $0$. On the other hand, if $(x_0,y_0) \in \Gamma_1$, then the Hopf boundary-point lemma and the uniform obliqueness of $\mathscr L_2$ yield $\beta^i \partial_i u_{x} > 0$ there. But this is a contradiction since $\mathscr L_2 u_x =0$ forces
  $\beta^i \partial_i u_{x} =  -\gamma u_x = 0$ at $(x_0, y_0)$.

  It remains to consider the case where $u_x$ vanishes identically. Then $u$ depends only on the transverse variable $y$, and so we can apply Lemma~\ref{limiting open property lemma} with $\bar u = \bar U = u$ to see that, for $n$ sufficiently large, $u_n$ also only depends on $y$, contradicting the assumed strict monotonicity.
\end{proof}

Combining the above lemmas, we arrive at the main result of this subsection.%
\begin{theorem}[Monotonicity] \label{monotonicity theorem} Suppose that $\mathcal{K} \subset \F^{-1}(0) \cap \genU_\infty$ is a connected set that contains a strictly monotone decreasing front (or increasing front).  If \eqref{spectral assumption} holds on $\mathcal{K}$, then every element of $\mathcal{K}$ is a strictly monotone decreasing front (or increasing front).  
\end{theorem}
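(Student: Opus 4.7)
The plan is a standard connectedness argument leveraging the open and closed property lemmas just established. Without loss of generality, assume $\mathcal{K}$ contains a strictly monotone \emph{decreasing} front; the increasing case is completely symmetric. Define
\[ \mathcal{M} := \{ (u, \Lambda) \in \mathcal{K} : \partial_x u < 0 \textrm{ in } \Omega \cup \Gamma_1 \}, \]
which is nonempty by hypothesis. The goal is to show that $\mathcal{M}$ is both relatively open and relatively closed in $\mathcal{K}$, with the topology on $\mathcal{K}$ inherited from $\Xspace_\bdd \times \R^m$. Since $\mathcal{K}$ is connected, this will force $\mathcal{M} = \mathcal{K}$.

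For the openness step, I fix any $(\bar u, \bar \Lambda) \in \mathcal{M}$. Because $(\bar u, \bar \Lambda) \in \mathcal{K}$, the spectral condition \eqref{spectral assumption} holds there and $\bar u \in \Xspace_\infty$, so Lemma~\ref{open property lemma} supplies a $\delta > 0$ such that every solution $(u, \Lambda) \in \genU_\infty$ of \eqref{fully nonlinear elliptic pde} with $\|u - \bar u\|_{C^2(\Omega)} + |\Lambda - \bar \Lambda| < \delta$ is strictly monotone. Since convergence in $\Xspace_\bdd \subset C_\bdd^{k+2+\alpha}(\overline{\Omega})$ entails convergence in $C^2_\bdd(\overline{\Omega})$, intersecting this $\delta$-ball with $\mathcal{K}$ yields a relative neighborhood of $(\bar u, \bar \Lambda)$ contained in $\mathcal{M}$.

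For the closedness step, I take a sequence $\{(u_n, \Lambda_n)\} \subset \mathcal{M}$ converging to some $(u, \Lambda) \in \mathcal{K}$ in the topology of $\Xspace_\bdd \times \R^m$. Since $(u, \Lambda) \in \mathcal{K}$, it satisfies \eqref{spectral assumption} and lies in $\genU_\infty$, and by continuity of $\mathscr F$ it is a zero. Lemma~\ref{closed property lemma} then immediately gives that $(u, \Lambda)$ is strictly monotone, so $(u, \Lambda) \in \mathcal{M}$.

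The argument is not so much obstructed as bookkept: the substantive work---constructing the positive comparison function $\varphi_\pm$ from the spectral hypothesis, running the maximum principle and Hopf boundary-point lemma on both the bounded middle region and the tails, and excluding the trivial limit via the implicit function theorem---has already been done in Lemmas~\ref{supersolution lemma} through~\ref{closed property lemma}. The only subtlety to verify is that the hypotheses \eqref{spectral assumption} and $u \in \Xspace_\infty$ propagate throughout $\mathcal{K}$, but both are guaranteed by the theorem's standing assumptions together with the inclusion $\mathcal{K} \subset \F^{-1}(0) \cap \genU_\infty$. The conclusion then follows from the elementary topological fact that a nonempty, relatively open, and relatively closed subset of a connected space is the whole space.
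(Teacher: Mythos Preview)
Your proposal is correct and is exactly the argument the paper has in mind: the paper simply writes ``Combining the above lemmas, we arrive at the main result of this subsection'' without spelling out the details, and your connectedness argument using Lemmas~\ref{open property lemma} and~\ref{closed property lemma} is precisely how those lemmas combine.
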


\subsection{Compactness or heteroclinic degeneracy} \label{compactness or hetero degen section}

With the monotonicity properties in hand, we next characterize the loss of compactness scenario.  Previously, this was done for homoclinic solutions (solitary waves) in  \cite[Lemma 6.3]{chen2018existence}.  The adaptation of these ideas to the heteroclinic setting is somewhat subtle, and involves the linear functional $\mathcal C$ introduced in Section~\ref{spaces section}.

\begin{lemma}[Compactness or heteroclinic degeneracy] \label{compactness fronts lemma} Suppose that there exists a uniformly bounded sequence $\{ (u_n, \Lambda_n) \} \subset \genU_\infty$ of strictly monotone front solutions to \eqref{fully nonlinear elliptic pde} with $\mathcal C u_n = 0$.   Then, either \begin{enumerate}[label=\rm(\roman*)]
  \item \label{compact front alt} we can extract a subsequence so that $(u_n,\Lambda_n) \to (u,\Lambda)$ in $C^{k+2+\alpha}_\bdd(\overline\Omega) \times \mathbb{R}^m$, where $(u,\Lambda)$ is a monotone front solution to \eqref{fully nonlinear elliptic pde}; or
  \item \label{triple conjugacy alt} there exists a sequence $x_n \to \pm\infty$ such that, after extracting a subsequence,
  \[ \left( u_n(\placeholder + x_n, \placeholder), \, \Lambda_n\right) \to (u_*,\Lambda_*) \qquad \textrm{in } C^{k+2}_\loc(\overline\Omega) \times \mathbb{R}^m\]
   for some $(u_*,\Lambda_*) \in C^{k+2+\alpha}_\bdd(\overline\Omega) \times \mathbb{R}$ that is a monotone front solution of \eqref{fully nonlinear elliptic pde}, but the three limiting states
   \begin{equation*}
     \lim_{x \to \mp\infty} u_*(x, \placeholder),
     \quad 
     \lim_{n \to \infty} \lim_{x \to +\infty} u_n(x, \placeholder),
     \quad 
     \lim_{n \to \infty} \lim_{x \to -\infty} u_n(x, \placeholder),
   \end{equation*}    
   are all distinct.
 \end{enumerate}
\end{lemma}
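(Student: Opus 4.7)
The plan is to extract a locally convergent subsequence of $\{(u_n,\Lambda_n)\}$ and then determine whether its limit captures the far-field behavior of the original sequence, defaulting to a translation argument if not. The uniform $C^{k+2+\alpha}_\bdd$ bound combined with the compact embedding $C^{k+2+\alpha} \hookrightarrow C^{k+2+\alpha/2}$ on bounded subdomains yields, after a diagonal extraction, a limit $(u,\Lambda) \in C^{k+2+\alpha}_\bdd(\overline\Omega) \times \R^m$ with $(u_n,\Lambda_n) \to (u,\Lambda)$ in $C^{k+2+\alpha/2}_\loc$. This $u$ solves \eqref{fully nonlinear elliptic pde} and inherits the weak monotonicity of the $u_n$, so by Lemma~\ref{fronts are Xinfty lemma} is itself a front with asymptotic states $U_\pm$. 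Arzela--Ascoli applied to the uniformly bounded $x$-independent solutions $V_{n\pm}(y) := \lim_{x\to\pm\infty} u_n(x,y)$ on $\Omega'$ gives $V_{n+} \to V_+$ in $C^{k+2+\alpha/2}(\overline{\Omega'})$ along a further subsequence, while the normalization forces $V_{n-}=0=V_-$. Monotonicity then yields the chain $0 = V_- \le U_- \le U_+ \le V_+$.

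If $U_\pm = V_\pm$ pointwise, I would promote the local convergence to $C^{k+2+\alpha}_\bdd$ convergence. Given $\varepsilon > 0$, choose $M$ large so that $|u(\pm M,y) - U_\pm(y)| < \varepsilon$ (possible by Lemma~\ref{fronts are Xinfty lemma}). The monotonicity sandwich $u_n(M,y) \le u_n(x,y) \le V_{n+}(y)$ valid for $x \ge M$ in the increasing case, combined with the local convergence $u_n(M,\cdot) \to u(M,\cdot)$ and the uniform convergence $V_{n+} \to U_+$, forces $|u_n(x,y) - u(x,y)|$ to be $O(\varepsilon)$ uniformly in $x \ge M$ and $y$ for $n$ large; a symmetric estimate holds at $-\infty$, and together with local convergence on $[-M,M] \times \overline{\Omega'}$ this gives $u_n \to u$ in $C^0_\bdd$. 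Quasilinearizing the PDE for $u_n - u$ as in \eqref{vtau equation} and applying standard Schauder estimates then upgrades this to convergence in $C^{k+2+\alpha}_\bdd$, realizing alternative \ref{compact front alt}.

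Otherwise at least one of the inequalities is strict; suppose $U_+ \not\equiv V_+$, the case $U_- \not\equiv 0$ being analogous with $x_n \to -\infty$. I first claim $U_+$ is strictly positive on $\Omega' \cup \Gamma_1'$: if instead $U_+ \equiv 0$, monotonicity forces $u \equiv 0$ since $0 = U_- \le u \le U_+ = 0$, so $\mathcal C u = 0$ gives $V_+(y_0) = \lim_n V_{n+}(y_0) = 0$, and the strong maximum principle together with the Hopf lemma applied to the non-negative $x$-independent solution $V_+$ then force $V_+ \equiv 0$, contradicting $U_+ \not\equiv V_+$. Thus $U_+ > 0$ throughout. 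Now pick $\tilde y$ with $U_+(\tilde y) < V_+(\tilde y)$ and some $c \in (U_+(\tilde y), V_+(\tilde y))$. Strict monotonicity of $u_n$ and $V_{n+}(\tilde y) \to V_+(\tilde y) > c$ guarantee, for $n$ large, the existence of $x_n$ with $u_n(x_n,\tilde y) = c$; local convergence combined with $u(\cdot,\tilde y) \le U_+(\tilde y) < c$ then forces $x_n \to +\infty$. After one more diagonal extraction, $\tilde u_n := u_n(\cdot + x_n, \cdot) \to u_*$ in $C^{k+2}_\loc$, with $u_*$ a monotone front (via Lemma~\ref{fronts are Xinfty lemma}) satisfying $u_*(0,\tilde y) = c$.

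The delicate step, and main obstacle, is verifying that the three states $U_{*-}$, $V_+$, and $V_- = 0$ are mutually distinct. Separation from $V_+$ is immediate: by monotonicity $U_{*-}(\tilde y) \le u_*(0,\tilde y) = c < V_+(\tilde y)$. For separation from $V_-$, the monotonicity of $u_n$ in $x$ gives $\tilde u_n(x,y) \ge u_n(M,y)$ whenever $x + x_n > M$, so passing to the limit $u_*(x,y) \ge u(M,y)$ for every $M \in \R$, and hence $u_*(\cdot,y) \ge U_+(y)$ pointwise. The strict positivity of $U_+$ then gives $U_{*-}(\tilde y) \ge U_+(\tilde y) > 0 = V_-(\tilde y)$, completing the proof of alternative \ref{triple conjugacy alt}.
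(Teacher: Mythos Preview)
Your proof is correct and takes a somewhat different route from the paper's. The paper never introduces an untranslated limit $u$: it works directly with an equidecay condition $\lim_{x\to\pm\infty}\sup_n\|u_n(x,\cdot)-U_{n\pm}\|_{C^0(\Omega')}=0$ (citing \cite[Lemma~6.3]{chen2018existence} for the compactness implication), and when this fails translates by the point of failure. For the distinctness of the three limiting states, the paper uses $\mathcal C u_n=0$ directly at the distinguished point $y_0$ to obtain $U_{*-}(y_0)\le\tfrac12\bigl(V_-(y_0)+V_+(y_0)\bigr)$, which immediately separates $U_{*-}$ from the extreme state. You instead first extract the local limit $u$ with states $U_\pm$, use $\mathcal C u_n=0$ only to rule out $U_+\equiv 0$, and then get the separation $U_{*-}\ge U_+>0$ from a monotonicity sandwich $u_*\ge U_+$. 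Both work; the paper's version is more economical, while yours makes the intermediate object explicit and perhaps reads more constructively.

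Two small points. When you write ``$\mathcal C u = 0$ gives $V_+(y_0)=0$'', you do not literally mean $\mathcal C u$ (which involves $U_+$, not $V_+$); rather you are passing to the limit in the identity $u_n(0,y_0)=\tfrac12 V_{n+}(y_0)$ using local convergence on the left and your Arzel\`a--Ascoli extraction on the right. And your contradiction argument only excludes $U_+\equiv 0$; the upgrade to $U_+>0$ throughout $\Omega'\cup\Gamma_1'$ needs one more line applying the strong maximum principle and Hopf lemma to the difference $U_+-0$ of two $x$-independent solutions, exactly as the paper does for arbitrary pairs of limiting states.
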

Indeed, we will see from the proof that the limiting states in \ref{triple conjugacy alt} are strictly ordered.
\begin{proof}
  Extracting a subsequence, we can assume that the fronts are all monotone increasing or monotone decreasing. We give the proof in the decreasing case, the other case being completely analogous. 

  Throughout the proof, let $U_{n\pm} \in C^{k+2+\alpha}(\overline{\Omega^\prime})$ denote the limiting states of $u_n$. Passing to a subsequence, we can assume that $U_{n\pm} \to U_\pm$ in $C^{k+2}(\overline{\Omega^\prime})$ and $\Lambda_n \to \Lambda_* \in \mathbb{R}^m$, for some $(U_\pm, \Lambda_*) \in C^{k+2+\alpha}(\overline{\Omega^\prime}) \times \mathbb{R}^m$.  Arguing in almost exactly the same way as in the proof of \cite[Lemma 6.3]{chen2018existence}, we can show that if the two limits
  \[ \lim_{x \to \pm \infty} \sup_n \|  u_n(x,\placeholder) - U_{n\pm} \|_{C^0(\Omega^\prime)} = 0,\]
  then the compactness alternative \ref{compact front alt} holds.  So assume to the contrary that there exists $\varepsilon > 0$ and a sequence of points $(x_n,y_n) \in \Omega$ with $x_n \to \pm\infty$ and such that 
  \begin{equation}
    \label{nonequidecay}
    |u_n(x_n,y_n) - U_{n\pm}(y_n)| > \varepsilon.
  \end{equation}

  As $\overline{\Omega^\prime}$ is compact, passing to a subsequence we can assume that $y_n \to y_* \in \overline{\Omega^\prime}$.    Suppose that $x_n \to \infty$; the case $x_n \to -\infty$ follows from an almost identical argument.  The uniform boundedness of $\{ u_n\}$ in $C^{k+2+\alpha}(\overline{\Omega})$ allows us to extract a subsequence with
  \[ \tilde u_n := u_n(\placeholder + x_n, \placeholder) \to  u_* \quad \textrm{in } C^{k+2}_\loc(\overline \Omega)\]
  for some $u_* \in C^{k+2+\alpha}_\bdd(\overline \Omega)$. Thus $(u_*,\Lambda_*)$ is solution of \eqref{fully nonlinear elliptic pde} which is monotone in that $\partial_x u_* \le 0$. This monotonicity implies the existence of pointwise limits $U_{*\pm}$ as $x \to \pm\infty$, which by Lemma~\ref{fronts are Xinfty lemma} satisfy $U_{*\pm} \in C^{k+2+\alpha}(\overline{\Omega'})$ and $\F(U_{*\pm},\Lambda_*)=0$. Since
  $U_{n-} \ge \tilde u_n \ge U_{n+}$, we conclude that these four limiting states have the non-strict ordering
  \begin{align}
    \label{limiting states}
    U_- \ge U_{*-} \ge U_{*+} \ge U_+. 
  \end{align}
  It remains to show that the three states $U_{*-},U_-,U_+$ are distinct. 
 
  Let $U_1,U_2$ be any two of the limiting states in \eqref{limiting states} with $U_1 \ge U_2$. Since $\F(U_1,\Lambda)=\F(U_2,\Lambda)=0$ by continuity, arguing as in the proof of Lemma~\ref{fronts are Xinfty lemma} we find that the difference $v := U_2 - U_1 \le 0$ satisfies an elliptic equation of the form \eqref{vtau equation}, except that the coefficients depend only on the transverse variable $y$. Applying the strong maximum principle and Hopf boundary-point lemma as in the proof of Lemma~\ref{closed property lemma}, we conclude that either $U_1 \equiv U_2$ or $U_1 > U_2$ on $\Omega' \cup \Gamma_1'$. 
 
  First consider the pair $U_{*-},U_+$. The assumption \eqref{nonequidecay} implies
    $\tilde u_n(0,y_n) > U_{n+}(y_n) + \varepsilon$. Taking limits yields
  \begin{align*}
    U_{*-}(y_*) \ge u_*(0,y_*) > U_{+}(y_*),
  \end{align*}
  and hence by the above argument that $U_{*-} > U_+$ on $\Omega' \cup \Gamma_1'$. Plugging into \eqref{limiting states}, this then implies that $U_- > U_+$ on $\Omega' \cup \Gamma_1'$.

  Finally, consider the pair $U_-,U_{*-}$. Here we will need the assumption $u_n \in \ker \mathcal C$, i.e.~that at the fixed point $y_0 \in \Omega' \cup \Gamma_1'$ we have
  \begin{align*}
    u_n(0,y_0) = \tfrac 12 \big(U_{n+}(y_0) + U_{n-}(y_0)\big).
  \end{align*}
  Fix $x \in \R$. For $n$ sufficiently large we have $-x_n < x$ and hence by monotonicity that
  \begin{align*}
    \tilde u_n(x,y_0) < \tilde u_n(-x_n,y_0) = u_n(0,y_0)= \tfrac 12 \big(U_{n+}(y_0) + U_{n-}(y_0)\big).
  \end{align*}
  Sending $n \to \infty$ yields
  \begin{align}
    \label{send to minf}
    u_*(x,y_0) \le \tfrac 12 \big(U_+(y_0)+U_-(y_0)),
  \end{align}
  so that upon sending $x \to -\infty$ we recover
  \begin{align}
    \label{use this}
    U_{*-}(y_0) \le \tfrac 12 \big(U_+(y_0)+U_-(y_0)\big) < U_-(y_0)
  \end{align}
  where the last inequality follows from $U_- > U_+$ in $\Omega' \cup \Gamma_1'$. Thus we must have
  \begin{align*}
    U_- > U_{*-} > U_+ \ona \Omega' \cup \Gamma_1'.
  \end{align*}
  In particular, these three functions are distinct and the proof is complete.
\end{proof}

\subsection{Fredholm properties} \label{fredholm theory section}

In this section, we establish some necessary facts about the Fredholm index of $\F_u(u, \Lambda)$.  First, we consider it as a mapping between the larger spaces $\Xspace_\bdd \to \Yspace_\bdd$.  
\begin{lemma}[Fredholm on $\Xspace_\bdd$] \label{fredholm Xb lemma}
Suppose that $(u, \Lambda) \in \genU_\infty$ satisfies the spectral hypothesis \eqref{spectral assumption}.  Then $\F_u(u,\Lambda)$ is Fredholm index $0$ as a mapping $\Xspace_\bdd \to \Yspace_\bdd$. 
\end{lemma}
As the proof of this result relies on some general facts about principal eigenvalues, we postpone it to Appendix~\ref{essential spectrum appendix}.

It remains now to understand how this translates to properties of $\F_u(u,\Lambda)$ as a mapping $\Xspace_\infty \to \Yspace_\infty$.  Since $\Xspace_\infty$ is a closed subspace, we know that if $(u,\Lambda)$ satisfies \eqref{spectral assumption}, then $\F_u(u, \Lambda)\colon \Xspace_\infty \to \Yspace_\infty$ is locally proper, i.e.~semi-Fredholm with a finite-dimensional kernel. To characterize the range, we will need the lemma below.  It is quite similar to \cite[Lemma A.10]{wheeler2015pressure}.

\begin{lemma}\label{lem ext} Let $(u,\Lambda) \in \genU_\infty$ satisfying \eqref{spectral assumption} be given.  If  $\F_u(u, \Lambda) w = (f_1, f_2) \in \Yspace_\infty$, for some $w\in \Xspace_\bdd$, then in fact $w \in \Xspace_\infty$.
\end{lemma}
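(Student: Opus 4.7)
The plan is to identify the expected limit profiles $W_\pm$ at $x=\pm\infty$ using the spectral condition, and then prove $w(x,\placeholder) \to W_\pm$ uniformly by a translation-compactness argument combined with a Liouville-type uniqueness statement for the constant-coefficient limiting equation on the full cylinder. Because $(f_1,f_2) \in \Yspace_\infty$, the pointwise limits
\begin{align*}
  (f_{1\pm},f_{2\pm}) := \lim_{x \to \pm\infty} \big(f_1(x,\placeholder),\, f_2(x,\placeholder)\big)
\end{align*}
exist in $C^{k+\alpha}(\overline{\Omega^\prime}) \times C^{k+1+\alpha}(\Gamma_1^\prime)$, with $(f_{1-},f_{2-}) = (0,0)$. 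By \eqref{spectral assumption} and Appendix~\ref{principal eigenvalue appendix}, the limiting transversal operator $\limL_\pm^\prime \maps \Xspace^\prime \to \Yspace^\prime$ is invertible, so there exist unique $W_\pm \in \Xspace^\prime$ with $\limL_\pm^\prime W_\pm = (f_{1\pm},f_{2\pm})$; in particular $W_- \equiv 0$.

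Extend $W_+$ to be $x$-independent on $\Omega$ and set $v := w - W_+$. Uniform convergence of the coefficients of $\mathscr L := \F_u(u,\Lambda)$ to those of $\limL_+$ as $x \to +\infty$, together with $\partial_x W_+ \equiv 0$, gives $\mathscr L W_+ \to \limL_+^\prime W_+ = (f_{1+},f_{2+})$ uniformly in $y$, whence $\mathscr L v \to 0$ uniformly as $x \to +\infty$. The key claim is that $v \to 0$ uniformly. Suppose toward a contradiction that there exist $\varepsilon > 0$ and points $(x_n,y_n)$ with $x_n \to +\infty$ and $|v(x_n,y_n)| \ge \varepsilon$. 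Since $w \in \Xspace_\bdd$, interior and boundary Schauder estimates give a uniform $C^{k+2+\alpha}_\bdd$ bound on $v$, so along a subsequence the translates $v_n(x,y) := v(x+x_n,y)$ converge in $C^{k+2}_\loc(\overline\Omega)$ to a bounded $V \in C^{k+2+\alpha}_\bdd(\overline\Omega)$ satisfying $\limL_+ V = 0$ on all of $\mathbb R \times \Omega^\prime$, with $|V(0,y_*)| \ge \varepsilon$ for $y_* = \lim y_n$ and $V|_{\Gamma_0} = 0$.

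To force $V \equiv 0$ I would mimic the comparison-function argument from Section~\ref{monotonicity section}: writing $V = \psi \varphi_+$ with $\varphi_+$ the function supplied by Lemma~\ref{supersolution lemma}, a direct computation shows $\psi$ is bounded and solves a uniformly elliptic equation on $\mathbb R \times \Omega^\prime$ with zeroth-order coefficient $-\delta < 0$ in $\Omega$ and $1/\varphi_+ > 0$ on $\Gamma_1$, together with $\psi|_{\Gamma_0} = 0$. A further translation along a maximizing sequence for $|\psi|$, followed by either the strong maximum principle or the Hopf boundary-point lemma, forces $\psi \equiv 0$, and hence $V \equiv 0$, a contradiction. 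With $v \to 0$ uniformly in hand, Schauder estimates on overlapping subcylinders $(\tau-1,\tau+1)\times\Omega^\prime$ together with $\mathscr L v \to 0$ uniformly upgrade this to $\n{v}_{C^{k+2+\alpha}((\tau,\tau+1)\times\Omega^\prime)} \to 0$ as $\tau \to +\infty$. This delivers all of the limits required at $+\infty$ in \eqref{def X infty space}, and the identical argument with $W_- \equiv 0$ handles $x \to -\infty$, so that $w \in \Xspace_\infty$.

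The main obstacle is the Liouville-type step: that $\limL_+ V = 0$ on the full cylinder $\mathbb R \times \Omega^\prime$ with $V$ bounded and $V|_{\Gamma_0} = 0$ forces $V \equiv 0$. Although the substitution $V = \psi \varphi_+$ reduces it to a maximum-principle statement with favorable signs, the domain is unbounded and the supremum of $|\psi|$ may only be approached in the limit $x \to \pm\infty$, so the maximum principle cannot be applied naively; this is precisely the step where \eqref{spectral assumption} is genuinely used, through the comparison function of Lemma~\ref{supersolution lemma}.
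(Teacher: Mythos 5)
Your argument is correct, but it follows a genuinely different path from the paper's. The paper avoids constructing candidate limits altogether: given two sequences $x_{1n}, x_{2n} \to +\infty$ at which $\partial^\beta w$ fails to equioscillate, it forms the \emph{difference of translates} $v_n := w(\placeholder + x_{1n},\placeholder) - w(\placeholder + x_{2n},\placeholder)$, so that the putative far-field profile cancels automatically, extracts a local limit $v$ with $\limL_+ v = 0$, and then simply cites Lemma~\ref{invertibility lemma} (invertibility of $\limL_\pm$ on $\Xspace_\bdd$, proved in the appendix by a Fredholm homotopy) to force $v = 0$. You instead build the expected limit $W_+$ explicitly from the invertibility of the transversal operator $\limL_+^\prime$ on the bounded base $\Omega^\prime$, and then prove the needed injectivity of $\limL_+$ on the cylinder directly, via translation along a maximizing sequence for $\psi := V/\varphi_+$ followed by the strong maximum principle and Hopf lemma. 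Your Liouville step essentially reproves the injectivity half of Lemma~\ref{invertibility lemma} by hand, in the same spirit as the paper's own comparison-function argument in Lemma~\ref{asymptotic monotonicity lemma}; this makes your version more self-contained here at the cost of repeating maximum-principle work that the paper packages elsewhere, while the paper's difference-of-translates device is slicker but leans on the full strength of the quoted invertibility lemma. One point to tighten: in your final step, Schauder cannot yield $\n{v}_{C^{k+2+\alpha}((\tau,\tau+1)\times\Omega^\prime)} \to 0$ from $\mathscr L v \to 0$ merely uniformly, since the right-hand side is only controlled in the H\"older norm, not convergent in it. Either run the translation-compactness contradiction separately for each $\partial^\beta v$ with $\abs{\beta} \le k+2$ (which is what the paper does in spirit, and what the $C^{k+2}_\loc$ convergence of the translates already delivers once the Liouville forces the limit to be zero), or invoke interpolation from the uniform $C^{k+2+\alpha}_\bdd$ bound; either way you get exactly the uniform convergence of $\partial^\beta v$ that the definition~\eqref{def X infty space} of $\Xspace_\infty$ requires, which is weaker than H\"older convergence.
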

\begin{proof}
Suppose on the contrary that there exists some $w \in \Xspace_\bdd \backslash \Xspace_\infty$ such that $\F_u(u,\Lambda) w =: (f_1, f_2) \in \Yspace_\infty$.  
To obtain a contradiction, it suffices to  show that all of the limits as $x\to\pm\infty$ behave as in the definition of $\Xspace_\infty$ \eqref{def X infty space}.  We will only present the argument for $|\beta| \leq 2$; the rest follow by differentiating the equation in the usual way. 

Assume that $\lim_{x \to \infty} \partial^\beta w$ does not exist, for some $|\beta| \leq 2$.  Then there is a $\delta > 0$,  $y_* \in \overline{\Omega^\prime}$, and two sequences $x_{1n}, x_{2n} \to +\infty$ with 
\[
 \left| \partial^\beta  w(x_{1n},y_*) - \partial^\beta w(x_{2n},y_*)  \right| \ge \delta.
\]
Consider now the shifted functions
\begin{align*}
v_n & := w(\placeholder+x_{1n}, \placeholder ) - w(\placeholder+x_{2n}, \placeholder), \\
g_{in} & := f_i(\placeholder+x_{1n}, \placeholder) - f_i(\placeholder+ x_{2n}, \placeholder), \quad \textrm{for } i = 1,2,
\end{align*}
and shifted linear operators
\[ \mathscr{L}_n := \F_u(u(\placeholder+x_{1n}, \placeholder), \Lambda). \]
The uniform bound of $v_n$ in $C^{k+2+\alpha}$ implies that we can extract a subsequence so that $v_n \to v$ in $C^{k+2}_\loc(\overline{\Omega})$, for some $v \in \Xspace_\bdd$. Since $(f_1, f_2) \in \Yspace_\infty$, we see that $g_{1n} \to 0$ in $C_\loc^k(\overline{\Omega})$, and $g_{2n} \to 0$ in $C_\loc^{k+1}(\Gamma_1)$. 
Thus 
\[
\mathscr{L}_n v_n = (g_{1n}, g_{2n}) \to 0 \quad \text{in } C_\loc^{k}(\overline{\Omega})  \times C^{k+1}_\loc(\Gamma_1),
\]
from which it follows that $\limL_+ v = 0$.  As $\prineigenvalue^+(u,\Lambda) < 0$, Lemma~\ref{invertibility lemma} forces $v = 0$.  On the other hand, we find that 
\[
|\partial^\beta v(0,y_*)| = \lim_{n\to \infty}  \left| \partial^\beta  w(x_{1n},y_*) - \partial^\beta w(x_{2n},y_*)  \right| \ge \delta,
\]
which is a contradiction. Arguing similarly for $x \to -\infty$, we conclude that $\partial^\beta w$ has well-defined limits as $x\to\pm\infty$ for all $|\beta| \leq k+2$.  Finally, because $w \in \Xspace_\bdd$ this also implies that $\partial^\beta \partial_x w$ vanishes as $x \to \infty$ for all $|\beta| \leq k+1$.
\end{proof}

\begin{corollary} \label{infinity spaces fredholm corollary}
  Let $(u,\Lambda) \in \genU_\infty$ be given such that \eqref{spectral assumption} holds.  Then $\F_u(u, \Lambda)$ is  Fredholm index $0$ as a mapping $\Xspace_\infty \to \Yspace_\infty$.
\end{corollary}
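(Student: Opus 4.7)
The plan is to transfer the Fredholm index 0 property from $L := \F_u(u, \Lambda): \Xspace_\bdd \to \Yspace_\bdd$ (Lemma~\ref{fredhom Xb lemma}) to the restriction $L|_{\Xspace_\infty}: \Xspace_\infty \to \Yspace_\infty$, using Lemma~\ref{lem ext} as the crucial bridge. First, a direct check against \eqref{def X infty space}--\eqref{def Y infty space} shows that $L$ maps $\Xspace_\infty$ into $\Yspace_\infty$, since the coefficients of $L$ inherit uniform $C^{k+\alpha}$ limits as $x \to \pm\infty$ from $u \in \Xspace_\infty$ through the analytic dependence \eqref{gen regularity elliptic coef}. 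Combined with Lemma~\ref{lem ext}, this yields the key identity $L^{-1}(\Yspace_\infty) = \Xspace_\infty$ within $\Xspace_\bdd$.

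Two consequences follow at once. Because $0 \in \Yspace_\infty$, the kernel of $L$ on $\Xspace_\bdd$ already lies in $\Xspace_\infty$, so $\ker L|_{\Xspace_\infty} = \ker L|_{\Xspace_\bdd}$ has the same finite dimension, call it $k$. Moreover, $\range{L|_{\Xspace_\infty}} = \range{L|_{\Xspace_\bdd}} \cap \Yspace_\infty$, which is closed as the intersection of closed subspaces. Hence $L|_{\Xspace_\infty}$ is semi-Fredholm, and the inclusion $\Yspace_\infty \hookrightarrow \Yspace_\bdd$ induces an injection $\Yspace_\infty / \range{L|_{\Xspace_\infty}} \hookrightarrow \Yspace_\bdd / \range{L|_{\Xspace_\bdd}}$, so the codimension is at most $k$ and the index is at least $0$.

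The decisive remaining step, which I expect to be the technical core, is to show this injection is also surjective, equivalently $\Yspace_\infty + \range{L|_{\Xspace_\bdd}} = \Yspace_\bdd$. I would establish this by explicit construction: for any $y \in \Yspace_\bdd$, use the invertibility of the transverse limiting operators $\limL_\pm'$ on $\Xspace' \to \Yspace'$ (which follows from \eqref{spectral assumption} via the appendix's characterization of principal eigenvalues) to define $v_\pm(x, \cdot) := (\limL_\pm')^{-1} y(x, \cdot)$ pointwise in $x$, then form $w := \chi_- v_- + \chi_+ v_+$ with smooth cutoffs $\chi_\pm$ summing to $1$ and with $\chi_\mp$ supported near $\pm\infty$. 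A direct calculation gives
\[
Lw - y = \chi_-(L - \limL_-') v_- + \chi_+(L - \limL_+') v_+ + [L, \chi_-] v_- + [L, \chi_+] v_+,
\]
where the commutators are compactly supported in $x$, hence in $\Yspace_\infty$, and the other two terms decay in $C^{k+\alpha}$ as $x \to \pm\infty$ because the coefficient differences $L - \limL_\pm'$ vanish there. Therefore $Lw - y \in \Yspace_\infty$.

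The main obstacle is verifying the joint $C^{k+2+\alpha}$ regularity of $v_\pm$ on $(x, y)$ from $x$-slice Schauder estimates (using only that $\limL_\pm'$ act in the transverse variable), and then carefully checking each term on the right-hand side above against the full list of limit conditions in \eqref{def Y infty space}, particularly the vanishing of $\partial_x$-derivatives of the boundary component at both ends. Once this is settled, $\range{L|_{\Xspace_\infty}}$ has codimension exactly $k$ in $\Yspace_\infty$, and so $L|_{\Xspace_\infty}$ is Fredholm of index $0$.
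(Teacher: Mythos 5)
Your overall strategy is a genuine alternative to the paper's. The paper first shows that $\F_u(0,\Lambda)$ is invertible $\Xspace_\infty \to \Yspace_\infty$ (combining Lemma~\ref{invertibility lemma} with Lemma~\ref{lem ext}), then transports the index from $(0,\Lambda)$ to $(u,\Lambda)$ by repeating the chain of semi-Fredholm homotopies from the proof of Lemma~\ref{invertibility lemma}. You instead work directly at $(u,\Lambda)$: you take the Fredholm index $0$ property on $\Xspace_\bdd$ from Lemma~\ref{fredhom Xb lemma} and try to carry it over to $\Xspace_\infty$ by matching kernel and cokernel dimensions. Your first half is correct and cleanly argued: Lemma~\ref{lem ext} gives $L^{-1}(\Yspace_\infty)=\Xspace_\infty$, hence the two kernels coincide, the restricted range $\range{L|_{\Xspace_\bdd}}\cap\Yspace_\infty$ is closed, and the quotient injection gives index at least $0$.

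The parametrix construction for the reverse inequality has a genuine gap, however. You define $v_\pm(x,\placeholder):=(\limL_\pm')^{-1}y(x,\placeholder)$ by inverting the \emph{transversal} operator pointwise in $x$. But $y_1\in C^{k+\alpha}_\bdd(\overline\Omega)$ has only $k$ derivatives in $x$, and an operator that acts solely in the $y$-variable cannot produce the two extra $x$-derivatives needed for $v_\pm\in\Xspace_\bdd=C^{k+2+\alpha}_\bdd$: $\partial_x^j v_\pm=(\limL_\pm')^{-1}\partial_x^j y$ exists only for $j\le k$ (or $j\le k+1$ from the boundary datum), so $w:=\chi_-v_-+\chi_+v_+$ is not in the domain of $L$ and $Lw-y$ is not a well-defined element of $\Yspace_\bdd$. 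You flag this as ``the main obstacle,'' but it cannot be repaired from $x$-slice Schauder estimates alone, since there is no elliptic gain in the $x$-direction. The fix is to set $v_\pm:=\limL_\pm^{-1}y$ using the full cylinder limit operator: its invertibility $\Xspace_\bdd\to\Yspace_\bdd$ under \eqref{spectral assumption} is precisely what Lemma~\ref{invertibility lemma} provides, and it hands you $v_\pm\in\Xspace_\bdd$ directly. With that replacement your commutator identity still holds (the terms $\chi_\pm(L-\limL_\pm)v_\pm$ decay because the coefficients of $L$ converge to those of $\limL_\pm$, and the commutators are compactly supported in $x$), the decay conditions in \eqref{def Y infty space} can be verified term by term, and the argument closes.
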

\begin{proof}
  We use the same pair of homotopies as in the proof of Lemma~\ref{fredholm Xb lemma} in Appendix~\ref{essential spectrum appendix}, connecting the operator $\limL:=\F_u(u,\Lambda)$ to the operator $\mathscr T:= (\Delta-\sigma,\partial_\nu)$, where $\sigma > 0$ is a constant and $\partial_\nu$ is the outward pointing normal derivative. The remainder of the argument is very similar to arguments in \cite[Appendix A]{wheeler2015pressure}. By the proof of Lemma~\ref{fredholm Xb lemma}, the operators along these homotopies are locally proper $\Xspace_\bdd \to \Yspace_\bdd$. Since $\Xspace_\infty \subset \Xspace_\bdd$ and $\Yspace_\infty \subset \Yspace_\bdd$ are closed subspaces, these operators are then also locally proper, and hence semi-Fredholm, as mappings $\Xspace_\infty \to \Yspace_\infty$. By the continuity of the index, it therefore suffices to show that $\mathscr T$ is invertible, and hence Fredholm index 0, as a mapping $\Xspace_\infty \to \Yspace_\infty$. From the proof of Lemma~\ref{fredholm Xb lemma}, we know that $\mathscr T$ is invertible $\Xspace_\bdd \to \Yspace_\bdd$, which immediately implies that it is injective $\Xspace_\infty \to \Yspace_\infty$. Moreover, it is easy to see that Lemma~\ref{lem ext} continues to hold with the same proof if $\F_u(u,\Lambda)$ replaced by $\mathscr T$. This and the surjectivity of $\mathscr T \maps \Xspace_\bdd \to \Yspace_\bdd$ implies surjectivity of $\mathscr T \maps \Xspace_\infty \to \Yspace_\infty$, which completes the proof.
\end{proof}

\subsection{Proof of the main theorems} \label{general theorem proof section}

We begin with Theorem~\ref{global ift}, the global implicit function theorem.  Let $(u_0, \Lambda_0)$ be a strict monotone front solution satisfying \eqref{kernel assumption}, \eqref{spectral assumption}, and \eqref{transversality condition}.   By Lemma~\ref{fronts are Xinfty lemma}, we have $u_0 \in \Xspace_\infty$.  Then, in light of \eqref{spectral assumption} and Corollary~\ref{infinity spaces fredholm corollary}, the linearized operator $\F_u(u_0, \Lambda_0)$  is Fredholm index $0$ as a mapping $\Xspace_\infty \to \Yspace_\infty$.  However, it is not an isomorphism due to the kernel direction generated by the translation invariance in $x$, and so we cannot apply Theorem~\ref{homoclinic global ift} directly.

Instead, we study a certain bordered problem: set $w := (u, \mu)$ and consider the new nonlinear operator 
\[ \mathscr{G}\colon \mathcal{W} \subset ( \Xspace_\infty \times \mathbb{R}) \times \mathbb{R} \longrightarrow \Yspace_\infty \times \mathbb{R}\]
given by
\begin{equation}
  \mathscr{G}(w,\lambda) := \begin{pmatrix} \mathscr{F}(u,\lambda,\mu) \\ \mathcal{C} u \end{pmatrix},\label{definition bordered op} 
\end{equation}
where $\mathcal{W}$ is an open set derived from $\genU_\infty$ in the obvious way, and $\mathcal C \maps \Xspace \to \R$ is the bounded linear functional defined in \eqref{def C functional}. Note that here we are adopting the convention that whenever we have $(w,\lambda) \in \mathcal{W}$, it is understood that $u$ denotes the component of $w$ in $\Xspace_\infty$.  
Without loss of generality, assume that $\mathcal{C} u_0 = 0$.   Appealing to Theorem~\ref{homoclinic global ift}, we then obtain a preliminary global bifurcation curve.  

\begin{lemma}[Global IFT for $\mathscr{G}$] \label{global ift K lemma}
There exists a global curve $\borderedcm \subset \mathcal{W}$, parameterized as 
\[ \borderedcm := \left\{ \left(w(s), \lambda(s) \right) : s \in \mathbb{R}  \right\} \subset \mathscr{G}^{-1}(0)\]
for some continuous $\mathbb{R} \ni s \longmapsto (w(s), \lambda(s)) \in \mathcal{W}$ with $(w(0), \lambda(0)) = (w_0, \lambda_0)$.   Moreover, 
\begin{enumerate}[label=\rm(\alph*)]
\item \label{bordered ift alternatives} In each of the limits $s \to \infty$ and $s \to -\infty$, one of the alternatives \ref{K blowup alternative}, \ref{K loss of compactness alternative}, \ref{K loss of fredholmness alternative}, or \ref{K loop} of Theorem~\ref{homoclinic global ift} is realized.  If in either direction \ref{K loop} happens, then clearly it happens in both.  
\item \label{bordered ift analyticity} At each point,  $\borderedcm$ admits a local real-analytic reparameterization.  
\item \label{bordered ift maximality} The curve $\borderedcm$ is maximal in the sense that, if $\mathscr{J} \subset \mathscr{G}^{-1}(0)$ is another locally real-analytic curve of monotone front solutions along which \eqref{spectral assumption} holds, and $(w_0, \Lambda_0) \in \mathscr{J}$, then $\mathscr{J} \subset \borderedcm$.  
\end{enumerate}
\end{lemma}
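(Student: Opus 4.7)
The plan is to apply the abstract global analytic implicit function theorem (Theorem~\ref{homoclinic global ift}) to the bordered operator $\mathscr{G}$ defined in \eqref{definition bordered op}. The border $\mathcal{C}$ has been designed precisely to eliminate the translation-generated kernel direction of $\F_u(u_0,\Lambda_0)$, while the transversality hypothesis \eqref{transversality condition} supplies the missing range direction through the parameter $\mu$. The main task is therefore to check the hypotheses of Theorem~\ref{homoclinic global ift}---analyticity, Fredholm index zero, local properness, and invertibility of the linearization at the basepoint---after which the conclusions of the lemma follow directly.

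First, I would verify that $\mathscr{G}_w(w_0,\lambda_0)$ is an isomorphism from $\Xspace_\infty \times \mathbb{R}$ to $\Yspace_\infty \times \mathbb{R}$. Corollary~\ref{infinity spaces fredholm corollary} combined with \eqref{spectral assumption} guarantees that $\F_u(u_0,\Lambda_0) \maps \Xspace_\infty \to \Yspace_\infty$ is Fredholm of index $0$; bordering by the rank-one map $\nu \mapsto \nu\,\F_\mu(u_0,\Lambda_0)$ on the domain and by the rank-one functional $\mathcal{C}$ on the codomain preserves index zero. For injectivity, suppose $(v,\nu) \in \kernel \mathscr{G}_w(w_0,\lambda_0)$. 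Then $\F_u(u_0,\Lambda_0)v = -\nu\,\F_\mu(u_0,\Lambda_0)$, and \eqref{transversality condition} immediately forces $\nu = 0$. Consequently $v \in \kernel \F_u(u_0,\Lambda_0) = \linspan\{\partial_x u_0\}$ by \eqref{kernel assumption}. Writing $v = t\,\partial_x u_0$ and noting that $\partial_x u_0$ vanishes in both upstream and downstream limits, the formula \eqref{def C functional} gives $\mathcal{C}v = t\,\partial_x u_0(0,y_0)$, which is nonzero by the strict monotonicity of $u_0$. Hence $t = 0$ and the linearization is invertible.

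Next, I would address the remaining structural hypotheses. Real-analyticity of $\mathscr{G}$ as a map on an open subset of $(\Xspace_\infty \times \mathbb{R}) \times \mathbb{R}$ is immediate from \eqref{gen regularity elliptic coef} and the linearity of $\mathcal{C}$. The loss-of-compactness alternative \ref{K loss of compactness alternative} in Theorem~\ref{homoclinic global ift} is what allows one to relax global properness: since the constraint $\mathcal{C}u_n = 0$ normalizes the translation of any sequence of strictly monotone fronts, Lemma~\ref{compactness fronts lemma} shows that the only way a bounded sequence of solutions to $\mathscr{G}=0$ can fail to subconverge in $C^{k+2+\alpha}_\bdd$ is through the heteroclinic-degeneracy scenario of Lemma~\ref{compactness fronts lemma}\ref{triple conjugacy alt}. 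Likewise, Fredholmness can fail only through the spectral degeneracy isolated as alternative \ref{K loss of fredholmness alternative}, by Corollary~\ref{infinity spaces fredholm corollary}.

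With these checks in place, Theorem~\ref{homoclinic global ift} directly yields the global continuous curve $\borderedcm$, the four alternatives at $s \to \pm\infty$ asserted in \ref{bordered ift alternatives}, the local real-analytic reparameterization \ref{bordered ift analyticity}, and the maximality \ref{bordered ift maximality}. The main obstacle I anticipate is largely bookkeeping, namely matching the abstract loss-of-compactness and loss-of-Fredholmness alternatives from \cite{chen2018existence} with the concrete dichotomies provided by Lemma~\ref{compactness fronts lemma} and the spectral condition \eqref{spectral assumption}; however, the bordered functional $\mathcal{C}$ has been tailored so that this correspondence is automatic, since it reads off only pointwise and asymptotic values and evaluates nontrivially on $\partial_x u$ for every strictly monotone front.
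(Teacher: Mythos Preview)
Your approach is essentially identical to the paper's: verify via Fredholm bordering and Corollary~\ref{infinity spaces fredholm corollary} that $\mathscr{G}_w(w_0,\lambda_0)$ is Fredholm index $0$, use \eqref{transversality condition} and \eqref{kernel assumption} together with $\mathcal{C}\partial_x u_0 = \partial_x u_0(0,y_0) \neq 0$ to show the kernel is trivial, and then invoke Theorem~\ref{homoclinic global ift}. One small correction: Theorem~\ref{homoclinic global ift} does \emph{not} require local properness or Fredholmness along the zero set as hypotheses---its only assumptions are analyticity and \eqref{homoclinic global ift assumptions}---so your second paragraph, while correct in spirit, is doing work that belongs to the proof of Theorem~\ref{global ift} rather than to this lemma; the paper defers the identification of \ref{K loss of compactness alternative} and \ref{K loss of fredholmness alternative} with heteroclinic and spectral degeneracy to that later argument.
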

\begin{proof}  Because $\mathcal{C}$ is a bounded functional on $\Xspace_\infty$, it follows from Corollary~\ref{infinity spaces fredholm corollary} and Fredholm bordering (see Lemma~\ref{bordering lemma}) that 
\[ \mathscr{G}_w(w_0,\lambda_0) =
 \begin{pmatrix} 
 \F_u(u_0, \lambda_0, \mu_0) & \F_\mu(u_0, \lambda_0, \mu_0) \\  
 \mathcal{C} & 0 \end{pmatrix} \]
 is Fredholm index $0$ as a mapping $\Xspace_\infty \times \mathbb{R} \to \Yspace_\infty \times \mathbb{R}$.  We claim, moreover, that it is injective and hence an isomorphism.   Let $\dot w = (\dot u, \dot \mu)$ be an element of $\ker{\mathscr{G}_w(w_0,\lambda_0)}$.  Then from \eqref{definition bordered op}, we have
\[ \mathscr{F}_u(u_0, \Lambda_0) \dot u + \F_\mu(u_0, \Lambda_0) \dot \mu = 0,\]
which by the transversality assumption \eqref{transversality condition} implies that $\dot \mu = 0$ and $\dot u \in \ker\mathscr{F}_u(u_0,\Lambda_0)$.   Hypothesis \eqref{kernel assumption} requires that the kernel of $\mathscr{F}_u(u_0,\lambda_0,\mu_0)$ be generated by $\partial_x u_0$.  However, 
\[ \mathcal{C} \partial_x u_0 = \partial_x u_0(0,y_0) \neq 0,\]
since $u$ is a strictly monotone front.  Therefore, we must have $\dot u = 0$, which confirms that $\ker \mathscr{G}_w(w_0,\lambda_0)$ is trivial.  The invertibility of $\mathscr{G}_w(w_0,\lambda_0)$ follows.

Taking $\mathscr{W} := \Xspace_\infty \times \mathbb{R}$ and $\mathscr{Z} := \Yspace_\infty \times \mathbb{R}$, the above argument implies that $\mathscr{G}$ satisfies the hypotheses \eqref{homoclinic global ift assumptions} of Theorem~\ref{homoclinic global ift}.   As an immediate consequence, we have that there exists a global curve $\borderedcm \subset \mathscr{G}^{-1}(0)$ of solutions exhibiting the properties claimed in parts~\ref{bordered ift alternatives}, \ref{bordered ift analyticity}, and \ref{bordered ift maximality}.  
\end{proof}

Finally, we complete the proof of Theorem~\ref{global ift} by translating the global theory for the bordered problem back to the original equation \eqref{fully nonlinear elliptic pde}.  In doing so, we will use the results of Section~\ref{monotonicity section} and Section~\ref{compactness or hetero degen section} to confirm that strict monotonicity is preserved along the curve, which will then allow us to characterize the loss of compactness~\ref{K loss of compactness alternative} and Fredholmness alternatives~\ref{K loss of fredholmness alternative} in terms of heteroclinic degeneracy \ref{gen hetero degeneracy} and spectral degeneracy \ref{gen ripples}.  

\begin{proof}[Proof of Theorem~\ref{global ift}]
In Lemma~\ref{global ift K lemma}, we constructed a global curve $\borderedcm$ of solutions to the Fredholm bordered problem.  
Naturally, this implies the existence of a corresponding $C^0$ curve 
\[ \iftcm = \left\{ (u(s), \Lambda(s)) : s \in \mathbb{R} \right\} \subset \F^{-1}(0) \cap \genU_\infty \]
of solutions to \eqref{fully nonlinear elliptic pde}.

Observe that the uniformity of the limits in the definition of the space $\Xspace_\infty$ and the continuity of the principal eigenvalue given by Lemma~\ref{continuity sigma0 lemma} together ensure that $s \mapsto \prineigenvaluepm(u(s), \Lambda(s))$ is continuous. Perhaps making a $C^0$ reparameterization, we can therefore assume that \eqref{spectral assumption} holds along $\cm$.  
Since $\cm$ is connected and contains $(u_0, \Lambda_0)$, Theorem~\ref{monotonicity theorem} guarantees that every $(u(s), \Lambda(s))$ is strictly monotone.   

Consider now the behavior as $s \to \infty$.  Assume that the heteroclinic degeneracy alternative~\ref{gen hetero degeneracy} does not occur.
  Since the solutions on $\cm$ are strictly monotone, we conclude from Lemma~\ref{compactness fronts lemma} that $\iftcm$ is locally pre-compact in $\Xspace_\infty \times \mathbb{R}^2$.  The curve $\borderedcm$ is likewise locally pre-compact, and hence the loss of compactness alternative~\ref{K loss of compactness alternative} cannot occur for it.  

  Suppose further that the spectral degeneracy alternative \ref{gen ripples} does not occur. Then if $s_n \to \infty$ is a sequence with $\sup_n N(s_n) < \infty$ and $(u(s_n),\Lambda(s_n))$ converging to $(u_*,\Lambda_*) \in \genU_\infty$, then the spectral condition \eqref{spectral assumption} must be satisfied at this limit. By Corollary~\ref{infinity spaces fredholm corollary}, the linearized operator $\mathscr{F}_u(u_*,\Lambda_*) \colon \Xspace_\infty \to \Yspace_\infty$ is therefore Fredholm index $0$.  Thus, along the curve $\borderedcm$, the loss of Fredholmness alternative~\ref{K loss of fredholmness alternative} cannot happen.   
 
  Recalling Lemma~\ref{global ift K lemma}, this winnows the possible limiting behavior of $\borderedcm$ to either \ref{K blowup alternative} or \ref{K loop}. For $\iftcm$, these lead to the blowup alternative~\ref{gen blowup alternative} or closed loop~\ref{gen loop}, respectively.  This proves part~\ref{gen ift alternatives}, and applying the same reasoning to the limit $s \to -\infty$ yields part~\ref{gen ift other direction alternatives}.
  Part~\ref{gen ift analyticity} is given directly by Lemma~\ref{global ift K lemma}\ref{bordered ift analyticity}.  Finally, if $\mathscr{J}$ is a curve of monotone front solutions as in part~\ref{gen ift maximality}, then by Lemma~\ref{fronts are Xinfty lemma} it lies in $\genU_\infty$.  The result then follows from Lemma~\ref{global ift K lemma}\ref{bordered ift maximality} and the proof of the theorem is complete.
\end{proof}

Next we turn to Theorem~\ref{general global bifurcation theorem} on global continuation of a local curve of strictly monotone fronts.  

\begin{proof}[Proof of Theorem~\ref{general global bifurcation theorem}]
  Let  $\cm_\loc$ be given as in the statement of the theorem.  Recalling Lemma~\ref{fronts are Xinfty lemma}, we have that $u(\varepsilon) \in \Xspace_\infty$ for each $\varepsilon \in (0,\varepsilon_0)$.  Fix any $\varepsilon_1 \in (0,\varepsilon_0)$ and set $(u_1, \lambda_1) := (u(\varepsilon_1), \lambda(\varepsilon_1))$.  Then there is a unique $\tau_1 \in \mathbb{R}$ such that such that $\mathcal{C} u_1(\placeholder-\tau_1, \placeholder) = 0$, where $\mathcal{C}$ is the functional in \eqref{def C functional}. Let
\[ c(\varepsilon, \tau) := \mathcal{C}\left[ u(\varepsilon)(\placeholder - \tau, \placeholder) \right] \qquad \textrm{for } (\varepsilon,\tau) \in  (0,\varepsilon_0) \times \mathbb{R}.\]
By construction, $c(\varepsilon_1,\tau_1) = 0$, and we compute that 
\[ \partial_\tau c(\varepsilon_1,\tau_1) = -\partial_x u(\varepsilon)(-\tau_1, y_0) \neq 0.\]
The implicit function theorem furnishes a $C^0$ curve $\tau = \tau(\varepsilon)$, defined for $|\varepsilon - \varepsilon_1| \ll 1$, so that $c(\varepsilon, \tau(\varepsilon)) = 0$.  Repeating this argument at each parameter value ensures that there exist an extension of $\tau(\varepsilon)$ to all of $(0,\varepsilon_0)$.  We will identify $\cm_\loc$ with its image under this family of translations, so that $\mathcal{C}$ vanishes along it.  Naturally, the hypotheses \eqref{kernel assumption}, \eqref{spectral assumption}, and \eqref{local singular assumption} are unaffected.

We claim, moreover, that this curve is locally real analytic.  To see this, fix again any $\varepsilon_1 \in (0,\varepsilon_0)$ and put $(u_1, \lambda_1) := (u(\varepsilon_1), \lambda(\varepsilon_1))$.  As we have argued before, hypothesis~\eqref{spectral assumption} and Corollary~\ref{infinity spaces fredholm corollary} imply that $\F_u(u_1, \lambda_1)$ is Fredholm index $0$ as a map $\Xspace_\infty \to \Yspace_\infty$.  In view of \eqref{kernel assumption}, there exists $\chi_1 \in \Yspace_\infty \setminus \{ 0\}$ such that 
\[ \Yspace_\infty = \range{\F_u(u_1, \lambda_1)} \oplus \mathbb{R} \chi_1.\]

Consider the two-parameter bordered problem 
\[ \mathscr{G}(w,\lambda) = 0\]
where, as in the proof of Theorem~\ref{global ift}, we are writing $w = (u,\mu) \in \Xspace_\infty \times \mathbb{R}$, but now define  
\[ \mathscr{G} \colon \mathcal{W} \subset ( \Xspace_\infty \times \mathbb{R}) \times \mathbb{R} \longrightarrow \Yspace_\infty \times \mathbb{R} \qquad (u, \lambda, \mu) \mapsto  \begin{pmatrix} \F(u,\lambda) + \mu \chi_1 \\ \mathcal{C} u \end{pmatrix}, \]
with the open set $\mathcal{W}$ derived from $\genU_\infty$ in the same way.  The local curve $\cm_\loc$ trivially lifts to a local $C^0$ curve of solutions to the bordered problem 
\[ \borderedcm_\loc := \left\{ (w(\varepsilon), \lambda(\varepsilon)) : \varepsilon \in (0,\varepsilon_0)  \right\} \subset \mathscr{G}^{-1}(0)\] 
where $w(\varepsilon) := (u(\varepsilon), 0)$.
Clearly $\mathscr{G}$ is real analytic, and letting $w_1 = (u_1,0)$, we compute that
\[ \mathscr{G}_w(w_1, \lambda_1) = \begin{pmatrix} \F_u(u_1, \lambda_1)  & \chi_1 \\ \mathcal{C} & 0 \end{pmatrix}. \]
By Fredholm bordering, this is a Fredholm index $0$ mapping $\Xspace_\infty \times \mathbb{R} \to \Yspace_\infty \times \mathbb{R}$.  Moreover, if $(\dot w, \dot \mu)$ is in its kernel, then 
\[ \F_u(u_1, \lambda_1) \dot u + \dot\mu \chi_1 = 0,\]
and hence $\dot\mu = 0$,  $\dot u \in \linspan{\{\partial_x u_1\}}$, and $\mathcal{C} \dot u = 0$.   But $u_1$ is a strictly monotone front, and so this forces $\dot u = 0$.  Thus $\mathscr{G}_w(w_1, \lambda_1)$ is an isomorphism.  The analytic implicit function theorem implies that the zero-set of $\mathscr{G}$ locally consists of a curve through $(w_1, \lambda_1)$ that is real analytic, and, by uniqueness, coincides with $\borderedcm_\loc$.  This confirms that at every point, $\borderedcm_\loc$ (and hence $\cm_\loc$) admits a local real-analytic reparameterization.  In fact, the same argument also shows that there can be no secondary bifurcation points along $\cm_\loc$; it locally comprises the complete zero-set of $\mathscr{F}$.

Fixing $\varepsilon_1 \in (0,\varepsilon_0)$ once more and defining $\mathscr G$ as above, we also can apply Theorem~\ref{homoclinic global ift} to the bordered problem at $(w_1, \lambda_1)$, yielding a global curve $\borderedcm \subset \mathscr{G}^{-1}(0)$.  As it is maximal in $\mathcal{W}$, we must have that $\borderedcm_\loc \subset \borderedcm$.  But note that $\mu = 0$ for each $(w,\lambda) \in \borderedcm_\loc$, and so local analyticity implies that $\mu = 0$ on the entirety of $\borderedcm$.  Projecting away this trivial component gives the desired global curve $\cm \subset \mathscr{F}^{-1}(0)$ extending $\cm_\loc$. 

In light of hypothesis~\eqref{local singular assumption}, we can reparameterize the curve so that 
  \[ (u(s), \lambda(s)) \to (u_0,\lambda_0) \qquad \textrm{as } s \to {0+},\]
  where $(u_0,\lambda_0)$ does not satisfy \eqref{spectral assumption}.
  This in particular means that $\cm$ (and hence $\borderedcm$) is not a closed loop.  Part~\ref{gen alternatives} now follows exactly as in the poof of Theorem~\ref{global ift}.  The local real analyticity and maximality of $\cm$ is likewise a consequence of Theorem~\ref{homoclinic global ift}.  Finally, \ref{gen reconnect} can be established as in the proof of \cite[Theorem~6.1(c)]{chen2018existence}. 
\end{proof}

\section{Application to variational and transmission problems} \label{variational section}

\subsection{Invariant quantities and conjugate flows for variational problems}
In many applications, the equation \eqref{fully nonlinear elliptic pde} formally arises as a variational problem
\begin{equation}\label{variational principle}
  \delta \int_\Omega \mathcal L(y,u,\nabla u, \Lambda)\, dx\, dy = 0,
\end{equation}
where the variations are taken with respect to $u$  vanishing on $\Gamma_0$, and $\mathcal L$ is regular enough that the mapping
\begin{equation}
  \label{var regularity elliptic coef} 
  \begin{aligned}
    (z,\xi,\Lambda) &\longmapsto \mathcal L(\placeholder, z, \xi, \Lambda),
    \qquad 
    \mathcal W \longrightarrow     C^{k+2+\alpha}_\bdd(\overline{\Omega^\prime})
  \end{aligned}
\end{equation}
is real analytic for some open set $\mathcal W \subset \R \times \R^n \times \R^n$ related to $\genU$. In this case \eqref{fully nonlinear elliptic pde} has the form
\begin{equation}
  \label{quasilinear variational pde}
  \left\{ 
  \begin{aligned}
    \nabla \cdot \mathcal L_\xi(y,u,\nabla u,\Lambda)
    - \mathcal L_z(y,u,\nabla u,\Lambda)
    &= 0 \qquad \textrm{in } \Omega, \\
    \nu \cdot \mathcal L_\xi(y,u,\nabla u,\Lambda)  & = 0 \qquad  \textrm{on } \Gamma_1, \\
    u & = 0 \qquad \textrm{on } \Gamma_0,
  \end{aligned} 
  \right.
\end{equation}
where as always $\nu=\nu(y)$ is the outward normal to $\Gamma_1$.

The variational principle \eqref{variational principle} is invariant under translations in $x$, and so one expects a corresponding conserved quantity, which is the content of the following lemma.
\begin{lemma}[Conserved quantity] \label{conserved quantity lemma}
  If $(u,\Lambda) \in \genU$ solves \eqref{quasilinear variational pde}, then the integral
  \begin{align*}
    \flowforce(u,\Lambda;x) :=  
    \int_{\Omega'} \Big(\mathcal L\big(y,u,\nabla u,\Lambda\big) - \mathcal L_{\xi_1}\big(y,u,\nabla u,\Lambda\big) u_x \Big)\, dy
  \end{align*}
  is a constant independent of $x$.
  \begin{proof}
    First differentiating under the integral and then using \eqref{quasilinear variational pde} to eliminate $\partial_x \mathcal L_\xi$, we find that
    \begin{align*}
      \frac d{dx} \flowforce 
      &=  
      \int_{\Omega'} \Big(\mathcal L_z u_x  + \mathcal L_{\xi_1} u_{xx} + \mathcal L_{\xi'} \cdot \nabla_y u_x
      - (\partial_x\mathcal L_{\xi_1}) u_x - \mathcal L_{\xi_1} u_{xx} \Big)\, dy\\
      &=  
      \int_{\Omega'} \Big( (\nabla_y \cdot \mathcal L_{\xi'}) u_x + \mathcal L_{\xi'} \cdot \nabla_y u_x\Big)\, dy\\
      &=  
      \int_{\Omega'} \nabla_y \cdot \big(\mathcal L_{\xi'} u_x \big)\, dy
      =  
      \int_{\Gamma_0' \cup \Gamma_1'} \big(\nu \cdot \mathcal L_\xi \big) u_x \, dS(y).
    \end{align*}
    Since $\nu \cdot \mathcal{L}_\xi$ vanishes on $\Gamma_1$ while $u$ and hence $u_x$ vanish on $\Gamma_0$, we obtain $\partial_x \flowforce = 0$ as desired.
  \end{proof}
\end{lemma}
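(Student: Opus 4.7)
The plan is to recognize $\flowforce$ as the conserved quantity associated with the translation invariance of the Lagrangian density $\mathcal L$ in $x$ (Noether's theorem) and to verify conservation by a direct computation. I would differentiate $\flowforce$ under the integral, invoke the Euler--Lagrange equation from \eqref{quasilinear variational pde} to collapse the resulting integrand into a pure $y$-divergence, and then apply the divergence theorem on the cross-section $\Omega'$, whereupon the boundary integrals will vanish by the natural Neumann-type condition on $\Gamma_1$ and the Dirichlet condition on $\Gamma_0$.

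Concretely, the regularity $u \in C^{k+2+\alpha}_\bdd(\overline\Omega)$ with $k \ge 0$ is more than enough to justify differentiating under the integral sign. Splitting $\nabla = (\partial_x, \nabla_y)$ and correspondingly $\mathcal L_\xi = (\mathcal L_{\xi_1}, \mathcal L_{\xi'})$, and noting that $\mathcal L$ carries no explicit $x$-dependence, the chain rule gives
\[
\partial_x \mathcal L(y, u, \nabla u, \Lambda) = \mathcal L_z\, u_x + \mathcal L_{\xi_1}\, u_{xx} + \mathcal L_{\xi'} \cdot \nabla_y u_x,
\]
while $\partial_x(\mathcal L_{\xi_1} u_x) = (\partial_x \mathcal L_{\xi_1}) u_x + \mathcal L_{\xi_1} u_{xx}$, so the $u_{xx}$ contributions cancel inside $\partial_x \flowforce$. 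The interior Euler--Lagrange equation in \eqref{quasilinear variational pde} rearranges to $\mathcal L_z - \partial_x \mathcal L_{\xi_1} = \nabla_y \cdot \mathcal L_{\xi'}$, so what remains of the integrand is
\[
(\nabla_y \cdot \mathcal L_{\xi'})\, u_x + \mathcal L_{\xi'} \cdot \nabla_y u_x = \nabla_y \cdot (\mathcal L_{\xi'}\, u_x),
\]
a pure divergence in $y$. Applying the divergence theorem on $\Omega'$ turns $\partial_x \flowforce$ into a boundary integral over $\Gamma_0' \cup \Gamma_1'$.

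There is essentially no obstacle at the boundary step: since $\Gamma_j = \R \times \Gamma_j'$, the outward normal $\nu$ to $\Omega$ has a vanishing $x$-component, so $\nu \cdot \mathcal L_\xi = \nu' \cdot \mathcal L_{\xi'}$ along $\Gamma_j$, where $\nu'$ is the outward normal to $\Omega'$. On $\Gamma_0'$ the Dirichlet condition $u \equiv 0$ forces $u_x \equiv 0$; on $\Gamma_1'$ the natural boundary condition in \eqref{quasilinear variational pde} gives $\nu' \cdot \mathcal L_{\xi'} = 0$. Both boundary contributions therefore vanish and $\partial_x \flowforce \equiv 0$, as desired. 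The only technical point worth double-checking is the identification $\nu \cdot \mathcal L_\xi = \nu' \cdot \mathcal L_{\xi'}$, which relies on the product structure of the cylinder and is precisely what makes the Noether-type argument work in this geometry.
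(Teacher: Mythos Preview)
Your proof is correct and follows essentially the same approach as the paper: differentiate under the integral, use the Euler--Lagrange equation to rewrite $\mathcal L_z - \partial_x \mathcal L_{\xi_1}$ as $\nabla_y \cdot \mathcal L_{\xi'}$, recognize the resulting integrand as a pure $y$-divergence, and apply the divergence theorem with the boundary conditions. Your explicit remark that $\nu \cdot \mathcal L_\xi = \nu' \cdot \mathcal L_{\xi'}$ because the outward normal to the cylinder has no $x$-component is a helpful clarification that the paper leaves implicit.
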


Sending $x\to\pm\infty$ for a front solution, we obtain the following corollary.

\begin{corollary}\label{conjugate corollary}
  Suppose that $(u,\Lambda) \in \genU_\infty$ solves $\F(u,\Lambda) = 0$, and let $U_\pm$ be the limits of $u$ as $x \to \pm\infty$. Then $U_\pm \in C^{k+2}(\Omega')$, 
  \begin{enumerate}[label=\rm(C\arabic*)]
  \item \label{conjugate solution part} $\F(U_\pm,\Lambda) = 0$, and
  \item \label{conjugate flowforce part} $\flowforce(U_+,\Lambda) = \flowforce(U_-,\Lambda)$.
  \end{enumerate}
\end{corollary}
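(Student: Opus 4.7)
The plan is to combine Lemma~\ref{fronts are Xinfty lemma} and Lemma~\ref{conserved quantity lemma} with an elementary passage to the limit under the integral defining $\flowforce$. Both parts follow quite directly, so I do not anticipate a genuine obstacle; the only item requiring any care is justifying that the convergence guaranteed by membership in $\Xspace_\infty$ is strong enough to commute with $\flowforce$.

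For part \ref{conjugate solution part}, I would simply invoke Lemma~\ref{fronts are Xinfty lemma} applied to the variational equation \eqref{quasilinear variational pde} (which is the special case of \eqref{fully nonlinear elliptic pde} under the regularity hypothesis \eqref{var regularity elliptic coef}). That lemma yields the existence of $U_\pm \in C^{k+2+\alpha}(\overline{\Omega'})$ with $\partial^\beta u(x, \cdot) \to \partial^\beta U_\pm$ uniformly for all $|\beta| \le k+2$, and moreover $\F(U_\pm, \Lambda) = 0$. The claimed $C^{k+2}(\Omega')$ regularity is then immediate.

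For part~\ref{conjugate flowforce part}, I would first apply Lemma~\ref{conserved quantity lemma} to conclude that the function $x \mapsto \flowforce(u, \Lambda; x)$ is constant on $\mathbb{R}$. The remaining task is to identify its limits as $x \to \pm\infty$ with $\flowforce(U_\pm, \Lambda)$, where $U_\pm$ is viewed as an $x$-independent function on $\Omega$ (so that $\partial_x U_\pm \equiv 0$). This is where membership in $\Xspace_\infty$ does the work: by the definition \eqref{def X infty space}, one has $u(x, \cdot) \to U_\pm$ and $\nabla_y u(x, \cdot) \to \nabla_y U_\pm$ uniformly in $y \in \overline{\Omega'}$, while $\partial_x u(x, \cdot) \to 0$ uniformly. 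Combined with the continuity of the mappings $(z,\xi) \mapsto \mathcal{L}(\cdot, z, \xi, \Lambda)$ and $(z,\xi) \mapsto \mathcal{L}_{\xi_1}(\cdot, z, \xi, \Lambda)$ guaranteed by \eqref{var regularity elliptic coef}, together with the compactness of $\overline{\Omega'}$, this yields uniform convergence of the integrand in the definition of $\flowforce(u, \Lambda; x)$. One may therefore pass to the limit under the integral to obtain
\[
  \lim_{x \to \pm\infty} \flowforce(u, \Lambda; x)
  = \int_{\Omega'} \mathcal{L}\bigl(y, U_\pm, (0, \nabla_y U_\pm), \Lambda\bigr)\, dy = \flowforce(U_\pm, \Lambda),
\]
the second (transport) term in the definition of $\flowforce$ dropping out because $\partial_x U_\pm = 0$. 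Equating these two limits to the common constant value from Lemma~\ref{conserved quantity lemma} delivers~\ref{conjugate flowforce part}, completing the corollary.
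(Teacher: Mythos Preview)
Your proposal is correct and follows essentially the same approach as the paper: invoke Lemma~\ref{fronts are Xinfty lemma} for the regularity and \ref{conjugate solution part}, then use Lemma~\ref{conserved quantity lemma} together with passage to the limit $x\to\pm\infty$ in $\flowforce(u,\Lambda;x)$ for \ref{conjugate flowforce part}. The paper simply writes the chain $\flowforce(U_+,\Lambda)=\lim_{x\to+\infty}\flowforce(u,\Lambda;x)=\lim_{x\to-\infty}\flowforce(u,\Lambda;x)=\flowforce(U_-,\Lambda)$ without spelling out the justification for commuting the limit with the integral, which you have supplied explicitly.
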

\begin{proof}
  The regularity of $U_\pm$ and \ref{conjugate solution part} were already shown in Lemma~\ref{fronts are Xinfty lemma}. The remaining statement \ref{conjugate flowforce part} follows at once from Lemma~\ref{conserved quantity lemma},
  \begin{equation*}
    \flowforce(U_+,\Lambda) = \lim_{x\to+\infty} \flowforce(u,\Lambda;x)
    = \lim_{x\to-\infty} \flowforce(u,\Lambda;x) = \flowforce(U_-,\Lambda).
    \qedhere
  \end{equation*}
\end{proof}

\begin{definition}[Conjugate flows]\label{conjugate definition}
  For a fixed parameter value $\Lambda$, we say that two distinct functions $U_\pm \in C^{k+2}(\Omega')$ are \emph{conjugate} (or \emph{conjugate flows}) if they satisfy \ref{conjugate solution part} and \ref{conjugate flowforce part}. 
\end{definition}

The terminology ``conjugate flows'' comes from steady hydrodynamics; see~\cite{benjamin1971unified}. By studying the conjugate flow problem in our applications below, we will be able to rule out heteroclinic degeneracy \ref{gen hetero degeneracy} using the following lemma.

\begin{lemma}[Triple conjugacy]\label{triple conjugacy lemma}
  Suppose that the PDE \eqref{fully nonlinear elliptic pde} has the variational structure \eqref{quasilinear variational pde}. Then in Lemma~\ref{compactness fronts lemma}\ref{triple conjugacy alt} the three distinct limiting states
   \begin{equation*}
     \lim_{x \to \mp\infty} u_*(x, \placeholder),
     \quad 
     \lim_{n \to \infty} \lim_{x \to +\infty} u_n(x, \placeholder),
     \quad 
     \lim_{n \to \infty} \lim_{x \to -\infty} u_n(x, \placeholder),
   \end{equation*}    
   are also conjugate in the sense of Definition~\ref{conjugate definition}.
\end{lemma}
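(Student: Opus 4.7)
The plan is to verify the two conditions \ref{conjugate solution part} and \ref{conjugate flowforce part} of Definition~\ref{conjugate definition} for every pair drawn from the three distinct limiting states. I treat the case $x_n \to +\infty$ in Lemma~\ref{compactness fronts lemma}\ref{triple conjugacy alt} (the opposite sign being entirely symmetric), so the three states read $U_- := \lim_n U_{n-}$, $U_+ := \lim_n U_{n+}$, and $U_{*-} := \lim_{x \to -\infty} u_*(x,\placeholder)$, where $U_{n\pm}$ are the upstream and downstream states of $u_n$. Condition \ref{conjugate solution part} is essentially free: from the proof of Lemma~\ref{compactness fronts lemma} we already have $U_{n\pm} \to U_\pm$ in $C^{k+2}(\overline{\Omega^\prime})$ and $\Lambda_n \to \Lambda_*$, so continuity of $\F$ under the analyticity hypothesis \eqref{gen regularity elliptic coef} forces $\F(U_\pm, \Lambda_*) = 0$; for $U_{*-}$, Lemma~\ref{fronts are Xinfty lemma} applied directly to the monotone front $(u_*, \Lambda_*)$ gives $U_{*-} \in C^{k+2+\alpha}(\overline{\Omega^\prime})$ with $\F(U_{*-}, \Lambda_*) = 0$.

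The real content is condition \ref{conjugate flowforce part}, for which I would exploit the conserved quantity of Lemma~\ref{conserved quantity lemma}. For each $n$ there exists a constant $h_n$ with $\flowforce(u_n, \Lambda_n; x) \equiv h_n$ in $x$, and Corollary~\ref{conjugate corollary}\ref{conjugate flowforce part} identifies
\[
  h_n = \flowforce(U_{n-}, \Lambda_n) = \flowforce(U_{n+}, \Lambda_n).
\]
Restricted to $x$-independent functions, $\flowforce(U, \Lambda) = \int_{\Omega^\prime} \mathcal L(y, U, \nabla U, \Lambda)\, dy$ is jointly continuous in $(U, \Lambda) \in C^1(\overline{\Omega^\prime}) \times \R^m$ by the analyticity hypothesis \eqref{var regularity elliptic coef}, so passing $n \to \infty$ produces a common value
\[
  h_\infty := \lim_n h_n = \flowforce(U_-, \Lambda_*) = \flowforce(U_+, \Lambda_*).
\]

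To bring $U_{*-}$ into this chain, I use the shifted sequence $\tilde u_n(x,y) := u_n(x + x_n, y)$. Since \eqref{quasilinear variational pde} is translation invariant in $x$, the same conservation applies and $\flowforce(\tilde u_n, \Lambda_n; x) \equiv h_n$ for every $x \in \R$. The hypothesis $\tilde u_n \to u_*$ in $C^{k+2}_\loc(\overline{\Omega})$ ensures that at each fixed $x$ the slice $(\tilde u_n(x,\placeholder), \nabla \tilde u_n(x,\placeholder))$ converges uniformly on the compact base $\overline{\Omega^\prime}$ to $(u_*(x,\placeholder), \nabla u_*(x,\placeholder))$, so passing $n \to \infty$ inside the one-dimensional integral defining $\flowforce(\tilde u_n, \Lambda_n; x)$ yields $\flowforce(u_*, \Lambda_*; x) = h_\infty$ for every $x \in \R$. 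Sending $x \to -\infty$ and using the uniform $C^{k+2}$ limits from Lemma~\ref{fronts are Xinfty lemma}, we reach $\flowforce(U_{*-}, \Lambda_*) = h_\infty$, completing the chain $\flowforce(U_-, \Lambda_*) = \flowforce(U_+, \Lambda_*) = \flowforce(U_{*-}, \Lambda_*)$. The only step demanding any care is checking that $C^{k+2}_\loc$ convergence is strong enough to pass to the limit inside $\flowforce$; because $\flowforce$ depends only on $u$ and its first derivatives and $\overline{\Omega^\prime}$ is compact, this is immediate, so there is no substantive obstacle.
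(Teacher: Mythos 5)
Your proof is correct and takes essentially the same route as the paper: identify the three limiting states, observe that they all solve $\F(\placeholder,\Lambda_*)=0$ by the convergence established in Lemma~\ref{compactness fronts lemma}, and then show they share the same flow-force value $h_\infty=\lim_n\flowforce(u_n,\Lambda_n;0)$, using translation invariance of the conserved quantity and local $C^{k+2}$ convergence to pass to the limit. The paper states this in two sentences; your version just makes the bookkeeping explicit.
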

\begin{proof}
  As in the proof of Lemma~\ref{compactness fronts lemma}, the limiting states $U$ above all lie in $C^{k+2+\alpha}(\overline{\Omega'})$ and solve $\F(U,\Lambda_*) = 0$. Moreover, by Lemma~\ref{conserved quantity lemma} they all have the same value of the conserved quantity $\flowforce$,
  \begin{equation*}
    \flowforce(U,\Lambda_*) = \lim_{n \to \infty} \flowforce(u_n,\Lambda_n; 0).
    \qedhere
  \end{equation*}
\end{proof}

\subsection{Transmission conditions} \label{sec:trans}
Finally, with only superficial modifications, the above global bifurcation theory applies to a broad class of quasilinear transmission problems.  Suppose that $\Omega = \Omega_1 \cup \Omega_2$ is a (slitted) cylinder with $\Omega_1 \cap \Omega_2 = \emptyset$.  Let $\Gamma_1 := \partial \Omega_1 \cap \partial \Omega_2 \neq \emptyset$ be their common boundary and set $\Gamma_0 := \partial \Omega \setminus \Gamma_1$.  We require that $\Gamma_0$ and $\Gamma_1$ are $C^{k+2+\alpha}$ and disjoint.  Set 
\begin{equation}
 \label{quasilinear transmission elliptic pde} 
  \begin{aligned}
    \mathcal{F}(y, u, \nabla u, D^2 u, \Lambda) &:= \nabla \cdot \A(y,u, \nabla u, \Lambda) + \B(y, u, \Lambda) \\
    \mathcal{G}(y,u, \nabla u, \Lambda) & := \jump{ - \nu\cdot \A(y,u, \nabla u, \Lambda)} + \mathcal{E}(y,u, \Lambda) ,
  \end{aligned}
\end{equation}
where $\mathcal{A}, \mathcal{B}, \mathcal{E}$ are such that the same regularity \eqref{gen regularity elliptic coef} and ellipticity conditions \eqref{ellipticity assumption}  hold, where the function spaces are naturally adapted to the current case.  The main novelty is the introduction of $\jump{\placeholder}=(\placeholder)_2-(\placeholder)_1$, which denotes the jump of a quantity across the interface $\Gamma_1$ from $\Omega_1$ to $\Omega_2$.
As before, we express \eqref{quasilinear transmission elliptic pde} as the abstract operator equation
\[ \F(u,\Lambda) = 0\]
where $\F \colon \genU \subset \Xspace_\bdd \times \mathbb{R}^m \to \Yspace_\bdd$ is real analytic, 
and the basic spaces are redefined to be
\begin{equation}
  \begin{aligned}
    \Xspace &:= \left\{ u \in C^0(\overline{\Omega}) \cap C^{k+2+\alpha}(\overline{\Omega_1}) \cap C^{k+2+\alpha}(\overline{\Omega_2})  : u|_{\Gamma_0} = 0 \right\} \\ 
    \Yspace & = \Yspace_1 \times \Yspace_2 := \left( C^{k+\alpha}(\overline{\Omega_1}) \cap C^{k+\alpha}(\overline{\Omega_2}) \right)  \times C^{k+1+\alpha}(\Gamma_1),
  \end{aligned}
  \label{transmission spaces} 
\end{equation}
 with $\Xspace_\bdd$, $\Xspace_\infty$, $\Yspace_\bdd$, and $\Yspace_\infty$ given analogously.

\begin{corollary}[Global continuation for transmission problems]\label{cor_transmission GBT}
Consider the quasilinear elliptic transmission problem \eqref{quasilinear transmission elliptic pde}. 
\begin{enumerate}[label=\rm(\alph*)]
\item \label{transmission global ift part} \textup{(Global IFT)} If there are two parameters $\Lambda = (\lambda, \mu)$ and $(u, \lambda, \mu) \in \genU$ is a strictly monotone front solution satisfying \eqref{kernel assumption}, \eqref{spectral assumption}, and \eqref{transversality condition}, then there exists a global curve $\cm$ of strictly monotone front solutions to \eqref{quasilinear transmission elliptic pde} such that the properties \ref{gen ift alternatives}--\ref{gen ift maximality} of Theorem~\ref{global ift} hold.
\item \label{transmission global bifurcation part} \textup{(Global bifurcation)} If there is one parameter $\Lambda = \lambda$ and $\cm_\loc \subset \genU$ is a local curve of strictly monotone front solutions satisfying \eqref{kernel assumption}, \eqref{spectral assumption}, and \eqref{local singular assumption}, then there exists a global curve $\cm$ of strictly monotone front solutions extending $\cm_\loc$ and exhibiting the properties \ref{gen alternatives}--\ref{gen reconnect} of Theorem \ref{general global bifurcation theorem}. 
\end{enumerate}
\end{corollary}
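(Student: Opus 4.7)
The plan is to rerun the proofs of Theorem~\ref{global ift} and Theorem~\ref{general global bifurcation theorem} verbatim, with the quasilinear transmission operator in \eqref{quasilinear transmission elliptic pde} and the modified spaces \eqref{transmission spaces}.  The argument in Section~\ref{general theory section} is structured so that only a handful of ingredients make essential use of the fact that $\Gamma_1$ is an external boundary, and each of these has a clean analogue for the transmission problem.  First I would check Lemma~\ref{fronts are Xinfty lemma}: the sole analytical input is a Schauder estimate for a linear uniformly elliptic equation, and in the current setting the corresponding interior--transmission Schauder estimate on $\Omega_M$ applied separately to the two sides gives the required bound in the norm of $\Xspace$.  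This immediately delivers $u-U_- \in \Xspace_\infty$ for any front, as well as Remark~\ref{zero upstream remark}, since the restricted operator $\F'$ now acts between H\"older spaces on $\Omega'_1 \cup \Omega'_2$.

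The next block is monotonicity (Section~\ref{monotonicity section}).  The comparison function of Lemma~\ref{supersolution lemma} is supplied by the principal eigenvalue theory for transmission problems established in Appendix~\ref{principal eigenvalue spec condition appendix}, which also gives the continuity of $\prineigenvaluepm$ needed later.  In the proofs of Lemmas~\ref{asymptotic monotonicity lemma}--\ref{closed property lemma} one applies the strong maximum principle in each of $\Omega_1$ and $\Omega_2$ separately.  The only delicate step is what replaces the Hopf boundary-point lemma at a point $(x_0,y_0) \in \Gamma_1$ where $v := u_x/\varphi$ attains a nonnegative maximum: a vanishing Hopf argument applied from each side produces strict one-sided normal derivatives with opposite signs, whose jump is then controlled by the linearization of the transmission condition $\mathcal G = 0$.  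Because $\mathcal{G}_{\xi^i} \nu_i > c_2$ by \eqref{ellipticity assumption}, this yields a contradiction exactly as in the original oblique-derivative argument.  This gives Theorem~\ref{monotonicity theorem} unchanged.

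The compactness-or-heteroclinic-degeneracy dichotomy (Lemma~\ref{compactness fronts lemma}) and its proof only use uniform $C^{k+2+\alpha}$ bounds on each piece together with Arzel\`a--Ascoli, so it goes through with $\Omega$ replaced by $\Omega_1 \cup \Omega_2$ and the maximum-principle step at $\Gamma_1$ treated as above.  Similarly, the Fredholm results Lemma~\ref{fredhom Xb lemma}, Lemma~\ref{lem ext}, and Corollary~\ref{infinity spaces fredholm corollary} rest on invertibility of the limiting transversal operator $\limL_\pm^\prime$ and on the definition of $\Xspace_\infty$; the former is delivered by the transmission principal eigenvalue theory in Appendix~\ref{principal eigenvalue spec condition appendix}, and the proof of Lemma~\ref{lem ext} proceeds word for word using the local $C^{k+2}$ convergence of translates, which still holds here thanks to the interior--transmission Schauder theory.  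The functional $\mathcal{C}$ of \eqref{def C functional} remains a bounded linear map on the new $\Xspace_\infty$ and still satisfies $\mathcal{C} u_x \ne 0$ for any strictly monotone front.

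With these ingredients in hand, the bordered problem \eqref{definition bordered op} is set up exactly as before, Lemma~\ref{global ift K lemma} and Theorem~\ref{homoclinic global ift} apply, and the transcriptions of the proofs of Theorem~\ref{global ift} and Theorem~\ref{general global bifurcation theorem} given in Section~\ref{general theorem proof section} conclude parts \ref{transmission global ift part} and \ref{transmission global bifurcation part}.  The main conceptual obstacle is the handling of the Hopf lemma and the conversion from the oblique boundary condition to the transmission condition sketched in the second paragraph; the rest is bookkeeping with the modified spaces \eqref{transmission spaces}.
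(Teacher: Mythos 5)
Your proposal is correct and follows essentially the same route as the paper: identify the linearization as a transmission operator of the form treated in Appendix~\ref{principal eigenvalue spec condition appendix}, invoke the transmission principal-eigenvalue theory (Corollary~\ref{cor pe transmission}) for the comparison function and Fredholm ingredients, replace the one-phase Hopf lemma at $\Gamma_1$ by the two-sided jump version (your ``Hopf from each side then use $\mathcal G$'' is precisely Lemma~\ref{lem hopf}), note that the compactness lemma only needs transmission Schauder estimates, and then run Section~\ref{general theorem proof section} verbatim. The paper's proof is terser but cites the same ingredients in the same order, so there is no substantive difference.
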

\begin{proof}
We will prove this result by verifying the results of Section~\ref{monotonicity section}--Section~\ref{fredholm theory section} in the current setting of \eqref{quasilinear transmission elliptic pde} and with the spaces \eqref{transmission spaces}.

Note first that the linearization of \eqref{quasilinear transmission elliptic pde} at $u \in \Xspace_\infty$ results in an elliptic operator of the form of \eqref{appendix L operator transmission}, which allows one to characterize the maximum principle and Hopf boundary-point lemma in the same way as before. Therefore the spectral condition \eqref{spectral assumption} together with Corollary~\ref{cor pe transmission} guarantees the existence of a positive comparison function as in Lemma~\ref{supersolution lemma}. The same ``quasilinearization" technique (using Lemma~\ref{lem hopf} in place of the one-phase Hopf lemma) leads to an asymptotic monotonicity result of the form Lemma~\ref{asymptotic monotonicity lemma}.  It is then immediate that Theorem~\ref{monotonicity theorem} holds for the transmission problem. 

The characterization of the loss of compactness, namely Lemma~\ref{compactness fronts lemma}, relies only on Schauder estimates and so it follows in exactly the same way.

Lastly, we show in Appendix \ref{principal eigenvalue appendix transmission} that all the prerequisites for the Fredholm theory lemmas in Section \ref{fredholm theory section} still hold true in the setting of \eqref{quasilinear transmission elliptic pde}. The arguments in Section~\ref{general theorem proof section} now apply verbatim, and we obtain the desired results.
\end{proof}

\section{Large fronts for semilinear Robin problems} \label{robin section}

In this section, we study a simple class of semilinear elliptic equations that is nonetheless rich enough to illustrate all of the alternatives in the global bifurcation theory.  Specifically, consider the PDE  
\begin{equation}
 \label{toy problem} \left\{ 
  \begin{aligned}
    \Delta u & = 0 & \qquad & \textrm{in } \Omega \\
    u_y - u + g(u,\lambda) & = 0 & \qquad & \textrm{on } \Gamma_1 \\
    u &=  0 & \qquad &\textrm{on } \Gamma_0,
  \end{aligned} \right.
\end{equation}
set on the infinite cylinder $\Omega := \mathbb{R} \times (0,1)$ with upper boundary $\Gamma_1 := \{ y = 1\}$ and lower boundary $\Gamma_0 := \{ y =0 \}$.  Here $g = g(z,\lambda)$ is assumed to be real analytic in its arguments and $\lambda$ is a real parameter.

Under certain natural hypotheses on $g$, we are able to construct global curves of solutions to \eqref{toy problem}.  By tailoring the choice of nonlinearity, we can in fact control precisely which of the limiting scenarios in Theorem~\ref{general global bifurcation theorem}\ref{gen alternatives} occurs, which shows that they are \emph{sharp}.
 
Typically, it is very difficult to infer the behavior of solutions along a global bifurcation curve.  What makes it possible here is an illuminating analogy between the conjugate flow problem \ref{conjugate solution part}--\ref{conjugate flowforce part} for the PDE \eqref{toy problem} and  classical finite-dimensional Hamiltonian mechanics.  Recall that the ODE  
\begin{equation}
  \ddot q = -\nabla V(q),\label{toy ODE} 
\end{equation}
has conserved energy $\frac{1}{2} |\dot q|^2 + V(q)$.  Clearly, the critical points of the potential $V$ are rest points for the system.  Likewise, a heteroclinic orbit can connect two rest points only if they are on the same level set of  $V$.   

We first observe that the PDE \eqref{toy problem} is of the variational form \eqref{quasilinear variational pde} with 
\[ 
  \mathcal L(y,z,\xi,\eta,\lambda)
  = \frac{\xi^2+\eta^2}2 - z\eta + g(z,\lambda)\eta.
\]
Hence by Lemma~\ref{conserved quantity lemma} there is an ``energy''   
\begin{align*}
  \flowforce(u, \lambda; x) &:= \int_0^1 \Big( \mathcal L(y,u,u_x,u_y,\lambda) - \mathcal L_\xi(y,u,u_x,u_y,\lambda)u_x \Big)\, dy
  \\&\phantom{:}= \int_0^1 \Big( \frac {u_y^2-u_x^2}2 - uu_y + g(u,\lambda)u_y \Big)\, dy
\end{align*}
that will be independent of $x$ for any solution $(u,\lambda)$.  Introducing the primitive $G = G(z,\lambda)$ of $g(\placeholder, \lambda)$ vanishing when $z=0$, and recalling the Dirichlet condition on $\Gamma_0 = \{y=0\}$, we can integrate the second two terms above to obtain the alternate formula
\begin{equation}\label{toy flow force}
  \flowforce(u, \lambda; x) =
  \frac 12 \int_0^1 \left( u_y^2(x,y)-u_x^2(x,y) \right) \, dy - \frac 12 u^2(x,1) + G(u(x,1),\lambda).
\end{equation}
We can then use $\flowforce$ to characterize the conjugate flows in the sense of Definition~\ref{conjugate definition}.  It is easy to confirm that $U = U(y)$ solves \eqref{toy problem} if and only if $U = r y$, for $r$ a root of $g(\placeholder, \lambda)$. The corresponding energy \eqref{toy flow force} for $U$ is simply
\begin{equation}
  \flowforce(U, \lambda) = G(r, \lambda). \label{toy flow force G relation} 
\end{equation}
  
Thus the conjugate flows for \eqref{toy problem} have an easy visual interpretation:  they are linear functions whose slopes are (distinct) critical points of $G$ that lie on the same level set.  In that sense, $G$ plays an analogous role to the potential $V$ in the Hamiltonian system \eqref{toy ODE}.  

With that in mind, we now impose some structural assumptions on $g$ and $G$.  As we wish to construct heteroclinic orbits, suppose that
\begin{equation}
  g(0, \lambda) = 0, \quad g(\lambda, \lambda) = 0 \qquad \textrm{for all }  \lambda \in \mathbb{R}, \label{g assumptions} 
\end{equation}
and
\begin{equation}
  \begin{alignedat}{2}
    &G(0, \lambda) = G(\lambda, \lambda) = 0 &\qquad& \textrm{for all }  \lambda \in \mathbb{R},\\
     &G_{rr}(0, \lambda), \, G_{rr}(\lambda,\lambda)  > 0  && \text{for all } \lambda \in \mathcal O \setminus \{0\},\\
      &G(\placeholder,\lambda)^{-1}(0) \cap g(\placeholder, \lambda)^{-1}(0) = \{0, \lambda \}
    &\qquad& \textrm{for all } \lambda \in \mathcal{O},
  \end{alignedat}
    \label{G assumptions} 
\end{equation}
  for some neighborhood of the origin $\mathcal{O} \subset \mathbb{R}$.  Note that \eqref{g assumptions} ensures that $0$ and $\lambda y$ are $x$-independent solutions for all $\lambda$, while the first line of \eqref{G assumptions} says that they are conjugate.   As we show in Lemma~\ref{toy ripple lemma}, the convexity of $G$ asked for in \eqref{G assumptions} is equivalent to the spectral assumption \eqref{spectral assumption}, while the remaining condition is related to heteroclinic degeneracy \ref{gen hetero degeneracy}.

We can now state the main result.  As in the previous section, choose $\alpha \in (0,1)$ and take 
\[ \Xspace := \left\{ u \in C^{2+\alpha}(\overline{\Omega}) : u|_{\Gamma_0} = 0 \right \}, \qquad \Yspace := C^{\alpha}(\overline{\Omega}) \times C^{1+\alpha}(\mathbb{R}) \]
with the spaces $\Xspace_\bdd$, $\Yspace_\bdd$, $\Xspace_\infty$, $\Yspace_\infty$ defined accordingly.  Let $\F \colon \Xspace_\bdd \times \mathbb{R}^2 \to \Yspace_\bdd$ be the nonlinear operator corresponding to the PDE \eqref{toy problem}.   

\begin{theorem}[Global bifurcation] \label{toy global bifurcation theorem} Consider the semilinear elliptic problem \eqref{toy problem}.   Assume that the structural conditions \eqref{g assumptions}, \eqref{G assumptions} hold, and also that 
  \begin{equation}
    g_{zz\lambda}(0,0) < 0.  \label{toy nondegeneracy assumption} 
  \end{equation} 
Then there exist global continuous curves $\cm^\pm$ of monotone front solutions to \eqref{toy problem} lying in $\Xspace_\infty$.  
  \begin{enumerate}[label=\rm(\alph*)]
  \item \label{toy global bifurcation monotone part} On $\cm^+$, all the fronts are strictly monotone increasing while on $\cm^-$ they are strictly monotone decreasing.  
  \item \label{toy global bifurcation alternatives part} As one follows $\mathscr{C}^\pm$, one of the three alternatives in Theorem~\ref{general global bifurcation theorem}\ref{gen alternatives} must occur.
  \item \label{toy global bifurcation uniqueness part} Moreover, the curves $\mathscr C^\pm$ leave a small neighborhood of $(0,0)$ in $\Xspace_\infty \times \R$ and do not re-enter.
  \end{enumerate}
\end{theorem}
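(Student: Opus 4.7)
The argument has three stages: a local bifurcation at the degenerate point $(u,\lambda) = (0,0)$, verification of the hypotheses of Theorem~\ref{general global bifurcation theorem} along the resulting local curve, and a local uniqueness statement to obtain (c).

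\emph{Step 1: Singularity and local bifurcation.} Differentiating the identities $g(0,\lambda) = 0$ and $g(\lambda,\lambda) = 0$ in $\lambda$ gives $g_\lambda(0,\lambda) = 0$ and $g_z(\lambda,\lambda) + g_\lambda(\lambda,\lambda) = 0$; evaluating at $\lambda = 0$ yields $g_z(0,0) = 0$. A short ODE calculation on $(0,1)$ then shows that the transverse principal eigenvalue at $u = 0$, $\lambda = 0$ is exactly zero, so $(0,0)$ is singular in the sense of \eqref{local singular assumption}. I would next reformulate \eqref{toy problem} as a spatial dynamical system in $x$ and apply a center manifold reduction in the style of Kirchg\"assner and Mielke. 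The linearized spatial operator at $\lambda=0$ has a nonsemisimple zero eigenvalue of algebraic multiplicity two (reflecting the vanishing transverse principal eigenvalue together with translation invariance), while the remaining spectrum is hyperbolic, so the reduced system is planar. Because \eqref{toy problem} has the variational structure exploited in Section~\ref{variational section}, the reduced flow is Hamiltonian with conserved quantity inherited from $\flowforce$. The nondegeneracy assumption \eqref{toy nondegeneracy assumption} ensures that the leading-order coefficient of the $\lambda A^2$ term in the reduced potential is nonzero, yielding a pitchfork-type splitting with equilibria $A = 0$ and $A = \lambda + O(\lambda^2)$. The Hamiltonian structure then furnishes heteroclinic orbits between these equilibria for small $\lambda > 0$ and $\lambda < 0$, which lift to local curves $\cm_\loc^\pm \subset \Xspace_\infty \times \R$ of strictly monotone increasing and decreasing fronts, respectively.

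\emph{Step 2: Verification of hypotheses.} Along $\cm_\loc^\pm$, the singular assumption \eqref{local singular assumption} is immediate from Step 1. The kernel condition \eqref{kernel assumption} reduces, via the center manifold decomposition, to a finite-dimensional statement on the reduced planar system, where it holds because the heteroclinic constructed above is a simple, nondegenerate connection. For the spectral condition \eqref{spectral assumption}, the limiting states along $\cm_\loc^\pm$ are of the form $U = ry$ with $g(r,\lambda)=0$; a direct ODE eigenvalue computation shows that the principal eigenvalue of the transverse linearization at such a state is strictly negative precisely when $g_z(r,\lambda) > 0$, which by $g(r,\lambda) = 0$ and $G_z = g$ is exactly the strict convexity of $G(\placeholder,\lambda)$ at $r$. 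The second line of \eqref{G assumptions} therefore supplies \eqref{spectral assumption} at both limiting states, and continuity propagates it along $\cm_\loc^\pm$.

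\emph{Step 3: Global continuation and local uniqueness.} Applying Theorem~\ref{general global bifurcation theorem} to each of $\cm_\loc^\pm$ produces global curves $\cm^\pm$ of front solutions along which, by Theorem~\ref{monotonicity theorem}, strict monotonicity is preserved. This yields (a), and the alternatives in (b) are exactly \ref{gen blowup alternative}, \ref{gen hetero degeneracy}, \ref{gen ripples} of Theorem~\ref{general global bifurcation theorem}. For (c), the center manifold reduction of Step 1 exhausts the set of small fronts in $\Xspace_\infty \times \R$ near $(0,0)$: any sufficiently small monotone front coincides, up to translation (killed by the functional $\mathcal C$), with a point on $\cm_\loc^+$ or $\cm_\loc^-$. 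Hence if $\cm^\pm$ re-entered a small enough neighborhood of $(0,0)$, it would meet $\cm_\loc^\pm$ again and, by local real analyticity and maximality, form a closed loop, contradicting Theorem~\ref{general global bifurcation theorem}\ref{gen reconnect}.

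The main obstacle is Step 1: one must set up the spatial dynamical system on the strip with Robin boundary conditions, verify the hyperbolic/center splitting of the spectrum at the doubly degenerate origin, and carry the variational structure through the reduction so that the resulting planar system is Hamiltonian with normal form controlled by $g_{zz\lambda}(0,0)$. The verification of \eqref{kernel assumption} along the local curve is likewise most naturally done at the level of the reduced system. Once these technical points are settled, Steps 2 and 3 are essentially a matter of invoking the general framework already developed in Sections~\ref{general theory section}--\ref{variational section}.
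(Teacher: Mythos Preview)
Your proposal is correct and follows essentially the same route as the paper: a center manifold reduction at $(0,0)$ produces local curves $\cm_\loc^\pm$ of strictly monotone fronts (the paper packages this as Theorem~\ref{toy small amplitude theorem}, using the H\"older-space reduction of \cite{chen2019center} rather than a Kirchg\"assner--Mielke formulation, but the mechanism is the same), the hypotheses \eqref{kernel assumption}, \eqref{spectral assumption}, \eqref{local singular assumption} are then checked exactly as you describe (the paper isolates the convexity criterion for \eqref{spectral assumption} as Lemma~\ref{toy ripple lemma}), and Theorem~\ref{general global bifurcation theorem} is applied. Your argument for part~(c) via local uniqueness plus the no-closed-loop conclusion of Theorem~\ref{general global bifurcation theorem}\ref{gen reconnect} is precisely what underlies the paper's one-line reference to Theorem~\ref{toy small amplitude theorem}\ref{toy uniqueness part}; one small correction is that the truncated reduced ODE actually has three equilibria $0,\lambda/2,\lambda$ rather than a pitchfork pair, but the heteroclinic connection between $0$ and $\lambda$ that you need is unaffected.
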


The proof of this theorem  represents a fairly concise application of Theorem~\ref{general global bifurcation theorem}.  In Section~\ref{toy local section}, we begin the process by constructing local curves $\cm_\loc^\pm$.  The hypotheses of the general theory are then verified in Section~\ref{toy global section}, furnishing Theorem~\ref{toy global bifurcation theorem}.  Finally, in Section~\ref{toy alternatives section}, we discuss the realizability of the alternatives.    

\subsection{Small-amplitude theory} \label{toy local section}

As the first step, we will prove the existence of small-amplitude monotone front solutions to \eqref{toy problem}.  The mechanical intuition makes it clear how to proceed.  At $\lambda = 0$, $G$ has a unique critical point corresponding to the trivial solution $u = 0$.  As $\lambda$ moves to the left or right, a second rest point develops that will be conjugate to $0$.  We therefore seek a local curve of heteroclinics solutions parameterized by $\lambda$ that bifurcate at $\lambda =0$ from the trivial solution.

\begin{theorem}[Small-amplitude fronts] \label{toy small amplitude theorem}  Under the hypotheses of Theorem~\ref{toy global bifurcation theorem}, there exists a $C^0$ curve $\cm_\loc$ of solutions $(u,\lambda)$ to \eqref{toy problem} that admits the parameterization
\[ \cm_\loc = \left\{ \left( u(\lambda), \lambda \right) :  |\lambda| < \varepsilon_0 \right\} \subset \Xspace_\infty \times \mathcal{O},  \]
for some $\varepsilon_0 > 0$.   Moreover, the following statements hold along $\cm_\loc$.
\begin{enumerate}[label=\rm(\alph*)]
\item \label{toy asymptotics part} \textup{(Asymptotics)} The solutions have leading form expressions given by
\begin{equation}
 \label{toy problem local asymptotics} 
    u(\lambda)(x,y)  = \frac{\lambda}{2} \left(1+ \tanh{(\kappa_1 |\lambda| x)}\right) y   + O(\lambda^2)  \qquad \textrm{in } \Xspace_\bdd 
\end{equation}
for an explicit positive constant $\kappa_1$ given in \eqref{toy def coeff}.   

\item \label{toy local monotonicity part} \textup{(Monotonicity)}  If $\lambda > 0$, then $(u(\lambda), \lambda)$ is a strictly increasing monotone front, while for $\lambda < 0$, it is  a strictly decreasing monotone front.  

\item \label{toy uniqueness part} \textup{(Uniqueness)} In a neighborhood of $(0,0)$ in $\Xspace_\infty \times \mathbb{R}$, $\cm_\loc$ comprises all monotone fronts (up to translation).

\item \label{toy kernel part} \textup{(Kernel)}  For all $0 < |\lambda| < \varepsilon_0$, the kernel of $\F_u(u(\lambda), \lambda)$ is one dimensional and generated by $\partial_x u(\lambda)$.    

\end{enumerate} 
\end{theorem}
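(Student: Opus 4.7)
The plan is to construct $\cm_\loc$ by a center manifold reduction in the spirit of \cite{chen2019center} (the Mielke framework for quasilinear elliptic problems on cylinders). Writing $v := u_x$ and recasting \eqref{toy problem} as a first-order spatial dynamical system in $x$, the trivial state $u \equiv 0$ becomes an equilibrium at $\lambda = 0$. Its linearization has a two-dimensional generalized kernel spanned by the zero-energy modes $(y,0)$ and $(0,y)$, while the remaining spectrum consists of nonzero real values $\pm k_n$ $(n\ge 1)$ with $\tan k_n = k_n$, and is hence bounded uniformly away from the imaginary axis. This spectral gap, together with the real-analyticity of $g$, is exactly the prerequisite for a parameter-dependent center manifold reduction.

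Invoking that reduction, I would obtain a smooth, $\lambda$-dependent two-dimensional invariant manifold with coordinates $(A,B)$ on which every sufficiently small bounded solution of \eqref{toy problem} in $\Xspace_\infty$ is represented as
\begin{equation*}
  u(x,y) \;=\; A(x)\,y \;+\; \psi\bigl(A(x), B(x), \lambda\bigr)(y),
\end{equation*}
with $\psi = O\bigl((|A|+|B|+|\lambda|)^2\bigr)$ taking values in the complement of the center subspace. The reduced equations inherit a Hamiltonian structure from the conserved flow force of Lemma~\ref{conserved quantity lemma}: substituting this ansatz into $\flowforce$, retaining leading terms, and exploiting the fact that the combined hypotheses \eqref{g assumptions} and $G(\lambda,\lambda) = 0$ force $g_z(0,0) = g_{zz}(0,0) = g_{z\lambda}(0,0) = 0$, a direct Taylor computation yields
\begin{equation*}
  G(z, \lambda) \;=\; -\tfrac{1}{12}\, g_{zz\lambda}(0,0)\, z^2 (z-\lambda)^2 \;+\; O(5)
\end{equation*}
and, to leading order on the center manifold, the reduced conservation law
\begin{equation*}
  -\tfrac{1}{6}\, B^2 \;+\; G(A,\lambda) \;=\; 0.
\end{equation*}
The heteroclinic from $A = 0$ to $A = \lambda$ sits on this zero energy level, and the resulting ODE $A_x^2 = 6 G(A,\lambda) + \cdots$ integrates explicitly (using the double zeros of $G(\cdot,\lambda)$ at $0$ and $\lambda$ together with the strict sign $g_{zz\lambda}(0,0) < 0$) to $A(x) = \tfrac{\lambda}{2}\bigl(1 + \tanh(\kappa_1 |\lambda| x)\bigr)$ with $\kappa_1 := \tfrac{1}{2}\sqrt{-g_{zz\lambda}(0,0)/2}$. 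Selecting the unique translate with $\mathcal C u = 0$ produces the curve $\cm_\loc$ and the expansion in \ref{toy asymptotics part}.

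The remaining conclusions follow from standard properties of the reduction. Monotonicity \ref{toy local monotonicity part} is immediate from the strict sign of $A_x$ (matching $\signum{\lambda}$) on the heteroclinic, together with the fact that the correction $\psi$ is of higher order. Uniqueness \ref{toy uniqueness part} holds because the center manifold captures all sufficiently small bounded solutions in $\Xspace_\infty$, and on the zero energy level the only orbit (modulo $x$-translation) connecting the two conjugate equilibria $0$ and $\lambda y$ is the heteroclinic just constructed. For the one-dimensional kernel claim \ref{toy kernel part}, I would linearize the reduced second-order Hamiltonian ODE along the strictly monotone heteroclinic: since the reduction respects $x$-translation and the non-translation mode of the linearized reduced ODE grows exponentially, the only bounded kernel element lifts to $\partial_x u(\lambda)$. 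The chief technical obstacle is carrying out the parameter-dependent reduction uniformly for $\lambda$ in a neighborhood of $0$ and identifying the quartic (rather than quadratic) degeneracy of $G$, which is precisely what forces every hypothesis in \eqref{g assumptions}, \eqref{G assumptions}, and \eqref{toy nondegeneracy assumption} to be used.
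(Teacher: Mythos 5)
Your proposal is correct and follows essentially the same route as the paper: a center manifold reduction in the framework of \cite{chen2019center}, with the conserved flow force providing the conservation law on the reduced system and the structural hypotheses \eqref{g assumptions}, \eqref{G assumptions}, \eqref{toy nondegeneracy assumption} pinning down the reduced ODE. The only cosmetic difference is that where the paper Taylor-expands the reduced right-hand side $f$ and the conserved quantity $h$ separately and then derives the coefficient relations $g_{12}=-\tfrac13 g_{21}$, $g_{30}=-\tfrac23 g_{21}$, you package the same information in the quartic factorization $G(z,\lambda) = -\tfrac{1}{12}g_{zz\lambda}(0,0)\,z^2(z-\lambda)^2 + O(5)$ and integrate the resulting first-order logistic equation, arriving at the identical $\kappa_1$ (since $g_{21}=g_{zz\lambda}(0,0)/2$). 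Both treatments still need the rescaling-in-$\lambda$ argument that the paper carries out explicitly to control the higher-order remainder and guarantee persistence of the heteroclinic on the zero-energy level set; you flag this as the ``chief technical obstacle'' but should be aware that it is not optional.
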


Our approach is based on the center manifold reduction theory recently introduced in \cite{chen2022center}, which is more convenient here than traditional spatial dynamics methods.  First, it is formulated in H\"older spaces as required by the global theory.  As we shall see shortly, it also allows us to compute the reduced equation through a power series expansion that is particularly simple for the present problem.  It also gives us the freedom to choose the projection onto the kernel of the linearized operator, which is useful in establishing the monotonicity of solutions.

\subsubsection*{Notation}
In order to construct the center manifold, it will be necessary to (temporarily) expand our function spaces to include solutions exhibiting some growth at infinity.  For $\nu \in \mathbb{R}$, we define the exponentially weighted H\"older space 
\[ C_\nu^{k+\alpha}(\overline{\Omega}) := \left\{ u \in C^{k+\alpha}(\overline{\Omega}) : \| u \|_{C_\nu^{k+\alpha}(\Omega)} < \infty \right\}, \]
where
\[ \| u \|_{C_\nu^{k+\alpha}(\Omega)} := \sum_{|\beta| \leq k} \| \sech{(\nu x)} \partial^\beta u \|_{C^0(\Omega)} + \sum_{|\beta| = k} \| \sech(\nu x) |\partial^\beta u|_\alpha \|_{C^0(\Omega)}, \]
and $|\placeholder|_\alpha$ is the local H\"older seminorm
\[ |u|_\alpha(x,y) := \sup_{(\tilde x, \tilde y) \in B_1(x,y) \cap \Omega} \frac{| u(\tilde x, \tilde y) - u(x,y)|}{ |(\tilde x -x, \tilde y -y)|^\alpha}. \]
Let $\Xspace_\nu$ and $\Yspace_\nu$ denote the corresponding versions of $\Xspace_\bdd$ and $\Yspace_\bdd$.  

\subsubsection*{Center manifold reduction}

Using \eqref{g assumptions} we compute that  the linearized transversal operator at $(u,\lambda) = (0,0)$ is given by  
\[ \limL_-^\prime  \colon \Xspace^\prime \to \Yspace^\prime \qquad w \mapsto \begin{pmatrix}  w_{yy} \\  \left( w_y - w  \right)\Big|_{\Gamma_1} \end{pmatrix}.\]
It is easily confirmed that $0$ is a simple eigenvalue of $\limL_-^\prime$ with corresponding eigenfunction $\varphi_0 = \varphi_0(y) := y$, while the remainder of the spectrum is strictly negative. 
As a consequence, if $\mathscr{F}_u(0,0)$ is viewed as a mapping $\Xspace_\nu \to \Yspace_\nu$, for $0 < \nu \ll 1$, its null space is two dimensional and characterized by
\[  \ker\F_u(0,0) \colon \Xspace_\nu \to \Yspace_\nu = \left\{ \left( A + B x\right) \varphi_0 \in \Xspace_\nu : (A,B) \in \mathbb{R}^2 \right \}.\]
A convenient projection $\FSproj$ onto this kernel is found by setting $(A,B)$ to be $(u, u_x)$ evaluated at the chosen point $(0,1) \in \Gamma_1$:  
\[ \FSproj u := \left( u(0,1) +  u_x(0,1) x \right) \varphi_0(y).\]

We have now verified the hypotheses of the center manifold reduction theorem \cite[Theorem 1.1]{chen2022center}, the conclusions of which are recorded below.

\begin{lemma}[Center manifold] There exists $\nu > 0$, neighborhoods $\mathscr N \subset \Xspace_\bdd \times \mathcal{O}$ and $N \subset \mathbb{R}^3$ and a coordinate map $\Psi = \Psi(A,B, \lambda)$ satisfying 
  \[ \Psi \in C^4(\mathbb{R}^3, \Xspace_\nu), \qquad \Psi(0,0,\lambda) = 0 \quad \textrm{for all } \lambda, \qquad \Psi_A(0,0,0) = \Psi_B(0,0,0) = 0,\]
such that the following hold
\begin{enumerate}[label=\rm(\alph*)]
\item Suppose that $(u, 0, \lambda) \in \mathscr N$ solves \eqref{toy problem}.  Then $v := u(\placeholder, 1)$ solves the second-order ODE
  \begin{equation}
    v^{\prime\prime} = f(v, v^\prime, \lambda), \label{gen reduced equation} 
  \end{equation}
where $f \colon \mathbb{R}^3 \to \mathbb{R}$ is the $C^4$ mapping 
\[ f(A,B, \lambda) := \frac{d^2}{dx^2} \Big|_{x=0} \Psi(A,B,\lambda)(x,0).\]
\item Conversely, if $v$ satisfies the ODE \eqref{gen reduced equation} and $(v(x), v^\prime(x), \lambda) \in N$ for all $x$, then $v = u(\placeholder, 1)$ for a solution $(u,\lambda) \in \mathscr N$ of the PDE \eqref{toy problem}.  Moreover, 
\[ u(x+\tau, y) = (\FSproj u)(\tau,y) + \Psi(v(x), v^\prime(x), \lambda)(\tau, y), \qquad \textrm{for all } \tau \in \mathbb{R}.\]
\end{enumerate}
\end{lemma}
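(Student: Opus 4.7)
The plan is to realize this lemma as a direct application of the center manifold reduction theorem of \cite{chen2019center}, whose hypotheses have largely been checked in the preceding discussion. My first step is to recast the elliptic problem \eqref{toy problem} as an ill-posed abstract evolution equation in the $x$-variable, with state $w(x) := (u(x,\placeholder),u_x(x,\placeholder))$ taking values in a Banach space of cross-sections over $(0,1)$. In this framework, the linearization at the trivial front becomes a closed unbounded operator whose spectrum $\sigma$ satisfies $-\sigma^2 \in \spec(\limL_-')$, and the nonlinear part inherits the real-analyticity of $g$ through the structure \eqref{g assumptions}.

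The core verification is the spectral picture. As already observed, $\limL_-'$ has $0$ as a simple eigenvalue with eigenfunction $\varphi_0(y)=y$, and all of its remaining spectrum is real, strictly negative, and bounded away from $0$. Translating via $-\sigma^2$, the spatial linearization therefore has $0$ as an isolated point of its spectrum with algebraic multiplicity two and geometric multiplicity one, together with a uniform spectral gap $\{|\realpart \sigma|\geq c\}$ around $0$. This gap supplies the weight parameter $\nu \in (0,c)$ used to define $\Xspace_\nu$ and gives the exponential dichotomy required by the cited theorem. One then checks that $\FSproj u := (u(0,1) + u_x(0,1)\,x)\,\varphi_0(y)$ is a continuous projection of $\Xspace_\nu$ onto the two-dimensional kernel $\ker \F_u(0,0)$, and that $\mathscr F(0,\lambda) = 0$ for all $\lambda$ in view of \eqref{g assumptions}, so that the trivial family persists through the parameter.

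With these ingredients in place, \cite[Theorem 1.1]{chen2019center} furnishes the neighborhoods $\mathscr N \subset \Xspace_\bdd \times \mathcal O$ and $N \subset \R^3$ together with a $C^4$ map $\Psi = \Psi(A,B,\lambda)$ into $\Xspace_\nu$ satisfying the stated normalizations $\Psi(0,0,\lambda) = \Psi_A(0,0,\lambda) = \Psi_B(0,0,\lambda) = 0$. Part (b) is then the parameterization conclusion of the theorem: every PDE solution in $\mathscr N$ admits the graph representation $u(x+\tau,y) = (\FSproj u)(\tau,y) + \Psi(v(x),v'(x),\lambda)(\tau,y)$ with $v(x) := u(x,1)$, and $(v,v')$ evolves according to a $C^4$ planar ODE on $N$. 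Differentiating the ansatz twice in the $\tau$ slot, which annihilates the linear projection term, and evaluating at the distinguished base point yields the reduced equation $v'' = f(v,v',\lambda)$ with the displayed formula for $f$, proving part (a). The main technical obstacle is the rigorous evolution-equation framework for the ill-posed elliptic system together with its spectral theory in the weighted H\"older spaces $C^{k+\alpha}_\nu$, but this is exactly what the cited theorem is engineered to handle; modulo the spectral computation for $\limL_-'$ and the choice of $\FSproj$, the entire lemma is a direct invocation of \cite[Theorem 1.1]{chen2019center}.
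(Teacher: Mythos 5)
Your proposal is correct and, in substance, is the same as the paper's: the lemma is recorded as a direct consequence of \cite[Theorem 1.1]{chen2019center}, and the only work is to verify the hypotheses, which you do (the transversal operator $\limL_-'$ has $0$ as a simple eigenvalue with eigenfunction $\varphi_0(y)=y$, the rest of the spectrum is real and strictly negative, giving a spectral gap; $\ker\F_u(0,0)\colon\Xspace_\nu\to\Yspace_\nu$ is two-dimensional and $\FSproj$ projects onto it; and $\F(0,\lambda)\equiv 0$ by \eqref{g assumptions}). The one cosmetic departure is that you package the spectral verification in the language of an ill-posed first-order evolution equation in $x$ with state $(u,u_x)$ and speak of an exponential dichotomy for the resulting unbounded operator; the paper deliberately avoids this spatial-dynamics reformulation, observing that \cite{chen2019center} is formulated directly for the elliptic PDE on the cylinder in the weighted H\"older spaces $C_\nu^{k+\alpha}$, so the hypotheses to check are precisely the PDE-level ones about $\limL_-'$ and $\ker\F_u(0,0)$. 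Your dynamical-systems framing is a valid piece of intuition and leads to the equivalent spectral condition, but it is an unnecessary detour relative to how the cited theorem is actually stated and applied.
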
 

The next step is to compute $f$ and the corresponding reduced equation \eqref{gen reduced equation} for the problem at hand.  Our approach relies on \cite[Theorem 1.6]{chen2022center}, which states that $\Psi$ admits the Taylor expansion 
\[ \Psi(A,B,\lambda) = \sum_{\substack{ 2 \leq i+j+k \leq 3 \\ i + j \geq 1}} \Psi_{ijk} A^i B^j \lambda^k + O\Big( ( |A| + |B| ) ( |A| + |B| + |\lambda| )^3 \Big) \qquad \textrm{in } \Xspace_\nu.  \]
Here the coefficients $\Psi_{ijk}$ are the unique functions in $\Xspace_\nu$ with $\FSproj \Psi_{ijk} = 0$ and 
\[ \partial_A^i \partial_B^j \partial_\lambda^k\Big|_{(A,B,\lambda) = 0} \F( (A+Bx) \varphi_0 + \Psi(A,B,\lambda),  \lambda) = 0 \qquad \textrm{for all } i+j+k \leq 3, \]
where the derivatives above are taken in the formal G\^ateaux sense. 

This leads to a hierarchy of linear equations of the general form
\[ \left\{ \begin{aligned}
\F_u(0,0) \Psi_{ijk} & = R_{ijk} \\
\FSproj \Psi_{ijk} & = 0,
\end{aligned} \right.\]
for some $R_{ijk} \in \Yspace_\mu$ that are explicit given previously computed terms; for more discussion, see \cite[Section 4]{chen2022center}. By Fredholm theory \cite[Lemma 2.3]{chen2022center}, these problems are uniquely solvable in $\Xspace_\nu$.  Following this procedure, we ultimately find that 
\begin{equation}
  f(A,B,\lambda) =   3g_{12} \lambda^2 A +  3 g_{21} \lambda A^2 + 3 g_{30} A^3  + r(A, B, \lambda)  \label{toy reduced ODE} 
\end{equation}
where $g_{\ell m} := (\partial_z^\ell \partial_\lambda^m  g)(0,0)/(\ell!m!)$, and $r \in C^3$ is a remainder term with 
\[ r(A,B,\lambda) = O\Big( |A| \big( |A| + |B|^{1/2} +|\lambda| \big)^3 + |B| \big(|A| + |B|^{1/2} +|\lambda| \big)^2 \Big).\] 
Notice that we do not see any $B$ dependence in the (expected) leading order part of $f$; this is a consequence of the invariance of \eqref{toy problem} under reflection in $x$.  Indeed, that symmetry actually guarantees that $f$ is even in $B$.  

The coefficients in \eqref{toy reduced ODE} are in fact related to one another due to the structural assumptions on $g$ and $G$ in \eqref{g assumptions} and \eqref{G assumptions}.  Differentiating  the identity $g(\lambda,  \lambda) = 0$ three times and evaluating at $\lambda = 0 $ gives
\[ g_{zzz} + 3g_{zz\lambda} + 3g_{z\lambda\lambda} = 0 \qquad \textrm{at } (0,0). \]
Similarly, differentiating the equation $G(\lambda, \lambda) = 0$ four times, and recalling that $G_z = g$, we find that
\[ g_{zzz} + 4g_{zz\lambda} + 6 g_{z\lambda\lambda} = 0 \qquad \textrm{at } (0,0).\]
Together, these imply that
\begin{equation}
  g_{12} = -\frac{1}{3} g_{21}, \qquad g_{30} = -\frac{2}{3} g_{21}.
  \label{toy coefficient relations} 
\end{equation} 

Taking $f$ and neglecting the remainder term gives the following truncated reduced equation on the center manifold:
\begin{equation}
  \label{toy reduced truncated ODE} 
  v_{xx}^{0} 
  = g_{21} \Bigl( -\lambda^2 v^0 + 3  \lambda (v^0)^2 - 2  (v^0)^3 \Bigr).
\end{equation}
Under the assumption \eqref{toy nondegeneracy assumption}, the above ODE has the explicit heteroclinic orbit 
\begin{equation}
  v^0(x) =  \lambda  \frac{1+\tanh{(\kappa_1 |\lambda| x)}}{2},\label{toy truncated heteroclinic} 
\end{equation}
where
\begin{equation}
  \kappa_1  := \frac{|g_{21}|^{1/2}}{2}.\label{toy def coeff} 
\end{equation}

It is also important to confirm that the conserved quantity $\flowforce$ for the full PDE has an analog on the center manifold.  Setting 
\[ h(A,B,\lambda) := \flowforce\big( (A+Bx) \varphi_0(y) + \Psi(A,B,\lambda); x\big), \]
we indeed have that, for any solution $v$ of the reduced ODE \eqref{toy reduced ODE}, $h(v, v^\prime, \lambda)$ is independent of $x$.  Expanding as before, we find that 
\begin{equation}
  \label{toy flow force expansion} 
  h(A,B,\lambda) = 
  \frac{g_{30}}{4}  A^4 + \frac{g_{21}}{3} \lambda A^3 + \frac{g_{12}}{2} \lambda^2 A^2  - \frac{1}{6} B^2 + \tilde r(A, B, \lambda),  
\end{equation} 
for a $C^4$ remainder 
\[ \tilde r(A,B,\lambda) = O\Big( |A| \big( |A| + |B|^{1/2} +|\lambda| \big)^4 + |B| \big(|A| + |B|^{1/2} +|\lambda| \big)^3 \Big).\] 
We are now prepared to prove the existence of small-amplitude monotone front solutions to \eqref{toy problem} 

 \begin{proof}[Proof of Theorem~\ref{toy small amplitude theorem}] 
 Working in the rescaled variables 
\[ x =: |\lambda|^{-1} X, \quad v(x) =: \lambda V(X), \quad v_x(x) =: \lambda |\lambda| W(X),\]
 the (full) reduced ODE \eqref{gen reduced equation} can be recast as the planar system 
\begin{equation}
  \left\{ \begin{aligned} 
    V_X &= W \\
  W_X &= 3g_{12} V + 3g_{21} V^2 + 3g_{30} V^3 + R(V, W, \lambda),\end{aligned} \right.\label{toy planar reduced ODE} 
\end{equation}
where the remainder term is
\[ R(V,W,\lambda) = O\big( |\lambda|(|V| + |W|) \big).\]

   When $\lambda = 0$, \eqref{toy planar reduced ODE} has the solution
\[ V^0(X) := \frac{1+\tanh(\kappa_1 X)}{2}, \qquad W^0(X) := \kappa_1\frac{ \sech^2(\kappa_1 X) }{2}\]
   shown in Figure~\ref{toy phase portrait figure}, which corresponds to the rescaling of $v^0$.  We must now show that this solution persists when $|\lambda| > 0$.    For that, we will need to make use of the conserved quantity
\[ H(V,W,\lambda) := \frac{1}{2} W^2  + \frac{g_{21}}{2}  V^2 - g_{21} V^3 + \frac{ g_{21}}{2}  V^4 + \tilde R(V,W, \lambda) \]
that results from composing $h$ with the scaling and using the relations \eqref{toy coefficient relations}.   One checks that the error term 
\[ \tilde R(V, W, \lambda) = O\big(|\lambda| (|V| + |W|) \big).\]  

\begin{figure}
  \centering
  \includegraphics[scale=1.1]{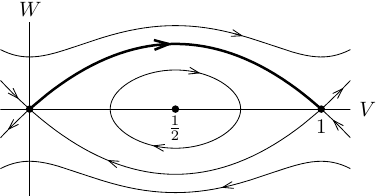}
  \caption{Phase portrait for the rescaled reduced ODE \eqref{toy planar reduced ODE} at $\lambda = 0$.  The thick curve is the heteroclinic orbit $(V^0,W^0)$ connecting the rest points $(0,0)$ and $(1,0)$.}
  \label{toy phase portrait figure}
\end{figure}

   At $\lambda = 0$, the system has rest points $(0,0)$ and $(1, 0)$.  They are connected via $(V^0,W^0)$ and lie on the level set $\{H(\placeholder,\placeholder,0) = 0\}$. For $|\lambda| > 0$, our assumptions \eqref{g assumptions}--\eqref{G assumptions} imply that $(0,0)$ and $(1,0)$ continue to be rest points and to lie on the level set $\{H(\placeholder,\placeholder,\lambda)=0\}$. The existence of a heteroclinic orbit $(V^\lambda, W^\lambda)$ between $(0,0)$ and $(1,0)$ then follows from the nondegeneracy of this level set. Undoing the scaling gives the leading order asymptotics stated in  \eqref{toy problem local asymptotics}, proving part~\ref{toy asymptotics part}.   
 
Next, consider the monotonicity claimed in part~\ref{toy local monotonicity part}. Analyzing the phase portrait of the reduced ODE, we easily verify that 
\[ ( \signum{\lambda} ) \partial_x u(\lambda)(\placeholder, 1) = ( \signum{\lambda} )\partial_x v(\lambda) > 0.\]
 But $\partial_x u(\lambda)$ is harmonic in $\Omega$ and vanishes on $\Gamma_0$ as well in the limits $x \to \pm\infty$.  The maximum principle then tells us that $(\signum{\lambda}) \partial_x u(\lambda) > 0$ in $\Omega \cup \Gamma_1$, meaning $u(\lambda)$ is strictly monotone.  
 
   To show \ref{toy uniqueness part}, first note that for $\lambda=0$, the conjugate flow analysis above together with \eqref{G assumptions} forces all fronts to vanish identically. For $\lambda > 0$, we likewise find that the only conjugate state to $0$ is $\lambda y$, corresponding to the rest points $(0,0)$ and $(1,0)$ of \eqref{toy planar reduced ODE}. Consider a monotone front $(u,\lambda) \in \mathscr N$ with $u \in \Xspace_\infty$. Then the corresponding orbit $(V,W)$ of \eqref{toy planar reduced ODE} must satisfy $0 \le V \le \lambda$ and $W \ge 0$. Since the phase portrait of \eqref{toy planar reduced ODE} is qualitatively the same as in Figure~\ref{toy phase portrait figure}, we conclude that the only possibility is that $u$ is a translate of $u(\lambda)$. The argument for $\lambda < 0$ is similar.

Finally, to prove~\ref{toy kernel part} we will use \cite[Theorem 1.6]{chen2022center} which reduces the issue  to the center manifold.  Specifically, this result tells us that $\dot u \in \ker\F_u(u, 0, \lambda)$ only if $\dot v := \dot u(\placeholder, 1)$ solves the linearized reduced equation
\[  \dot v'' = \nabla_{(A,B)} f(v,  v', \lambda) \cdot ( \dot v,  \dot v'),\]
where $v = u(\lambda)(\placeholder, 1)$.  Let $(\dot V, \dot W)$ be the corresponding rescaled quantities, which solve a nonautonomous planar system of the form
\[ \begin{pmatrix} \dot V_X \\ \dot W_X \end{pmatrix} = \mathcal{M}(X) \begin{pmatrix} \dot V \\ \dot W \end{pmatrix}.\]
Using the expansion of $f$ in \eqref{toy reduced ODE} and the relations \eqref{toy coefficient relations}, we find that 
 \begin{align*}
 \lim_{X \to \pm\infty} \mathcal{M}(X) & =  
 \begin{pmatrix} 0 & 1 \\ 
 |g_{21}| +O(\lambda) & O(\lambda) \end{pmatrix}.
 \end{align*}
Thus $\mathcal{M}$ is strictly hyperbolic upstream and downstream with one negative and one positive eigenvalue.  By a familiar dynamical systems argument, this implies that there cannot be two linearly independent solutions of the reduced linearized problem that are uniformly bounded.  We may then conclude that the kernel of $\F_u(u(\lambda), \lambda) \colon \Xspace_\bdd \to \Yspace_\bdd$ is indeed generated by $\partial_x u(\lambda)$.
\end{proof}

 \subsection{Large fronts for the semilinear problem} \label{toy global section}
 Now that we have local curves of small-amplitude fronts, we seek to continue them globally using Theorem~\ref{general global bifurcation theorem}.  For that, it will be necessary to understand the linearized problem at an arbitrary front.
 
 Letting $(u, \lambda) \in \Xspace_\infty \times \mathbb{R}$ be given, a simple computation shows that 
\begin{align*} \mathscr{F}_u(u,\lambda) \dot u &= \begin{pmatrix} \Delta \dot u \\ \left( \dot u_y - \dot u + g_z(u, \lambda) \dot u \right)|_{\Gamma_1} \end{pmatrix} \\
\mathscr{F}_\lambda(u,\lambda) \dot \lambda &= \begin{pmatrix} 0 \\ g_\lambda(u|_{\Gamma_1}, \lambda) \dot \lambda \end{pmatrix}.\end{align*}
 Now suppose $(u, \lambda)$ is a front-type solution to \eqref{toy problem}.  Our characterization of the conjugate flows \eqref{toy flow force G relation} ensures that $\lim_{x \to \infty} (u - ry) = 0$ for some root $r$ of $g(\placeholder,\lambda)$.  The transversal linearized operators upstream and downstream are therefore
 \[ \limL_-^\prime(u,\lambda) \dot u = \begin{pmatrix}  \dot u_{yy} \\ \left(\dot u_y - \dot u + g_z(0,\lambda) \dot u\right)|_{\Gamma_1} \end{pmatrix},  \quad \limL_+^\prime(u, \lambda) \dot u = \begin{pmatrix}  \dot u_{yy} \\ \left(\dot u_y - \dot u + g_z(r, \lambda) \dot u \right)|_{\Gamma_1} \end{pmatrix}. \]

 It is readily seen that $\xi^2 \geq 0$ is in the spectrum of $\limL_-^\prime(u,\lambda)$ if and only if 
 \[ 1 = \frac{\tanh{\xi}}{\xi} (1- g_z(0, \lambda)  ),\]
 which is possible if and only if $g_{z}(0,\lambda) \leq 0$.  Likewise, $\xi^2 \geq 0$ is an eigenvalue of $\limL_+^\prime(u,\lambda)$ provided that 
 \[ 1 = \frac{\tanh{\xi}}{\xi} (1- g_z(r,\lambda)  ). \]
 Thus the spectral degeneracy alternative \ref{gen ripples} is equivalent to the loss of strict convexity of $G$ at the corresponding critical point.   
In summary, we have proved the following.
\begin{lemma}[Convexity condition] \label{toy ripple lemma}
Let $(u, \lambda) \in \Xspace_\infty \times  \mathbb{R}$ be a monotone front solution of \eqref{toy problem} with $\lim_{x \to \infty} u_y = r$.  Then $\prineigenvalue^-(u,\lambda) < 0$  if and only if $G_{zz}(0,\lambda) > 0$, and $\prineigenvalue^+(u,\lambda) < 0$ if and only if $G_{zz}(r, \lambda) > 0$.
\end{lemma}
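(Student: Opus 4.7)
My plan is to reduce everything to an explicit one-dimensional Sturm--Liouville problem and then read off the sign of the principal eigenvalue from an elementary dispersion relation. Since $G$ is the primitive of $g(\placeholder,\lambda)$ vanishing at $z = 0$, we have $G_{zz} = g_z$, so the two claimed equivalences reduce to $\prineigenvalue^-(u,\lambda) < 0 \Leftrightarrow g_z(0,\lambda) > 0$ and $\prineigenvalue^+(u,\lambda) < 0 \Leftrightarrow g_z(r,\lambda) > 0$.  Only the ``$-$'' case requires proof, since the ``$+$'' case is identical after replacing the root $0$ of $g(\placeholder,\lambda)$ by $r$.

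Next, I would restrict the transversal linearized operator $\limL_-^\prime(u,\lambda)$ to $x$-independent functions on $\Omega^\prime = (0,1)$.  Its principal eigenvalue problem is the self-adjoint Sturm--Liouville system
\begin{equation*}
\phi^{\prime\prime} = \sigma \phi \textup{~in~} (0,1), \qquad \phi(0) = 0, \qquad \phi^\prime(1) + \bigl(g_z(0,\lambda) - 1\bigr)\phi(1) = 0.
\end{equation*}
By the standard characterization (Appendix~\ref{principal eigenvalue appendix}), $\prineigenvalue^-(u,\lambda)$ is the largest $\sigma \in \R$ admitting a strictly positive eigenfunction, so its sign is determined by whether any $\sigma \ge 0$ lies in the spectrum with such an eigenfunction.

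Setting $\xi := \sqrt{\sigma} \geq 0$, the only candidates for $\phi$ satisfying $\phi(0) = 0$ are multiples of $\sinh(\xi y)$, or of $y$ when $\xi = 0$; both are strictly positive on $(0,1]$.  Substituting into the Robin condition at $y = 1$ produces the dispersion relation
\begin{equation*}
\xi \coth(\xi) = 1 - g_z(0,\lambda),
\end{equation*}
with the convention $\xi \coth(\xi) \to 1$ as $\xi \to 0$.  Since $\xi \mapsto \xi\coth(\xi)$ is smooth, strictly increasing on $[0,\infty)$, and maps onto $[1,\infty)$, some $\sigma \geq 0$ lies in the spectrum if and only if $1 - g_z(0,\lambda) \geq 1$, that is, if and only if $g_z(0,\lambda) \leq 0$.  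Contrapositively, $\prineigenvalue^-(u,\lambda) < 0$ exactly when $g_z(0,\lambda) > 0$, equivalently when $G_{zz}(0,\lambda) > 0$.

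There is no serious obstacle beyond this elementary calculation; the only mild subtlety is that the borderline case $g_z(0,\lambda) = 0$ yields $\prineigenvalue^- = 0$ with positive eigenfunction $\phi(y) = y$, and this is precisely what forces the \emph{strict} convexity of $G$ demanded in the statement.
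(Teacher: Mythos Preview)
Your proposal is correct and is essentially the same argument as the paper's: the paper also writes down the transversal eigenvalue problem, observes that positive eigenfunctions with $\sigma=\xi^2\ge 0$ must be multiples of $\sinh(\xi y)$ (or $y$), and reads off the equivalent dispersion relation $1=\tfrac{\tanh\xi}{\xi}(1-g_z(0,\lambda))$, which is just your $\xi\coth\xi=1-g_z(0,\lambda)$ rewritten. You have simply filled in a few more details than the paper's terse discussion.
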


The existence of the global bifurcation curves now follows easily. 

\begin{proof}[Proof of Theorem~\ref{toy global bifurcation theorem}]
Let 
  \[ \cm_{\loc}^\pm := \left\{ \left(u(\lambda), \lambda \right) \in \cm_\loc : 0 < |\lambda|  < \varepsilon, ~\pm \lambda > 0 \right\}.\] 
Then by Theorem~\ref{toy small amplitude theorem}\ref{toy local monotonicity part},  solutions on $\cm_\loc^+$ are strictly monotone increasing fronts, while those on $\cm_\loc^-$ are strictly monotone decreasing fronts.  

  Moreover, the structural assumptions on $G$ in \eqref{G assumptions} and Lemma~\ref{toy ripple lemma} imply that the spectral condition \eqref{spectral assumption} holds at each front $(u, \lambda)$ with $\lambda \in\mathcal{O} \setminus \{0\}$.  In particular, this includes all solutions on the local curves.  On the other hand, $G_{zz}(0,0)=0$ means that the bifurcation point $(u_0,\lambda_0) = (0,0)$ satisfies \eqref{local singular assumption}. We have already confirmed in Theorem~\ref{toy small amplitude theorem}\ref{toy kernel part} that the kernel assumption~\eqref{kernel assumption} is satisfied.  Theorem~\ref{general global bifurcation theorem} may therefore be applied to $\cm_\loc^\pm$ yielding the global curves $\cm^\pm$ satisfying \ref{toy global bifurcation monotone part} and \ref{toy global bifurcation alternatives part}. Finally, \ref{toy global bifurcation uniqueness part} follows from Theorem~\ref{toy small amplitude theorem}\ref{toy uniqueness part}.
\end{proof}

\subsection{Realization of the alternatives} \label{toy alternatives section}

In this subsection, we show that the alternatives for the global curve given in Theorem~\ref{toy global bifurcation theorem} are essentially sharp.  Indeed, one of the most appealing features of the PDE \eqref{toy problem} is that it is simple to connect each of the scenarios above with qualitative properties of $G$.

As a first step, we establish some basic uniform regularity results that more precisely characterize which quantities are unbounded in the event that the blowup alternative~\ref{gen blowup alternative} occurs.  

\begin{lemma}[Uniform regularity] \label{toy uniform regularity lemma}
If  $(u, \lambda) \in \Xspace_\infty \times \mathbb{R}$ is a monotone front solution to \eqref{toy problem} with  $|\lambda| < M$, then
  \begin{equation}
    \| u \|_{C^{2+\alpha}(\Omega)} \leq C_1 \| u \|_{L^\infty(\Omega)} \leq C_2, \label{uniform bound} 
  \end{equation}
where  $C_1,C_2 > 0$ depend only on $M$ and 
\[ R :=  \sup \left\{  |r| : (r,\lambda^\prime) \in g^{-1}(0) \cap G^{-1}(0),\,  |\lambda^\prime| < M \right\}. \] 
\end{lemma}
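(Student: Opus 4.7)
The plan is to prove the two inequalities in \eqref{uniform bound} separately: first an $L^\infty$ bound obtained from monotonicity and the conjugate-flow analysis of Section~\ref{toy local section}, then a global Schauder estimate that is linear in $\|u\|_{L^\infty}$, exploiting the fact that, since $g(0,\lambda)=0$, the PDE \eqref{toy problem} can be recast as a linear homogeneous Robin problem with coefficient determined by $u$ itself.

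First I would establish the $L^\infty$ bound. Because $u \in \Xspace_\infty$, the upstream limit is $U_-\equiv 0$, while by Lemma~\ref{fronts are Xinfty lemma} the downstream limit $U_+\in C^{2+\alpha}(\overline{\Omega'})$ solves the $x$-independent version of \eqref{toy problem}. Together with \eqref{g assumptions} this forces $U_+(y)=ry$ for some root $r$ of $g(\placeholder,\lambda)$. Corollary~\ref{conjugate corollary} and \eqref{toy flow force G relation} then give
\[
G(r,\lambda) = \flowforce(U_+,\lambda) = \flowforce(U_-,\lambda) = G(0,\lambda) = 0,
\]
so $(r,\lambda) \in g^{-1}(0)\cap G^{-1}(0)$ and hence $|r|\le R$ by definition. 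Monotonicity of $u$ in $x$ sandwiches $u(x,y)$ between $0$ and $ry$ at each fixed $y\in[0,1]$, giving $|u(x,y)|\le|r|y\le R$ throughout $\overline\Omega$; this yields the second inequality in \eqref{uniform bound} with $C_2 = R$.

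For the Schauder bound, the key observation is that $g(0,\lambda)=0$ and real analyticity of $g$ permit the factorization $g(u,\lambda)=u\,h(u,\lambda)$ with $h$ real analytic. The boundary condition on $\Gamma_1$ then becomes $u_y+c\,u=0$, where the coefficient $c(x):=h(u(x,1),\lambda)-1$ is determined by $u$ itself. Coupled with $\Delta u=0$ in $\Omega$ and $u=0$ on $\Gamma_0$, this is a \emph{linear homogeneous} mixed boundary value problem whose coefficient is bounded by a constant depending only on $R$ and $M$, by virtue of Step~1. A standard bootstrap on fixed-size subcylinders contained in larger ones --- De~Giorgi--Nash (or Krylov--Safonov for oblique-derivative problems) to pass from $L^\infty$ to $C^\alpha$, followed by two rounds of linear Schauder estimates, which upgrade first $c$ then $u$ to $C^{1+\alpha}$, then $C^{2+\alpha}$ --- produces the local estimate
\[
\|u\|_{C^{2+\alpha}((a,a+1)\times(0,1))} \le C_1(R,M)\,\|u\|_{L^\infty((a-1,a+2)\times(0,1))},
\]
with $C_1$ independent of $a\in\R$ by translation invariance of the cylinder. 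Taking the supremum over $a$ gives the first inequality.

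The only mildly subtle point --- and the main ``obstacle'' --- is ensuring that the resulting constant is \emph{linear} in $\|u\|_{L^\infty}$ rather than merely bounded by some function of it. This linearity is automatic from the reformulation above, because after factoring $g=uh$ the problem is linear and homogeneous in $u$; all the nonlinearity has been absorbed into the coefficient $c$, whose $C^{1+\alpha}$ bound depends on $R$ and $M$ but not on the actual size of $u$ within $[-R,R]$.
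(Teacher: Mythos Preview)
Your proof is correct and follows essentially the same strategy as the paper: the $L^\infty$ bound via monotonicity and the conjugate-flow constraint, then a bootstrap to $C^{2+\alpha}$ that is linear in $\|u\|_{L^\infty}$ precisely because $g(0,\lambda)=0$. The one technical difference is in how the boundary condition is packaged for the initial H\"older estimate. The paper rewrites the Robin condition as a Neumann condition $u_y = h$ with $h := u - g(u,\lambda)$ and $\|h\|_{L^\infty}\le C\|u\|_{L^\infty}$, then performs an even reflection across $\Gamma_1$ so that the problem becomes an interior divergence-form equation $\Delta\tilde u = \partial_y\tilde h$, to which the standard De~Giorgi--Nash estimate \cite[Theorem~8.24]{gilbarg2001elliptic} applies directly. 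You instead factor $g(u,\lambda)=u\,h(u,\lambda)$ to obtain a homogeneous oblique-derivative problem and invoke a De~Giorgi--Nash/Krylov--Safonov result for such problems. Both routes are valid; the reflection trick has the slight advantage of citing only interior theory, while your formulation makes the linearity in $u$ more transparent from the outset.
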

\begin{proof}
Throughout the course of the proof, $C$ denotes a generic positive constant depending only on $M$ and $R$.  Because $u \in \Xspace_\infty$, it has well-defined upstream and downstream states that are constrained by $\lambda$.  Indeed, monotonicity implies
\[ \| u \|_{L^\infty} \leq R \]
which proves the second inequality in \eqref{uniform bound}.  

It remains to show that one can control the full $C^{2+\alpha}$ norm of $u$ by in terms of $\| u \|_{L^\infty}$.  Towards that end, note that $u$ solves
  \begin{equation}
    \left\{ \begin{aligned}
      \Delta u & = 0 & \qquad & \textrm{in } \Omega \\
      u_y  & = h & \qquad & \textrm{on } \Gamma_1 \\
      u &=  0 & \qquad &\textrm{on } \Gamma_0
    \end{aligned} \right.
    \label{uniform bound neumann problem} 
  \end{equation}
where 
\[ h := u - g(u, \lambda)  \in \Xspace_\infty, \qquad \| h \|_{L^\infty} \leq C \| u \|_{L^\infty}.\]
It follows from standard elliptic estimates for the Dirichlet problem that, for any $\Omega_0 \subset \subset \Omega \cup \Gamma_0$, 
  \begin{equation}
    \| u \|_{C^{2+\alpha}(\Omega_0)} < C.\label{near B estimate} 
  \end{equation}
In particular, $C$ above is independent of axial translation, so we uniformly control $u$ in $C^{2+\alpha}$ away from $\Gamma_1$ by $C \| u \|_{L^\infty(\Omega)}$.

Next, let $(x_0, 1) \in \Gamma_1$ be given and consider the half-balls
 \[ \Omega_{1/4} := \Omega \cap B_{1/4}((x_0,1)) \quad \subset \quad  \Omega_{1/2} := \Omega \cap B_{1/2}((x_0,1)).\]
If we can prove that the bound \eqref{near B estimate} holds with $\Omega_{1/4}$ in place of $\Omega_0$, then the conclusion of the lemma is immediate.  To accomplish that, we will reflect the problem \eqref{uniform bound neumann problem} and use interior estimates.  Denote by $\tilde \Omega_{1/4}$ and $\tilde \Omega_{1/2}$ the balls $B_{1/4}$ and $B_{1/2}$ centered at $(x_0,1)$, respectively, and define $\tilde u$ to be the even extension of $u$ over $\Gamma_1$.  Thus $\tilde u \in H^1(\tilde \Omega_{1/2})$ is a weak solution to 
\[ \Delta \tilde u = 2\partial_y \tilde h \qquad \textrm{in } \tilde \Omega_{1/2} \]
with $\tilde h := h(x) \chi_{\{ y > 1 \}} \in L^\infty(\tilde \Omega_{1/2})$.  Using the De Giorgi--Nash-type result \cite[Theorem 8.24]{gilbarg2001elliptic}, we conclude that for any $p > 2$, there exists $\tilde \alpha \in (0,1)$ such that 
\[ \| u \|_{C^{\tilde \alpha}(\Omega_{1/4})} \leq \| \tilde u \|_{C^{\tilde \alpha}(\tilde \Omega_{1/4})} \lesssim \| \tilde u \|_{L^2(\tilde \Omega_{1/2})} + \| \tilde h \|_{L^p(\tilde \Omega_{1/2})} \leq C \| u \|_{L^\infty( \Omega_{1/2})}.\]
Therefore, $u$ is uniformly bounded in $C^{\tilde{\alpha}}$ on the segment of $\Gamma_1$ lying in $\tilde \Omega_{1/4}$.  Because $h$ is real analytic, the desired bound \eqref{near B estimate} for the half-ball $\Omega_{1/4}$ is now a consequence of Schauder estimates for \eqref{uniform bound neumann problem} and a standard bootstrapping argument.  This completes the proof. 
\end{proof}
\begin{remark} \label{semilinear bounds remark}
A similar argument yields versions of this result whenever the PDE \eqref{fully nonlinear elliptic pde} is semilinear.  The key point above is to control $\| u \|_{C^{\alpha}}$ by $\|u\|_{L^\infty}$ and $|\lambda|$.  For divergence form semilinear operators (on quite rough domains), this follows from \cite[Proposition 3.6]{nittka2011regularity}.  
\end{remark}

Now we are able to give scenarios in which each of the three alternatives in Theorem~\ref{toy global bifurcation theorem} are expected; these are illustrated in Figure~\ref{toy alternatives figure}.  In particular, we can guarantee that the unboundedness alternative \ref{gen blowup alternative} and spectral degeneracy alternative \ref{gen ripples} happen for some explicit classes of $G$.  We also provide a necessary condition for heteroclinic degeneracy \ref{gen hetero degeneracy}.  

\begin{figure}
\includegraphics[scale=1]{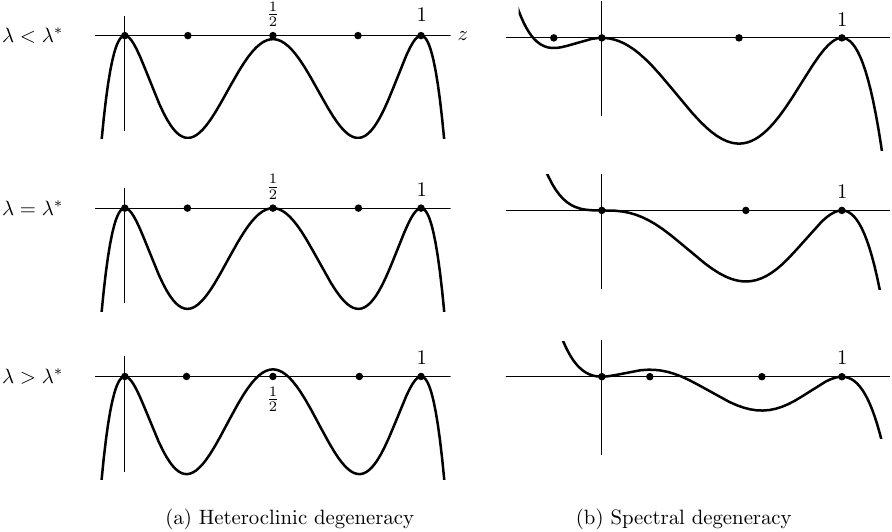}
  \caption{The curves above depict the graph of  $-G(\placeholder, \lambda)$ for two different families of potential and at a subcritical $(\lambda < \lambda^*)$, critical $(\lambda = \lambda^*)$, and supercritical $(\lambda > \lambda^*)$ parameter value.    In (a), as $\lambda$ passes through $\lambda^*$, a second critical point at the energy level $0$ forms, which permits heteroclinic degeneracy. The corresponding orbits of the ODE $\ddot u = G_z(u,\lambda)$ are shown in Figure~\ref{hetero degen phase portrait figure}. In (b), at $\lambda = \lambda^*$, $G(\placeholder, \lambda)$ loses strict convexity at $0$ implying the onset of spectral degeneracy. The corresponding orbits of the ODE $\ddot u = G_z(u,\lambda)$ are shown in Figure~\ref{ripples phase portrait figure}. }
\label{toy alternatives figure}
\end{figure}

 \begin{proposition}[Realization] \label{realization proposition} Let $\cm^\pm$ be the global bifurcation curve furnished by Theorem~\ref{toy global bifurcation theorem}. 
 \begin{enumerate}[label=\rm(\alph*)]
 \item \label{toy unbounded realization} If $G$ satisfies \eqref{G assumptions} for all $\lambda \in \mathbb{R}$, then the unboundedness alternative \ref{gen blowup alternative} must occur.  In particular, 
   \begin{equation}
     |\lambda(s)|  \to \infty \qquad \textrm{as } s \to \pm\infty. \label{toy better blowup} 
   \end{equation}
 
 \item \label{toy degeneracy realization} Heteroclinic degeneracy \ref{gen hetero degeneracy} can occur only if there exists $r \ne 0,\lambda_*$ with $g(r, \lambda_*) = 0$ and $G(r,\lambda_*) = 0$. 
 
 \item \label{toy ripple realization} Suppose that there exist bounded open sets $\mathcal{N}_2 \supset\supset \mathcal{N}_1 \supset \mathcal{O}$ such that
 \[ G(\placeholder, \lambda)^{-1}(0) \cap g(\placeholder, \lambda)^{-1}(0) = \{ 0, \lambda\} \qquad \textrm{for all }  \lambda \in \mathcal{N}_2\]
  and $G$ satisfies \eqref{G assumptions} on $\mathcal N_1$, but
 \[ \min\left\{ G_{zz}(0,\lambda),\, G_{zz}(\lambda,\lambda) \right\}  \to 0 \qquad \textrm{as }  \lambda \to \partial \mathcal{N}_1.\]
 Then necessarily spectral degeneracy \ref{gen ripples} occurs.     
 \end{enumerate}
 \end{proposition}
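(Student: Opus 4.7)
The plan for all three parts is to combine the variational conjugate-flow classification from Lemma~\ref{triple conjugacy lemma}, the convexity--spectral correspondence in Lemma~\ref{toy ripple lemma}, and the uniform regularity of Lemma~\ref{toy uniform regularity lemma} to eliminate the unwanted alternatives in Theorem~\ref{toy global bifurcation theorem}\ref{toy global bifurcation alternatives part}. Throughout, the critical observation is \eqref{toy flow force G relation}: the $x$-independent solutions conjugate to $0$ correspond exactly to nonzero critical points of $G(\placeholder,\lambda)$ lying on the level set $\{G(\placeholder,\lambda) = 0\}$.

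For part~\ref{toy unbounded realization}, I would first rule out heteroclinic degeneracy \ref{gen hetero degeneracy}. By Lemma~\ref{triple conjugacy lemma} any three distinct limiting states produced by \ref{gen hetero degeneracy} are pairwise conjugate in the sense of Definition~\ref{conjugate definition}; translating this through \eqref{toy flow force G relation} gives three distinct critical points of $G(\placeholder,\lambda_*)$ at the level $G = 0$, contradicting the third clause of \eqref{G assumptions}. Next, Lemma~\ref{toy ripple lemma} identifies the spectral degeneracy alternative \ref{gen ripples} with $G_{zz}(0,\lambda(s_n)) \to 0$ or $G_{zz}(\lambda(s_n),\lambda(s_n)) \to 0$; since the strict convexity required by \eqref{G assumptions} now holds on all of $\R$, continuity bounds these quantities below on every compact $\lambda$-interval, so \ref{gen ripples} is impossible along a bounded sequence $\lambda(s_n)$. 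The only remaining option is blowup \ref{gen blowup alternative}, and Lemma~\ref{toy uniform regularity lemma} converts this to $|\lambda(s)| \to \infty$, because if $|\lambda|$ remained bounded then $\|u\|_\Xspace$ would also be bounded and the distance to $\partial\genU$ would stay positive.

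For part~\ref{toy degeneracy realization}, I would simply read off what \ref{gen hetero degeneracy} requires. Since every front on $\cm^\pm$ has upstream limit $0$ and downstream limit $\lambda_n y$ with $\lambda_n \to \lambda_*$, the three distinct limiting states must take the form $0$, $\lambda_* y$, and $r y$ for some $r$. Lemma~\ref{triple conjugacy lemma} combined with \eqref{toy flow force G relation} forces $r$ to be a critical point of $G(\placeholder,\lambda_*)$ distinct from $0$ and $\lambda_*$ satisfying $G(r,\lambda_*) = 0$, which is exactly the claim $g(r,\lambda_*) = 0$ and $G(r,\lambda_*) = 0$.

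For part~\ref{toy ripple realization} I would argue by contradiction, assuming \ref{gen ripples} does not occur. The $\mathcal{N}_2$-conjugacy hypothesis, together with Lemma~\ref{triple conjugacy lemma} as in part~\ref{toy unbounded realization}, rules out heteroclinic degeneracy along any subsequence with $\lambda(s_n) \in \mathcal{N}_2$. The key step is then to show that $\lambda(s)$ cannot approach $\partial\mathcal{N}_1$: if it did, the hypothesis $\min(G_{zz}(0,\lambda),G_{zz}(\lambda,\lambda)) \to 0$ together with Lemma~\ref{toy ripple lemma} and the continuity of $\prineigenvaluepm$ (Lemma~\ref{continuity sigma0 lemma}) would already realize \ref{gen ripples}, contradicting our assumption. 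Hence $\lambda(s)$ remains in a compact subset of $\mathcal{N}_1 \subset \mathcal{N}_2$, so $\lambda$ is bounded; the semilinear version of Lemma~\ref{toy uniform regularity lemma} noted in Remark~\ref{semilinear bounds remark} then provides a uniform $C^{2+\alpha}$ bound on $u(s)$, excluding \ref{gen blowup alternative}. Since none of the three alternatives can occur, this contradicts Theorem~\ref{toy global bifurcation theorem}\ref{toy global bifurcation alternatives part}. The main obstacle is the step in part~\ref{toy ripple realization} that turns the qualitative approach of $\lambda(s)$ to $\partial \mathcal{N}_1$ into the quantitative conclusion of \ref{gen ripples}: one must produce a sequence $s_n$ along which $N(s_n)$ stays bounded while $\prineigenvaluepm(u(s_n),\lambda(s_n)) \to 0$, and this requires combining the uniform bounds with the hypothesis on $\partial\mathcal{N}_1$ in a careful compactness argument.
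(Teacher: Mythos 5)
Your parts~\ref{toy degeneracy realization} and \ref{toy ripple realization} follow essentially the paper's route (conjugate-flow bookkeeping via Lemma~\ref{triple conjugacy lemma} and \eqref{toy flow force G relation}, the spectral--convexity dictionary of Lemma~\ref{toy ripple lemma}, and the a priori bounds of Lemma~\ref{toy uniform regularity lemma} to deduce $|\lambda(s)|\to\infty$), so I will focus on part~\ref{toy unbounded realization}.

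There is a genuine gap in your proof of part~\ref{toy unbounded realization}. You rule out spectral degeneracy by asserting that, because \eqref{G assumptions} now holds for all $\lambda$, ``continuity bounds [$G_{zz}(0,\lambda)$ and $G_{zz}(\lambda,\lambda)$] below on every compact $\lambda$-interval.'' This fails precisely at $\lambda=0$: the second clause of \eqref{G assumptions} gives strict convexity only for $\lambda\neq 0$, and in fact $G_{zz}(0,0)=g_z(0,0)=0$ is forced by differentiating $g(0,\lambda)=g(\lambda,\lambda)=0$ at the origin --- this is exactly what makes $(0,0)$ the bifurcation point in Theorem~\ref{toy small amplitude theorem}. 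So on a compact interval containing $0$, no positive lower bound exists, and a sequence with $\lambda(s_n)\to 0$ would still produce $\prineigenvalue^\pm\to 0$ via Lemma~\ref{toy ripple lemma} and Lemma~\ref{continuity sigma0 lemma}. Your argument does not exclude this possibility, and hence does not exclude \ref{gen ripples}.

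The paper closes this hole differently: it observes that if $\lambda(s_n)\to 0$ with $N(s_n)$ bounded, then (since $\|u(s_n)\|_{L^\infty}\le |\lambda(s_n)|$ by monotonicity, and hence $\|u(s_n)\|_{\Xspace}\to 0$ via Lemma~\ref{toy uniform regularity lemma}) the curve would re-enter a small neighborhood of $(0,0)$ in $\Xspace_\infty\times\R$. This is exactly what Theorem~\ref{toy global bifurcation theorem}\ref{toy global bifurcation uniqueness part} forbids. You should invoke that statement here rather than the compactness-of-$\lambda$-interval claim. (In part~\ref{toy ripple realization} you are arguing by contradiction under the standing assumption that \ref{gen ripples} does \emph{not} occur, so the same trigger at $\lambda\to 0$ would automatically contradict that assumption; still, it is cleaner to note explicitly that $\lambda(s)$ must stay away from $0$ as well as from $\partial\mathcal{N}_1$.)
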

\begin{proof}

  First we consider part~\ref{toy degeneracy realization}.  Suppose that the heteroclinic degeneracy does indeed occur as in \ref{gen hetero degeneracy}, in which case there are three distinct limiting states
  \begin{equation*}
    \lim_{x \to \mp\infty} u_*(x, \placeholder) =: r_* y,
    \quad 
    \lim_{n \to \infty} \lim_{x \to +\infty} u(s_n)(x, \placeholder) = \lambda_* y,
    \quad 
    \lim_{n \to \infty} \lim_{x \to -\infty} u(s_n)(x, \placeholder) = 0,
  \end{equation*}    
  all of which are $x$-independent solutions of \eqref{toy problem}. By Lemma~\ref{triple conjugacy lemma}, they are all mutually conjugate, and so in addition to $g(r_*,\lambda_*)=g(0,\lambda_*)=0$ we have $G(r_*,\lambda_*)=G(0,\lambda_*)=0$. The statement follows.
 
  Next, suppose that $G$ satisfies \eqref{G assumptions} globally. Lemma~\ref{toy ripple lemma} implies that spectral degeneracy \ref{gen ripples} can only happen if $\cm^\pm$ approaches $(0,0)$ as $s \to \pm\infty$. But this is ruled out by Theorem~\ref{toy global bifurcation theorem}\ref{toy global bifurcation uniqueness part}. From part~\ref{toy degeneracy realization}, we see that the heteroclinic degeneracy alternative is likewise impossible.  Thus blowup must occur.  In view of Lemma~\ref{toy uniform regularity lemma}, unboundedness of $\cm^\pm$ in $\Xspace_\bdd \times \mathbb{R}$ can happen only if $|\lambda(s)| \to \infty$.  
 
  Finally, assume that $G$ is given as in part~\ref{toy ripple realization}.   As the set $\mathcal{N}_1$ is bounded, we have by the previous paragraph that $\| u(s) \|_{L^\infty}$ (and hence $\| u(s) \|_{\Xspace}$)  is uniformly bounded so long as $\lambda(s) \in \mathcal{N}_1$.  Likewise, the unboundedness alternative cannot occur if $\lambda(s) \subset \subset \mathcal{N}_2$ for all $|s| > 0$, so there must exist some $|s_0| < \infty$ such that $\lambda(s) \to \partial \mathcal{N}_1$ as $s \to s_0\mp$.  In light of Lemma~\ref{toy ripple lemma}, this leads to spectral degeneracy \ref{gen ripples}.
\end{proof}

\section{Large-amplitude hydrodynamic bores} \label{bores section}

\begin{figure}
  \centering
  \includegraphics[scale=1]{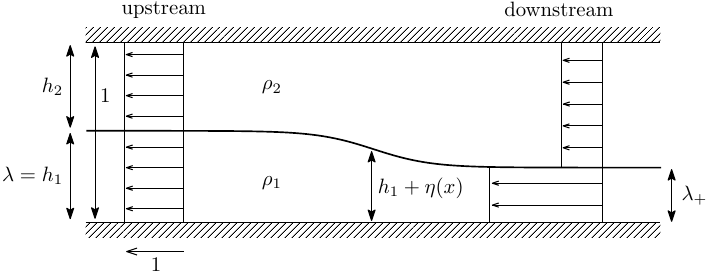}
  \caption{A monotone bore in a two-layer system. 
  \label{basic bore figure} }
  \end{figure}
  In this section, we apply our abstract results to the classical problem of bores traveling along the interface between two fluids of different densities, which can be formulated as the quasilinear transmission problem \eqref{eqn:u} below. While this system is much more complicated than \eqref{toy problem}, we will show that it enjoys many of the same qualitative features. In particular, thanks to its variational structure, it is possible to rule out both heteroclinic degeneracy \ref{gen hetero degeneracy} and spectral degeneracy \ref{gen ripples} via careful study of the conjugate flow problem. On the other hand, since the system is quasilinear rather than semilinear, the analysis of the blowup alternative \ref{gen blowup alternative} becomes quite delicate.

Consider a stably stratified configuration in which a lighter fluid with constant density $\rho_2 > 0$ lies atop a heavier fluid with constant density $\rho_1 > \rho_2$. Working in a reference frame moving with the wave, we assume that interface between the two layers as well as the fluid velocity fields are independent of time. Both for simplicity and because it is the setting with the most applied interest, suppose that the fluid velocity tends to some constant value $(-c,0)$ in the upstream limit $x \to -\infty$, where here $c>0$ is interpreted as wave speed. Letting $h_1,h_2 > 0$ be the upstream thicknesses of the two layers and $g > 0$ be the constant acceleration due to gravity, the dimensionless Froude number
\begin{align*}
  F := \frac c{\sqrt{g(h_1+h_2)}} 
\end{align*}
measures the relative importance of inertial and gravitational effects. Switching to dimensionless units with $h_1+h_2$ as the length scale and $c$ as the velocity scale, we are left with the three dimensionless parameters $F$, $\rho_2/\rho_1$, and $h_1$.

The origin of coordinates is chosen so that the interface between the two layers is $y=\eta(x)$ where $\eta \to 0$ as $x \to -\infty$. The bottom boundary of the channel is therefore $y=-h_1$, and the upper boundary is $y=h_2=1-h_1$.  We denote the upper fluid domain in these variables by $\fluidD_2$, the lower fluid by $\fluidD_1$, and set $\fluidD := \fluidD_1 \cup \fluidD_2$.  Finally, we write $\fluidS$ for the internal interface. 

\begin{subequations}\label{eqn:stream}
  Requiring the flow to be irrotational and incompressible in each layer, the velocity field can be expressed as $(\partial_y \psi, -\partial_x \psi)$ for some \emph{stream function} $\psi$ satisfying
  \begin{equation} \label{eqn:psi harmonic}
    \Delta \psi = 0 \quad \text{in } \fluidD.
  \end{equation}
  The stream function is constant along the interface as it is a material surface, and we normalize this constant to be zero:
  \begin{align}
    \label{eqn:stream:kinint}
    \psi = 0 \qquad \text{on } \fluidS.
  \end{align}
  In particular, $\psi$ is continuous across the interface. Our assumptions upstream can be written as
  \begin{align}
    \label{eqn:stream:asym}
    \nabla\psi \to (0,-1),
    \quad 
    \eta \to 0
    \qquad \text{ as } x \to -\infty.
  \end{align}
  The rigid boundaries $y=-h_1$ and $y=h_2$ are also material surfaces, and hence level curves of the stream function. Calculating these values using \eqref{eqn:stream:kinint} and \eqref{eqn:stream:asym}, we find
  \begin{align}
    \begin{alignedat}{2}
      \psi &= h_1 &\quad& \text{ on } y=-h_1,  \\
      \psi &= -h_2 &\quad& \text{ on } y=h_2. 
    \end{alignedat}
    \label{psi value on top}
  \end{align}
  Finally, the dynamic boundary condition on $y=\eta(x)$ asserts the continuity of the pressure, 
  \begin{equation}
    \label{eqn:stream:dynamic}
    \frac 12 \jump{\rho\abs{\nabla\psi}^2}
    + \frac {\jump\rho}{F^2} y = \frac {\jump\rho}2  \qquad  \ona \fluidS.
  \end{equation}
  As in Section~\ref{sec:trans}, $\jump{\placeholder}=(\placeholder)_2-(\placeholder)_1$ denotes the jump of a quantity across the interface $y=\eta(x)$. For general traveling waves, \eqref{eqn:stream:dynamic} would contain an undetermined Bernoulli constant, but in our case we have calculated this constant explicitly using \eqref{eqn:stream:asym}.
\end{subequations}

For more background on \eqref{eqn:stream} and related problems, see Section~\ref{intro bores section} in the introduction or the lengthier discussion in \cite[Section 1.2]{chen2018existence}. Our first theorem is the following global bifurcation result for monotone bores.

\begin{theorem}[Large monotone bores] \label{global bore theorem} Fix an $\alpha \in (0,1)$ and densities $0 < \rho_2 < \rho_1$.  There exist global $C^0$ curves
\[ \cm^\pm = \left\{ (\psi(s), \eta(s), \lambda(s)) : \pm s \in (0,\infty) \right\}\]
 of solutions to the internal wave problem \eqref{eqn:stream} with $h_1 = \lambda(s)$, $h_2 = 1-h_1$, and $F$ given by \eqref{F equation}.   They enjoy the  H\"older regularity  
 \[ \psi(s) \in C_\bdd^{2+\alpha}(\overline{\fluidD_1(s)}) \cap C_\bdd^{2+\alpha}(\overline{\fluidD_2(s)}) \cap C_\bdd^0(\overline{\fluidD(s)}), \qquad \eta(s) \in C_\bdd^{2+\alpha}(\mathbb{R}), \]
 where $\fluidD(s)$ denotes the fluid domain corresponding to $\eta(s)$ and $\lambda(s)$.  
\begin{enumerate}[label=\rm(\alph*)]
\item \label{global bore monotone part} \textup{(Strict monotonicity)} Each solution on $\cm^\pm$ is a strictly monotone bore:
\begin{equation}
  \begin{aligned}
    \pm \partial_x \eta(s) &< 0 & \qquad &  \textrm{on } \mathbb{R}, \\
    \pm \partial_x \psi(s) & > 0 & \qquad & \textrm{in } \fluidD(s) \cup \fluidS(s), \\
    \partial_y \psi(s) & < 0 & \qquad & \textrm{in } \overline{\fluidD(s)}.
  \end{aligned} \label{monotonicity Euler variables} 
\end{equation}
\item \label{global bore limit part} \textup{(Stagnation limit)} Following $\cm^\pm$, we encounter waves that are arbitrarily close to having a horizontal stagnation point on the internal interface:
  \begin{equation}
    \lim_{s \to \pm\infty} \sup_{\fluidS(s)} \partial_y\psi_i(s) = 0, \qquad \textrm{for $i = 1$ or $2$}.  \label{stagnation limit} 
  \end{equation}
\item \label{global bore laminar part} \textup{(Laminar origin)} Both $\cm^-$ and $\cm^+$ emanate from the same laminar solution in that 
  \[ \eta(s) \to 0, \quad \nabla \psi(s) \to (0,-1), \quad \lambda(s) \to {\lambdastar\pm} \qquad \textrm{as } s \to 0\pm,\]
where $\lambdastar$ is the constant \eqref{unique conjugate}.  
\end{enumerate}

\end{theorem}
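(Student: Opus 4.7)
The plan is to view \eqref{eqn:stream} as a quasilinear elliptic transmission problem of the form \eqref{quasilinear transmission elliptic pde} on a fixed domain via the Dubreil--Jacotin change of variables $(x,y)\mapsto(x,\psi)$, in which the unknown becomes the height $h=h(x,\psi)$ of a streamline above the bottom and the interface $\fluidS=\{\psi=0\}$ is fixed. This reformulation is justified precisely by the no-stagnation condition $\partial_y\psi<0$ in \eqref{monotonicity Euler variables}, which must therefore be maintained along the curve; the bottom and top walls correspond to $\psi=\lambda$ and $\psi=\lambda-1$ and carry homogeneous Dirichlet data, while the jump of the normal derivative on $\{\psi=0\}$ is governed by the Bernoulli condition \eqref{eqn:stream:dynamic}. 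Subtracting a laminar background, I obtain an operator equation $\mathscr F(u,\lambda)=0$ of transmission type to which Corollary~\ref{cor_transmission GBT}\ref{transmission global bifurcation part} is to be applied.

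For the local theory near $\lambda=\lambdastar$, I invoke the small-amplitude bore construction of \cite{chen2019center}, which furnishes $C^0$ curves $\cm^\pm_\loc$ of strictly monotone bores bifurcating from the laminar flow at the critical parameter $\lambdastar$. The accompanying kernel computation, analogous to Theorem~\ref{toy small amplitude theorem}\ref{toy kernel part}, verifies the nondegeneracy condition \eqref{kernel assumption}. The spectral condition \eqref{spectral assumption} holds along $\cm^\pm_\loc$, while at the bifurcation point one of the principal eigenvalues $\prineigenvaluepm$ vanishes, yielding the singular assumption \eqref{local singular assumption}. Corollary~\ref{cor_transmission GBT} then produces the global curves $\cm^\pm$; Theorem~\ref{monotonicity theorem} delivers part~\ref{global bore monotone part}, and part~\ref{global bore laminar part} follows from the asymptotics at $s=0$ provided by the local bifurcation.

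The heart of the proof is part~\ref{global bore limit part}: the alternatives \ref{gen hetero degeneracy} and \ref{gen ripples} must be excluded and the surviving alternative \ref{gen blowup alternative} identified with the stagnation limit \eqref{stagnation limit}. To dispose of \ref{gen hetero degeneracy} I analyze the conjugate flow equations in the sense of Definition~\ref{conjugate definition}, which for two-layer irrotational flow reduce to a pair of algebraic relations (Benjamin's classical conjugate-flow equations \cite{benjamin1971unified}). A direct computation classifies their solutions: for each admissible parameter value there are at most two distinct conjugate parallel flows, so the three mutually distinct, mutually conjugate limiting states required by Lemma~\ref{triple conjugacy lemma} cannot coexist. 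The same algebraic framework, combined with the essential-spectrum characterization of Appendix~\ref{principal eigenvalue spec condition appendix}, identifies $\prineigenvaluepm=0$ with criticality of the Froude number at the upstream or downstream state; since the upstream state is pinned at the laminar flow at $\lambdastar$ and along $\cm^\pm$ the downstream state is driven monotonically away from criticality, alternative \ref{gen ripples} is ruled out as well. This conjugate-flow bookkeeping is the main analytic obstacle: it requires an essentially complete classification of the parallel flows and their energies to prevent additional coincidences.

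Finally, the blowup alternative \ref{gen blowup alternative} must be sharpened to yield \eqref{stagnation limit}. Thanks to the monotonicity in \eqref{monotonicity Euler variables}, $\lambda(s)$ remains in a compact subset of $(0,1)$, because any approach of $\lambda(s)$ to $\{0,1\}$ would force the interface to meet a rigid wall in the laminar upstream limit, contradicting the Bernoulli condition for the Froude number fixed by the conjugate-flow calculation. A Schauder estimate for the transmission problem in the spirit of Lemma~\ref{toy uniform regularity lemma}, combined with the pointwise bound on $u$ that monotonicity affords, controls $\|u\|_\Xspace$ away from stagnation. Consequently the only remaining mechanism for $N(s)\to\infty$ in \eqref{gen global blowup} is approach to $\dell\genU$, which in Dubreil--Jacotin variables corresponds precisely to loss of ellipticity, i.e.\ $h_\psi\to 0$ somewhere on $\{\psi=0\}$. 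In Eulerian variables this is $\partial_y\psi\to 0$ in one of the layers at a point of $\fluidS$, which is exactly \eqref{stagnation limit}. Tracing through the a priori regularity argument carefully---in particular keeping track of which layer stagnates and verifying that the Bernoulli condition does not permit spurious loss of regularity away from the interface---is the second main technical obstacle, though it is expected to yield to arguments tailored to the bore geometry.
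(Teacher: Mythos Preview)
Your overall architecture is right and matches the paper closely: reformulate via Dubreil--Jacotin as a transmission problem, invoke the small-amplitude theory of \cite{chen2019center} for \eqref{kernel assumption}, \eqref{spectral assumption}, \eqref{local singular assumption}, apply Corollary~\ref{cor_transmission GBT}, and use the conjugate-flow classification (Lemma~\ref{conjugate lemma}) to eliminate \ref{gen hetero degeneracy} and \ref{gen ripples}. Parts~\ref{global bore monotone part} and~\ref{global bore laminar part} go through as you say.

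There are, however, two genuine gaps in your treatment of part~\ref{global bore limit part}. First, your claim that $\lambda(s)$ stays in a compact subset of $(0,1)$ is incorrect. The conjugate-flow analysis only gives a one-sided bound ($\lambda(s)\ge\lambdastar$ along $\cm^+$, $\lambda(s)\le\lambdastar$ along $\cm^-$), and your argument that approach to the walls ``contradicts the Bernoulli condition'' does not hold up: the upstream interface sitting at $y=0$ while $\lambda\to 0$ or $1$ violates nothing in \eqref{eqn:stream:dynamic}. The paper explicitly treats $\lambda(s)\to 1$ as a live possibility and shows by a direct mean-value argument that it forces the entire upper layer---hence in particular the interface---to stagnate.

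Second, and more seriously, the uniform regularity you need cannot be obtained ``in the spirit of Lemma~\ref{toy uniform regularity lemma}''. That lemma relies on the equation being semilinear (see Remark~\ref{semilinear bounds remark}); here the height equation \eqref{eqn:u} is quasilinear, and a naive Schauder bootstrap does not control $\|u\|_\Xspace$ in terms of $\|u\|_{L^\infty}$ alone. The paper instead first proves a uniform velocity bound (Theorem~\ref{thm est for u-deriv}) via a local $H^1$ estimate together with the Alt--Caffarelli--Friedman monotonicity formula applied across the interface, and only then carries out an elaborate bootstrap through Meyers' $W^{1,r}$ theorem (Section~\ref{uniform regularity section}) to conclude that $\|u\|_\Xspace$ is controlled by $\|u_p\|_{L^\infty}$. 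Without these two ingredients you cannot reduce blowup to the stagnation limit \eqref{stagnation limit}; a final maximum-principle argument localizing the near-stagnation point to $\fluidS$ is also needed.
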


The next result characterizes the limiting form of the profile along $\cm^\pm$.  

\begin{theorem}[Limiting interface] \label{limit eta theorem} 
Consider the behavior of the profile as we traverse $\cm^\pm$.
\begin{enumerate}[label=\rm(\alph*)]
\item \textup{(Overturning or singularity)} \label{elevation part} In the limit along $\cm^-$, either the interface \emph{overturns} in that
  \begin{equation}
    \limsup_{s \to -\infty} \| \partial_x \eta(s) \|_{L^\infty(\mathbb{R})} = \infty, \label{overturning} 
  \end{equation}
or it becomes \emph{singular} in that we can extract a translated subsequence 
\[ \eta(s) \longrightarrow \eta^* \in \Lip(\mathbb{R}) \quad \textrm{in $C_\loc^\varepsilon$ for all } \varepsilon \in (0,1) \]
such that $\{ y < \eta^*(x) \}$ simultaneously fails to satisfy both an interior sphere and exterior sphere condition at a single point on its boundary. 
\item \textup{(Overturning or contact)} \label{depression part} Following $\cm^+$, either the interface overturns or it comes into contact with the upper wall:
  \begin{equation}
    \limsup_{s \to \infty} \lambda(s) = 1 \qquad \textrm{or} \qquad  \limsup_{s \to \infty} \| \partial_x \eta(s) \|_{L^\infty(\mathbb{R})} = \infty.
    \label{wall contact or overturning} 
  \end{equation}
\end{enumerate}
\end{theorem}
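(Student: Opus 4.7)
My plan is to combine the stagnation limit of Theorem~\ref{global bore theorem}\ref{global bore limit part} with a compactness argument and a Hopf-type analysis at the limiting stagnation point.

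From \eqref{stagnation limit}, I extract sequences $s_n\to\pm\infty$ and $(x_n,\eta(s_n)(x_n))\in\fluidS(s_n)$, together with a fixed index $i\in\{1,2\}$, such that $\partial_y\psi(s_n)(x_n,\eta(s_n)(x_n))\to 0$. For part~\ref{elevation part}, I assume toward contradiction that \eqref{overturning} fails, so that $\|\partial_x\eta(s_n)\|_{L^\infty(\R)}$ stays bounded. For part~\ref{depression part}, I analogously assume that both alternatives in \eqref{wall contact or overturning} fail, which additionally gives $\limsup_n \lambda(s_n)<1$. After translating by $-x_n$, the uniform Lipschitz control of the interfaces together with standard elliptic and boundary Schauder estimates for the harmonic stream functions (whose boundary data are uniformly controlled via \eqref{eqn:stream:dynamic} and the uniform bounds on $\lambda(s_n)$) produce a subsequential limit $(\psi^*,\eta^*,\lambda^*)$ with $\eta^*\in\Lip(\R)$ solving \eqref{eqn:stream} in a suitable weak sense, the monotonicity \eqref{monotonicity Euler variables} surviving in the limit, and $\partial_y\psi_i^*(P)=0$ at $P:=(0,\eta^*(0))$.

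The crux is a Hopf-type contradiction. The function $v:=-\partial_y\psi_i^*$ is harmonic, nonnegative on the closure of $\fluidD_i^*$, and vanishes at $P$. If the interior sphere condition of $\fluidD_i^*$ at $P$ were satisfied, the Hopf boundary-point lemma would force $\partial_\nu v>0$ there, contradicting $\nabla\psi_i^*(P)=0$. For part~\ref{depression part}, the assumption $\lambda^*<1$ keeps $P$ uniformly inside the channel, and the fact that $\eta^*$ is a monotonic Lipschitz graph inherited from \eqref{monotonicity Euler variables} rules out an inward cusp of $\fluidD_i^*$ at $P$, so the interior sphere condition on the side of phase $i$ holds automatically. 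The resulting contradiction yields \eqref{wall contact or overturning}. For part~\ref{elevation part}, the same Hopf argument forces at least one of the sphere conditions of $\{y<\eta^*(x)\}$ to fail at $P$ (interior when $i=1$, exterior when $i=2$).

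To upgrade this to \emph{both} sphere conditions failing at the same point, I would use the dynamic boundary condition \eqref{eqn:stream:dynamic}: one-sided stagnation in phase $i$ at $P$ pins $\eta^*(0)$ to the critical elevation determined by $F$ and $\jump\rho$, and at precisely this height the Bernoulli relation propagated from the upstream laminar state forces the opposite phase also to stagnate at $P$. A second Hopf argument applied to the opposite phase then forces the remaining sphere condition to fail at $P$ as well, yielding the singular alternative in \ref{elevation part}. This double-stagnation step is the main obstacle; it mirrors the classical ``wave of greatest height'' phenomenon in a two-phase setting and requires delicate analysis near a Lipschitz corner where standard boundary regularity fails.
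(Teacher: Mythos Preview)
Your outline has the right overall shape---extract a limiting configuration, locate the stagnation point, invoke Hopf---but it is missing the key technical ingredient the paper relies on: Caffarelli's free boundary regularity (Theorem~\ref{lipschitz implies better theorem}), which upgrades a Lipschitz free boundary to $C^{1+\varepsilon}$.

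For part~\ref{depression part}, the assertion that ``$\eta^*$ is a monotonic Lipschitz graph\ldots\ so the interior sphere condition on the side of phase $i$ holds automatically'' is false. A monotone Lipschitz function such as $\eta(x)=x+\tfrac12|x|$ has a corner at which neither side of the graph satisfies an interior sphere condition; monotonicity rules out cusps but not corners. The paper instead shows (Lemma~\ref{depression limiting lemma}) that the limit $\psi^*$ is a \emph{viscosity} solution of a two-phase free boundary problem $\partial_\invec\psi^+=g(\partial_\invec\psi^-,y)$ with $g$ determined by Bernoulli. Along $\cm^+$ one has $\eta<0$, so $g(0,\cdot)$ is uniformly positive near $P$, and Caffarelli's theorem then gives $\eta^*\in C^{1+\varepsilon}$ near $P$. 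Only at that point is the Hopf lemma (in the $C^{1,\alpha}$-boundary form, Lemma~\ref{Hopf lemma}\ref{hopf holder}) available, and the contradiction follows.

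For part~\ref{elevation part}, the claim that ``one-sided stagnation in phase $i$ at $P$ pins $\eta^*(0)$ to the critical elevation'' is incorrect. Inserting $|\nabla\psi_1|=0$ into \eqref{eqn:stream:dynamic} yields only the inequality $\eta^*(0)\ge F^2/2$ (and $|\nabla\psi_2|=0$ gives $\eta^*(0)\le F^2/2$); equality holds precisely when \emph{both} phases stagnate. The paper's dichotomy is therefore on $y^*$: if $y^*\ne F^2/2$, the Bernoulli function $g$ is nondegenerate and Caffarelli again yields $C^{1+\varepsilon}$, whence a Hopf contradiction exactly as in~\ref{depression part}; the singular alternative arises only in the borderline case $y^*=F^2/2$, where Bernoulli forces double stagnation and two separate Hopf arguments show both sphere conditions fail. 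Your route cannot dispose of the case $y^*\ne F^2/2$ without the Caffarelli step.

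A related regularity gap: you apply Hopf to $v=-\partial_y\psi_i^*$ and use $\nabla\psi_i^*(P)=0$, but both presume $\psi_i^*\in C^1$ up to the (a~priori only Lipschitz) interface, which is exactly the regularity in dispute. The paper applies Hopf to $\psi_i^*$ itself, and only after the $C^{1+\varepsilon}$ upgrade.
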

\begin{remark} \label{elevation overturning remark} The singularity alternative along $\cm^-$ is considerably more exotic than a corner or cusp, as both of these would satisfy exterior sphere conditions.  In fact, the singular point will be one where the flow in \emph{both} layers approaches stagnation.  Observe that the dynamic condition \eqref{eqn:stream:dynamic} can be rearranged as 
\[ \frac{1}{2} \jump{\rho |\nabla \psi|^2} = -\frac{\jump{\rho}}{F^2} \left( y - \frac{F^2}{2} \right).  \]
For elevation waves, $y > 0$ along $\fluidS$, so the right-hand side above vanishes at the (unique) point on the interface with height $y = F^2/2$.  This highly degenerate scenario is outside the scope of current free boundary regularity theory.  Indeed,  the one-phase case was only recently considered by Varvaruca and Weiss \cite{varvaruca2011geometric}, and several open questions regarding it remain.  We strongly believe, however, that through a more detailed analysis the singularity alternative can be eliminated.   This will be the subject of future work.

On the other hand, it is interesting to note that clearly the height $y = F^2/2$ is special.  For example, in the numerical paper \cite{maklakov2018almost}, Malakov and Sharipov compute a family of overhanging solitary internal waves.  Roughly speaking, the free surface profile for the near-limiting waves resembles a mushroom or capital $\Omega$.  They observe that the point where the free boundary folds back occurs exactly at $y = F^2/2$, and the flow there is close to stagnation in both layers.
\end{remark}
\begin{remark} \label{depression overturning remark}
The numerical results in \cite{dias2003internal} suggest that along $\cm^+$, the interface always hits the wall rather than overturning.  Formally, one expects that in this scenario the entire upper layer will become stagnant, resulting in what is known as a gravity current.  Formal computations by von K\'arm\'an \cite{vonkarman1940engineer} indicate that, at the point of contact, the interface makes a precise $60^\circ$ angle with the wall (see also the paper of Benjamin \cite{benjamin1968gravity}).  In a very real sense, this is the analogue of the Stokes conjecture in the context of internal bores.  We hope to address it as well in a forthcoming paper.
\end{remark}

\subsection{Reformulation} \label{formulation section}
To fix the domain, we now switch to the new coordinates
\begin{align} \label{def qp coords} 
  q = x,
  \qquad 
  p = -\frac\psi h,
\end{align}
where here $h$ denotes the piecewise constant function which is $h_1$ in the lower layer and $h_2$ in the upper layer.  This is sometimes called the Dubreil-Jacotin or partial hodograph transform.  It is valid whenever there are no horizontal stagnation points:
\begin{equation}
  \label{psi no stagnation} 
  \sup_{\fluidD}  \psi_y < 0. 
\end{equation}
The image of $\fluidD$ under this change of variables is the slitted cylinder 
\[ \Omega = \Omega_1 \cup \Omega_2, \quad   \Omega_1 := \mathbb{R} \times (-1,0), \quad \Omega_2 := \mathbb{R} \times (0,1).\]
In keeping with the general theory, denote by $\Gamma_0 := \{ p = -1\} \cup \{ p = 1\}$ the corresponding lower and upper boundaries, and let $\Gamma_1 := \{ p = 0\}$ be the image of the internal interface $\fluidS$.  
  As a new dependent variable, we consider
\begin{align*}
  y = y(q,p),
\end{align*}
which measures the vertical deflection of the streamlines (level sets of $\psi$) relative to their upstream heights.  
This transforms \eqref{eqn:stream} into
\begin{subequations}\label{eqn:h}
  \begin{alignat}{2}
    \bigg( {- \frac{1+y_q^2}{2y_p^2}} \bigg)_p 
    + 
    \bigg( \frac{y_q}{y_p} \bigg)_q 
    &= 0 &\qquad& \text{ for } 0 < \abs p < 1. \\
    \frac 12 \bigg\llbracket 
      \rho h^2 
    \frac{1+y_q^2}{y_p^2}
    \bigg\rrbracket
    + \frac {\jump\rho}{F^2} y &= \frac {\jump\rho}2  && \ona p=0,\\
    y &= -h_1 &\qquad& \text{ on } p=-1,\\
    y &= h_2 &\qquad& \text{ on } p=1,
  \end{alignat}
  together with the asymptotic condition
  \begin{align}
    \label{eqn:h:asym}
    y \to hp
    \qquad \text{ as } q \to -\infty.
  \end{align}
\end{subequations}

The upstream state in \eqref{eqn:stream:asym} corresponds to $y=hp$, and so to conform to the conventions in the previous sections we introduce the difference
\begin{align*}
  u(q,p) := y(q,p) - hp.
\end{align*}
While $u$ often denotes to the horizontal fluid velocity in the fluid literature, we emphasize that it has no such connotation here. In terms of $u$, \eqref{eqn:h} becomes
\begin{subequations}\label{eqn:u}
  \begin{alignat}{2}
    \label{eqn:u:lap}
    \bigg( {- \frac{1+u_q^2}{2(h+u_p)^2}} \bigg)_p 
    + 
    \bigg( \frac{u_q}{h+u_p} \bigg)_q 
    &= 0 &\qquad& \ina \Omega, \\
    \label{eqn:u:dyn}
    -\frac 12 \bigg\llbracket 
      \rho h^2 
    \frac{1+u_q^2}{(h+u_p)^2}
    \bigg\rrbracket
    - \frac {\jump\rho}{F^2} u + \frac {\jump\rho}2 &= 0 && \ona \Gamma_1,\\
    \label{eqn:u:kin}
    u &= 0 &\qquad& \ona \Gamma_0,
  \end{alignat}
  with the asymptotic conditions
  \begin{align}
    \label{eqn:u:asym}
    u \to 0 \qquad \text{ as } x \to -\infty.
  \end{align}
\end{subequations}

In what follows we will hold the densities $0 < \rho_2 < \rho_1$ as fixed constants, and allow the layer thicknesses $h_1,h_2$ to vary subject to the constraint $h_1+h_2=1$. To conform with the notation of the previous sections, we therefore write
\begin{align*}
  h_1 = \lambda,
  \qquad h_2 = 1-\lambda
\end{align*}
where $\lambda \in (0,1)$ is our parameter. As we will see below in Section~\ref{conjugate flow section}, in order to have a nontrivial solution with distinct limits as $u\to\pm\infty$, the Froude number $F$ must be given by 
\begin{align}
  \label{F equation}
  F^2 = \frac{\sqrt{\rho_1}-\sqrt{\rho_2}}{\sqrt{\rho_1}+\sqrt{\rho_2}},
\end{align}
independently of $\lambda$.

An elementary calculation shows that 
\begin{equation}
  \label{grad u grad psi} 
  u_q = -\frac{\psi_x}{\psi_y}, \qquad h + u_p = -\frac{h}{\psi_y}.
\end{equation}
Hence, $u_q$ measures the slope of the relative velocity field, while $h+u_p$ is inversely proportional to the relative horizontal velocity.   The absence of horizontal stagnation \eqref{psi no stagnation} is equivalent to the requirement that 
\begin{equation}
  \label{u no stangation} 
  \sup_{\Omega} \left( h+u_p \right) < \infty.
\end{equation}

Following the conventions for transmission problems in Section~\ref{sec:trans}, fix an $\alpha \in (0,1)$ and let 
\[ \begin{aligned} 
\Xspace & := \left\{ u \in C^{2+\alpha}(\overline{\Omega_1}) \cap C^{2+\alpha}(\overline{\Omega_2}) \cap C^0(\overline{\Omega}) : u|_{\Gamma_0} = 0 \right\}, \\
\Yspace & = \Yspace_1 \times \Yspace_2 :=  \left( C^{\alpha}(\overline{\Omega_1}) \cap C^\alpha(\overline{\Omega_2})\right) \times C^{1+\alpha}(\Gamma_1),
\end{aligned} \]
with $\Xspace_\bdd$, $\Xspace_\infty$, $\Yspace_\bdd$, and $\Yspace_\infty$ defined accordingly.  For each $\delta > 0$, take
\begin{equation}
  \genU^\delta := \Big\{ (u,\lambda) \in \Xspace_\bdd \times (0,1) : \inf_{\Omega} ( u_p + h) > \delta, ~ \frac{1}{\lambda} + \frac{1}{1-\lambda} < \frac{1}{\delta} \Big\}, \label{def U delta} 
\end{equation}
so that the nonlinear operator $\F$ corresponding to \eqref{eqn:u} viewed as a map $\genU^\delta \subset \Xspace_\bdd \times \mathbb{R} \to \Yspace_\bdd$ satisfies the real analyticity \eqref{gen regularity elliptic coef} and uniform ellipticity and obliqueness requirements \eqref{ellipticity assumption}.    This collection is nested, so we write $\genU := \cup_{\delta > 0} \genU^\delta$, and denote by $\genU_\infty$ the set of $(u, \lambda) \in \genU$ for which $u \in \Xspace_\infty$. Note that the sets $\genU^\delta$ are open, as are $\genU$ and $\genU_\infty$.
\subsection{Variational structure and weak formulations}
The problem \eqref{eqn:u} has a variational structure \eqref{variational principle} with
\begin{align*}
  \mathcal L(p,z,\xi,\lambda)
  = \rho \left[ h^2\frac{1+\xi_1^2}{2(h+\xi_2)^2} + \frac 12 - \frac 1{F^2} (h p + z) \right] (h+\xi_2).
\end{align*}
In particular, \eqref{eqn:u:lap}--\eqref{eqn:u:kin} can be written as
\begin{equation}\label{variational bore}
  \left\{ \begin{aligned}
    \nabla \cdot \mathcal L_\xi(p,u,\nabla u,\lambda)
    - \mathcal L_z(p,u,\nabla u,\lambda)
    &= 0 \qquad \textrm{in } \Omega, \\
    \jump{\mathcal L_{\xi_2}(p,u,\nabla u,\lambda)}  & = 0 \qquad  \textrm{on } \Gamma_1, \\
    u & = 0 \qquad \textrm{on } \Gamma_0.
  \end{aligned} \right.
\end{equation}
Using an analogue of Lemma~\ref{conserved quantity lemma}, we see that \eqref{eqn:u} has the conserved quantity
\begin{align*}
  \flowforce(u,\lambda;q)
   &:=  
   \int_{-1}^1 
   \rho  \bigg( h^2\frac{1-u_q^2}{2(h+u_p)^2}
    + \frac 12 - \frac 1{F^2} (h p + u) \bigg) (h+u_p)\, dp,
\end{align*}
as can be verified by a direct calculation.

Notice that the first two lines of \eqref{variational bore} can be interpreted as $\nabla \cdot \mathcal L_\xi - \mathcal L_z = 0$ holding in a weak sense. A related weak formulation is
\begin{equation}\label{bore weak formulation} \left\{
  \begin{alignedat}{2}
    \nabla \cdot \Big( \rho \nabla f (\nabla u,h) - \frac \rho{F^2} u \mathbf{e}_2 \Big) 
    + \frac\rho{F^2} u_p &= 0 &\qquad& \ona \Omega\cup\Gamma_1 ,\\
    u &= 0 &\qquad& \ona \Gamma_0,
  \end{alignedat} \right.
\end{equation}
where the function $f\colon \mathbb{R}^2 \times \mathbb{R} \to \mathbb{R}$ is given by
\begin{align*}
  f(\xi,a) = \frac{a^2 \xi_1^2 + \xi_2^2}{2(a+\xi_2)}.
\end{align*}
Here and in what follows, the gradient $\nabla_\xi f$ and Hessian $D_\xi^2 f$ will simply be denoted by $\nabla f$ and $D^2 f$, respectively. 

\subsection{Conjugate flows}\label{conjugate flow section}
As in Section~\ref{toy global section}, our ability to refine the alternatives in Theorem~\ref{general global bifurcation theorem} depends in a large part on the fact that the conjugate flow problem \ref{conjugate solution part}--\ref{conjugate flowforce part} is explicitly solvable. This fact is well known in the literature on internal waves; see for instance \cite[appendix~A]{Laget1997interfacial}.
\begin{lemma}\label{conjugate lemma}
  Fix $\lambda \in (0,1)$. Then there is a flow $U_+$ conjugate to $U_-\equiv 0$ in the sense of Definition~\ref{conjugate definition} if and only if 
  the Froude number $F$ is given by \eqref{F equation} and $\lambda \ne \lambdastar$ where
  \begin{equation}
    \label{lambda star}
    \lambdastar := \frac {\sqrt{\rho_1}}{\sqrt{\rho_1}+\sqrt{\rho_2}}.
  \end{equation}
  Moreover, in this case $U_+$ is unique and given by
  \begin{equation}
    \label{unique conjugate}
    U_+(p) = (\lambdastar-\lambda)(1-\abs p).
  \end{equation}
\end{lemma}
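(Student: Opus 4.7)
The plan is to exploit the fact that conjugate flows are \emph{$x$-independent} solutions of \eqref{eqn:u}, which reduces the problem to a purely algebraic one. First I would observe that for any $U = U(p)$, the interior equation \eqref{eqn:u:lap} collapses to $\partial_p\bigl((h+U_p)^{-2}\bigr) = 0$, so $h+U_p$ is constant on each layer. Using $U(\pm 1)=0$, the Dirichlet conditions on $\Gamma_0$, and continuity across $p=0$, one finds that every $x$-independent solution has the piecewise linear form
\begin{equation*}
  U(p) = \alpha\,(1-|p|),
  \qquad \alpha \in \R,
\end{equation*}
parameterized by the single number $\alpha = U(0)$. In particular $U_- \equiv 0$ corresponds to $\alpha = 0$, and a conjugate $U_+ \not\equiv 0$ corresponds to some $\alpha \neq 0$.

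Next I would substitute this ansatz into the remaining two conditions of Definition~\ref{conjugate definition}. Writing $c_1 := \lambda + \alpha$ and $c_2 := (1-\lambda) - \alpha$ (so $c_1 + c_2 = 1$), the dynamic boundary condition \eqref{eqn:u:dyn} becomes
\begin{equation*}
  \frac{\rho_1 \lambda^2}{c_1^2} - \frac{\rho_2 (1-\lambda)^2}{c_2^2}
  + \frac{2(\rho_1-\rho_2)\alpha}{F^2} - (\rho_1 - \rho_2) = 0,
\end{equation*}
while a direct calculation of $\flowforce(U_+,\lambda) - \flowforce(0,\lambda)$ using the piecewise linear structure collapses (after the miraculous identity $\tfrac{\lambda^2}{2c_1} + \tfrac{c_1}{2} - \lambda = \tfrac{\alpha^2}{2c_1}$ and its upper-layer analogue) to the much cleaner equation
\begin{equation*}
  \alpha^2 \left[ \frac{\rho_1}{c_1} + \frac{\rho_2}{c_2} - \frac{\rho_1-\rho_2}{F^2} \right] = 0.
\end{equation*}

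For $\alpha \neq 0$ this yields $\rho_1/c_1 + \rho_2/c_2 = (\rho_1-\rho_2)/F^2$. Subtracting twice this from the dynamic condition produces, after using $\lambda - c_1 = -\alpha$ and $(1-\lambda) - c_2 = \alpha$, the relation
\begin{equation*}
  \alpha \left( \frac{\rho_2}{c_2^2} - \frac{\rho_1}{c_1^2} \right) = 0,
\end{equation*}
so that $c_1/\sqrt{\rho_1} = c_2/\sqrt{\rho_2}$. Combined with $c_1+c_2=1$, this forces $c_1 = \lambdastar$ in the notation of \eqref{lambda star}, hence $\alpha = \lambdastar - \lambda$ and $U_+$ takes the claimed form \eqref{unique conjugate}. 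Plugging back into $\rho_1/c_1 + \rho_2/c_2 = (\rho_1-\rho_2)/F^2$ and simplifying using $\rho_1 - \rho_2 = (\sqrt{\rho_1}-\sqrt{\rho_2})(\sqrt{\rho_1}+\sqrt{\rho_2})$ yields the Froude number formula \eqref{F equation}.

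Conversely, when \eqref{F equation} holds and $\lambda \neq \lambdastar$, I would verify by direct substitution that $\alpha = \lambdastar - \lambda$ satisfies both the dynamic and flow-force equations, giving existence. Uniqueness follows because $\alpha$ is determined by the system above, and when $\lambda = \lambdastar$ we are forced into $\alpha=0$, i.e.\ $U_+ \equiv U_-$. The only real obstacle is bookkeeping: one must recognize the algebraic simplification of the flow-force integral that exposes the common factor $\alpha^2$, without which the two conditions look like an unwieldy coupled rational system rather than a near-decoupled one.
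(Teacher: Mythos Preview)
Your proposal is correct and follows the same approach as the paper: reduce $x$-independent solutions to the piecewise-linear form $U=\alpha(1-|p|)$ via \eqref{eqn:u:lap}, \eqref{eqn:u:kin}, and continuity, then solve the algebraic system coming from the dynamic condition \eqref{eqn:u:dyn} and the flow-force condition $\flowforce(U_+,\lambda)=\flowforce(0,\lambda)$. Your algebra is in fact a bit more transparent than the paper's---by extracting the common factor $\alpha^2$ from the flow-force difference you decouple the system cleanly, whereas the paper simply writes out both equations in full and says ``eliminating $F^2$''---but the underlying argument is identical.
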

\begin{proof}
  From \eqref{eqn:u:lap}, \eqref{eqn:u:kin}, and the continuity of $U$, we see that \ref{conjugate solution part} forces \eqref{unique conjugate} for some $\lambdastar \in (0,1) \setminus\{\lambda\}$, which can be interpreted physically as the downstream depth of the lower layer. Plugging into \eqref{eqn:u:dyn}, we conclude that \ref{conjugate solution part} is satisfied if and only if $\lambdastar$ satisfies
  \begin{equation}
    \label{conjugate 1}
    \frac 12 \left(
    \rho_2 \frac{(1-\lambda)^2}{(1-\lambdastar)^2}
    - \rho_1\frac{\lambda^2}{\lambdastar^2}
    \right)
    + \frac{\rho_2-\rho_1}{F^2} (\lambda-\lambdastar)
    = \frac{\rho_2-\rho_1}2.
  \end{equation}
  On the other hand,
  \begin{align*}
    \flowforce(U,\lambda) 
    &= \lambdastar \rho_1\int_{-1}^0 \bigg(
    \frac{\lambda^2}{2\lambdastar^2} + \frac 12 -
    \frac {(\lambdastar-\lambda)(1+p)+\lambda p}{F^2}
    \bigg)\, dp
    \\ &\qquad 
    + (1-\lambdastar)\rho_2 \int_0^1 \bigg(
    \frac{(1-\lambda)^2}{2(1-\lambdastar)^2} + \frac 12 -
    \frac {(\lambdastar-\lambda)(1-p)+(1-\lambda) p}{F^2}
    \bigg)\, dp\\
    &=
    \lambdastar \rho_1 \bigg(
    \frac{\lambda^2}{2\lambdastar^2} + \frac 12 +
    \frac {2\lambda-\lambdastar}{2F^2}
    \bigg)
    + (1-\lambdastar)\rho_2  \bigg(
    \frac{(1-\lambda)^2}{2(1-\lambdastar)^2} + \frac 12 +
    \frac {2\lambda-\lambdastar-1}{2F^2}
    \bigg).
  \end{align*}
  Setting $\lambdastar=\lambda$ to calculate $\flowforce(0,\lambda)$, we see that \ref{conjugate flowforce part} is equivalent to
  \begin{equation}
    \label{conjugate 2}
    \begin{split}
      \lambdastar \rho_1 \bigg(
      \frac{\lambda^2}{2\lambdastar^2} + \frac 12 +
      \frac {2\lambda-\lambdastar}{2F^2}
      \bigg)
      + (1-\lambdastar)\rho_2  \bigg(
      \frac{(1-\lambda)^2}{2(1-\lambdastar)^2} + \frac 12 +
      \frac {2\lambda-\lambdastar-1}{2F^2}
      \bigg)\\
      =
      \lambda \rho_1 \bigg(
      1 + 
      \frac\lambda{2F^2}
      \bigg)
      + (1-\lambda)\rho_2  \bigg(
      1+
      \frac {\lambda-1}{2F^2}
      \bigg).
    \end{split}
  \end{equation}
Eliminating $F^2$ between \eqref{conjugate 1} and \eqref{conjugate 2}, we discover that the only possible value of $\lambdastar \in (0,1) \setminus \{\lambda\}$ is \eqref{lambda star}. Substituting \eqref{lambda star} into \eqref{conjugate 1} then yields \eqref{F equation}.
\end{proof}

\begin{lemma}[Spectral nondegeneracy] \label{no ripples lemma}
  Suppose that the Froude number $F$ is given by \eqref{F equation}, and let $(u,\lambda) \in \genU_\infty$ be a monotone front solution of $\F(u,\lambda) = 0$. If $\lambda\ne\lambdastar$, then the spectral assumption \eqref{spectral assumption} holds, while $\lambda=\lambdastar$ instead implies $\prineigenvalue^-(u,\lambda) = 0$.
\end{lemma}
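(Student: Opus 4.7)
The plan is to exploit the fact that the transversal linearized operators $\limL_\pm'(u,\lambda)$ depend only on the constant background state at $\pm\infty$ and on $\lambda$. Because of the upstream normalization $U_-\equiv 0$, the upstream principal eigenvalue $\prineigenvalue^-(u,\lambda)$ is really a function of $\lambda$ alone; moreover Lemma~\ref{conjugate lemma} forces the downstream state $U_+$ to be either $0$ or the unique nontrivial conjugate $U_+^*(p):=(\lambdastar-\lambda)(1-|p|)$, so $\prineigenvalue^+$ reduces to two explicit cases. The lemma therefore reduces to three spectral computations on $\Omega'=(-1,0)\cup(0,1)$: $\prineigenvalue^-(0,\lambdastar)=0$, $\prineigenvalue^-(0,\lambda)<0$ for $\lambda\ne\lambdastar$, and $\prineigenvalue^+(U_+^*,\lambda)<0$ whenever $\lambda\ne\lambdastar$.

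For the first two I would linearize \eqref{eqn:u:lap}--\eqref{eqn:u:dyn} at $u\equiv 0$ and restrict to functions $\phi=\phi(p)$. The bulk equation becomes $(1/h^3)\phi''+\sigma\phi=0$ on each layer with Dirichlet conditions $\phi(\pm 1)=0$ and the linearized transmission condition
\[
  \Bigl\llbracket \tfrac{\rho}{h}\phi'\Bigr\rrbracket = \tfrac{\jump{\rho}}{F^2}\phi \qquad \text{at } p=0.
\]
At $\sigma=0$ the only positive piecewise-linear candidate compatible with the Dirichlet conditions is $\phi(p)=1-|p|$; substituting into the transmission condition gives $\Phi(\lambda):=\rho_1/\lambda+\rho_2/(1-\lambda)-(\rho_1-\rho_2)/F^2=0$. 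A short calculation shows $\Phi$ is strictly convex with $\Phi(\lambdastar)=\Phi'(\lambdastar)=0$, equivalent to the conjugate flow identity of Lemma~\ref{conjugate lemma}, so $\Phi$ vanishes only at $\lambdastar$ and is strictly positive elsewhere. Together with positivity of $\phi$ on $\Omega'\cup\Gamma_1'$, this produces $\prineigenvalue^-(0,\lambdastar)=0$ via Krein--Rutman for the transmission problem in Appendix~\ref{principal eigenvalue appendix transmission}.

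To convert $\Phi(\lambda)>0$ into $\prineigenvalue^-(0,\lambda)<0$, I would write the full dispersion relation. For $\sigma<0$ any positive eigenfunction has the form $A\sin(k_1(p+1))$ on $\Omega_1$ and $B\sin(k_2(1-p))$ on $\Omega_2$ with $k_i=\sqrt{-\sigma h_i^3}$, and matching continuity with the transmission jump yields
\[
  \mathcal D(\sigma,\lambda):=\frac{\rho_1}{\lambda}\,k_1\cot k_1+\frac{\rho_2}{1-\lambda}\,k_2\cot k_2-\frac{\rho_1-\rho_2}{F^2}=0.
\]
Since $k\cot k$ is strictly decreasing on $(0,\pi)$ with $k\cot k\to 1$ as $k\to 0^+$ and $k\cot k\to-\infty$ as $k\to\pi^-$, and $\mathcal D(0,\lambda)=\Phi(\lambda)>0$, continuity yields a unique largest $\sigma_0^-(\lambda)<0$ solving $\mathcal D=0$ with $k_1,k_2$ still in $(0,\pi)$ and the corresponding eigenfunction consequently positive. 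A parallel calculation for $\sigma>0$, using $\sinh$ and $\cosh$, shows the resulting left-hand side always exceeds $\Phi(\lambda)$, so no positive eigenfunction exists at $\sigma\ge 0$; hence $\sigma_0^-(\lambda)$ is the principal eigenvalue.

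Finally, for $\prineigenvalue^+(U_+^*,\lambda)$ I would linearize at the nontrivial conjugate $U_+^*$. A direct evaluation of $D^2 f$ at this background gives modified bulk coefficients featuring $\lambdastar$ in place of $\lambda$, and the compatibility condition at $\sigma=0$ collapses to
\[
  (\sqrt{\rho_1}+\sqrt{\rho_2})\Bigl[\tfrac{\lambda^2}{\sqrt{\rho_1}}+\tfrac{(1-\lambda)^2}{\sqrt{\rho_2}}\Bigr]=1,
\]
whose left-hand side is strictly convex in $\lambda$ with minimum value $1$ attained at $\lambdastar$. The same dispersion-relation monotonicity then delivers $\prineigenvalue^+<0$ for $\lambda\ne\lambdastar$. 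The main obstacle throughout is the careful bookkeeping of the transmission coefficients — especially at $U_+^*$, where the Hessian $D^2 f$ has several nontrivial entries — and verifying via the appendix that positivity of the candidate eigenfunction genuinely identifies it as the principal eigenvalue rather than a larger real one.
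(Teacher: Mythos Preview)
Your approach is correct, but it takes a noticeably more laborious route than the paper. The paper never writes down a dispersion relation at all. Instead, it tests the single function $w(p)=1-|p|$ against the transversal linearized operator $\limL_\pm'$ and invokes the supersolution characterization of the principal eigenvalue (Corollary~\ref{cor pe transmission}). Since $\limL_{\pm 1}'w=0$ identically, the entire argument collapses to checking the sign of the single scalar $\limL_{\pm 2}'w$ at $p=0$. For the upstream state this quantity is exactly $-\Phi(\lambda)$ in your notation; the convexity argument you give then shows $w$ is a positive \emph{strict} supersolution when $\lambda\ne\lambdastar$, forcing $\prineigenvalue^-<0$, and a positive eigenfunction with eigenvalue $0$ when $\lambda=\lambdastar$. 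The downstream state is handled identically after replacing $h$ by $h_+$, yielding precisely your second convex expression. What you gain from the dispersion-relation route is an explicit description of the eigenvalue, not merely its sign; what the paper gains is a proof of perhaps five lines that entirely sidesteps the bookkeeping you flag as the main obstacle.
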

\begin{proof}
  Letting $U_\pm$ denote the limits of $u$ as $x \to \pm\infty$, the operators $\limL_\pm^\prime(u,\lambda)$ are given by
  \begin{align*}
    \limL_{\pm1}'(u,\lambda) w &= \bigg(\frac{w_p}{(h+\partial_p U_\pm)^3}\bigg)_p,\\
    \limL_{\pm2}'(u,\lambda) w &= 
     \bigg\llbracket 
      \rho h^2 
    \frac{w_p}{(h+\partial_p U_\pm)^3}
    \bigg\rrbracket
    - \frac {\jump\rho}{F^2} w.
  \end{align*}
  The upstream limit $U_- \equiv 0$ since $u \in \Xspace_\infty$, so let us first consider 
  \begin{align*}
    \limL_{-1}'(u,\lambda) w = \frac{w_{pp}}{h^3},
    \qquad 
    \limL_{-2}'(u,\lambda) w = 
     \bigg\llbracket 
    \frac{\rho w_p}{h}
    \bigg\rrbracket
    - \frac {\jump\rho}{F^2} w.
  \end{align*}
  Setting $w = 1-\abs p$, we have $\limL_{+1}'(u,\lambda)w = 0$ and
  \begin{align*}
    \limL_{-2}'(u,\lambda) w 
    &= 
    - \frac{\rho_2}{h_2} - \frac{\rho_1}{h_1}
    - \frac{\rho_2-\rho_1}{F^2}\\
    &= 
    - \frac{\rho_2}{1-\lambda} - \frac{\rho_1}\lambda
    + (\sqrt{\rho_1}+\sqrt{\rho_2})^2.
  \end{align*}
  Optimizing over $\lambda \in (0,1)$ we see that $\limL_{-2}'(u,\lambda) w \le 0$ with equality only if $\lambda=\lambdastar$. For $\lambda\ne\lambdastar$, $w$ is therefore a positive strict supersolution in the sense of Appendix~\ref{principal eigenvalue appendix transmission}, implying that the principal eigenvalue $\prineigenvalue^-(u,\lambda)$ of $\limL_-'(u,\lambda)$ is strictly negative as desired. For $\lambda=\lambdastar$, on the other hand, $w$ is a positive eigenfunction of $\limL_-'(u,\lambda)$ with eigenvalue $0$, and hence $\prineigenvalue^-(u,\lambda)=0$.
 
  If $u \equiv 0$ then the argument for $\limL_+'(u,\lambda)$ is identical, so suppose that $u \not \equiv 0$. Then Lemma~\ref{conjugate lemma} forces $U_+$ to be given by \eqref{unique conjugate}. Letting $\hplus$ be the function which is $\lambdastar$ in the lower layer and $1-\lambdastar$ in the upper layer, we find
  \begin{align*}
    \limL_{+1}'(u,\lambda) w = \frac{w_{pp}}{\hplus^3},
    \qquad 
    \limL_{+2}'(u,\lambda) w = 
     \bigg\llbracket 
    \frac{\rho h^2 w_p}{\hplus^3}
    \bigg\rrbracket
    - \frac {\jump\rho}{F^2} w.
  \end{align*}
  Once more taking $w = 1-\abs p$, we have $\limL_{+1}'(u,\lambda)w = 0$. The boundary term is 
  \begin{align*}
    \limL_{+2}'(u,\lambda) w 
    &= 
    -\frac{(1-\lambda)^2\rho_2}{(1-\lambdastar)^3} - \frac{\lambda^2\rho_1}{\lambdastar^3}
    + (\sqrt{\rho_1}+\sqrt{\rho_2})^2.
  \end{align*}
  Optimizing over $\lambda \in (0,1)$ we again find that $\limL_{+2}'(u,\lambda) w \ge 0$ with equality if and only if $\lambda=\lambdastar$. Arguing as above we conclude that $\prineigenvalue^+(u,\lambda) \le 0$ with equality if and only if $\lambda=\lambdastar$.
\end{proof}

\subsection{Small-amplitude bores} \label{small section}

We next recall the small-amplitude existence theory from \cite[Section 7]{chen2022center}.  That paper treated the more general setting where the vorticity in the upper layer is constant, not necessarily zero.  This substantially complicates the conjugate flow equations, as well as potentially allowing for critical layers (lines of stagnation points) in the bulk.  As one consequence, the authors were forced to use a much less elegant change of variables to fix the fluid domain.  For the layer-wise irrotational case \eqref{eqn:psi harmonic}, however, translating between these two coordinate systems is tedious but rather straightforward.  The result is then as follows.

\begin{theorem}[Small-amplitude bores] \label{small bores theorem} 
Fix densities $0 < \rho_2 < \rho_1$, and let the Froude number $F$ be given by \eqref{F equation}.   There exists a $C^0$ curve $\cm_\loc$ of solutions to the internal wave problem \eqref{eqn:u} that admits the parameterization
\[ \cm_\loc = \left\{ \left( u(\varepsilon), \lambda(\varepsilon) \right) :  |\varepsilon| < \varepsilon_0 \right\} \subset \Xspace_\infty \times (0,1),  \]
for some $\varepsilon_0 > 0$.   Moreover, the following statements hold along $\cm_\loc$.
\begin{enumerate}[label=\rm(\alph*)]
\item \label{bore asymptotics part} \textup{(Asymptotics)} The height function $u(\varepsilon)$ and upstream depth ratio $\lambda(\varepsilon)$ have leading form expressions given by
\begin{equation}
 \label{small bore asymptotics} 
  \begin{aligned}
    u(\varepsilon)(q,p) & = -\frac{\varepsilon}{2} \left(1+ \tanh{(\kappa_1 |\varepsilon| q)}\right)(1-|p|)   + O(\varepsilon^2)  \qquad \textrm{in }  \Xspace_\bdd, \\
    \lambda(\varepsilon) & =  \lambdastar + \varepsilon = \frac{\sqrt{\rho_1}}{\sqrt{\rho_1}+\sqrt{\rho_2}} + \varepsilon, 
  \end{aligned}
\end{equation}
for the explicit positive constant 
\[ \kappa_1^2 := \frac{3(\sqrt{\rho_1}+\sqrt{\rho_2})^4}{4\rho_1 ( \rho_1 -\sqrt{\rho_1}\sqrt{\rho_2} + \rho_2)}.\]  

\item \label{small bore monotone part} \textup{(Monotonicity)} If $\varepsilon > 0$, then $(u(\varepsilon), \lambda(\varepsilon))$ is a strictly decreasing monotone front, while for $\varepsilon < 0$, it is a strictly increasing monotone front.  

\item \label{small bore uniqueness part} \textup{(Uniqueness)} In a neighborhood of $(0,\lambdastar)$ in $\Xspace_\infty \times \mathbb{R}$, $\cm_\loc$ comprises all monotone fronts (up to translation).

\item \label{small bore kernel part} \textup{(Kernel)}  For all $0 < |\varepsilon| < \varepsilon_0$, the kernel of $\F_u(u(\varepsilon), \lambda(\varepsilon))$ is one dimensional and generated by $\partial_x u(\varepsilon)$.

\end{enumerate}
\end{theorem}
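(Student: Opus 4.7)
The plan is to mirror the proof of Theorem~\ref{toy small amplitude theorem} for the semilinear toy problem, using center manifold reduction, but now applied to the quasilinear transmission problem \eqref{eqn:u}. Since the more general setting with constant vorticity in the upper layer was already treated in \cite[Section 5]{chen2019center}, most of the heavy lifting can be imported; the task is to specialize to $F$ given by \eqref{F equation} and the irrotational case, and to extract the explicit leading-order expressions. The natural bifurcation point is $(u,\lambda) = (0,\lambdastar)$, where Lemma~\ref{no ripples lemma} gives $\prineigenvalue^-(0,\lambdastar) = 0$ with principal eigenfunction $\varphi_0(p) := 1 - |p|$, while the rest of the spectrum of $\limL_-'(0,\lambdastar)$ stays strictly negative.

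Working in exponentially weighted H\"older spaces $\Xspace_\nu$ with $0 < \nu \ll 1$, the kernel of $\F_u(0,\lambdastar) \maps \Xspace_\nu \to \Yspace_\nu$ becomes two-dimensional, spanned by $\varphi_0$ and $q\varphi_0$. Choosing the spectral projection $\FSproj u := \big(u(0,0) + u_q(0,0)\,q\big)\varphi_0(p)$ (using the interface-midpoint values), I invoke \cite[Theorem 1.1]{chen2019center} to reduce \eqref{eqn:u} near $(0,\lambdastar)$ to a $C^4$ planar ODE $v'' = f(v,v',\lambda)$, where $v(q) := u(q,0)$. Via \cite[Theorem 1.2]{chen2019center}, Taylor-expanding $f$ about $(0,0,\lambdastar)$ and using the reflection symmetry $q \mapsto -q$, which forces $f$ to be even in $v'$, together with the fact that the conjugate flow relations \eqref{conjugate 1}--\eqref{conjugate 2} already encode the roots of $f(\placeholder,0,\lambda)$, yields a truncated reduced equation structurally identical to \eqref{toy reduced truncated ODE}, namely
\begin{equation*}
  v_{qq} = c \Bigl( -(\lambda-\lambdastar)^2 v + 3(\lambda-\lambdastar) v^2 - 2 v^3 \Bigr) + \text{remainder},
\end{equation*}
for an explicit constant $c > 0$ whose computation gives the stated $\kappa_1$.

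Setting $\varepsilon := \lambda - \lambdastar$ and rescaling $q = |\varepsilon|^{-1}Q$, $v = -\varepsilon V$, $v_q = -\varepsilon|\varepsilon| W$, the truncated system admits the explicit heteroclinic orbit $(V^0,W^0)$ from Section~\ref{toy local section} connecting $(0,0)$ to $(1,0)$, corresponding to the unique conjugate flow \eqref{unique conjugate}. Persistence of this orbit for $0 < |\varepsilon| \ll 1$ follows from the nondegeneracy of the zero-level set of the induced conserved quantity $h(A,B,\lambda) = \flowforce\bigl((A+Bq)\varphi_0 + \Psi(A,B,\lambda); q\bigr)$, since the two fixed points persist as critical points on the same level of $h$. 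Undoing the scaling produces the asymptotics \ref{bore asymptotics part}. For \ref{small bore monotone part}, the phase portrait gives the sign of $v_q$ on $\Gamma_1$; then $u_q$ solves the linear transmission equation $\F_u(u,\lambda)u_q = 0$ with $u_q = 0$ on $\Gamma_0$ and $u_q \to 0$ as $q \to \pm\infty$, so the transmission strong maximum principle together with the Hopf-type lemma (Lemma~\ref{lem hopf}) upgrades the sign of $u_q$ to hold strictly on $\overline{\Omega_1} \cup \overline{\Omega_2}$.

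Parts \ref{small bore uniqueness part} and \ref{small bore kernel part} are handled on the center manifold. For uniqueness, Lemma~\ref{conjugate lemma} shows that the only nontrivial conjugate state to $0$ corresponds to rest points $(0,0)$ and $(1,0)$ of the rescaled planar system; the phase portrait then shows that $(V^\varepsilon, W^\varepsilon)$ is the unique bounded non-equilibrium orbit up to translation. For \ref{small bore kernel part}, I apply \cite[Theorem 1.6]{chen2019center} to push the kernel question onto the linearized reduced ODE; away from $\lambdastar$, Lemma~\ref{no ripples lemma} ensures both rest points are strictly hyperbolic, so there is at most one bounded solution (up to scaling), which must be $\partial_q u(\varepsilon)$. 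The main obstacle is purely bookkeeping, namely verifying that the structural factorization of the reduced cubic matches \eqref{toy reduced truncated ODE} and computing the explicit value of $\kappa_1$; conceptually, the transmission boundary condition complicates the expansion of $\Psi$ but is already accommodated by the machinery of \cite{chen2019center}, which was built for precisely this kind of two-layer problem.
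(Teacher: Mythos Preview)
Your proposal is correct and follows essentially the same route as the paper's own proof: both import the center manifold reduction from \cite[Section~5]{chen2019center}, observe that the reduced ODE has the same cubic structure as \eqref{toy reduced truncated ODE}, use the flow-force-derived conserved quantity for persistence of the heteroclinic, and handle monotonicity, uniqueness, and the kernel exactly as in Theorem~\ref{toy small amplitude theorem} via the maximum principle and \cite[Theorem~1.6]{chen2019center}. The only cosmetic slip is your claim that strict monotonicity extends to $\overline{\Omega_1}\cup\overline{\Omega_2}$; since $u_q$ vanishes on $\Gamma_0$, the correct conclusion is $\pm u_q < 0$ on $\Omega\cup\Gamma_1$.
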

\begin{proof}
The existence of $\cm_\loc$ and the asymptotics in \eqref{small bore asymptotics} are essentially given by \cite[Theorem 7.1 and Corollary 7.3]{chen2022center}.  For that reason, we will only sketch the argument.  In fact, the proof is remarkably similar to that of Theorem~\ref{toy small amplitude theorem}: one applies a center manifold reduction to obtain a planar system whose bounded solutions give rise to solutions of height equation \eqref{eqn:u}.  This ODE at leading order coincides with \eqref{toy reduced truncated ODE}, only the coefficients involve $\rho, \lambda$, and $F$.  The conjugate flow analysis in Lemma~\ref{conjugate lemma} identifies the correct parameter regime in which the truncated reduced ODE admits a heteroclinic orbit, which is then shown to persist for the full reduced ODE using a conserved quantity derived from the flow force $\flowforce$.   

The monotonicity claimed in part~\ref{small bore monotone part} is a consequence of \cite[Remark 7.9]{chen2022center} and the maximum principle.  In particular, the center manifold reduction argument gives directly that $\pm (\partial_x u)(\varepsilon)(\placeholder, 0) < 0$ for $\pm \varepsilon > 0$.  To see that the rest of the streamlines are likewise monotone, we argue as in \cite[Theorem 7.8(a)]{chen2022center}.  

  Part~\ref{small bore uniqueness part} follows exactly as in the proof of Theorem~\ref{toy small amplitude theorem}\ref{toy uniqueness part}.
Verifying that the kernel along $\cm_\loc$ is one dimensional can likewise be accomplished using \cite[Theorem 1.6]{chen2022center} to reduce the question to the linearized reduced equation.  By the same argument as in Theorem~\ref{toy small amplitude theorem}, it is readily seen that this planar system cannot have  two linearly independent bounded solutions, and hence \eqref{kernel assumption} holds.  
\end{proof}
\begin{remark}\label{ekdv remark}
It bears mentioning that the truncated reduced ODE on the center manifold is exactly the stationary extended Korteweg--de Vries equation (eKdV).  This is a nonlinear dispersive PDE that has been derived previously as a model for bores in a certain long wave scaling (see, \cite{helfrich2006review} and the references contained therein).  
\end{remark}

\subsection{Velocity bounds} \label{velocity bounds section}

We now establish some a priori estimates for solutions of the internal wave problem.  These are crucial to the proof of Theorem~\ref{global bore theorem} as they will eventually allow us to conclude that the blowup alternative implies the stagnation limit \eqref{stagnation limit}.   The first task is to derive $L^\infty$ control of the velocity $(\partial_y\psi,-\partial_x\psi)$; this will ensure that the height equation \eqref{eqn:u} is uniformly elliptic along the global curve.

In the case of constant density rotational waves \cite{varvaruca2009extreme} or continuously stratified waves \cite{chen2018existence}, one can show that a modified pressure enjoys a maximum principle and hence has a lower bound. This can then be translated to an upper bound on the magnitude of the relative velocity through Bernoulli's law. For two layer flows, however, it is not clear whether a similar approach is feasible due to the transmission condition. We therefore follow the strategy of \cite{amick1986global} to first derive a local $H^1$ estimate on the stream function, and then use the classical monotonicity formula of Alt--Caffarelli--Friedman \cite{alt1984variational} in conjunction with Bernoulli's law to produce the bound for the velocity.  Because our non-dimensionalization differs from that in \cite{amick1986global}, we cannot simply quote their result.  Indeed, the precise dependence on parameters is crucially important, so it is necessary to carefully reprove the theorem in order to understand the effects of the new scaling.

\begin{lemma}\label{lem est for grad psi}
Let $(\psi, \eta, \lambda)$ be a solution of \eqref{eqn:stream} with $\psi_y < 0$ and $\lambda \in (0, 1)$.   Then, for all $m\in \R$ it holds that
\begin{equation}\label{est grad psi}
\int^{m+1}_{m-1}\int_{-\lambda}^{1 - \lambda} |\nabla \psi|^2 \,dy \, dx \le C_0
\end{equation}
where the constant $C_0 = C_0(\rho, F) > 0$.
\end{lemma}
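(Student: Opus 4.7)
The plan proceeds in three stages: a uniform $L^\infty$ bound on $\psi$ and $\eta$, a weighted Caccioppoli-type energy identity, and control of the resulting interface trace contribution via the Bernoulli condition.

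First, I would observe that $\psi$ is piecewise harmonic with boundary values $\lambda$, $0$, and $-(1-\lambda)$ on the bottom wall, the interface, and the top wall respectively. Combined with the monotonicity hypothesis $\psi_y<0$ and the maximum principle, this forces $-(1-\lambda)\le\psi\le\lambda$ throughout $\fluidD$; in particular $\|\psi\|_{L^\infty}\le 1$, and since $\fluidS=\{\psi=0\}$ lies strictly between the walls, $|\eta|<1$ pointwise. These bounds are uniform over $\lambda\in(0,1)$ and over the class of admissible solutions.

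Second, fix a standard cutoff $\chi=\chi(x)$ equal to $1$ on $[m-1,m+1]$ and supported in $[m-2,m+2]$. In each layer $\fluidD_i$ I would test $\Delta\psi=0$ against $\rho_i\chi^2(\psi-c_i)$, where $c_1=\lambda$ and $c_2=-(1-\lambda)$ are chosen so that $\psi-c_i$ vanishes on the rigid wall of $\fluidD_i$; this kills the wall contributions from the integration by parts. Summing over the two layers, using an auxiliary identity (obtained by testing $\Delta\psi=0$ against $\chi^2$ alone) to convert the remaining wall normal derivatives into interface traces, and absorbing the first-order terms via Young's inequality together with the bound $|\psi-c_i|\le 1$, one arrives at an inequality of the form
\[
  \tfrac12\int_{\fluidD}\rho\chi^2|\nabla\psi|^2\,dA \;\le\; C(\rho) + \int \chi^2\bigl[\rho_1\lambda|\psi_y^-| + \rho_2(1-\lambda)|\psi_y^+|\bigr](1+\eta_x^2)\,dx,
\]
where $\psi_y^\pm$ denote the one-sided limits of $\psi_y$ on $\fluidS$, both strictly negative by monotonicity and the Hopf lemma applied in each layer.

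Finally, the interface integral would be controlled using the Bernoulli condition $\jump{\rho\psi_y^2(1+\eta_x^2)}=\jump{\rho}(1-2\eta/F^2)$, which by $|\eta|<1$ is uniformly bounded by a constant depending only on $\rho$ and $F$. The plan is to apply Young's inequality with a small parameter to trade the linear expressions $|\psi_y^\pm|$ for the quadratic traces $\rho_i(\psi_y^\mp)^2(1+\eta_x^2)$, and then to invoke a boundary trace estimate on each layer so that the excess bulk contribution can be absorbed back into the left-hand side; throughout, the factor $1+\eta_x^2$ is handled using the kinematic identity $\psi_x^\pm=-\psi_y^\pm\eta_x$ together with Bernoulli. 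Careful bookkeeping of constants shows that $\rho_1\lambda,\rho_2(1-\lambda)\le\max(\rho_1,\rho_2)$ and yields a final constant depending only on $\rho$ and $F$, as required.

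The main obstacle lies in this last step: Bernoulli delivers only the \emph{signed} combination $\rho_2(\psi_y^+)^2-\rho_1(\psi_y^-)^2$ on $\fluidS$, and the absorption procedure must be orchestrated so that precisely this combination, rather than the individual traces $(\psi_y^\pm)^2$, survives after the Young and trace estimates. This is exactly why the approach of \cite{amick1986global} is needed in the two-layer setting, as remarked in the paragraph preceding the lemma.
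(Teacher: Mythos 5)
Your preliminary steps are sound: the $L^\infty$ bounds $\abs{\psi}\le 1$, $\abs{\eta}<1$, and the cancellation of the wall contributions under the test functions $\rho_i\chi^2(\psi-c_i)$ are all correct, and you correctly identify the interface trace as the crux. But the gap you flag at the end is a genuine one that your plan does not close. After integration by parts, the interface contribution to your Caccioppoli identity is $\rho_1\lambda\int_\fluidS\chi^2\abs{\partial_\nu\psi_1}\,dS + \rho_2(1-\lambda)\int_\fluidS\chi^2\abs{\partial_\nu\psi_2}\,dS$, a \emph{sum} of the two one-sided normal traces with strictly positive weights. Bernoulli \eqref{eqn:stream:dynamic} controls only the signed \emph{difference} $\rho_2\abs{\nabla\psi_2}^2 - \rho_1\abs{\nabla\psi_1}^2$ on $\fluidS$. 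No amount of Young's inequality can turn a positive-weighted sum into a signed difference, and the layer-wise trace inequalities you would use to re-absorb the quadratic traces into the bulk have no small constant on an infinite strip. Moreover, every trace quantity carries factors of $1+\eta_x^2$ (from $\abs{\nabla\psi_i}=\abs{\psi_y^i}\sqrt{1+\eta_x^2}$ and $dS=\sqrt{1+\eta_x^2}\,dx$), and the Lipschitz norm of $\eta$ is precisely one of the things we have no a priori control over.

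The paper's proof sidesteps all of this by not working in Eulerian coordinates at all. It first passes to the Dubreil-Jacotin variables \eqref{def qp coords}, rewriting the target integral as $\iint h^2\frac{1+u_q^2}{h+u_p}\,dp\,dq$ over the \emph{fixed} slit cylinder $\Omega$, where the interface has become the flat line $\{p=0\}$ and the arclength factor disappears. The no-stagnation hypothesis $\psi_y<0$ gives $u_p>-h$, which combined with $u=0$ on $\Gamma_0$ forces $\abs{u}<1$. Testing the weak form \eqref{bore weak formulation} of the height equation against $\zeta^2 u$ produces an identity whose only interface term is $\frac{\rho_1-\rho_2}{2F^2}\int_{\Gamma_1}\zeta^2 u^2\,dq$, which is then trivially bounded by a constant depending only on $\rho$ and $F$. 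Bernoulli plays no role whatsoever in this lemma; it only enters later, in Theorem~\ref{thm est for u-deriv}, paired with the Alt--Caffarelli--Friedman monotonicity formula, to upgrade this local $H^1$ bound to the $L^\infty$ velocity estimate. The structural lesson — and the reason the approach of \cite{amick1986global} is followed here — is that the $L^2$ estimate for a two-phase free boundary problem should be established in the fixed-domain formulation, where the transmission term is harmless; the free boundary formulation is reserved for the $L^\infty$ upgrade.
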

\begin{proof}
As $\psi_y < 0$, we can use the $(q,p)$ variables to write
\begin{equation}\label{grad psi}
\int^{m+1}_{m-1}\int_{-\lambda}^{1 - \lambda} |\nabla \psi|^2 \,dy \, dx = \int^{m+1}_{m-1} \int_{-1}^1 h^2 {1 + u_{q}^2 \over h + u_{p}}\,dp \, dq.
\end{equation}
A second consequence is that $u_p > -h$, and hence $|u| < 1$.  

For any $m\in \R$, consider a cutoff function $\zeta = \zeta(q) \in C^\infty_c([m-2, m+2])$ with $0 \le \zeta \le 1$ and $\zeta \equiv 1$ on $[m-1, m+1]$. Multiplying \eqref{bore weak formulation} by $\zeta^2 u$ and integrating over $\Omega$, we get
\begin{equation}\label{test eqn}
\begin{split}
\iint_\Omega {\rho h^2 \zeta^2 \over 2} {2h + u_p \over (h + u_p)^2} \left( u_q^2 +{u_p^2 \over h^2} \right) \, dq \, dp
& = - \iint_\Omega 2\zeta\zeta' \rho h^2 {u u_q \over h + u_p} \, dq \, dp \\ 
& \qquad + {1 - \rho_2 \over F^2} \int_{\Gamma_1} \zeta^2 u^2 \,dq \\
& \le \epsilon \iint_\Omega \rho h^3 \zeta^2 {u^2 u_q^2 \over (h + u_p)^2} \, dq \, dp \\
& \qquad + {1\over \epsilon} \int^{m+2}_{m-2} \rho h (\zeta')^2 \, dq + {4(1 - \rho_2) \over F^2},
\end{split}
\end{equation}
where in the last line we used Young's inequality and the fact $|u| < 1$. Choosing $\epsilon = 1 /2$ in each layer, this gives the bound
\begin{equation}\label{est grad}
{1 \over 2} \int^{m+1}_{m-1} \int_{-1}^1 \rho {2h + u_p \over (h + u_{p})^2} u_p^2 \, dp \, dq + {1 \over 2} \int^{m+1}_{m-1} \int^1_{-1} \rho h^2 {u_q^2 \over h + u_{p}} \, dp \, dq \le C(\rho, F).
\end{equation}

On the other hand, 
\begin{equation*}
{1\over h + u_{p}} \le 
  \begin{cases}
    \displaystyle {2 \over h} & \text{ when } \displaystyle u_{p} \ge -{h \over 2},\\[2ex]
    \displaystyle {4\over h^2} {u_{p}^2 \over h + u_{p}} & \text{ otherwise},
  \end{cases}
\end{equation*}
and therefore
\begin{equation*}
\int^{m+1}_{m-1}\int^1_{-1} {h^2\over h + u_{p}}\,dp \, dq \le C(\rho, F).
\end{equation*}
Substituting the above estimate and \eqref{est grad} into \eqref{grad psi} yields the desired estimate \eqref{est grad psi}. 
\end{proof}

\begin{theorem}[Bounds on velocity] \label{thm est for u-deriv}
Let $(u, \lambda) \in \Xspace_\infty \times (\delta,1-\delta)$ be a strictly monotone front solution of \eqref{eqn:u} for some $\delta > 0$. Then, 
\begin{equation}\label{est u-deriv 1}
\left\| {1\over h + u_{p}} \right\|_{L^\infty(\Omega)} \le C
\end{equation}
and
\begin{equation}\label{est u-deriv 2}
\left\| {u_{q}\over h + u_{p}} \right\|_{L^\infty(\Omega)} \le C
\end{equation}
where the constant $C = C(\delta, \rho, F) > 0$.
\end{theorem}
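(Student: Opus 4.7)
The plan is to reduce the estimates to an $L^\infty$ bound on the relative velocity $\nabla\psi$ in the physical domain $\fluidD$, and then to follow the strategy of Amick~\cite{amick1986global}. The identities \eqref{grad u grad psi} give
\[
  \frac{1}{h+u_p} = -\frac{\psi_y}{h}, \qquad \frac{u_q}{h+u_p} = \frac{\psi_x}{h},
\]
so since $\lambda \in (\delta,1-\delta)$ forces $h \ge \delta$, an $L^\infty$ bound on $\nabla\psi$ with constants depending only on $\delta,\rho,F$ implies both \eqref{est u-deriv 1} and \eqref{est u-deriv 2}. The argument combines the local $H^1$ estimate of Lemma~\ref{lem est for grad psi}, the Alt--Caffarelli--Friedman (ACF) monotonicity formula, the dynamic condition \eqref{eqn:stream:dynamic}, and a maximum principle for $|\nabla\psi|^2$.

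Fix an arbitrary interface point $z_0 = (x_0, \eta(x_0)) \in \fluidS$ and consider the one-signed parts $\psi^\pm := \max(\pm\psi, 0)$. Since $\psi$ is harmonic in each layer, continuous across $\fluidS$, vanishes there, and has values of opposite signs in the two layers (by $\psi_y < 0$ together with \eqref{eqn:stream:kinint}--\eqref{psi value on top}), the functions $\psi^\pm$ are nonnegative subharmonic with supports in $\overline{\fluidD_1}$ and $\overline{\fluidD_2}$ meeting only on the curve $\fluidS$, and both vanish at $z_0$. The ACF monotonicity formula \cite{alt1984variational} then gives that
\[
  \Phi(r) := \frac{1}{r^4}\int_{B_r(z_0) \cap \fluidD_1}|\nabla\psi|^2\,dz \cdot \int_{B_r(z_0) \cap \fluidD_2}|\nabla\psi|^2\,dz
\]
is nondecreasing in $r>0$. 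Bounding $\Phi(1)$ via Lemma~\ref{lem est for grad psi} and sending $r \to 0^+$ yields the pointwise product estimate $A(z_0) \, B(z_0) \le C(\rho,F)$, where $A, B$ denote the traces of $|\nabla\psi|$ from the lower and upper layer at $z_0$.

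I then rearrange the dynamic condition \eqref{eqn:stream:dynamic} into
\[
  \rho_2 B^2 - \rho_1 A^2 = \jump{\rho}\big(1 - \tfrac{2 y}{F^2}\big) \ona \fluidS,
\]
whose right-hand side is uniformly bounded because $|y| \le 1$ on $\fluidS$. Combined with the product bound $AB \le C$ from the previous step, this yields $\rho_1 A^4 \le C^2 \rho_2 + C \cdot |\jump\rho|(1+2/F^2) A^2$, and hence an upper bound on $A$; the same argument gives an upper bound on $B$. We conclude that $|\nabla\psi|$ is pointwise bounded on $\fluidS$ uniformly in the solution.

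The final step propagates this interface bound throughout $\fluidD$. In each layer $|\nabla\psi|^2$ is subharmonic, with explicit limits at $x = \pm\infty$ determined by the upstream state and the conjugate flow of Lemma~\ref{conjugate lemma}, both controlled by $1/\delta$. The main obstacle is the rigid walls $\{y=-\lambda\}$ and $\{y=1-\lambda\}$, where $\psi$ is constant and $|\nabla\psi|^2$ reduces to $\psi_y^2$, which is not directly bounded. I plan to handle this by odd reflection: extending $\psi - \lambda$ antisymmetrically across $y = -\lambda$ and $\psi + (1-\lambda)$ antisymmetrically across $y = 1-\lambda$ produces a harmonic function on a thickened strip, whose gradient on the original walls can then be estimated by standard interior bounds for harmonic functions in terms of the $L^\infty$ norm of $\psi$ (which is trivially bounded by $\max\{\lambda,1-\lambda\} \le 1$) together with the interface and far-field control from the previous steps. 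A concluding application of the maximum principle to $|\nabla\psi|^2$ in each layer then produces the desired uniform bound on $\nabla\psi$ in $\overline{\fluidD}$, completing the proof.
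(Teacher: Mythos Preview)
Your core strategy---the local $H^1$ bound of Lemma~\ref{lem est for grad psi}, the ACF monotonicity formula, and the Bernoulli condition \eqref{eqn:stream:dynamic} to control $|\nabla\psi|$ on the interface---is exactly what the paper does. The difference lies in how you propagate the interface bound to the rest of the domain. The paper works in the semi-hodograph variables $(q,p)$: it differentiates \eqref{eqn:u:lap} to obtain linear elliptic equations for $u_p$ and $u_q$, applies the maximum principle, and uses the Hopf lemma (for $u$ and for $u_q$) together with the monotonicity of the front to show that the extrema on the rigid walls are controlled by the downstream conjugate state. You instead stay in physical coordinates, invoke the subharmonicity of $|\nabla\psi|^2$, and handle the walls by odd reflection plus interior gradient estimates. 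Both routes are valid; yours is arguably more elementary but less self-contained, while the paper's exploits the available monotonicity more directly.

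There is, however, one point you need to make explicit, and it affects two of your steps. Both applying ACF on $B_1(z_0)$ and using interior gradient estimates after reflection require that $\fluidS$ stays a uniform distance from the rigid walls; otherwise the $1/r^4$ in $\Phi$ and the $1/r$ in the Cauchy estimate are uncontrolled. (Note that the channel has unit height, so $B_1(z_0)$ always protrudes past a wall.) This separation does hold: by strict monotonicity and Lemma~\ref{conjugate lemma}, $\eta$ lies between $0$ and $\lambdastar-\lambda$, whence $\operatorname{dist}(\fluidS,\{y=-\lambda\}\cup\{y=1-\lambda\}) \ge \min(\delta,\lambdastar,1-\lambdastar)$, a constant depending only on $\delta$ and $\rho$. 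Once you insert this observation---either by shrinking the ACF radius to this distance (as the paper does) or by performing the reflection before invoking ACF---your argument goes through.
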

\begin{remark} \label{uniform grad psi bound remark} Observe that, in light of \eqref{grad u grad psi}, this leads directly to a bound on the corresponding velocity field
\[ \| \nabla \psi \|_{L^\infty(\fluidD)} \leq C,\]
with $C = C(\delta, \rho, F) > 0$.
\end{remark}
\begin{proof}[Proof of Theorem~\ref{thm est for u-deriv}]
Differentiating \eqref{eqn:u:lap} and then applying the maximum principle, we know that $u_{p}$ attains its minimum either on $\partial\Omega$ or in the limits $q \to \pm\infty$.  As $u \in \Xspace_\infty$ is a monotone front, $u$ does not change signs in $\Omega$; for definiteness, say $u$ is strictly decreasing so that $u<0$ in $\Omega \cup \Gamma_1$.  By Lemma~\ref{conjugate lemma}, this implies that 
\[ \lim_{q \to -\infty} u_p = 0, \qquad 
\lim_{q \to \infty} u_p = U_+^\prime = (\lambda - \lambdastar) \signum{p},  \]
where note $\lambdastar < \lambda$.  Because $u$ vanishes on $\Gamma_0$, the Hopf boundary-point lemma tell us that $u_{1p} < 0$ on $\{p=-1\}$ and $u_{2p} > 0$ on $\{p=1\}$. Thus the minimum of $u_p$ is negative and is attained on $\{p=-1\}$, $\{p=0\}$, or in the downstream limit.  

Likewise, differentiating \eqref{eqn:u:lap} in $q$, we see that $u_{1q}$ obeys the maximum principle. Since $u_q < 0$ in $\Omega \cup \Gamma_1$ and $u_{1q} = 0$ on $\{p=-1\}$, the Hopf boundary-point lemma implies that $u_{1pq} < 0$ on $\{p=-1\}$. Therefore
\[ \min_{\Gamma_0} u_p = U_+^\prime(-1) = \lambdastar - \lambda.\] 
In other words, both the minimums of $u_p$ on $\Gamma_0$ and in the downstream limit are controlled by $\delta$ and $\rho$.  

The above argument and the relationship between $\nabla \psi$ and $\nabla u$ given by \eqref{grad u grad psi} imply that the estimate \eqref{est u-deriv 1} will follow from an upper bound of $\partial_y \psi_1$ and $\partial_y \psi_2$ on $\fluidS$, whereas the estimate \eqref{est u-deriv 2} is equivalent to an upper bound of $\psi_x$ in $\fluidD$. But $\psi_x$ is harmonic in each layer, and so it attains its maximum on $\partial\fluidD$.  As it vanishes identically on the upper and lower boundaries as well as in the limits $q \to \pm\infty$, we need only estimate it on $\fluidS$.

Consider now the pseudo-stream function $\tilde \psi : = \sqrt\rho \psi$. Obviously, it suffices to control $|\nabla \tilde \psi|$ on $\fluidS$. Following \cite[Theorem 6.8]{amick1986global}, let a point $(x,y) \in \fluidS$ be given and denote by $B_r$ the ball of radius $r$ centered there. We apply the monotonicity result of \cite[Lemma 5.1]{alt1984variational} to conclude that the function
\begin{equation*}
\phi(r) := \left({1\over r^2} \iint_{B_r \cap \fluidD_1} |\nabla \tilde \psi|^2 \, dx \, dy \right)\left( {1\over r^2} \iint_{B_r \cap \fluidD_2} |\nabla \tilde \psi|^2 \, dx \, dy \right)
\end{equation*}
is increasing in $r$ for $0< r < a:= \text{dist}(\fluidS, \partial \overline\fluidD)$. The regularity of $\tilde \psi$ implies that $\phi(r) \to \phi(0)$ as $r \to 0$, and hence by Lemma \ref{lem est for grad psi},
\begin{equation}\label{ACF}
\phi(0) = {\pi^2 \over 4} |\nabla \tilde \psi_1(x,y)|^2 |\nabla \tilde \psi_2(x,y)|^2 < \phi(a) \le C.
\end{equation}

Recall that by the Bernoulli condition \eqref{eqn:stream:dynamic} we have
\begin{equation*}
{1\over 2} \left( |\nabla \tilde \psi_2|^2 - |\nabla \tilde \psi_1|^2 \right) = {\rho - 1\over 2}\left( 1 - {2\over F^2} \eta \right) \qquad \textrm{on } \fluidS.
\end{equation*}
Since $\lambdastar < \eta < 0$, this combined with \eqref{ACF} implies that $|\nabla \tilde \psi|$ is bounded by $C$.  
\end{proof}
\begin{remark}\label{rk C1 control}
From the above theorem we see that $\|u_q\|_{L^\infty(\Omega)} \le C \|h + u_p\|_{L^\infty(\Omega)}$. Moreover, because we can always write
\begin{equation*}
u(q,p) = \left\{\begin{array}{ll}
\int^p_{-1} u_p(q, p')\,dp' \quad & \text{in } \Omega_1,\\
-\int^1_{p} u_p(q, p')\,dp'  \quad & \text{in } \Omega_2,
\end{array}\right.
\end{equation*}
it follows that
\begin{equation*}
\|u\|_{L^\infty(\Omega)} \le \|u_p\|_{L^\infty(\Omega)}, \qquad \|u\|_{C^1(\Omega)} \le C (1+ \|u_p\|_{C^0(\Omega)}).
\end{equation*}
\end{remark}

\subsection{Uniform regularity} \label{uniform regularity section}

The purpose of this section is to show that $\| u \|_\Xspace$ is controlled by $\| u_p \|_{L^\infty(\Omega)}$ together with lower bounds on $\lambda$ and $1-\lambda$.  When we construct the global bifurcation curve, this will allow us to conclude that blowup in norm coincides with either the stagnation limit or the interface coming into contact with the walls.   Arguments of this kind are well-known, for example, in the constant density and continuously stratified settings, where the system for the height function is elliptic with oblique boundary condition.  Here, however, we are dealing with a transmission boundary condition. Instead of applying a Schauder-type argument, we will again follow the idea of Amick--Turner \cite{amick1986global} and appeal to weak solution theory for elliptic equations of divergence form. For this, we quote the following theorem of Meyers, adjusted slightly to fit in our setting.
\begin{theorem}[Meyers, \cite{meyers1963estimate}]\label{thm meyers}
Let $\mathcal{D} \subset \R^2$ be a smooth bounded domain. Consider the equation
\begin{equation}\label{div prob}
\nabla \cdot (\mathbf A \nabla u) = \nabla \cdot G + g \ \text{ in }\ \mathcal{D}, \qquad u = 0 \ \text{ on } \partial \mathcal{D},
\end{equation}
where the matrix $\mathbf A = \mathbf A(x)$ has measurable coefficients and satisfies $c_1\mathbf{I} \le \mathbf A  \le {1\over c_1} \mathbf{I}$ for some $c_1 > 0$, where $\mathbf{I}$ is the $2\times 2$ identity matrix. Then there exists some $r = r(c_1) > 2$ such that, for any $G \in L^r(\mathcal{D})$ and $g \in L^2(\mathcal{D})$, \eqref{div prob} admits a unique solution $u \in W^{1,r}_0(\mathcal{D})$, which obeys the estimate
\begin{equation*}
\| \nabla u \|_{L^r(\mathcal{D})} \le C \left( \|G\|_{L^r(\mathcal{D})} + \|g\|_{L^2(\mathcal{D})} \right),
\end{equation*}
where $C = C(\mathcal{D},c_1, r) > 0.$
\end{theorem}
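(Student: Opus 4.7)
The plan is to follow the classical Gehring--Meyers self-improvement strategy. First I would establish the $L^2$ theory: given $G \in L^2(\mathcal D)$ and $g \in L^2(\mathcal D)$, the uniform ellipticity $c_1 \mathbf I \le \mathbf A \le c_1^{-1} \mathbf I$ lets me invoke Lax--Milgram on $H^1_0(\mathcal D)$ to obtain a unique weak solution $u \in H^1_0(\mathcal D)$ satisfying the energy estimate
\begin{equation*}
  \|\nabla u\|_{L^2(\mathcal D)} \le C(c_1)\big(\|G\|_{L^2(\mathcal D)} + \|g\|_{L^2(\mathcal D)}\big).
\end{equation*}
The $L^r$ statement will follow if I can show that this $H^1_0$ solution is automatically in $W^{1,r}_0$ for some $r>2$ depending only on $c_1$, with the corresponding estimate.

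Next I would prove a Caccioppoli-type inequality on balls. Fix $B_{2R} \subset \mathcal D$ and a cutoff $\zeta \in C_c^\infty(B_{2R})$ with $\zeta \equiv 1$ on $B_R$. Testing the equation against $\zeta^2(u - c)$ for a suitable constant $c$ and using ellipticity, Young's inequality, and $c_1 \mathbf I \le \mathbf A \le c_1^{-1}\mathbf I$, I obtain
\begin{equation*}
  \int_{B_R} |\nabla u|^2 \,dx
  \le \frac{C}{R^2} \int_{B_{2R}} |u - c|^2\,dx
  + C \int_{B_{2R}} |G|^2\,dx + C R^2 \int_{B_{2R}} |g|^2\,dx.
\end{equation*}
The same inequality holds on balls meeting $\partial \mathcal D$ (with $c=0$) by the zero boundary condition, provided the boundary is smooth enough to reflect. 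Now choose $c = (u)_{B_{2R}}$ and apply the Sobolev--Poincaré inequality in its ``reverse'' form: in two dimensions one has $\|u - (u)_{B_{2R}}\|_{L^2(B_{2R})} \le C R\, \|\nabla u\|_{L^q(B_{2R})}$ for any $q$ with $q^* = 2$, i.e.\ $q = 1$ (or any $q$ slightly below $2$ by interpolation). Combining with the Caccioppoli estimate yields the reverse H\"older inequality
\begin{equation*}
  \Big(\fint_{B_R} |\nabla u|^2 \,dx\Big)^{1/2}
  \le C \Big(\fint_{B_{2R}} |\nabla u|^q \,dx\Big)^{1/q} + C \Big(\fint_{B_{2R}} |G|^2\,dx\Big)^{1/2} + C R \Big(\fint_{B_{2R}} |g|^2\,dx\Big)^{1/2}
\end{equation*}
for some $q \in (1,2)$ depending only on $c_1$.

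The heart of the argument, and the main obstacle, is then Gehring's lemma: a function satisfying such a reverse H\"older inequality is automatically in $L^{2+\varepsilon}_{\mathrm{loc}}$ for some $\varepsilon = \varepsilon(c_1) > 0$, with quantitative estimate. Setting $r := 2 + \varepsilon$, this upgrades the integrability of $\nabla u$ from $L^2$ to $L^r$ in the interior, and a boundary version of Gehring (which is standard for smooth $\partial \mathcal D$, using the reflection across $\partial \mathcal D$) extends the estimate up to the boundary, yielding
\begin{equation*}
  \|\nabla u\|_{L^r(\mathcal D)} \le C\big(\|G\|_{L^r(\mathcal D)} + \|g\|_{L^2(\mathcal D)}\big).
\end{equation*}
Uniqueness in $W^{1,r}_0 \subset H^1_0$ is inherited from the $L^2$ theory. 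The technical obstacle is precisely packaging Gehring's lemma at the boundary under the smoothness hypothesis on $\partial \mathcal D$; once that is in hand, the constant $r$ depends only on $c_1$, as required.
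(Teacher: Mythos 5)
Your proposal is a correct proof, but note first that the paper gives no proof of this statement at all: it is quoted directly from Meyers \cite{meyers1963estimate} as a classical black-box tool. You therefore have nothing in the text to match; what you should compare against is Meyers' original argument, and there the routes genuinely diverge. Meyers' 1963 proof is a perturbation argument grounded in Calder\'on--Zygmund theory: one writes $\nabla\cdot(\mathbf A\nabla u)$ as a perturbation of the constant-coefficient operator $\nabla\cdot(\bar{\mathbf A}\nabla u)$, observes that the solution map for the latter is bounded $L^p\to L^p$ for all $1<p<\infty$ with norm continuous in $p$ and equal to $1$ at $p=2$, and then shows by a Neumann-series/interpolation argument that the perturbed solution map remains bounded on $L^p$ for $|p-2|$ small, quantified by the ellipticity ratio $c_1$. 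You instead go through Caccioppoli, Sobolev--Poincar\'e, a reverse H\"older inequality for $|\nabla u|$, and Gehring's lemma (an ingredient that in fact postdates Meyers' paper by a decade). Both routes are valid and give the same qualitative conclusion; the trade-off is that Meyers' interpolation proof is shorter and more elementary for this scalar linear problem, while your Gehring approach is more robust --- it handles systems, quasilinear equations, and rough domains by essentially the same covering argument --- at the cost of the nontrivial self-improvement lemma. Two small technical points in your sketch deserve a sentence in a written-up version: the Sobolev--Poincar\'e exponent in $n=2$ is $q=1$ (the phrase ``slightly below $2$'' is only correct after also invoking H\"older on the bounded ball to pass from $L^1$ to $L^q$), and the boundary reverse H\"older inequality needs a flattening/reflection argument to be stated, not merely asserted.
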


Our strategy will be to repeatedly differentiate \eqref{bore weak formulation} with respect to $q$ then apply Theorem \ref{thm meyers} to obtain $W^{1,r}$ estimates on successively higher-order $q$ derivative of $u$.  Eventually, this will lead to sufficient uniform H\"older regularity of the trace $u|_{\Gamma_1}$ so that Schauder theory for the Dirichlet problem in $\Omega_i$ furnishes the desired result.  Unlike Amick and Turner, we only have that $u$ and its derivatives are locally integrable, so these estimates must be carried out with additional care.

Now for a fixed $\delta > 0$, we consider solutions $(u,\lambda)$ of the height equation satisfying
\begin{equation}\label{fixing}
\inf_{\Omega}(h + u_{p}) > \delta, \qquad \|u\|_{C^1(\Omega)} + {1\over h_1} + {1\over h_2} < {1\over \delta},
\end{equation}
where recall that $h_1 = \lambda$ and $h_2 = 1-\lambda$.    It is easy to see that under condition \eqref{fixing}, we have uniform ellipticity:
\begin{equation}\label{bounds Hessian}
D^2 f (\nabla u,h) \ge {\delta^5 \over 4 (1 + \delta)^3} \mathbf{I} =: c_1 \mathbf{I}, \qquad |D^2 f(\nabla u, h)| \lesssim {1\over \delta^5}.
\end{equation}

  Throughout this section, we will denote  $\Omega_{m,k} := [m-k, m+k] \times (-1,1)$, for $m \in \R$ and $k > 0$.  It is important to note that that these are connected domains that contain a portion of the internal interface $\Gamma_1$.  
\begin{lemma}\label{lem local est}
Let $(u,h)$ be a solution of the height equation \eqref{eqn:u} satisfying \eqref{fixing}. Then there exists an $r = r(\delta) > 2$ such that for any $m\in \R$,
\begin{align}
\|\nabla v\|_{L^2(\Omega_{m,2})} & \le C \|\nabla u\|_{L^2(\Omega_{m,3})} \le C, \label{L2 est}\\
\|\nabla u\|_{L^r(\Omega_{m,1})} & \le C, \label{Holder est} \\
\|\nabla v\|_{L^r(\Omega_{m,1})} & \le C, \label{deriv Holder est}
\end{align}
where $v: = u_q$ and $C = C(\delta) > 0$.
\end{lemma}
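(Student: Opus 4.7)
The strategy is to view the weak formulation \eqref{bore weak formulation} as a \emph{linear} divergence-form equation whose coefficients, though merely measurable due to the jump of $\rho$ across $\Gamma_1$, are bounded and uniformly elliptic by \eqref{fixing} and \eqref{bounds Hessian}. This puts us in position to combine a standard Caccioppoli estimate with Meyers' higher-integrability theorem (Theorem~\ref{thm meyers}). To cast \eqref{bore weak formulation} in the required linear form, I would write $\nabla f(\nabla u,h) = \mathbf{A}(q,p)\nabla u$ with $\mathbf{A}(q,p) := \int_0^1 D^2 f\bigl(s\nabla u(q,p), h(p)\bigr)\, ds$; this representation is legitimate because $\nabla f(0,h)=0$. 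By \eqref{fixing} and \eqref{bounds Hessian}, the matrix $\rho\mathbf{A}$ is measurable and uniformly elliptic on $\Omega$ with constants depending only on $\delta$.

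For \eqref{L2 est}, the second inequality is immediate from the $C^1$ bound supplied by \eqref{fixing}. For the first, differentiating the weak form of \eqref{bore weak formulation} in $q$ yields a divergence-form equation for $v := u_q$ whose principal part is $\rho D^2 f(\nabla u, h)$. I would choose a cutoff $\zeta=\zeta(q)$ with $\zeta\equiv 1$ on $(m-2,m+2)$ and $\operatorname{supp}\zeta\subset(m-3,m+3)$, and observe that $\zeta^2 v$ vanishes on $\Gamma_0$ since $u$ does. Testing against $\zeta^2 v$ and performing the standard Caccioppoli-type manipulation then yields $\|\nabla v\|_{L^2(\Omega_{m,2})}\lesssim\|v\|_{L^2(\Omega_{m,3})}\leq\|\nabla u\|_{L^2(\Omega_{m,3})}$.

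For \eqref{Holder est} and \eqref{deriv Holder est}, I would localize and apply Theorem~\ref{thm meyers}. Fix a smooth bounded domain $\tilde{\mathcal D}\subset (m-2,m+2)\times(-1,1)$ that contains $\Omega_{m,1}$, together with a cutoff $\zeta\in C^\infty_c(\tilde{\mathcal D})$ with $\zeta\equiv 1$ on $\Omega_{m,1}$. Then $w:=\zeta u$ belongs to $W^{1,2}_0(\tilde{\mathcal D})$ and satisfies a divergence-form equation of the type \eqref{div prob}, whose right-hand side $\nabla\cdot G+g$ has $\|G\|_{L^\infty(\tilde{\mathcal D})}$ and $\|g\|_{L^2(\tilde{\mathcal D})}$ both controlled by the constants in \eqref{fixing}. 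Theorem~\ref{thm meyers} then produces an $r=r(\delta)>2$ and the bound $\|\nabla w\|_{L^r(\tilde{\mathcal D})}\leq C$, from which \eqref{Holder est} follows. The proof of \eqref{deriv Holder est} is analogous, applied to $v$ in place of $u$; the only new input is that the $L^2$ bound \eqref{L2 est} is used to control the lower-order terms on the right-hand side.

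The main technical obstacle is ensuring the transmission condition is correctly encoded in the weak formulation: the jump of $\rho$ across $\Gamma_1$ renders the coefficient matrix $\rho\mathbf{A}$ discontinuous there, but this is precisely the setting Meyers' theorem accommodates. A secondary issue is that the rectangles $\Omega_{m,k}$ are not smooth domains, so Theorem~\ref{thm meyers} cannot be invoked on them directly; this is why the auxiliary smooth domain $\tilde{\mathcal D}$ is introduced above.
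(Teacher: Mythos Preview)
Your approach is essentially the same as the paper's: linearize via $\mathbf{A}=\int_0^1 D^2 f(s\nabla u,h)\,ds$, Caccioppoli for the $L^2$ bounds, then Meyers for the $L^r$ bounds. You even streamline one step---the paper derives the second inequality in \eqref{L2 est} by first running a Caccioppoli argument for $u$ and then invoking the $C^0$ bound on $u$, whereas you correctly observe it follows immediately from the $C^1$ bound already built into \eqref{fixing}.

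There is one small but genuine glitch in your localization for Meyers. You ask for a cutoff $\zeta\in C^\infty_c(\tilde{\mathcal D})$ with $\zeta\equiv 1$ on $\Omega_{m,1}$, where $\tilde{\mathcal D}\subset(m-2,m+2)\times(-1,1)$. This is impossible: $\Omega_{m,1}=[m-1,m+1]\times(-1,1)$ is not compactly contained in any such $\tilde{\mathcal D}$, since its closure meets $\{p=\pm 1\}$. The paper resolves this by taking $\zeta=\zeta(q)$ to depend on $q$ alone, with support in $(m-2,m+2)$, and choosing the smooth domain $\mathcal D$ so that $\Omega_{m,2}\subset\mathcal D\subset\Omega_{m,3}$; the portion of $\partial\mathcal D$ lying on $\{p=\pm 1\}$ is then handled by the Dirichlet condition $u|_{\Gamma_0}=0$, which forces $\zeta u=0$ there. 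With that correction your argument goes through exactly as in the paper.
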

\begin{proof}
Consider a cutoff function $\tilde \zeta \in C^\infty_c([m - 4, m+4])$ with $\tilde \zeta \equiv 1$ on $[m-3, m+3]$ and $0 \le \tilde \zeta \le 1$. Multiplying the equation by $\tilde{\zeta}^2 u$ and integrating by parts as in \eqref{test eqn} yields 
\begin{equation*}
\iint_\Omega {\rho h^2 \tilde \zeta^2 \over 2} {2h + u_p \over (h + u_p)^2} \left( u_q^2 +{u_p^2 \over h^2} \right) \, dq \, dp
 = - \iint_\Omega 2 \tilde \zeta \tilde \zeta' \rho h^2 {u u_q \over h + u_p} \, dq \, dp + {2 \over F^2} \iint_\Omega \rho \tilde \zeta^2 uu_p \, dq \, dp. 
\end{equation*}
Condition \eqref{fixing} guarantees that ${2h + u_p \over (h + u_p)^2} \ge C(\delta) > 0$. Applying Young's inequality we conclude that
\[
\|\nabla u\|_{L^2(\Omega_{m,3})} \le C \|u\|_{L^2(\Omega_{m,4})},
\]
and hence from Remark \ref{rk C1 control} the second estimate in \eqref{L2 est} holds. 

Now,  differentiating \eqref{eqn:u} in $q$, we obtain the following equation for $v := u_q$,
\begin{equation}\label{eqn:v}
\left\{
\begin{aligned}
\nabla \cdot \left( \rho D^2 f(\nabla u, h) \nabla v - {\rho \over F^2} v \mathbf{e}_2  \right) + {\rho \over F^2} v_p &= 0 && \text{ in } \Omega \cup \Gamma_1, \\
v &= 0 && \text{ on } \Gamma_0,
\end{aligned} \right.
\end{equation}
where $\mathbf{e}_2 := (0,1)^T$.  Consider another cutoff function $\tilde\zeta \in C^\infty_c([m - 3, m+3])$ with $\tilde \zeta \equiv 1$ on $[m-2, m+2]$ and $0 \le \tilde \zeta \le 1$. Testing the function $\tilde \zeta^2 v$ against \eqref{eqn:v},  we have
\begin{equation}\label{int v}
\begin{split}
 \iint_{\Omega} \rho \tilde \zeta^2 \nabla v \cdot \left( D^2 f(\nabla u,h) \nabla v \right) \, dq \, dp & = \iint_{\Omega} \rho  (\tilde \zeta^2)' v \left( D^2 f(\nabla u,h) \nabla v \right) \cdot \mathbf{e}_1 \, dq \, dp \\
 & \qquad + {2 \over F^2} \iint_\Omega \tilde\zeta^2 v v_p \, dq \, dp.
\end{split}
\end{equation}
Thus a use of Young's inequality and \eqref{bounds Hessian} leads to 
\[
\|\nabla v\|_{L^2(\Omega_{m,2})} \le C \|v\|_{L^2(\Omega_{m,3})},
\]
and hence the first estimate of \eqref{L2 est}.

Next, we will use Theorem~\ref{thm meyers} to derive the $\dot W^{1,r}$ estimates asserted in \eqref{Holder est} and \eqref{deriv Holder est}.  Rewrite \eqref{bore weak formulation} as
\begin{equation*}
\left\{ \begin{aligned}
\nabla \cdot \left( \mathcal{A}(q,p, h) \nabla u - {\rho \over F^2} u \mathbf{e}_2  \right) + {\rho \over F^2} u_p & = 0 &\qquad& \text{in }  \Omega \cup \Gamma_1 \\
u & = 0 & \qquad & \textrm{on } \Gamma_0, \end{aligned}\right.
\end{equation*}
where 
\begin{equation*}
\mathcal{A}(q, p, h) := \rho \int^1_0 D^2 f\big( t \nabla u(q, p), h \big)\,dt.
\end{equation*}
Since \eqref{fixing} holds for convex combination of $u$, we know that $c_1 \mathbf{I} \leq \mathcal{A} \leq \frac{1}{c_1} \mathbf{I}$, for $c_1 = O(\delta^5)$ as in \eqref{bounds Hessian}. Let $\bar u: = \zeta u$ for $\zeta$ a cutoff function as in the proof of Lemma \ref{lem est for grad psi} and consider a domain $\mathcal D$ with a smooth boundary and $\Omega_{m,2} \subset \mathcal D \subset \Omega_{m,3}$. Then $\bar u$ satisfies
\begin{equation*} \left\{
\begin{aligned}
\nabla \cdot \big( \mathcal A \nabla \bar u \big) & = \nabla \cdot \left( u \mathcal A \nabla \zeta + {\rho \over F^2} \zeta u \mathbf{e}_2 \right) + \nabla \zeta \cdot \big( \mathcal A \nabla u \big) - {\rho \over F^2} \zeta u_p & \qquad & \text{in } \mathcal D, \\
\bar u  & = 0 & \qquad & \text{on } \partial \mathcal D. 
\end{aligned} \right.
\end{equation*}
Hence Theorem \ref{thm meyers} and translation invariance imply
\begin{equation*}
\begin{split}
\| \nabla \bar u \|_{L^r(\mathcal D)} & \le C \left( \left\| u \mathcal A \nabla \zeta +  {\rho \over F^2} \zeta u \mathbf{e}_2  \right\|_{L^r(\mathcal D)} + \left\| \nabla \zeta \cdot \big( \mathcal A \nabla u \big) - {\rho\over F^2} \zeta u_p \right\|_{L^2(\mathcal D)} \right) \le C,
\end{split}
\end{equation*}
where $r = r(\delta) >2$ and $C = C(\delta)$. We have therefore proved \eqref{Holder est}.  Applying a similar argument to \eqref{eqn:v}, likewise gives \eqref{deriv Holder est}.
\end{proof}

If we further differentiate \eqref{eqn:v}, we can obtain an $L^r$ bound on the gradient of $\tilde v := v_q$. The equation for $\tilde v$ is
\begin{equation}\label{eqn:vq} \left\{
\begin{aligned}
\partial_i \left( \rho f^{ij} \partial_j \tilde v  \right) + \partial_i \left( \rho f^{ijk} \partial_j v \partial_k v  \right) - \partial_2\left( {\rho \over F^2} \tilde v \right) + {\rho \over F^2} \partial_2 \tilde v &= 0 && \text{ in } \Omega \cup \Gamma_1, \\
\tilde v &= 0 && \text{ on } \Gamma_0,
\end{aligned} \right.
\end{equation}
where $\partial_1 = \partial_q$, $\partial_2 = \partial_p$ and we are using the shorthand
  \[  f^{ij} := (\partial_{\xi_i}\partial_{\xi_j} f)(\nabla u,h), \quad f^{ijk} := (\partial_{\xi_i} \partial_{\xi_j} \partial_{\xi_k} f)(\nabla u,h).\]

As before, we can test against a cutoff function $\zeta= \zeta(q)$ to obtain a local version for $\tilde w := \zeta \tilde v$ that reads
\begin{subequations}\label{subeqn vq}
\begin{equation}\label{eqn:vq local}
\nabla \cdot \left( \rho D^2 f \nabla \tilde w \right) =  \nabla \cdot \tilde{G} + g \qquad \text{in }\ \mathcal D,
\end{equation}
for a smooth domain $\Omega_{m,2} \subset \mathcal D \subset \Omega_{m,3}$.  Here, the forcing terms are given explicitly by
\begin{equation}
\tilde{G} = \rho \tilde v D^2 f \nabla \zeta - \zeta G, \quad g = \rho \nabla \zeta \cdot D^2 f \nabla \tilde v + \nabla \zeta \cdot G - {\rho \over F^2} \zeta  \partial_2 \tilde v
\end{equation}
where 
\begin{equation}
G = \rho \begin{pmatrix}  f^{1jk} \partial_j v \partial_k v \\  f^{2jk} \partial_j v \partial_k v  \end{pmatrix} - {\rho \over F^2} \tilde v \mathbf{e}_2.
\end{equation}
\end{subequations}
Therefore, in order to apply Theorem \ref{thm meyers}, we need to ensure that $\nabla \tilde v \in L^2$ and $\nabla v \in L^s$ for some $s > 2$ sufficiently large.

\begin{lemma}\label{lem gradv L^4 est}
Let $(u,h)$ be a solution of \eqref{eqn:u} that satisfies \eqref{fixing}. Then there exist an $r = r(\delta) > 2$ and a constant $C = C(\delta)$ such that, for any $m\in \R$,
\begin{equation}\label{gradv L^4 est}
\| \nabla v \|^4_{L^4(\Omega_{m,1})} \le C \left( \|\nabla v_q\|^{4 - r}_{L^2(\Omega_{m,1})}  + 1 \right),
\end{equation}
where $v = u_q$.
\end{lemma}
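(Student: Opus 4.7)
The strategy is to interpolate the $L^4$ norm of $\nabla v$ between the already-known $L^r$ bound from \eqref{deriv Holder est} and the $L^2$ norm of $\nabla^2 v$, which we control through the PDE. We may assume $r \in (2,4)$, since for $r \ge 4$ H\"older's inequality together with \eqref{deriv Holder est} immediately gives $\|\nabla v\|_{L^4(\Omega_{m,1})} \le C$.

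\textbf{Step 1 (Gagliardo--Nirenberg).} The 2D Gagliardo--Nirenberg inequality with the scaling $\frac14 = (1-\theta)/r$ gives exponent $\theta = (4-r)/4 \in (0,1)$. Applying it layer-wise to $\zeta \nabla v$, where $\zeta \in C_c^\infty([m-3/2,m+3/2])$ equals $1$ on $[m-1,m+1]$, and using \eqref{deriv Holder est} together with \eqref{L2 est} to bound cutoff contributions, we obtain
\begin{equation*}
\|\nabla v\|_{L^4(\Omega_{m,1})}^4 \le C \bigl(\|\nabla^2 v\|_{L^2(\Omega_{m,3/2})}^{4-r} + 1\bigr).
\end{equation*}

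\textbf{Step 2 (controlling $v_{pp}$ via the PDE).} Since $\|\nabla^2 v\|_{L^2}^2 \le 2\|\nabla v_q\|_{L^2}^2 + \|v_{pp}\|_{L^2}^2$, it suffices to control $v_{pp}$. Inside each layer (where $\rho$ is constant) \eqref{eqn:v} reduces to $\nabla \cdot (D^2 f(\nabla u, h)\nabla v) = 0$. Expanded in non-divergence form, this reads
\begin{equation*}
f^{11} v_{qq} + 2 f^{12} v_{qp} + f^{22} v_{pp} + \bigl(\partial_q f^{1j} + \partial_p f^{2j}\bigr) v_j = 0,
\end{equation*}
and the uniform ellipticity $f^{22} \ge c_1 > 0$ from \eqref{bounds Hessian} lets us solve for $v_{pp}$. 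The chain rule produces $\partial_q f^{1j}, \partial_p f^{2j}$ in terms of $u_{qq}=v_q$, $u_{qp}=v_p$ and the unwanted $u_{pp}$. But the original equation \eqref{eqn:u:lap}, in non-divergence form, is algebraic in $u_{pp}$ and yields
\begin{equation*}
u_{pp} = \frac{(h+u_p)\bigl(2 u_q v_p - v_q(h+u_p)\bigr)}{1+u_q^2},
\end{equation*}
so that $|u_{pp}| \le C|\nabla v|$ under \eqref{fixing}. Substituting back gives the pointwise bound $|v_{pp}| \le C\bigl(|\nabla v_q| + |\nabla v|^2\bigr)$, whence
\begin{equation*}
\|v_{pp}\|_{L^2(\Omega_{m,3/2})}^2 \le C\bigl(\|\nabla v_q\|_{L^2(\Omega_{m,3/2})}^2 + \|\nabla v\|_{L^4(\Omega_{m,3/2})}^4\bigr).
\end{equation*}

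\textbf{Step 3 (absorption).} Writing $A := \|\nabla v\|_{L^4(\Omega_{m,3/2})}^4$ and $B := \|\nabla v_q\|_{L^2(\Omega_{m,3/2})}^2$, Steps 1--2 combine to give $A \le C(B + A)^{(4-r)/2} + C$. Because $(4-r)/2 \in (0,1)$, Young's inequality $A^{(4-r)/2} \le \varepsilon A + C_\varepsilon$ absorbs the nonlinear term on the right into the left, yielding $A \le C\bigl(B^{(4-r)/2} + 1\bigr) = C\bigl(\|\nabla v_q\|_{L^2}^{4-r}+1\bigr)$. A finite covering of $\Omega_{m,1}$ by translates of a smaller strip, using translation invariance to shift the auxiliary domain $\Omega_{m,3/2}$ inside the prescribed $\Omega_{m,1}$, converts this into the stated inequality.

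\textbf{Main obstacle.} The heart of the argument is the reduction $|v_{pp}| \lesssim |\nabla v_q|+|\nabla v|^2$. Differentiating the coefficient $D^2 f(\nabla u, h)$ naively introduces the unwanted $u_{pp}$, which would create a circular dependence on $\|\nabla^2 v\|_{L^2}$. The resolution is that the quasilinear equation \eqref{eqn:u:lap} is algebraic in $u_{pp}$ and expresses it \emph{linearly} in $\nabla v$; the remaining quadratic $|\nabla v|^2$ is precisely light enough that the $(4-r)/2 < 1$ scaling from Step 1 allows Step 3's absorption to close.
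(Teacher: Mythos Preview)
Your approach is correct in substance but genuinely differs from the paper's. Both arguments interpolate via Gagliardo--Nirenberg and close using the sublinear exponent $(4-r)/2<1$, yet they control the ``bad'' second derivative $v_{pp}$ in different ways. The paper does not expand the equation for $v$ in non-divergence form; instead it introduces the auxiliary flux $w:=\rho f^{2i}\partial_i v-\tfrac{\rho}{F^2}v$, which is continuous across $\Gamma_1$ (this is exactly the transmission condition encoded in \eqref{eqn:v}) and hence lies in $H^1$ of the full slit strip $\Omega_{m,1}$. The PDE then expresses $\nabla w$ in terms of $\nabla v$, $\nabla\tilde v$, and $|\nabla v|^2$, and since $f^{22}$ is bounded below, a GN bound on $\|w\|_{L^4}$ controls $\|v_p\|_{L^4}$. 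Your route is more elementary --- pointwise algebra on the non-divergence equation together with the observation that \eqref{eqn:u:lap} expresses $u_{pp}$ linearly in $\nabla v$ --- but it is intrinsically layer-wise because $v_p$ jumps across $\Gamma_1$. The paper's device of working with $w$ respects the transmission structure and keeps every estimate on the unsplit domain $\Omega_{m,1}$.

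One technical slip: the covering argument in your Step~3 does not work as written. Your Steps~1--2 produce an inequality with the left side on $\Omega_{m,1}$ and the right side (in particular the term $A$ to be absorbed) on the strictly larger $\Omega_{m,3/2}$; no finite covering of $\Omega_{m,1}$ by translates $\Omega_{m_j,a}$ can force the corresponding enlarged domains $\Omega_{m_j,3a/2}$ to all sit inside $\Omega_{m,1}$ while still covering it. The clean fix is to drop the cutoff in Step~1 entirely and apply Gagliardo--Nirenberg directly on each fixed rectangle $\Omega_{m,1}\cap\Omega_i$, whose constant is uniform in $m$ by translation invariance. Then both sides of the inequality, and the absorption, live on $\Omega_{m,1}$, and no covering is needed. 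This is how the paper avoids the issue.
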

\begin{proof}
Throughout the proof, let $C > 0$ denote a generic positive constant depending only on $\delta$ and $\| u_p \|_{C^0}$.  Recall that $v$ satisfies \eqref{eqn:v}. From \eqref{L2 est} and \eqref{deriv Holder est} we know that there exists some $r = r(\delta) > 2$ such that
\begin{equation}\label{v W1r est}
\|v\|_{W^{1,\tilde r}(\Omega_{m,1})} \le C \|u_p\|_{C^0(\Omega)} \leq C
\end{equation}
for all $\tilde r \in [2, r]$ and $m \in \R$.  Thus \eqref{gradv L^4 est} holds if $r \ge 4$. So we assume that $r < 4$.

By the Gagliardo--Nirenberg inequality, we have for any $g \in H^1(\Omega_{m,k})$,
\begin{equation}\label{GN}
\|g\|_{L^4(\Omega_{m,k})} \lesssim \|\nabla g\|^{1-{\theta/4}}_{L^2(\Omega_{m,k})} \| g \|^{\theta/4}_{L^s(\Omega_{m,k})}
\end{equation}
for all $\theta \in [1, 4]$. In particular, applying \eqref{GN} with $\theta=2$ to \eqref{v W1r est}, we infer that 
\begin{equation}\label{v L4 est}
\|v\|^4_{L^4(\Omega_{m,1})} \lesssim \| \nabla v \|^2_{L^2(\Omega_{m,1})} \|v\|^2_{L^2(\Omega_{m,1})} \le C \|v\|^4_{L^2(\Omega_{m,2})} \le C \|u_p\|^4_{C^0(\Omega)}.
\end{equation}

Now, the first equation of \eqref{eqn:v} can be rewritten as
\begin{align*}
\partial_p w = -\rho \left( f^{111}\tilde v^2 + (f^{112} + f^{121}) v_p \tilde v + f^{122} v_p^2 + f^{1i} \partial_i \tilde v   \right) - {\rho \over F^2} v_p,
\end{align*}
where 
\begin{equation}\label{def w}
w := \rho f^{2i} \partial_i v  - {\rho \over F^2} v \quad \text{and} \quad \tilde v := v_q. 
\end{equation}
Moreover, from the definition of $w$ we have
\begin{align*}
\partial_q w =  \rho \left( f^{211}\tilde v^2 + (f^{212} + f^{221}) v_p \tilde v + f^{122} v_p^2 + f^{2i} \partial_i \tilde v  \right) - {\rho \over F^2} v_q.
\end{align*}
It follows from this and \eqref{v W1r est} that
\begin{equation}\label{w est}
\begin{split}
& \|w\|_{L^{\tilde r}(\Omega_{m,1})} \le C, \\
& \|\nabla w\|^2_{L^2(\Omega_{m,1})} \le C \left( \|\nabla \tilde v\|^2_{L^2(\Omega_{m,1})} + \|\nabla v\|^2_{L^2(\Omega_{m,1})} + \|v_q\|^4_{L^4(\Omega_{m,1})} + \|v_p\|^4_{L^4(\Omega_{m,1})} \right).
\end{split}
\end{equation}
Hence from the definition of $w$ \eqref{def w}, \eqref{bounds Hessian}, \eqref{v L4 est} and the Gagliardo--Nireberg inequality \eqref{GN} with $\theta = r$, we have
\begin{equation*}
\begin{split}
\|v_p\|^4_{L^4(\Omega_{m,1})} & \le C \left( \|w\|^4_{L^4(\Omega_{m,1})} + \|v_q\|^4_{L^4(\Omega_{m,1})} + \|v\|^4_{L^4(\Omega_{m,1})} \right) \\
& \leq C \left( \|\nabla w\|^{4-r}_{L^2(\Omega_{m,1})}\|w\|^r_{L^r(\Omega_{m,1})} + \|v_q\|^4_{L^4(\Omega_{m,1})} + 1 \right) \\
& \le C \left( \left( \|\nabla \tilde v\|^2_{L^2(\Omega_{m,1})} + \|v_q\|^4_{L^4(\Omega_{m,1})} + \|v_p\|^4_{L^4(\Omega_{m,1})} \right)^{2 - r/2} + \|v_q\|^4_{L^4(\Omega_{m,1})} + 1 \right).
\end{split}
\end{equation*}
Since $r > 2$, the above bound yields
\begin{equation*}
\|v_p\|^4_{L^4(\Omega_{m,1})} \le C \left( \|\nabla v_q\|^{4 - r}_{L^2(\Omega_{m,1})} + \|v_q\|^4_{L^4(\Omega_{m,1})} +1 \right).
\end{equation*}
Applying \eqref{GN} to $v_q$ with $\theta = r$ and using \eqref{v W1r est}, we see that
\begin{equation*}
\|v_q\|^4_{L^4(\Omega_{m,1})} \lesssim \|\nabla v_q\|^{4 - r}_{L^2(\Omega_{m,1})} \|v_q\|^r_{L^r(\Omega_{m,1})} \le C  \|\nabla v_q\|^{4 - r}_{L^2(\Omega_{m,1})}.
\end{equation*}
Therefore
\begin{equation*}
\|v_p\|^4_{L^4(\Omega_{m,1})} \le C \left( \|\nabla v_q\|^{4 - r}_{L^2(\Omega_{m,1})} + 1 \right),
\end{equation*}
which implies \eqref{gradv L^4 est}.
\end{proof}

\begin{lemma}\label{lem gradv_q L2 est}
Let $(u,h)$ be a solution of \eqref{eqn:u} satisfying \eqref{fixing}. Then there exists a constant $C = C(\delta)$ such that for any $m\in \R$,
\begin{equation}\label{gradv_q L2 est}
\| \nabla v_q \|_{L^2(\Omega_{m,1})} \le C, 
\end{equation}
where $v = u_q$.
\end{lemma}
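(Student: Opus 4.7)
The plan is to test the equation \eqref{eqn:vq} for $\tilde v = v_q$ against a cutoff multiple of $\tilde v$ itself, producing an energy estimate whose nonlinear right-hand side can be absorbed via the $L^4$ control of $\nabla v$ from Lemma~\ref{lem gradv L^4 est}. Concretely, fix $m \in \mathbb{R}$ and pick a cutoff $\zeta \in C^\infty_c((m-2,m+2))$ with $\zeta \equiv 1$ on $[m-1,m+1]$ and $0 \le \zeta \le 1$. Testing \eqref{eqn:vq} against $\zeta^2 \tilde v$ and integrating by parts in the same distributional sense used in the proof of Lemma~\ref{lem local est}, the boundary contributions on $\Gamma_0$ vanish because $\tilde v = 0$ there, and the contributions across $\Gamma_1$ vanish by the transmission condition built into \eqref{eqn:vq local}.

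After integration by parts, the principal term is
\begin{equation*}
  \iint_\Omega \rho \zeta^2 f^{ij}\partial_i \tilde v\, \partial_j \tilde v\, dq\, dp,
\end{equation*}
which by the uniform ellipticity \eqref{bounds Hessian} dominates $c_1 \iint \zeta^2 |\nabla \tilde v|^2$. The remaining terms are of three types: (i) cutoff-derivative terms of the form $\iint \zeta\zeta' \tilde v f^{1j}\partial_j \tilde v$, which by Young's inequality are bounded by $\epsilon \iint \zeta^2|\nabla \tilde v|^2 + C_\epsilon \iint |\tilde v|^2$ and hence controlled using \eqref{L2 est}; (ii) the linear $F^{-2}$ terms $\iint \rho \zeta^2 \tilde v \partial_p \tilde v$, handled identically; and (iii) the nonlinear term coming from $\partial_i(\rho f^{ijk}\partial_j v \partial_k v)$, which after integration by parts yields
\begin{equation*}
  -\iint_\Omega \rho f^{ijk}\partial_j v\, \partial_k v\, \partial_i(\zeta^2 \tilde v)\, dq\, dp.
\end{equation*}
Distributing and applying Young's inequality, this is bounded by
\begin{equation*}
  \epsilon \iint_\Omega \zeta^2 |\nabla \tilde v|^2\, dq\, dp + C_\epsilon \iint_{\Omega_{m,2}} |\nabla v|^4\, dq\, dp + C\iint_{\Omega_{m,2}}|\tilde v|^2\, dq\, dp,
\end{equation*}
where the constant depends on $\|D^3 f\|_{L^\infty} \le C(\delta)$ and $\|\zeta'\|_{L^\infty}$.

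Absorbing the $\epsilon$ terms into the left-hand side and applying \eqref{L2 est} together with Lemma~\ref{lem gradv L^4 est} on each of the unit windows $\Omega_{m\pm 1,1} \subset \Omega_{m,2}$, we arrive at an estimate of the form
\begin{equation*}
  \|\nabla \tilde v\|_{L^2(\Omega_{m,1})}^2 \le C\bigl(\|\nabla \tilde v\|_{L^2(\Omega_{m,2})}^{4-r} + 1\bigr).
\end{equation*}
Since $r > 2$ we have $4 - r < 2$, so one more use of Young's inequality would immediately give \eqref{gradv_q L2 est} if the norms on both sides were over the same set. This is the one genuine obstacle: the estimate loses a unit in the $q$-direction, so a naive absorption fails. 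The standard remedy is an iteration/covering argument of Moser type, or alternatively a Caccioppoli inequality with nested cutoffs: write the inequality in the form $E(\rho_1) \le \tfrac 12 E(\rho_2) + C(\rho_2-\rho_1)^{-N}$ for concentric strips and invoke a standard hole-filling lemma (e.g.\ \cite[Lemma V.3.1]{giaquinta1983multiple}) to conclude that $E(1) \le C$. Putting everything together yields \eqref{gradv_q L2 est} with a constant depending only on $\delta$, completing the lemma.
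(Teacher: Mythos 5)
Your energy estimate matches the paper's: testing \eqref{eqn:vq} against $\zeta^2 \tilde v$, using the ellipticity bound \eqref{bounds Hessian}, Young's inequality, \eqref{L2 est}, and Lemma~\ref{lem gradv L^4 est} gives exactly
$\|\nabla \tilde v\|^2_{L^2(\Omega_{m,1})} \le C\bigl(\|\nabla \tilde v\|^{4-r}_{L^2(\Omega_{m,2})} + 1\bigr)$,
and you correctly flag that the mismatch of domains is the remaining obstruction.

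The gap is in how you propose to close this. You wave at a Moser iteration or the Giaquinta hole-filling lemma, but you do not carry it out, and it does not apply as stated. That lemma needs an inequality $E(\rho_1) \le \theta E(\rho_2) + C(\rho_2-\rho_1)^{-N}$ valid for \emph{all} pairs $1 \le \rho_1 < \rho_2 \le 2$, with $\theta < 1$ fixed and a constant blowing up only polynomially as $\rho_2-\rho_1 \to 0$. But the $\Omega_{m,1} \to \Omega_{m,2}$ loss in your estimate is not at your disposal to shrink: it is inherited rigidly from Lemma~\ref{lem gradv L^4 est}, which was proved on fixed unit strips (and whose own proof loses fixed units through the Gagliardo--Nirenberg step and \eqref{L2 est}). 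Turning this into a nested--cutoff Caccioppoli inequality would require reproving Lemma~\ref{lem gradv L^4 est} and its predecessors with variable-width cutoffs and controlled polynomial dependence on the width, which is a substantial modification that you have not done.

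The paper's resolution is much simpler and avoids the issue altogether: take the supremum over $m$ on both sides. Since $\Omega_{m,2}$ is covered by the three shifted unit strips $\Omega_{m-1,1}$, $\Omega_{m,1}$, $\Omega_{m+1,1}$, one has $\sup_m\|\nabla\tilde v\|_{L^2(\Omega_{m,2})} \lesssim \sup_m\|\nabla\tilde v\|_{L^2(\Omega_{m,1})}$. Writing $X := \sup_m\|\nabla\tilde v\|_{L^2(\Omega_{m,1})}$, the estimate collapses to the scalar inequality $X^2 \le C(X^{4-r} + 1)$, and since $4 - r < 2$ this forces $X \le C(\delta)$ directly, with no iteration needed (only the a priori qualitative finiteness of $X$, which follows from local elliptic regularity combined with the decay of $u_{qq}$ at $q \to \pm\infty$ built into $\Xspace_\infty$). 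You should replace the hole-filling sketch with this supremum argument.
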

\begin{proof}
Recall that $\tilde v := v_q$ solves \eqref{eqn:vq}. We now test $\zeta^2 \tilde v$ against \eqref{eqn:v} with the cutoff function $\zeta$ given as in Lemma \ref{lem est for grad psi}. Using the ellipticity condition \eqref{bounds Hessian}, Young's inequality, \eqref{L2 est} and \eqref{gradv L^4 est}, we discover that
\begin{equation*}
\begin{split}
c_1 \iint_\Omega \rho \zeta^2 |\nabla \tilde v|^2 \, dq \, dp & \le \iint_\Omega \rho \zeta^2 f^{ij} \partial_j \tilde v \partial_i \tilde v \, dq \, dp \\
& = - \iint_\Omega \rho \zeta^2 f^{ijk} \partial_j v \partial_k v \partial_i \tilde v \, dq \, dp - \iint_\Omega \rho  (\zeta^2)' \tilde v f^{1j} \partial_j \tilde v \, dq \, dp \\
& \quad - \iint_\Omega \rho (\zeta^2)' \tilde v f^{1jk} \partial_j v \partial_k v \, dq \, dp + \iint_\Omega {2\rho \over F^2} \zeta^2 \tilde v \tilde v_p \, dq \, dp \\
& \le {c_1 \over 2} \iint_\Omega \rho \zeta^2 |\nabla \tilde v|^2 \, dq \, dp + C \left( \| \nabla v\|^4_{L^4(\Omega_{m,2})} + \|\tilde v\|^2_{L^2(\Omega_{m,2})} \right),
\end{split}
\end{equation*}
which implies that
\[
\| \nabla \tilde v \|^2_{L^2(\Omega_{m,1})} \le C \left( \| \nabla \tilde v\|^{4 - r}_{L^2(\Omega_{m,2})} + 1 \right).
\]
Taking the supremum over all $m$, we find that 
\[
\sup_m \| \nabla \tilde v \|^2_{L^2(\Omega_{m,1})} \le C\left( \sup_m \| \nabla \tilde v \|_{L^2(\Omega_{m,2})}^{4-r} + 1 \right) \lesssim 
C\left( \sup_m \| \nabla \tilde v \|_{L^2(\Omega_{m,1})}^{4-r} + 1 \right).
\]
Finally, because $r > 2$, this gives
\[
\sup_m \| \nabla \tilde v \|_{L^2(\Omega_{m,1})} \le C,
\]
proving \eqref{gradv_q L2 est}.
\end{proof}

\begin{corollary}\label{cor gradv Lp est}
Let $(u,h)$ be a solution to \eqref{eqn:u} that satisfies \eqref{fixing}. Then, for all $s \ge 2$, there exists a constant $C = C(\delta,s)$ such that, for any $m\in \R$,
\begin{equation}\label{gradv Lp est}
\|\nabla v\|_{L^s(\Omega_{m,1})} \le C.
\end{equation}
\end{corollary}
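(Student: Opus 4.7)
The plan is to bootstrap from the $L^4$ bound on $\nabla v$ (which follows from Lemma~\ref{lem gradv L^4 est} combined with Lemma~\ref{lem gradv_q L2 est}) using the auxiliary quantity $w$ defined in \eqref{def w} together with two-dimensional Sobolev embedding. The key observation is that both $\tilde v = v_q$ and $w$ belong to $H^1(\Omega_{m,1})$ uniformly in $m$, and hence by Sobolev embedding in two dimensions to $L^s(\Omega_{m,1})$ for every $s < \infty$; the remaining component $v_p$ can then be recovered algebraically from the definition of $w$.

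First, Lemma~\ref{lem gradv_q L2 est} supplies the uniform bound on $\|\nabla \tilde v\|_{L^2(\Omega_{m,1})}$, while Lemma~\ref{lem local est} controls $\|\tilde v\|_{L^2(\Omega_{m,1})}$, so that $\|\tilde v\|_{H^1(\Omega_{m,1})} \le C(\delta)$. Moreover $\tilde v = u_{qq}$ is continuous across the interface $\{p=0\}$: the trace of $u$ from each side of $\{p=0\}$ is a single $C^{2+\alpha}$ function of $q$, so the limits $u_{qq}|_{p=0^\pm}$ coincide. The distributional $p$-derivative of $\tilde v$ therefore contains no singular part, and the usual $H^1(\Omega_{m,1}) \hookrightarrow L^s(\Omega_{m,1})$ embedding (valid for any $s < \infty$ in dimension two) yields $\|\tilde v\|_{L^s(\Omega_{m,1})} \le C(\delta,s)$.

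A parallel argument applies to $w$. The inequality \eqref{w est}, already derived in the proof of Lemma~\ref{lem gradv L^4 est}, bounds $\|\nabla w\|_{L^2(\Omega_{m,1})}^2$ by $\|\nabla \tilde v\|_{L^2}^2$, $\|\nabla v\|_{L^2}^2$, $\|v_q\|_{L^4}^4$, and $\|v_p\|_{L^4}^4$, all of which are now uniformly controlled by Lemmas~\ref{lem local est}, \ref{lem gradv L^4 est}, and \ref{lem gradv_q L2 est}; combined with the uniform $L^{\tilde r}$ bound on $w$ from \eqref{w est}, this gives $\|w\|_{H^1(\Omega_{m,1})} \le C(\delta)$. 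The continuity of $w$ across $\{p=0\}$ is precisely the transmission condition built into \eqref{eqn:v}, namely $\jump{\rho(f^{2i}\partial_i v - v/F^2)}=0$, so again the weak gradient has no singular part and 2D Sobolev embedding yields $\|w\|_{L^s(\Omega_{m,1})} \le C(\delta,s)$.

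Finally, solving the definition \eqref{def w} algebraically for $v_p$ gives
\[
  v_p \;=\; \frac{1}{\rho f^{22}}\Big(w \,+\, \tfrac{\rho}{F^2}\, v \,-\, \rho f^{21} \tilde v\Big).
\]
The ellipticity bound \eqref{bounds Hessian} keeps $f^{22}$ bounded away from zero by a constant depending only on $\delta$, while $|f^{21}|$ is bounded and $v = u_q$ is bounded via \eqref{fixing} and Remark~\ref{rk C1 control}. Combined with the $L^s$ bounds on $w$ and $\tilde v$, this delivers $\|v_p\|_{L^s(\Omega_{m,1})} \le C(\delta,s)$, and together with $v_q = \tilde v$ this proves \eqref{gradv Lp est}. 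The main subtlety in the argument is verifying that $\tilde v$ and $w$ genuinely lie in $H^1(\Omega_{m,1})$ across the interface rather than only piecewise in each layer; as noted, this is ensured by the regularity of $u$ and by the transmission condition embedded in \eqref{eqn:v}, respectively.
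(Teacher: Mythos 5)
Your proof is correct and follows essentially the same route as the paper: bound $w$ and $v_q$ in $H^1(\Omega_{m,1})$ uniformly in $m$ using Lemmas~\ref{lem local est}, \ref{lem gradv L^4 est}, \ref{lem gradv_q L2 est} and \eqref{w est}, upgrade to $L^s$ by two-dimensional Sobolev embedding, then recover $v_p$ algebraically from \eqref{def w} using the lower bound on $f^{22}$ from \eqref{bounds Hessian}. Your remark about verifying $H^1$ regularity across the interface is a correct observation but is not actually needed: the Sobolev embedding can be applied separately on $\Omega_{m,1}\cap\Omega_1$ and $\Omega_{m,1}\cap\Omega_2$, each of which is a Lipschitz domain, and the two piecewise $L^s$ bounds simply add.
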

\begin{proof}
From \eqref{gradv L^4 est}, \eqref{gradv_q L2 est} and \eqref{w est} we know that
\begin{equation*}
\|\nabla v\|^4_{L^4(\Omega_{m,1})} + \|\nabla w \|^2_{L^2(\Omega_{m,1})} \le C,
\end{equation*}
with $w$ is given in \eqref{def w}. Sobolev embedding then implies that $w \in L^s(\Omega_{m,1})$ for all $s\ge 2$. Similarly, from \eqref{v W1r est} and \eqref{gradv_q L2 est} we know that $v, v_q \in L^s(\Omega_{m,1})$. In this way, \eqref{gradv Lp est} follows from the fact that $f^{22}$ is bounded from below by a positive constant depending on $\delta$.
\end{proof}

Now we can apply the elliptic regularity result Theorem~\ref{thm meyers} to $v_q$, which gives the following.
\begin{lemma}\label{lem gradtildev est}
Let $(u,h)$ be a solution of \eqref{eqn:u} satisfying \eqref{fixing}. Then there exist some $s_* = s_*(\delta_*) > 2$ and a constant $C = C(\delta)$ such that, for any $m\in \R$,
\begin{equation}\label{gradtildev est}
\|\nabla v_q\|_{L^{s_*}(\Omega_{m,1})} \le C.
\end{equation}
\end{lemma}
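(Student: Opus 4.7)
The plan is to apply Meyers' Theorem~\ref{thm meyers} to the localized equation \eqref{eqn:vq local} for $\tilde w := \zeta \tilde v = \zeta v_q$ on a smooth domain $\mathcal D$ with $\Omega_{m,2} \subset \mathcal D \subset \Omega_{m,3}$. By the uniform ellipticity \eqref{bounds Hessian}, the coefficient matrix $\rho D^2 f(\nabla u, h)$ lies between $c_1 \mathbf{I}$ and $c_1^{-1} \mathbf{I}$ for a constant $c_1 = c_1(\delta) > 0$. Meyers' theorem then furnishes some $s_* = s_*(\delta) > 2$ together with an estimate of the form
\begin{equation*}
  \|\nabla \tilde w\|_{L^{s_*}(\mathcal D)} \le C \bigl( \|\tilde G\|_{L^{s_*}(\mathcal D)} + \|g\|_{L^2(\mathcal D)} \bigr),
\end{equation*}
provided $\tilde w$ has zero Dirichlet trace on $\partial \mathcal D$. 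The latter holds since $\zeta$ has compact support in $q$ (giving $\tilde w = 0$ on the vertical portions of $\partial \mathcal D$), while on the top and bottom boundaries $\Gamma_0 = \{p = \pm 1\}$ we differentiate $u \equiv 0$ twice in $q$ to obtain $\tilde v = u_{qq} = 0$.

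The core step is estimating the two forcing terms. For the vector-valued forcing $\tilde G = \rho \tilde v D^2 f \nabla \zeta - \zeta G$, recall that $G$ is a quadratic polynomial in $\nabla v$ plus a linear term in $\tilde v = v_q$. Corollary~\ref{cor gradv Lp est} supplies $\|\nabla v\|_{L^s(\Omega_{m,1})} \le C$ for every $s \ge 2$, so $\|G\|_{L^{s_*}(\Omega_{m,1})} \le C$. Combined with the bound \eqref{bounds Hessian} on $D^2 f$ and the smoothness of $\zeta$, this yields $\|\tilde G\|_{L^{s_*}(\mathcal D)} \le C$. For the scalar forcing $g = \rho \nabla \zeta \cdot D^2 f \nabla \tilde v + \nabla \zeta \cdot G - (\rho/F^2) \zeta \partial_p \tilde v$, the first and third terms are controlled by $\|\nabla \tilde v\|_{L^2(\Omega_{m,2})} \le C$ from Lemma~\ref{lem gradv_q L2 est}, while the middle term is bounded by $\|G\|_{L^2(\Omega_{m,1})} \le C$. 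Thus $\|g\|_{L^2(\mathcal D)} \le C$.

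Combining these inputs with Meyers' theorem gives $\|\nabla \tilde w\|_{L^{s_*}(\mathcal D)} \le C$, and since $\zeta \equiv 1$ on $\Omega_{m,1}$ this implies the desired bound \eqref{gradtildev est}. I expect no serious obstacle here, as the genuine technical work has already been performed in Lemmas~\ref{lem gradv L^4 est}--\ref{lem gradv_q L2 est} and Corollary~\ref{cor gradv Lp est}: those precisely upgraded $\nabla v$ to arbitrary Lebesgue exponents and $\nabla \tilde v$ to $L^2$, which is exactly the combination Meyers' theorem needs to promote $\nabla \tilde v$ from $L^2$ to some $L^{s_*}$ with $s_* > 2$. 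The only care required is to keep the constants uniform in $m$, which follows from the translation invariance already exploited throughout this subsection.
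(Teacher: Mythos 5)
Your proof is correct and follows exactly the same route as the paper: apply Meyers' Theorem~\ref{thm meyers} to the localized divergence-form equation \eqref{subeqn vq} for $\tilde w = \zeta\tilde v$, using Corollary~\ref{cor gradv Lp est} to put $\tilde G$ in $L^{s_*}$ and Lemma~\ref{lem gradv_q L2 est} to put $g$ in $L^2$. The extra detail you supply on the uniform ellipticity and on checking the zero Dirichlet trace of $\tilde w$ on $\partial\mathcal D$ is accurate and merely makes explicit what the paper treats as routine.
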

\begin{proof}
Recall that we obtained an equation \eqref{subeqn vq} governing the a cutoff version of $\tilde v = v_q$ on a smooth subdomain $\Omega_{m,2} \subset \mathcal D \subset \Omega_{m,3}$. From \eqref{gradv Lp est} we know that $G \in L^s(\Omega_{m,2})$ for all $s \ge 2$, and hence $\tilde G \in L^s(\mathcal D)$. Also \eqref{gradv_q L2 est} implies that $g \in L^2(\mathcal D)$. Therefore, \eqref{gradtildev est} is a direct consequence of Theorem \ref{thm meyers}.
\end{proof}

Repeating the same argument as in Lemmas \ref{lem gradv L^4 est}--\ref{lem gradtildev est} one can actually obtain
\begin{corollary}\label{cor higher gradient}
Let $(u,h)$ be a solution of \eqref{eqn:u} satisfying \eqref{fixing}. Then for any integer $k \ge 2$, there exist some $s_* = s_*(k, \delta) > 2$ and a constant $C = C(k, \delta)$ such that, for any $m\in \R$,
\begin{equation}
\|\nabla \partial^k_q u\|_{L^{s_*}(\Omega_{m,1})} \le C.
\end{equation}
\end{corollary}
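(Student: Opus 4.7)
The plan is to argue by induction on $k \ge 2$, with the base case $k=2$ being exactly Lemma~\ref{lem gradtildev est} (noting $\nabla v_q = \nabla \partial_q^2 u$). The inductive step will repeat the three-stage procedure of Lemmas~\ref{lem gradv L^4 est}--\ref{lem gradtildev est}: first an $L^2$ energy bound, then a bootstrap to $L^r$ for some $r > 2$ via Gagliardo--Nirenberg and a $p$-derivative auxiliary equation, and finally a Meyers-type application to obtain $L^{s_*}$ for the next derivative.

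More precisely, suppose the conclusion holds for all orders $j \le k$, i.e.\ there exists $s = s(k,\delta) > 2$ with $\|\nabla \partial_q^j u\|_{L^{s}(\Omega_{m,1})} \le C$ uniformly in $m$ for every $j \in \{1,\dots,k\}$. Differentiating \eqref{eqn:v} an additional $k-1$ times in $q$ and setting $w_{k+1} := \partial_q^{k+1} u$, I obtain a divergence-form equation of the schematic shape
\begin{equation*}
  \partial_i\bigl(\rho f^{ij} \partial_j w_{k+1}\bigr) - \partial_p\Big(\tfrac{\rho}{F^2} w_{k+1}\Big) + \tfrac{\rho}{F^2}\partial_p w_{k+1}
  = \partial_i G_{k+1} + g_{k+1} \ina \Omega \cup \Gamma_1,
\end{equation*}
with $w_{k+1} = 0$ on $\Gamma_0$. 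Here $G_{k+1}$ and $g_{k+1}$ are polynomial expressions in the derivatives $\partial_q^j \nabla u$ for $j \le k$, with coefficients that are smooth functions of $\nabla u$ and $h$ (hence uniformly bounded under \eqref{fixing}). By the induction hypothesis and H\"older's inequality, $G_{k+1}$ and $g_{k+1}$ have uniformly bounded $L^{s/N}$ norm for some integer power $N$ depending on $k$.

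Testing $\zeta^2 w_{k+1}$ against this equation (with $\zeta$ a standard cutoff adapted to $\Omega_{m,1} \subset \Omega_{m,2}$) and using the lower ellipticity bound from \eqref{bounds Hessian}, I get the local $L^2$ estimate $\|\nabla w_{k+1}\|_{L^2(\Omega_{m,1})} \le C$. Next, mimicking the construction of the auxiliary function $w$ in \eqref{def w}, I solve the principal part for $\partial_p w_{k+1}$ in terms of $\partial_q w_{k+1}$ plus lower-order terms; together with Gagliardo--Nirenberg interpolation on the interior tangential derivative $\partial_q w_{k+1}$, this upgrades $\|\nabla w_{k+1}\|_{L^4(\Omega_{m,1})}$ to a uniform bound, mirroring Lemma~\ref{lem gradv L^4 est}. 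Iterating this Sobolev-embedding step as in Corollary~\ref{cor gradv Lp est} promotes the bound to $L^s$ for arbitrarily large $s$.

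Finally, with $\nabla w_{k+1}$ controlled in $L^s$ for large $s$, both $G_{k+1}$ and $g_{k+1}$ lie in $L^s(\Omega_{m,2})$. A cutoff of $\partial_q w_{k+1} = \partial_q^{k+2} u$ then satisfies a Dirichlet problem of the form \eqref{div prob} with right-hand sides in $L^{s_*}$ and $L^2$; Theorem~\ref{thm meyers} produces the required bound $\|\nabla \partial_q^{k+1} u\|_{L^{s_*}(\Omega_{m,1})} \le C$ with some $s_* = s_*(k+1,\delta) > 2$, closing the induction.

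The main obstacle is bookkeeping: each additional $q$-derivative introduces a longer list of multilinear terms in lower derivatives, and I must verify that the induction hypothesis (which only provides $L^s$ control, not $L^\infty$) is strong enough to place every such term in $L^2$ (for the energy step) and in $L^{s_*/N}$ (for the Meyers step). This is purely a H\"older-inequality calculation, and by choosing $s$ large enough at each inductive stage—which is permissible because Corollary~\ref{cor gradv Lp est} can be reproved at every level giving any prescribed integrability—no obstruction arises. The structural reason the argument loops successfully is that at each step the principal coefficients $f^{ij}$ are fixed by $\nabla u$ alone (already in $L^\infty$ under \eqref{fixing}), so the linear theory requires no new regularity of $u$ itself.
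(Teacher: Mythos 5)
Your proposal takes essentially the same approach as the paper's, which simply says to iterate the argument of Lemmas~\ref{lem gradv L^4 est}--\ref{lem gradtildev est} at each higher order of $q$-differentiation; you correctly spell out the induction and flag the two structural facts that make it close, namely that the principal coefficients $f^{ij}$ depend only on $\nabla u$, already in $L^\infty$ under \eqref{fixing}, and that the analogue of Corollary~\ref{cor gradv Lp est} can be reproduced at each stage to give arbitrarily high integrability of the lower-order factors before the Meyers step. One minor bookkeeping slip: to obtain an equation whose unknown is $w_{k+1}=\partial_q^{k+1}u$ you should differentiate \eqref{eqn:v} $k$ times in $q$ (equivalently \eqref{eqn:vq} $k-1$ times), not \eqref{eqn:v} $k-1$ times; this does not affect the substance of the argument.
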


Recalling that $\Gamma_1 \subset \cup_{m} \Omega_{m,1}$, the above result and Morrey's inequality ensure that the trace $u|_{\Gamma_1}$ can be bootstrapped to arbitrarily high H\"older regularity.  This is the key to proving our main result of the subsection.

\begin{theorem}[Uniform regularity and analyticity] \label{thm reg}
Let $(u,h)$ be a solution to \eqref{eqn:u} satisfying \eqref{fixing}. Then for any integer $k \geq  2$ and any $\alpha \in (0,1)$ there is a constant $C = C(k, \delta) > 0$ such that $u \in C^{k+ \alpha}(\overline{\Omega_i})$ and 
\[
\|u\|_{C^{k+\alpha}(\overline{\Omega_i})} \le C.
\]
Moreover $u$ is real analytic in $\overline{\Omega_i}$.
\end{theorem}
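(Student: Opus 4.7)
The plan is to convert the uniform $L^{s_*}$ bounds on all $q$-derivatives of $u$ established in Corollary~\ref{cor higher gradient} into uniform H\"older bounds on the interface trace $g:=u|_{\Gamma_1}$, then to run a standard Schauder bootstrap layer-by-layer, and finally to invoke classical analyticity theorems for analytic elliptic boundary value problems.

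\textbf{Steps 1--2 (uniform $C^{k+\alpha}$ regularity).} For every $k\ge0$ the corollary furnishes $s_*=s_*(k,\delta)>2$ and a uniform bound $\|\nabla\partial_q^k u\|_{L^{s_*}(\Omega_{m,1})}\le C(k,\delta)$. Morrey's embedding $W^{1,s_*}\hookrightarrow C^{0,\beta_k}$ with $\beta_k:=1-2/s_*>0$ on the two-dimensional Lipschitz slab $\Omega_{m,1}$ then gives $\|\partial_q^k u\|_{C^{0,\beta_k}(\overline{\Omega_{m,1}})}\le C(k,\delta)$ uniformly in $m$, so that taking traces yields $\|g\|_{C^{k,\beta_k}_{\bdd}(\Gamma_1)}\le C(k,\delta)$ for every $k$; in particular $g$ is $C^\infty$ with uniform control on every seminorm. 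Fix one layer $\Omega_i$ and view $u|_{\overline{\Omega_i}}$ as a solution of the fully nonlinear Dirichlet problem
\[
\mathcal F(p,u,\nabla u,D^2u,\lambda)=0\ina \Omega_i,\qquad u|_{\Gamma_0\cap\partial\Omega_i}=0,\qquad u|_{\Gamma_1}=g.
\]
By \eqref{fixing} and \eqref{bounds Hessian} this equation is uniformly elliptic with coefficients real-analytic in $(\nabla u,h)$. A standard Schauder bootstrap for quasilinear uniformly elliptic equations, applied on interior balls together with half-balls centered at points of $\Gamma_0$ and $\Gamma_1$ and starting from the $C^1$ base estimate in \eqref{fixing} and Remark~\ref{rk C1 control}, then produces
\[
\|u\|_{C^{k+\alpha}(\overline{\Omega_i}\cap\Omega_{m,1/2})}\le C(k,\delta,\alpha)
\]
for every $k\ge2$ and $\alpha\in(0,1)$. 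Taking the supremum over $m\in\R$ yields the claimed bound $\|u\|_{C^{k+\alpha}(\overline{\Omega_i})}\le C$.

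\textbf{Step 3 (analyticity) and main obstacle.} Interior analyticity of $u$ in each open layer $\Omega_i$ is a direct consequence of the classical theorem on analyticity of smooth solutions to analytic quasilinear uniformly elliptic equations, and analyticity up to $\Gamma_0$ follows immediately from the analytic Dirichlet datum $0$ there. The delicate step is analyticity up to $\Gamma_1$: this requires showing that the common trace $g$ is itself real-analytic in $q$. I would establish this by combining the already-known one-sided interior analyticity with the analytic transmission conditions \eqref{eqn:u:dyn} and \eqref{eqn:u:kin}, which couple the boundary values of $u_1,\nabla u_1$ and $u_2,\nabla u_2$, and then invoke an analytic boundary-regularity theorem of Morrey--Kinderlehrer--Nirenberg type to upgrade the analyticity of $g$ to analyticity of $u$ on each closed layer. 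The main obstacle is precisely this propagation of analyticity across $\Gamma_1$, since the Schauder bootstrap of the previous step only yields $C^\infty$ regularity of the interface trace and an additional nonlinear analytic argument is needed to close the loop.
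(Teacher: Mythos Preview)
Your Steps~1--2 match the paper's argument essentially verbatim: Morrey embedding on the uniform $W^{1,s_*}$ bounds from Corollary~\ref{cor higher gradient} controls the trace $u|_{\Gamma_1}$ in $C^{k+\beta}$, and then layer-wise Schauder theory for the Dirichlet problem (the paper cites \cite[Theorem~6.19]{gilbarg2001elliptic}) yields the uniform $C^{k+\alpha}(\overline{\Omega_i})$ bound.

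For Step~3 the paper takes a different and cleaner route than you propose. Rather than staying in the $(q,p)$ variables and trying to propagate analyticity across the fixed interface $\Gamma_1$ via the nonlinear transmission condition, the paper simply returns to the stream-function formulation \eqref{eqn:stream} in the physical domain. There the interface $\fluidS$ is a genuine free boundary for the harmonic function $\psi$ in each layer, with an analytic Bernoulli condition, and the classical free-boundary regularity result of Kinderlehrer--Nirenberg \cite[Theorem~3.2]{kinderlehrer1978regularity} applies directly to give that $\eta$ and $\psi_i$ are real analytic up to $\fluidS$. Analyticity of $u$ in $\overline{\Omega_i}$ then follows by composing with the analytic change of variables \eqref{def qp coords}. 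This move completely sidesteps the obstacle you flag: you never need to prove analyticity of the trace $g$ in the semi-hodograph coordinates, because the free-boundary theorem delivers it for free in the original variables. Your proposed approach of arguing directly in $(q,p)$ via a Morrey--Kinderlehrer--Nirenberg theorem for analytic transmission problems is not wrong in spirit, but you correctly identify it as incomplete, and the paper's device of switching formulations is the missing idea that closes the argument.
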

\begin{proof}
From Lemma \ref{lem gradtildev est} and Sobolev embedding we know that the trace $u|_{\Gamma_1}$ is of class $C^{k + \alpha}$. Furthermore the conditions of \cite[Theorem 6.19]{gilbarg2001elliptic} are met and hence the uniform regularity follows from standard Schauder theory. 

The analyticity can be inferred from the classical result of \cite{kinderlehrer1978regularity}. More specifically, returning to the stream function formulation \eqref{eqn:stream} one may apply \cite[Theorem 3.2]{kinderlehrer1978regularity} to obtain analyticity.
\end{proof}

\subsection{Global bifurcation} \label{final proof section}

We now have all the necessary tools for the proof of our existence theorem for large-amplitude bores.    

\begin{proof}[Proof of Theorem~\ref{global bore theorem}]
  In Theorem~\ref{small bores theorem}, it was shown that there exists a local curve $\cm_\loc$ of solutions to the internal wave problem, and the kernel condition \eqref{kernel assumption} holds along it.  Let $\cm_\loc^\pm$ denote the segment corresponding to the parameter values $\pm\varepsilon \in (0, \varepsilon_0)$.  By construction, both $\cm_\loc^-$ and $\cm_\loc^+$ emanate from the trivial solution $(0,\lambdastar)$, so that \eqref{local singular assumption} follows from Lemma~\ref{no ripples lemma}.  It is also clear that they lie in $\genU_\infty$ and $\genU^\delta$, for all $0 < \delta \ll 1$.  Moreover, we confirmed in Lemma~\ref{no ripples lemma} that the spectral assumption \eqref{spectral assumption} holds along $\cm_\loc^\pm$.

Applying Corollary~\ref{cor_transmission GBT} to $\cm_\loc^\pm$ with $\F|_{\genU^\delta}$ gives a global curve $\cm_\delta^\pm \subset \genU^\delta$ of strictly monotone bore solutions.  By maximality, they are nested, and hence their union $\cm^\pm := \cup_{\delta > 0} \cm_\delta^\pm \subset \genU_\infty$ is also a $C^0$ curve of strictly monotone fronts.  Like the local curves, the fronts on $\cm^+$ are strictly decreasing while those on $\cm^-$ are strictly increasing.  Expressed in Eulerian variables through \eqref{grad u grad psi}, this gives \eqref{monotonicity Euler variables} proving part~\ref{global bore monotone part}.  We already concluded that part~\ref{global bore laminar part} holds, so it remains only to prove part~\ref{global bore limit part}.  

  With that in mind, consider the limiting behavior along $\cm^+$; an identical argument will apply to $\cm^-$.  
  Thanks to our characterization in Lemma~\ref{conjugate lemma} of the flows that are conjugate to $0$,
  Corollary~\ref{conjugate corollary} implies that every solution along $(u(s),\lambda(s)) \in \cm^+$ has
  \begin{align*}
    \lim_{q \to +\infty} u(s)(q,p) = (\lambdastar-\lambda(s))(1-\abs p),
  \end{align*}
  where $\lambdastar$ is given in \eqref{lambda star}. This shows that $\liminf_{s} \lambda(s) \geq \lambdastar$, as otherwise strict monotonicity would fail along $\cm^+$.  
  It also rules out heteroclinic degeneracy \ref{gen hetero degeneracy}. Indeed, if \ref{gen hetero degeneracy} were to occur, then by Lemma~\ref{triple conjugacy lemma} the three flows 
   \begin{equation*}
     \lim_{q \to \mp\infty} u_*(q, \placeholder),
     \quad 
     \lim_{n \to \infty} \lim_{q \to +\infty} u(s_n)(q, \placeholder) = (\lambdastar-\lambda_*)(1-\abs p),
     \quad 
     \lim_{n \to \infty} \lim_{q \to -\infty} u(s_n)(q, \placeholder) = 0,
   \end{equation*}    
   must all be conjugate and distinct, contradicting Lemma~\ref{conjugate lemma}. Likewise, Lemma~\ref{no ripples lemma} and Theorem~\ref{small bores theorem}\ref{small bore uniqueness part}
     ensure that the spectral degeneracy alternative \ref{gen ripples} does not happen.  Thus we are left only with blowup \ref{gen blowup alternative}.

Suppose first that the internal interface stays bounded away from the walls: 
\[  \limsup_{s \to \infty} \lambda(s) < 1.\] 
Note that we already have a uniform lower bound $\lambda(s) > \lambdastar > 0$.   By Theorem~\ref{thm est for u-deriv}, it follows that $\cm^+ \subset \genU^\delta$ for some $\delta > 0$.  Recalling the definition of $N(s)$ in \eqref{gen global blowup}, this implies that $\|u(s)\|_{\Xspace} \to \infty$ as $s \to \infty$.  In light of Theorem~\ref{thm reg}, this can occur only if $\| \partial_p u(s) \|_{C^0(\Omega)} \to \infty$.
 
 To see that this leads to stagnation on the interface \eqref{stagnation limit}, let $\psi(s)$ be the corresponding stream functions and $\fluidD(s)$ the fluid domain.  Translated to Eulerian variables using \eqref{grad u grad psi}, the blowup alternative becomes 
 \[ \lim_{s \to \infty} \left\| \frac{1}{\partial_y \psi(s)} \right\|_{C^0(\fluidD(s))} = \infty.\]
 Because $\partial_y \psi(s)$ is harmonic in $\fluidD(s)$, it cannot attain its minimum in the interior of either layer.   Also, $\partial_x \psi(s)$ vanishes identically on the upper and lower boundaries, and so we have $\partial_y^2 \psi(s) = -\partial_x^2 \psi(s) = 0$ along them as well.  The Hopf boundary-point lemma therefore implies that $\partial_y \psi(s)$ does not attain its minimum or maximum on the walls.  But the upstream limit of $\partial_y \psi(s)$ is simply $-1$, while the downstream limit is bounded uniformly away from $0$ in terms of $1-\lambda(s)$.  We must then have 
 \[ \inf_{\fluidD(s)} |\partial_y \psi(s)| = \inf_{\fluidS(s)} |\partial_y \psi(s)| = -\sup_{\fluidS(s)} \partial_y \psi(s) \to 0 \qquad \textrm{as } s \to \infty.\] 

Assume instead that $\lambda(s) \to 1$.  Then from \eqref{psi value on top} we have
\[ h_2 = 1-\lambda(s) = -\int_{\eta(s)(x)}^{1-\lambda(s)} \partial_y \psi(s)(x, y) \, dy \qquad \textrm{for all } x \in \mathbb{R},\]
and thus 
\[ \inf_{\eta(s)(x) < y < 1-\lambda(s)} | \partial_y \psi(s)(x,y) | \leq \frac{1-\lambda(s)}{1-\lambda(s) - \eta(s)(x)},\]
which in particular means that a stagnation point develops on the interface.  The proof of the theorem is therefore complete.  
\end{proof}

\subsection{Limiting interfaces} \label{limit interface section}
We now turn to the proof of Theorem~\ref{limit eta theorem} on the limiting behavior of the free surface profile along the curves $\cm^\pm$.  

\subsubsection*{Lower bound for elevation bores} 

An important step in proving Theorem~\ref{limit eta theorem}\ref{elevation part} is to ensure that along $\cm^-$, the interface does not come into contact with the lower boundary. Throughout the subsection we will suppose that the interface does not overturn meaning that $\eta(s)$ is uniformly bounded in Lipschitz norm:
\begin{equation}
  \limsup_{s \to -\infty} \| \partial_x \eta(s) \|_{L^\infty(\mathbb{R})} = M < \infty. \label{no overturning upstream} 
\end{equation}  
It follows that  $|\nabla \psi_i(s)| \eqsim_M |\partial_y \psi_i(s)|$ on $\fluidS(s)$, and thus in the stagnation limit \eqref{stagnation limit} we have that the full gradient vanishes.  

Observe also that, because each $\eta(s)$ is monotonically increasing and vanishes upstream, there exists a unique $x(s) \in \mathbb{R}$ such that $\eta(s)(x(s)) = \sigma$, where $\sigma := \min\{F^2/4, \lambdastar/4\}$.  The uniform Lipschitz bound \eqref{no overturning upstream} then gives 
\begin{equation}
  \frac{\sigma}{2} < \eta(s) < {\sigma} \qquad \textrm{on } (x(s)-L, \, x(s)), \label{lower bound eta} 
\end{equation}
for $L := \sigma/(2M)$.  

Since $0 < \lambda(s) < \lambdastar$, we may pass to a subsequence along which $\lambda(s) \to \lambda^* \in [0,\lambdastar]$.  The case $\lambda^* = 0$ indicates the lower layer has collapsed, while the fact $\lambda^* \leq \lambdastar$ is a consequence of the preservation of monotonicity.  

Consider now the translated family of profiles 
\[ \widetilde \eta(s) := \eta(s)(\placeholder + x(s)-L/2)\]
with domain $\mathcal{I} := (-L/2,L/2)$.  As it is uniformly bounded in $\Lip(\overline{\mathcal{I}})$, we can extract a  subsequence converging in $C^{\varepsilon}$ for all $\varepsilon \in (0,1)$ to some $\eta^* \in \Lip(\overline{\mathcal{I}})$.  By \eqref{lower bound eta}, the entire family, $\tilde\eta(s)$ is uniformly positive  on $\mathcal{I}$.  

Denote $\mathcal{S}(s) := \{ (x,\widetilde \eta(s)(x)) : x \in \mathcal{I} \}$, $\mathcal{D}(s) := \mathcal{I} \times (-\lambda(s), 1-\lambda(s))$,  and let $\widetilde \psi_i(s)$ be the corresponding translated stream functions.  From Lemma~\ref{lem est for grad psi}, we have that $\nabla \widetilde\psi(s)$ is bounded uniformly in $L^2(\mathcal{D}(s))$.  As in the proof of Theorem~\ref{thm est for u-deriv}, the Alt--Caffarelli--Friedman monotonicity formula \eqref{ACF} and dynamic condition \eqref{eqn:stream:dynamic} together give 
\[ \limsup_{s \to \infty} \| \nu \cdot \nabla \widetilde \psi_i(s) \|_{L^\infty(\mathcal{S}(s))} \lesssim 1,\]
where $\nu$ is the outward unit normal to $\mathcal{D}_1(s)$ along $\mathcal{S}(s)$.  Since each $\mathcal{S}(s)$ is $C^{2}$, this leads to uniform Lipschitz control of $\widetilde \psi_i(s)$ in a neighborhood $\mathcal{N}$ of $\mathcal{S}(s)$; see, for example, \cite[Lemma 11.19]{caffarelli2005geometric}.  On the other hand, $\widetilde \psi(s)$ satisfies homogeneous Neumann conditions on the top and bottom of $\mathcal{D}$, so Schauder theory gives uniform bounds on $\widetilde \psi(s)$ in $C^{2+\alpha}(\mathcal{D}(s) \setminus \mathcal{N})$.

Now, let $\bar \psi_1(s) \in \Lip(\overline{\mathcal{I}} \times [-1,1])$ be the function found by extending $\widetilde\psi_1(s)$ by zero above $\mathcal{S}(s)$ and by $\lambda(s)$ below $y = -\lambda(s)$.  Letting $\bar \psi_2(s)$ be defined likewise, the above argument show that $\bar\psi_i(s)$ is uniformly Lipschitz on $\mathcal{D}(s)$.  Then, along a subsequence we have 
\begin{equation}
 \label{psi_i convergence}
  \begin{aligned}
    \bar \psi_i(s) & \overset{C^{\varepsilon}}{\longrightarrow}  \psi_i^* \in \Lip(\overline{\mathcal{D}})  & \quad & \textrm{for all } \varepsilon \in (0,1),\\
    \nabla \bar \psi_i(s) & \longrightarrow \nabla \psi_i^*& \quad& \textrm{weak-$\ast$ } L^\infty,  
  \end{aligned}
\end{equation} 
where $\mathcal{D} := \mathcal{I} \times (-\lambda^*,1-\lambda^*)$.  Denote 
\begin{equation}
 \label{def elevation psi*} 
  \begin{aligned}
    \psi^* & := \psi_1^*+\psi_2^*,  & \qquad
    \mathcal{S} & := \{ y = \eta^*(x) \} \cap \mathcal{D}, \\
    \mathcal{D}_1 & :=  \{ y < \eta^*(x) \}  \cap \mathcal{D}, & \qquad
  \mathcal{D}_2 & :=  \{ y > \eta^*(x) \} \cap \mathcal{D}.  
  \end{aligned} 
\end{equation}

\begin{lemma} \label{elevation limiting problem lemma}
Assume that along $\cm^-$ the interface does not overturn \eqref{no overturning upstream}.  Let $\psi^*$ and the sets $\mathcal{D}_1$, $\mathcal{D}_2$, and $\mathcal{S}$ be given as in \eqref{def elevation psi*}.  Then
 \[ \psi^* \in C^{2+\alpha}(\mathcal{D}_1) \cap C^{2+\alpha}(\mathcal{D}_2), \qquad \psi^* < 0 \textrm{ in } \mathcal{D}_2.\]
 If $\lambda^* = 0$, then $\psi^* = 0$ in $\overline{\mathcal{D}_1}$.  On the other hand, if $\lambda^* > 0$, then $\psi^* > 0$ in $\mathcal{D}_1$.  
\end{lemma}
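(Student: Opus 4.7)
My plan is to prove Lemma~\ref{elevation limiting problem lemma} by combining the pre-compactness information gathered in \eqref{psi_i convergence} with the strong maximum principle. The first step is to identify $\psi^*$ with $\psi_1^*$ on $\mathcal{D}_1$ and with $\psi_2^*$ on $\mathcal{D}_2$. Because $\bar\psi_1(s)$ was extended by zero to the upper layer $\mathcal{D}_2(s)$ and $\tilde\eta(s) \to \eta^*$ uniformly on $\mathcal{I}$, every compact $K \subset \mathcal{D}_2$ sits inside $\mathcal{D}_2(s)$ for all sufficiently large $s$, whence $\psi_1^* \equiv 0$ in $\mathcal{D}_2$; the symmetric argument gives $\psi_2^* \equiv 0$ in $\mathcal{D}_1$. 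Hence $\psi^* = \psi_i^*$ in $\mathcal{D}_i$.

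For the regularity claim I would observe that on any compact $K \subset \mathcal{D}_i$, the functions $\tilde\psi_i(s)$ are harmonic and uniformly bounded in $L^\infty$, so interior Schauder estimates give uniform $C^{2+\alpha}(K)$ control. A diagonal extraction upgrades the convergence in \eqref{psi_i convergence} to $C^2_\loc(\mathcal{D}_i)$, showing that $\psi_i^*$ is harmonic and therefore real analytic in each $\mathcal{D}_i$.  Dirichlet data passes to the limit in the obvious way:
\begin{align*}
  \psi_1^*|_{y=-\lambda^*} = \lambda^*, \qquad \psi_2^*|_{y=1-\lambda^*} = -(1-\lambda^*), \qquad \psi_i^*|_{\mathcal{S}} = 0,
\end{align*}
using the boundary conditions \eqref{psi value on top} and \eqref{eqn:stream:kinint}, the convergence $\lambda(s) \to \lambda^*$, and (for the interface values) the uniform Lipschitz control of $\bar\psi_i(s)$ obtained during the construction of the limit. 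The monotonicity $\partial_y \tilde\psi_i(s) < 0$ also passes to the limit as $\partial_y \psi_i^* \leq 0$, yielding the sign bounds $\psi_1^* \geq 0$ in $\mathcal{D}_1$ and $\psi_2^* \leq 0$ in $\mathcal{D}_2$.

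The sign statements then follow from the strong maximum principle. In $\mathcal{D}_2$, if the non-positive harmonic function $\psi_2^*$ attained the value $0$ at an interior point it would be identically zero, contradicting $\psi_2^* = -(1-\lambda^*) < 0$ on the top wall (we have $\lambda^* \le \lambdastar < 1$ by Lemma~\ref{conjugate lemma}), so $\psi^* = \psi_2^* < 0$ in $\mathcal{D}_2$. When $\lambda^* > 0$, applying the same reasoning in $\mathcal{D}_1$ with the bottom-wall value $\psi_1^* = \lambda^* > 0$ gives $\psi^* = \psi_1^* > 0$ there. For the degenerate case $\lambda^* = 0$, the strong maximum principle alone is not enough, but the pointwise bound $0 \leq \tilde\psi_1(s) \leq \lambda(s)$ --- an immediate consequence of $\partial_y \tilde\psi_1(s) < 0$ together with the boundary values $\lambda(s)$ on the bottom and $0$ on $\mathcal{S}(s)$ --- collapses as $\lambda(s) \to 0$ to $\psi^* = \psi_1^* \equiv 0$ on the closure $\overline{\mathcal{D}_1}$.

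The main technical subtlety I anticipate is the identification of the boundary trace of $\psi_i^*$ on $\mathcal{S}$, since the interface is only Lipschitz and the approximating interfaces $\mathcal{S}(s)$ themselves move with $s$. This is handled by the uniform Lipschitz control of $\bar\psi_i(s)$ in a neighborhood of $\mathcal{S}(s)$ (derived earlier from the Alt--Caffarelli--Friedman monotonicity formula together with the non-overturning hypothesis \eqref{no overturning upstream}): since $\bar\psi_i(s)$ vanishes on $\mathcal{S}(s)$, $\mathcal{S}(s) \to \mathcal{S}$ in the Hausdorff sense, and the Lipschitz constant is uniform in $s$, a short interpolation forces $\psi_i^*$ to vanish on $\mathcal{S}$. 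Everything else is a fairly direct application of standard elliptic tools.
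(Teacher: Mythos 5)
Your proof is correct and follows essentially the same strategy as the paper: harmonicity passes to the limit on each layer, the sign of $\psi^*$ on the top wall is inherited from $\lambda(s)-1 < \lambdastar-1 < 0$, and the maximum principle pins down the strict sign in the interior. Two small remarks. First, you are actually slightly more careful than the paper's terse ``maximum principle'' sentence: by first establishing $\psi_2^* \le 0$ throughout $\mathcal D_2$ (via the limiting monotonicity $\partial_y\psi_2^* \le 0$ and $\psi_2^*|_{\mathcal S}=0$) and then invoking the \emph{strong} maximum principle at any hypothetical interior zero, you sidestep the issue that the Dirichlet data on the lateral boundary $\{x = \pm L/2\}$ is not prescribed; the paper's reasoning implicitly uses the same fact. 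Second, for the degenerate case $\lambda^*=0$ your squeeze $0 \le \tilde\psi_1(s) \le \lambda(s)$ is a slightly more direct route than the paper's argument (that $\psi^*$ vanishes on both horizontal boundaries of $\mathcal D_1$ while $\partial_y\psi^*\le 0$, forcing the function to be identically zero along each vertical segment), but the two are readily seen to be equivalent in substance. In short: same approach, with your version spelling out a couple of steps the paper leaves implicit.
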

\begin{proof}
The interior regularity is obvious: $\widetilde\psi(s)$ is harmonic in each layer, and thus $\psi^*$ is harmonic in $\mathcal{D}\setminus \mathcal{S}$.  Since $\tilde \psi(s) = \lambda(s)-1 < \lambdastar - 1 < 0$ on the top boundary of $\fluidD(s)$, we must have that $\psi_2^* < 0$ on the upper boundary of $\mathcal{D}_2$.  Because it vanishes on $\mathcal{S}$, the strong maximum principle implies that $\psi^* < 0$ in the upper layer.  

If $\lambda^* > 0$, the same argument ensures that $\psi^* > 0$ in ${\mathcal{D}_1}$.   
On the other hand, if $\lambda^* = 0$, then because $\widetilde \psi(s) = \lambda(s)$ on the lower boundary of $\fluidD(s)$, it must be that $\psi^*$ vanishes on both the top and bottom of $\mathcal{D}_1$.  But, by continuity it holds that $\psi_y^* \leq 0$ in $\mathcal{D}_1$, and so we must have $\psi^* = 0$ in $\overline{\mathcal{D}_1}$.
\end{proof}

\begin{lemma}[Lower bound] \label{no contact lemma} In the limit along $\cm^-$, if the interface does not overturn \eqref{no overturning upstream},  then  
\[ \liminf_{s \to -\infty} \lambda(s) > 0.\]
\end{lemma}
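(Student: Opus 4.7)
The plan is to argue by contradiction: suppose $\lambda(s_n) \to 0$ along some subsequence. Then Lemma~\ref{elevation limiting problem lemma} applies in the $\lambda^* = 0$ case and yields $\psi_1^* \equiv 0$ on $\overline{\mathcal{D}_1}$, while $\psi_2^*$ is harmonic and strictly negative on $\mathcal{D}_2$ with $\psi_2^* = 0$ on $\mathcal{S}$ and $\psi_2^* = -1$ on the top wall $\{y=1\}$. The contradiction will come from the dynamic boundary condition \eqref{eqn:stream:dynamic}, which cannot tolerate $\nabla \psi_1^* \equiv 0$ at the heights permitted by \eqref{lower bound eta}.

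The main step is to pass to the limit in \eqref{eqn:stream:dynamic} along $\mathcal{S}$. First I would show that $\nabla \tilde\psi_1(s_n) \to 0$ in a sense strong enough to be tested on $\mathcal{S}$: since $\tilde\psi_1(s_n)$ is harmonic on the lower strip with vanishing trace on $\mathcal{S}(s_n)$ and trace of magnitude $\lambda(s_n) \to 0$ on the bottom, the maximum principle gives $\|\tilde\psi_1(s_n)\|_{L^\infty} \le \lambda(s_n)$. Combined with the uniform Lipschitz control on $\tilde\psi_i(s)$ near $\mathcal S(s)$ already obtained from the Alt--Caffarelli--Friedman estimate, together with classical boundary regularity for harmonic functions on uniformly Lipschitz domains, this forces $\nabla \tilde\psi_1(s_n) \to 0$ in $L^2_\loc(\mathcal S)$. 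Simultaneously, interior Schauder estimates deliver $\tilde\psi_2(s_n) \to \psi_2^*$ in $C^{2+\alpha}_\loc(\mathcal D_2)$, and a non-tangential trace argument promotes this to $L^1_\loc(\mathcal S)$ convergence of $|\nabla \tilde\psi_2(s_n)|^2$. Integrating \eqref{eqn:stream:dynamic} against $\varphi \in C_c^\infty(\mathcal I)$ and passing to the limit yields, a.e.\ on $\mathcal S$,
\[
  \frac{\rho_2}{2}|\nabla \psi_2^*|^2 \;=\; \frac{\rho_2-\rho_1}{2}\left(1 - \frac{2\eta^*}{F^2}\right).
\]
Since $\rho_2 < \rho_1$ and $\eta^* \le \sigma \le F^2/4$ by \eqref{lower bound eta}, the right-hand side is bounded above by $(\rho_2-\rho_1)/4 < 0$, while the left-hand side is non-negative; this is the desired contradiction.

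The hard part will be making the limit passage in the quadratic term $|\nabla \tilde\psi_i|^2$ rigorous. A priori, the gradients converge only weakly-$*$ in $L^\infty$ and the interface $\mathcal S$ is only globally Lipschitz, so a classical boundary trace of $\nabla \psi_i^*$ is not even well defined. The lower-layer piece becomes essentially trivial once one exploits the quantitative smallness $\|\tilde\psi_1(s_n)\|_{L^\infty} \le \lambda(s_n)$, but the upper layer genuinely requires one to combine interior Schauder theory with a non-tangential maximal function estimate for harmonic functions on uniformly Lipschitz domains. An alternative route that sidesteps some of these technicalities would be to transfer the Bernoulli identity into an integrated flow-force equality via Lemma~\ref{conserved quantity lemma} and pass to the limit there; either approach ultimately delivers the same pointwise inequality on $\mathcal S$.
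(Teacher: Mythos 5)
There is a genuine gap in the key limit passage for the lower layer. You claim the convergence $\nabla\tilde\psi_1(s_n) \to 0$ on $\mathcal S(s_n)$ is ``essentially trivial'' once one exploits $\|\tilde\psi_1(s_n)\|_{L^\infty} \le \lambda(s_n) \to 0$, but this is false: smallness in $L^\infty$ of a harmonic function does not force its boundary gradient to be small. For instance $u_n = n^{-1}\sin(n\pi x)\sinh(n\pi y)/\sinh(n\pi)$ on the unit square has $\|u_n\|_{L^\infty}\to 0$ while $|\partial_y u_n|$ on $\{y=1\}$ stays order one. Interior Schauder estimates give $\nabla\tilde\psi_1(s_n)\to 0$ only in the open interior of $\mathcal D_1$, and since $\mathcal S(s_n)$ is merely uniformly Lipschitz (not $C^{1+\alpha}$), there is no boundary Schauder estimate to transmit the interior smallness to the trace. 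The paper closes exactly this gap using the sign structure you have not invoked: $\nu\cdot\nabla\tilde\psi_1(s_n)$ has a definite sign on $\fluidS(s_n)$ by the Hopf lemma, so when one integrates by parts, $\int_{\fluidS(s_n)}\nu\cdot\nabla\tilde\psi_1(s_n)\,\xi\,dS = \int \nabla\tilde\psi_1(s_n)\cdot\nabla\xi \to 0$ for nonnegative $\xi$ supported near $\mathcal S$, and a single-signed sequence whose integrals vanish converges to zero a.e.\ along a subsequence.

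A secondary issue is that your argument also attempts to pass to the limit in the upper-layer quantity $|\nabla\tilde\psi_2(s_n)|^2$ on the interface, which is significantly harder and unnecessary. The paper sidesteps this entirely by dropping the $\rho_2|\nabla\psi_2|^2$ term from the rearranged Bernoulli identity (it enters with the favorable sign), so that only $|\nabla\tilde\psi_1(s)|^2$ appears, and no upper-layer trace convergence is needed: one applies the resulting inequality $\tilde\eta(s) > \tfrac{F^2}{2} + \tfrac{F^2\rho_1}{2\jump\rho}|\nabla\tilde\psi_1(s)|^2$ pointwise at finite $s$, not in the limit, and then uses the a.e.\ vanishing from the previous step together with \eqref{lower bound eta} to reach the contradiction for $s\gg 1$. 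If you want to fix your draft, the essential repair is to replace the $L^\infty$-smallness heuristic with the paper's sign/integration-by-parts argument, and to use the one-sided form of the Bernoulli inequality so that only the lower layer needs to be controlled.
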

\begin{proof}
Seeking a contradiction, suppose that \eqref{no overturning upstream} holds but along some subsequence $\lambda(s) \to 0$.   From Lemma~\ref{elevation limiting problem lemma}, we see that by passing to a further subsequence we have $\bar\psi_1(s) \to 0$ in $C^\varepsilon(\overline{\mathcal{D}})$ for all $\varepsilon \in (0,1)$, and so in view of \eqref{psi_i convergence}, $\nabla \bar \psi_1(s) \to 0$ weakly-$\ast$ in $L^\infty(\mathcal{D})$.  

Let $\xi$ be a smooth nonnegative test function supported in a sufficiently small neighborhood of $\mathcal{S}$ in $\mathcal{I} \times (-1,1)$.  Then we have 
\[ \int_{\widetilde{\fluidS}(s)} \nu \cdot \nabla \bar\psi_1(s) \xi \, dS = \int_{\widetilde{\fluidD}(s)} \nabla \bar\psi_1(s) \cdot \nabla \xi \, dx \, dy \longrightarrow 0 \qquad \textrm{as } s \to \infty, \] 
where $\widetilde{\fluidD}(s)$ is the translated fluid domain determined by $\widetilde \eta$, $dS$ is surface measure, and $\nu$ is the outward unit normal to $\widetilde{\fluidD}_1(s)$.  Since $\nu \cdot \nabla \bar \psi_1(s) = \nu \cdot \nabla \widetilde\psi_1(s)$ is strictly negative on $\fluidS(s)$ and $\xi$ is nonnegative, we infer that, passing to a further subsequence,
\[ \nu \cdot \nabla \widetilde \psi_1(s)|_{y = \widetilde\eta(s)(x)} \longrightarrow 0 \qquad \textrm{a.e. on } \mathcal{I}.\]
But then the Bernoulli condition \eqref{eqn:stream:dynamic} leads to the inequality 
\[  \widetilde \eta(s) > \frac{F^2}{2} +  \frac{F^2 \rho_1 }{2 \jump{\rho}}  |\nabla \widetilde \psi_1(s)|^2 > \frac{F^2}{4} \qquad \textrm{a.e. on } \mathcal{I}, \textrm{ for } s \gg 1, \]
which contradicts the upper bound on $\widetilde \eta(s)$ in \eqref{lower bound eta}.
\end{proof}

\subsubsection*{$C^{1+\varepsilon}$ regularity at regular boundary points and the proof of Theorem~\ref{limit eta theorem}} 

The next (and last) step of the argument is to show that, if overturning does not occur and the interface remains bounded away from the walls, then we can extract a subsequence converging to a limiting wave defined near the stagnation point.  In fact, this will be a solution in the viscosity sense of a two-phase free boundary problem.  Through the classical work of Caffarelli \cite{caffarelli1987harnack1}, we will prove that the interface is smooth enough to apply the Hopf boundary-point lemma, and thus obtain a contradiction since there is a stagnation point.  

For completeness, let us first recall the following definition.
\begin{definition} \label{def viscosity solution} Let $\mathcal{D} \subset \mathbb{R}^2$ be a smooth bounded domain.  We say that $\fbu \in C^0(\mathcal{D})$ is a (weak) viscosity solution to the elliptic free boundary problem 
  \begin{equation}
    \left\{
      \begin{aligned}
        \Delta \fbu & = 0 & \qquad & \textrm{in } \{ \fbu > 0 \}  \\
        \Delta \fbu & = 0 & \qquad & \textrm{in } \operatorname{int}\{ \fbu \leq 0 \}   \\
        \partial_{\invec} \fbu^+ & =  g(\partial_{\invec} \fbu^-, X)  & \qquad & \textrm{on } \partial \{ \fbu > 0 \},
      \end{aligned} \right.
    \label{caffarelli free boundary problem} 
  \end{equation}
provided it satisfies the interior equations in the usual viscosity sense, and the boundary condition is interpreted as follows:
\begin{itemize}
\item
 If at $X_0 \in \partial\{ \fbu > 0 \}$ there is a tangent ball $B \subset \{ \fbu > 0\}$, and $a, b \geq 0$ such that
 \[ \begin{aligned}
 \fbu^+ &\geq  a \left\langle \placeholder - X_0,\,  \invec \right\rangle^+ +o(|\placeholder-X_0|) &\qquad& \textrm{in } B  \\
  \fbu^- &\leq  b \left\langle \placeholder-X_0, \,  \invec \right\rangle^- +o(|\placeholder - X_0|) &\qquad& \textrm{in } B^c,
 \end{aligned} \]
with equality on non-tangential domains, then $a \leq g(b, X_0)$.  Here $\invec$ is normal to $B$ at $X_0$ pointing into $\{ \fbu > 0\}$ and the superscripts $\pm$ denote the positive and negative parts.
\item If at $X_0 \in \partial\{ \fbu > 0\}$ there is a tangent ball $B \subset \operatorname{int}\{ \fbu \leq 0\}$, and $a, b \geq 0$ such that
\[ \begin{aligned}
 \fbu^- &\geq  b \left\langle \placeholder-X_0, \, \invec \right\rangle^- +o(|\placeholder-X_0|) &\qquad& \textrm{in } B  \\
  \fbu^+ &\leq  a \left\langle \placeholder-X_0, \, \invec \right\rangle^+ +o(|\placeholder-X_0|) &\qquad& \textrm{in } B^c,
 \end{aligned} \]
with equality on non-tangential domains and $\invec$ defined as before, then $a \geq g(b, X_0)$.
\end{itemize}
\end{definition}

Note that there are several equivalent characterizations of viscosity solutions; see \cite{caffarelli2005geometric,desilva2019recent} for background and further discussion.  
For our purposes, we need only the following two facts, both of which are well-known consequences of the definition.  First, any classical solution is a viscosity solution, and second, if $\{ \fbu_n \}$ is a sequence of solutions converging uniformly to $\fbu$, then $\fbu$ is a viscosity solution.

Our interest in viscosity solutions lies in the the next theorem, which is essentially the main result of \cite{caffarelli1987harnack1} (with a later refinement in \cite{caffarelli1989harnack2}).  It can also be obtained using the techniques of De Silva \cite{desilva2011free}. 

\begin{theorem}[Caffarelli] \label{lipschitz implies better theorem} Assume that $\fbu \in C^0(\mathcal{D})$ solves the free boundary problem \eqref{caffarelli free boundary problem} in the weak viscosity sense of Definition~\ref{def viscosity solution} and that in a neighborhood of $X_0 \in \partial \{ \fbu > 0\}$, the free boundary is the graph of a Lispchitz function.  Assume that $g$ is smooth; $g(0, \placeholder)$ is uniformly positive near $X_0$ and $ g(\placeholder, X)$ is strictly increasing; and, for some $N \gg 1$, $t \mapsto t^{-N} g(t, X)$ is strictly decreasing.  
  Then in a small neighborhood of $X_0$, the free boundary is $C^{1+\varepsilon}$, for some $\varepsilon \in (0,1)$.  
\end{theorem}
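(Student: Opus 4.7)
The plan is to reduce the statement to Caffarelli's gain-of-regularity scheme for two-phase free boundary problems from \cite{caffarelli1987harnack1,caffarelli1989harnack2}. The central idea is a cone-enlargement iteration: the Lipschitz graph hypothesis on $\partial\{\fbu > 0\}$ produces a cone $\Gamma(e_0, \theta_0) \subset S^1$ of directions in which $\fbu^+$ is monotone increasing and $\fbu^-$ is monotone decreasing near $X_0$, with aperture $\theta_0$ controlled below in terms of the Lipschitz constant of the graph. The theorem then follows by showing that at successively finer dyadic scales about $X_0$, the monotonicity aperture can be enlarged by a universal amount, which upon iteration forces H\"older continuity of the free-boundary normal.

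First I would flatten the free boundary by a bilipschitz change of coordinates and extract the monotonicity cone from the Lipschitz graph hypothesis. Nondegeneracy of $\fbu^+$ at $X_0$ follows because $g(0,\placeholder) > 0$, and the interior Harnack inequality together with the boundary Harnack principle yields quantitative two-sided bounds for $\partial_\tau \fbu^\pm$ with $\tau$ in the interior of the cone. The technical heart is a Harnack-type improvement: there exist universal $\delta,\beta > 0$ and a new direction $e'$ such that at half scale the monotonicity cone becomes $\Gamma(e',\theta_0 + \delta r^\beta)$. Iterating this across dyadic scales produces a unit normal $\nu(X)$ to the free boundary with $|\nu(X)-\nu(X_0)| \lesssim |X-X_0|^\varepsilon$, giving $\partial\{\fbu > 0\} \in C^{1+\varepsilon}$ near $X_0$.

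The three hypotheses on $g$ enter precisely at the enlargement step. The positivity $g(0,\placeholder) > 0$ rules out degeneracy of the positive phase when $\partial_\invec \fbu^-$ is small; the strict monotonicity of $g(\placeholder,X)$ is what transfers a Harnack gain in the slope of $\fbu^-$ into a strict Harnack gain for the slope of $\fbu^+$; and the decreasing character of $t \mapsto t^{-N} g(t,X)$ is the structural concavity condition that preserves the gain under the rescaling $\fbu(X) \mapsto r^{-1}\fbu(rX)$ used in the induction. Together these ensure the enlargement lemma is uniform with respect to the base point on the free boundary, so the iteration can be iterated.

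The main obstacle, and the reason the result is nontrivial even when treated as a citation, is carefully checking that the viscosity formulation in Definition~\ref{def viscosity solution} is compatible with Caffarelli's weak formulation and that the dependence of $g$ on $X$ does not spoil the self-similar rescaling. Alternatively, one can bypass the Harnack iteration by invoking the improvement-of-flatness scheme of De Silva \cite{desilva2011free,desilva2019recent}: the Lipschitz graph together with interior nondegeneracy produces $\varepsilon$-flatness at sufficiently small scales, and the linearization of the transmission condition through $g$ gives the required partial differential equation for the approximating half-plane solution, from which the $C^{1+\varepsilon}$ regularity follows by the standard contraction argument.
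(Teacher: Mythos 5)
The paper does not prove this theorem; it is quoted directly from the literature, with the authors noting that it is essentially the main result of Caffarelli's two-phase free boundary papers (with a refinement in the follow-up) and that it can also be obtained by De~Silva's improvement-of-flatness techniques. Your sketch faithfully reconstructs exactly the Caffarelli scheme the paper cites — Lipschitz graph yields a monotonicity cone, Harnack-type cone enlargement across dyadic scales, the three structural hypotheses on $g$ entering in the enlargement/rescaling step — and you flag the De~Silva alternative just as the paper does, so your approach is the same as the paper's (cited) proof.
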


With that in mind, let us now show that if the alternatives in Theorem~\ref{limit eta theorem} do not occur, then we can extract a limiting bore that satisfies the equations locally in a viscosity sense.  In what follows, we suppose that $(x(s), y(s)) \in \fluidS(s)$ is a family of boundary points along which the stagnation limit \eqref{stagnation limit} occurs as $s \to \pm\infty$.  First, we look at the family of depression bores.  

\begin{lemma}[Depression limiting problem] \label{depression limiting lemma} If along $\cm^+$, neither alternative in \eqref{wall contact or overturning} occurs, then we can extract translated subsequences so that in the limit $s \to \infty$, 
\begin{equation}
 \label{depression convergence} 
  \begin{aligned}
    (x(s), y(s)) & \longrightarrow (0, y^*), & & \\
    \lambda(s) & \longrightarrow \lambda^* \in [\lambdastar, 1), & & \\
    \eta(s) & \overset{C^\varepsilon}{\longrightarrow} \eta^* \in \Lip(\overline{\mathcal{I}}) &\quad &\textrm{for all } \varepsilon \in (0,1), \\
    \bar \psi_i(s) & \overset{C^\varepsilon}{\longrightarrow}  \psi_i^* \in \Lip(\overline{\mathcal{D}}) &  \quad & \textrm{for all } \varepsilon \in (0,1), \\
    \nabla \bar \psi_i(s) & \longrightarrow  \nabla \psi_i^* & \quad & \textrm{weak-$\ast$ } L^\infty(\mathcal{D}),
  \end{aligned}
\end{equation}
where $\mathcal{I}$ is a sufficiently small open interval containing $0$ and $\mathcal{D} := \mathcal{I} \times (-\lambda^*, 1-\lambda^*)$.   Moreover,  $\psi^*$ is a viscosity solution of the free boundary problem \eqref{caffarelli free boundary problem} for the function 
  \begin{equation}
    g( \partial_{\invec} \fbu^-, y) := \frac{1}{\rho_1}\left( \rho_2 |\partial_{\invec} \fbu^- |^2 + \frac{2 \jump{\rho}}{F^2} y - \jump{\rho} \right)^{1/2}. \label{def depression g} 
  \end{equation}   
\end{lemma}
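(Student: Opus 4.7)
The argument will closely parallel the construction of the limiting configuration for elevation bores carried out just above, but with the roles of the upper and lower layers slightly reinterpreted in line with the sign conventions for $\cm^+$ (where $\partial_x\eta<0$ and the free surface descends into the lower layer). The plan is as follows.

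First, I will set up the translations and extract subsequences. By the first alternative in \eqref{wall contact or overturning} failing, $\lambda(s)$ is bounded away from $1$; by Theorem~\ref{global bore theorem}\ref{global bore laminar part} we also have $\lambda(s) \geq \lambdastar$ modulo a subsequence, so along some subsequence $\lambda(s) \to \lambda^* \in [\lambdastar,1)$. Translating each wave so that $x(s) \mapsto 0$ and choosing a small interval $\mathcal{I} \ni 0$, the failure of the overturning alternative gives a uniform Lipschitz bound for $\eta(s)$, and Arzel\`a--Ascoli yields the $C^\varepsilon$ convergence $\eta(s) \to \eta^*$ with $\eta^* \in \Lip(\overline{\mathcal{I}})$, along a further subsequence. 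After yet another subsequence, $(x(s),y(s)) \to (0,y^*)$ with $y^* = \eta^*(0)$.

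Second, I will obtain uniform Lipschitz control on the stream functions as was done in the elevation case. Lemma~\ref{lem est for grad psi} gives a uniform local $L^2$ bound on $\nabla \psi$, the Alt--Caffarelli--Friedman monotonicity formula together with the Bernoulli condition \eqref{eqn:stream:dynamic} supply a uniform $L^\infty$ bound on $\nu\cdot\nabla\tilde\psi_i(s)$ on $\fluidS(s)$ (via exactly the argument of Theorem~\ref{thm est for u-deriv}, with $y$ now bounded since the interface avoids both walls), and then interior harmonicity in each layer combined with trivial Neumann conditions on the rigid boundaries yields uniform Lipschitz control on $\bar\psi_i(s)$ on all of $\mathcal{D}$. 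Extracting a further subsequence, I obtain the convergences in \eqref{depression convergence}, the weak-$*$ $L^\infty$ statement for $\nabla\bar\psi_i(s)$ following from Banach--Alaoglu. Harmonicity of $\psi_i^*$ in the open sets $\mathcal{D}_1^*,\mathcal{D}_2^*$ determined by $\eta^*$, together with the correct signs of $\psi^*$ in each layer, follow from the maximum principle exactly as in Lemma~\ref{elevation limiting problem lemma}; note that the no-contact assumption ensures $\mathcal{D}_2^*$ is a genuine open set of positive measure.

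Third, I will verify that $\psi^*$ is a viscosity solution of \eqref{caffarelli free boundary problem} with the boundary nonlinearity \eqref{def depression g}. Each approximating $\bar\psi(s)$ is a classical solution of the free boundary problem, and a direct rearrangement of \eqref{eqn:stream:dynamic} gives exactly the relation $\partial_\invec \psi^+ = g(\partial_\invec \psi^-,\,y)$ pointwise on $\fluidS(s)$, with $g$ as in \eqref{def depression g}. Classical solutions with smooth free boundary and an admissible Bernoulli-type jump relation are viscosity solutions in the sense of Definition~\ref{def viscosity solution}; this is a standard observation (see \cite{caffarelli2005geometric}). Because the $\bar\psi(s) \to \psi^*$ uniformly and $g$ is continuous in its arguments on the region where $y^* > F^2/2 - (\jump\rho)^{-1}\rho_2|\partial_\invec\psi^-|^2 \cdot(\text{etc.})$ keeps the square root well-defined, the standard stability of viscosity solutions under uniform limits (a straightforward adaptation, e.g.~of \cite[Chap.~2]{caffarelli2005geometric}) transfers the viscosity property to $\psi^*$.

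The main obstacle will be the stability step in the last paragraph: although each $\bar\psi(s)$ satisfies the boundary condition classically, tangent balls at a free boundary point $X_0$ of the limit need not come from tangent balls of the approximating free boundaries, so the inequalities required in Definition~\ref{def viscosity solution} must be checked by a comparison argument using sub- and super-solutions. The standard way around this is to perturb the putative tangent ball slightly, use the uniform convergence of $\bar\psi(s)$ in $C^\varepsilon$ together with its interior harmonicity to lift asymptotic expansions from $\psi^*$ to $\bar\psi(s)$, apply the classical Bernoulli identity for $\bar\psi(s)$, and then send $s \to \infty$ using continuity of $g$. This is technical but standard, and the resulting $\psi^*$ is exactly the weak viscosity solution required to invoke Theorem~\ref{lipschitz implies better theorem}, which is ultimately how the overturning/contact dichotomy in Theorem~\ref{limit eta theorem}\ref{depression part} will be proved.
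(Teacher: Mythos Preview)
Your proposal is correct and follows essentially the same approach as the paper: extract subsequences using the uniform Lipschitz bounds, pass to the limit as in the elevation case, and invoke the standard facts that classical solutions are viscosity solutions and that viscosity solutions are stable under uniform limits. The one place where the paper is sharper is the well-definedness of $g$: rather than your somewhat garbled inequality involving $y^*$, the paper simply observes that along $\cm^+$ the profile satisfies $\eta(s) < \lim_{x\to\infty}\eta(s) = \lambdastar - \lambda(s) \le 0$, so that on $\fluidS(s)$ the radicand $\rho_2|\partial_\invec \fbu^-|^2 + \tfrac{2\jump{\rho}}{F^2}y - \jump{\rho}$ is strictly positive (recall $\jump{\rho}<0$), which cleanly justifies solving the Bernoulli condition for $\partial_\invec\psi_1$ and makes $g(0,\,\cdot\,)$ uniformly positive as required for Theorem~\ref{lipschitz implies better theorem}.
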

\begin{proof}
By hypothesis, $\limsup \lambda(s) < 1$ and so we can extract a subsequence converging to $\lambda^* \in [\lambdastar, 1)$.  Repeatedly translating $x(s)$ to the origin and arguing as in the proof of Lemma~\ref{elevation limiting problem lemma}, the rest of the limits in \eqref{depression convergence} follow easily.  Moreover, we see that 
\[ \psi^* > 0 \textrm{ in } \mathcal{D}_1 := \{ y < \eta^*(x) \} \cap \mathcal{D},  \qquad  \psi^* < 0 \textrm{ in } \mathcal{D}_2 := \{ y > \eta^*(x) \} \cap \mathcal{D},\]
so that $\partial \{ \psi^* > 0\} = \{ y = \eta^*(x)\}$.   By the Hopf lemma, we know that the inward normal derivative $\invec \cdot \nabla \psi_1(s) > 0$ on $\fluidS(s)$.  Observe also that along $\cm^+$, 
\[ \eta(s) < \lim_{x \to \infty} \eta(s) = \lambdastar - \lambda(s) < 0.\]
This justifies rearranging the Bernoulli condition \eqref{eqn:stream:dynamic} as 
\[ \partial_\invec \psi_1(s) = g(\partial_\invec \psi_2(s), y)  \qquad \textrm{on } \fluidS(s) = \partial\{ \psi_1(s) > 0\}\]
with $g$ given by \eqref{def depression g}.  Thus,  $\psi^*$ is the uniform limit of viscosity solution to the free boundary problem, and so it too is a viscosity solution.
\end{proof}

The situation for $\cm^-$ is nearly the same except that we must be wary of the case $y^* = F^2/2$ as this permits double stagnation.  
\begin{lemma}[Elevation limiting problem] \label{elevation limiting lemma} 
If along $\cm^-$, the interface does not overturn, then in the limit $s \to -\infty$ we can extract a convergent subsequence as in \eqref{depression convergence} except that now $\lambda^* \in (0, \lambdastar]$.  If $y^* < F^2/2$, then perhaps shrinking $\mathcal{I}$, we have that $\psi^*$ is a viscosity solution of the free boundary problem \eqref{caffarelli free boundary problem} for $g$ as in \eqref{def depression g}.   If instead $y^* > F^2/2$, then $-\psi^*$ is a viscosity solution taking 
  \begin{equation}
    g(\partial_{\invec} \fbu^-, y) := \frac{1}{\rho_2} \left( \rho_1 |\partial_{\invec} \fbu^-|^2 - \frac{2 \jump{\rho}}{F^2} y + \jump{\rho} \right)^{1/2}. \label{def g small F} 
  \end{equation}  
\end{lemma}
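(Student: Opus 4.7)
The plan is to mirror the proof of Lemma~\ref{depression limiting lemma}, with two substantive adjustments: the range of $\lambda^*$ differs because we are on $\cm^-$, and the sign of $y-F^2/2$ dictates which side of the interface plays the role of $\{\fbu>0\}$ in Caffarelli's formulation.

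First, I would extract the subsequences exactly as in Lemmas~\ref{elevation limiting problem lemma} and~\ref{depression limiting lemma}. Strict monotonicity along $\cm^-$ together with Lemma~\ref{conjugate lemma} forces $\lambda(s)\in(0,\lambdastar)$, so after passing to a subsequence $\lambda(s)\to\lambda^*\in[0,\lambdastar]$; Lemma~\ref{no contact lemma} (applicable because of the non-overturning assumption) upgrades this to $\lambda^*\in(0,\lambdastar]$. Translating so $x(s)\to 0$, the $L^2$ gradient bound of Lemma~\ref{lem est for grad psi}, the Alt--Caffarelli--Friedman monotonicity formula (as exploited in the proof of Theorem~\ref{thm est for u-deriv}), and Schauder estimates away from the free surface provide uniform $W^{1,\infty}$ bounds on $\bar\psi_i(s)$ and uniform $\Lip$ bounds on $\tilde\eta(s)$ on the cylinder $\mathcal{I}\times(-\lambda^*,1-\lambda^*)$, from which \eqref{depression convergence} follows by a standard diagonal argument.

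The genuinely new ingredient is the case analysis based on $y^*$. Rewriting the Bernoulli condition \eqref{eqn:stream:dynamic} as $\rho_1|\nabla\psi_1|^2-\rho_2|\nabla\psi_2|^2=\jump\rho(2y/F^2-1)$, we see that when $y^*<F^2/2$ the right-hand side is strictly positive in a neighborhood of the stagnation point (after possibly shrinking $\mathcal{I}$), so $|\nabla\psi_1|$ remains uniformly bounded below by a positive constant even as $|\nabla\psi_2|\to 0$; solving algebraically for $|\nabla\psi_1|$ yields exactly \eqref{def depression g}. In this case the lower layer is $\{\psi^*>0\}$ and the setup is identical to that of Lemma~\ref{depression limiting lemma}. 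When $y^*>F^2/2$, the sign flips: $|\nabla\psi_2|$ is the quantity bounded below, and solving for it produces \eqref{def g small F}. Since now it is the upper layer where $-\psi^*>0$, one must take $\fbu=-\psi^*$ to recover the Caffarelli framework with a strictly positive $g(0,y^*)$. In either case the correspondence between $\partial_\invec\fbu^\pm$ and the two one-sided gradient magnitudes $|\nabla\psi_i|$ is verified by tracking the inward normal to $\{\fbu>0\}$.

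Passage to the limit is then routine: each $\tilde\psi(s)$ is a classical (hence viscosity) solution of the relevant free boundary problem on its cylinder; local uniform convergence with uniform Lipschitz bounds preserves viscosity solutions; and the tangent-ball asymptotic expansions in Definition~\ref{def viscosity solution} are inherited by standard non-tangential compactness arguments. The main obstacle is the excluded borderline $y^*=F^2/2$, at which both $g(0,F^2/2)$ in \eqref{def depression g} and in \eqref{def g small F} vanish. Neither formulation then satisfies the positivity hypothesis of Theorem~\ref{lipschitz implies better theorem}, the flow degenerates to stagnation in \emph{both} layers at the same point, and the interface regularity argument breaks down. This degenerate configuration is precisely the singularity scenario in Theorem~\ref{limit eta theorem}\ref{elevation part} that our current techniques cannot rule out, and is the reason we must state Lemma~\ref{elevation limiting lemma} as a dichotomy.
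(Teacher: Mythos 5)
Your proposal is correct and follows essentially the same route as the paper: extract subsequential limits as in Lemmas~\ref{elevation limiting problem lemma}--\ref{depression limiting lemma} with Lemma~\ref{no contact lemma} providing the lower bound $\lambda^* > 0$, then split on the sign of $y^* - F^2/2$ to decide which side of the Bernoulli identity to solve for and hence whether $\psi^*$ or $-\psi^*$ is the viscosity solution. Your added observations --- that the radicand in \eqref{def depression g} is bounded below in a neighborhood after shrinking $\mathcal{I}$, and that $g(0,F^2/2)=0$ in both \eqref{def depression g} and \eqref{def g small F} explains the excluded borderline case --- are accurate elaborations of points the paper treats more tersely.
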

\begin{proof}
 Assuming that overturning \eqref{overturning} does not occur, we have by Lemma~\ref{no contact lemma} that $\liminf \lambda(s) > 0$.  The existence of the subsequential limits \eqref{depression convergence} follows exactly as before.   It remains only to consider the Bernoulli condition.  If $y^* < F^2/2$, then in a neighborhood $(0, y^*)$ we have that the radicand of $g$ in \eqref{def depression g} is strictly positive.  On the other hand, if $y^* > F^2/2$, then we must solve \eqref{eqn:stream:dynamic} for $\partial_\mu \psi_2(s)$ in terms of $y$ and $\partial_\mu \psi_1(s)$.  This results in the function $g$ defined in \eqref{def g small F}.
\end{proof}

Applying Theorem~\ref{lipschitz implies better theorem}, we arrive at the following immediate corollary of these lemmas.

\begin{corollary}[$C^{1+\varepsilon}$ regularity] \label{first regularity lemma} Consider the regularity of the limiting profiles along $\cm^\pm$. 
\begin{enumerate}[label=\rm(\alph*)]
\item \label{depression smooth part} In the setting of Lemma~\ref{depression limiting lemma}, the limiting profile $\eta^*$ along $\cm^+$ is of class $C^{1+\varepsilon}$, for some $\varepsilon \in (0,1)$, on a neighborhood of $0$. 
\item \label{elevation smooth part} The same statement holds in the setting of Lemma~\ref{elevation limiting lemma} for the limiting profile along $\cm^-$ provided $y^* \neq F^2/2$. 
\end{enumerate}
\end{corollary}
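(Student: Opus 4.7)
The plan is to deduce Corollary~\ref{first regularity lemma} as a direct application of Theorem~\ref{lipschitz implies better theorem} to the viscosity solution $\psi^*$ (or $-\psi^*$) produced by Lemmas~\ref{depression limiting lemma} and~\ref{elevation limiting lemma}. In both parts, after extracting the convergent subsequence provided by those lemmas, the limiting function is a viscosity solution in $\mathcal{D}$ of the two-phase free boundary problem \eqref{caffarelli free boundary problem} for an explicit $g$, and its free boundary coincides (possibly after relabeling) with the graph $\{y = \eta^*(x)\}$ of a Lipschitz function on some interval $\mathcal{I}$ about $0$. The only step remaining is to check that $g$ meets the structural hypotheses of Theorem~\ref{lipschitz implies better theorem} in a neighborhood of the stagnation point $(0,y^*)$.

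First consider part~\ref{depression smooth part}. Here $g$ is given by \eqref{def depression g} and $y^* = \lim y(s) < 0$ since we are on $\cm^+$ and $\eta(s) < 0$. Because $\jump\rho < 0$, the radicand in \eqref{def depression g} at $b=0$ equals $\frac{2\jump\rho}{F^2}y-\jump\rho$, which is strictly positive on $\{y<F^2/2\}$ and in particular uniformly positive near $y^*<0$; this gives smoothness of $g$ there together with $g(0,\placeholder)>0$. Strict monotonicity in $b \ge 0$ is immediate from the formula, and for the last condition one computes directly that
\[
\frac{t\, g_b(t,y)}{g(t,y)} = \frac{\rho_2 t^2}{\rho_2 t^2 + \frac{2\jump\rho}{F^2}y - \jump\rho} < 1,
\]
so $t\mapsto t^{-N}g(t,y)$ is strictly decreasing for any $N\ge 2$. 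Thus all hypotheses of Theorem~\ref{lipschitz implies better theorem} are satisfied at the free boundary point $(0,y^*)$, and one concludes that $\{y=\eta^*(x)\}$ is $C^{1+\varepsilon}$ in a neighborhood of $0$.

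Part~\ref{elevation smooth part} proceeds analogously after dividing into the two cases of Lemma~\ref{elevation limiting lemma}. If $y^* < F^2/2$, the same $g$ from \eqref{def depression g} appears, and the above verification carries over verbatim since the radicand at $b=0$ is positive exactly on $\{y<F^2/2\}$. If instead $y^* > F^2/2$, one applies Theorem~\ref{lipschitz implies better theorem} to $-\psi^*$ with $g$ as in \eqref{def g small F}; here the radicand at $b=0$ equals $-\frac{2\jump\rho}{F^2}y+\jump\rho$, which is uniformly positive on $\{y>F^2/2\}$, and the analogous bound
\[
\frac{t\, g_b(t,y)}{g(t,y)} = \frac{\rho_1 t^2}{\rho_1 t^2 - \frac{2\jump\rho}{F^2}y + \jump\rho} < 1
\]
again yields the required structural condition. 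In both subcases the free boundary is the graph of $\eta^*$, so its $C^{1+\varepsilon}$ regularity translates to $\eta^* \in C^{1+\varepsilon}$ near $0$.

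There is no real obstacle to be overcome beyond bookkeeping: the excluded value $y^* = F^2/2$ is precisely the one at which $g(0,y^*)$ degenerates to zero in \emph{both} formulas, so that Caffarelli's hypothesis $g(0,\placeholder)>0$ fails and Theorem~\ref{lipschitz implies better theorem} cannot be applied. This reflects the double-stagnation singularity discussed in Remark~\ref{elevation overturning remark}, which lies outside the scope of currently available free boundary regularity theory and is the reason the case $y^*=F^2/2$ must be treated separately.
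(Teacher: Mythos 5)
Your proposal is correct and takes the same route as the paper, which presents the corollary as an immediate consequence of Lemmas~\ref{depression limiting lemma} and~\ref{elevation limiting lemma} together with Theorem~\ref{lipschitz implies better theorem}; you simply spell out the routine verification of the structural hypotheses on $g$ (positivity of $g(0,\placeholder)$ away from $y = F^2/2$, strict monotonicity in $b$, and the condition via $t\,g_b/g < 1$) that the paper omits as obvious.
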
 

We are at last ready to prove the main result on the limiting behavior of the interface.

\begin{proof}[Proof of Theorem~\ref{limit eta theorem}]
First consider the statement in part~\ref{elevation part}.  Seeking a contradiction, suppose that \eqref{overturning} does not occur.  Then Lemma~\ref{elevation limiting lemma} gives us the existence of a limiting profile $\eta^*$ and stream function $\psi^* = \psi_1^* + \psi_2^*$.  

If $y^* \neq F^2/2$, we have by Corollary~\ref{first regularity lemma}\ref{elevation smooth part}  that $\eta^* \in C^{1+\varepsilon}$ in a neighborhood of $0$, and thus $\psi_1^*$ and $\psi_2^*$ are $C^{1+\varepsilon}$ up to that portion of the boundary.  This is a contradiction:  by construction, $(0,y^*)$ is the limiting stagnation point and so one of $|\nabla \bar\psi_i|$ must vanish there, which cannot be true in view of the Hopf boundary-point lemma.  Note that this requires a sharper version than the classical result; see Lemma~\ref{Hopf lemma}.

On the other hand, if $y^* = F^2/2$, then from the Bernoulli condition we see that \emph{both} phases limit to stagnation at $(0, y^*)$, that is, 
\[ \nabla \psi_1(s)(x(s), y(s)), ~ \nabla \psi_2(s)(x(s), y(s)) \to 0 \qquad \textrm{as } s \to -\infty.  \]
Thus $\{ y < \eta^*\}$ satisfies  neither an interior nor an exterior sphere condition at $(0, y^*)$ as otherwise the Hopf lemma would be violated.   We have therefore proved that either overturning \eqref{overturning} occurs, or else the free boundary becomes singular.  

The argument for part~\ref{depression part} is essentially the same: assuming that \eqref{wall contact or overturning} does not occur, we may apply Lemma~\ref{depression limiting lemma} and Corollary~\ref{first regularity lemma}\ref{depression smooth part} leading to a contradiction with Hopf.
\end{proof}

\section*{Acknowledgments}

The research of RMC is supported in part by the NSF through DMS-1907584.  The research of SW is supported in part by the NSF through DMS-1812436.  

The authors also wish to thank the Erwin Schr\"odinger Institute for Mathematics and Physics, University of Vienna for their hospitality.  A portion of this work was completed during a Research-in-Teams Program generously supported by the ESI.  Finally, the authors thank the anonymous referees whose comments lead to several improvements in the paper.

\appendix

\section{Principal eigenvalues and the spectral condition} \label{principal eigenvalue spec condition appendix}

\subsection{Principal eigenvalues}\label{principal eigenvalue appendix}

In this subsection, we recall some important information about the principal eigenvalue of linear elliptic operators on bounded domains. We will work on the base of the cylinder $\Omega^\prime$ which has boundary components $\Gamma_0^\prime$ and $\Gamma_1^\prime$. Note that from the beginning of Section~\ref{sec_intro} on the setup of the problem, these boundary components are both $C^{k+2+\alpha}$.

Let $\mathfrak{L} : C^{k+2+\alpha}(\overline{\Omega^\prime}) \to C^{k+\alpha}(\overline{\Omega^\prime}) \times C^{k+1+\alpha}(\Gamma_1^\prime)$ be given by 
\begin{equation}
 \label{appendix eigenvalue L} 
  \begin{aligned}
    \mathfrak{L}_1 v & := \mathfrak{a}^{ij}(y) \partial_i \partial_j v + \mathfrak{b}^i(y) \partial_i v + \mathfrak{c}(y) v, \\
    \mathfrak{L}_2 v & := \left(  \mathfrak{d}^i(y) \partial_i v + \mathfrak{e}(y) v\right)|_{\Gamma_1^\prime},
  \end{aligned}
\end{equation}
where the coefficients satisfy 
\[ \mathfrak{a}^{ij} = \mathfrak{a}^{ji}, \quad \mathfrak{a}^{ij}, \mathfrak{b}^i, \mathfrak{c} \in C^{k+\alpha}(\overline{\Omega^\prime}), \quad \mathfrak{d}^i, \mathfrak{e} \in C^{k+1+\alpha}(\Gamma_1^\prime),\]
the interior operator is uniformly elliptic, and the boundary condition on $\Gamma_1^\prime$ is uniformly oblique.  Notice here that the summation is over $1 \leq i, j \leq n-1$ and $\partial_i = \partial_{y_i}$.   One should imagine $\mathfrak{L}$ as standing in for the limiting transversal operator $\limL_\pm^\prime$ elsewhere in the paper.

Call $v \in C^2(\Omega^\prime) \cap C^1(\overline{\Omega^\prime})$ a \emph{supersolution} of $\mathfrak{L}$ provided that 
\begin{equation}
  \label{defn supersoln} \left\{ \begin{aligned}
    \mathfrak{L}_1 v & \leq 0 & \qquad & \textrm{in } \Omega^\prime \\ 
    \mathfrak{L}_2 v & \geq 0 & \qquad & \textrm{on } \Gamma_1^\prime \\
  v & \geq 0 & \qquad & \textrm{on } \Gamma_0^\prime, \end{aligned} \right. 
\end{equation}
and a \emph{strict supersolution} if at least one of these inequalities is strict.  The following theorem ensures the existence of a principal eigenvalue of $\mathfrak{L}$, and states that it is negative precisely when $\mathfrak{L}$ obeys the maximum principle and Hopf boundary-point lemma.  

\begin{theorem} \label{oblique prob eigenvalue theorem} Consider a linear elliptic operator $\mathfrak{L}$ of the form \eqref{appendix eigenvalue L}.  
\begin{enumerate}[label=\rm(\alph*)]
\item \label{oblique existence part} There exists a unique $\sigma \in \mathbb{C}$ such that the spectral problem 
\begin{equation}
  \label{appendix eigenvalue problem}  \left\{ \begin{aligned} 
    \mathfrak{L}_1 \varphi & = \sigma \varphi & \qquad & \textrm{in } \Omega^\prime \\ 
    \mathfrak{L}_2 \varphi & = 0 & \qquad & \textrm{on } \Gamma_1^\prime \\
  \varphi & = 0  & \qquad & \textrm{on } \Gamma_0^\prime, \end{aligned} \right.  
\end{equation}
has a solution $\varphi_0 \in C^{k+2+\alpha}(\overline{\Omega^\prime})$ satisfying 
\[ \varphi_0 > 0 \textrm{ on }  \Omega^\prime \cup \Gamma_1^\prime, \qquad \nu \cdot \nabla \varphi_0 < 0 \textrm{ on } \Gamma_0^\prime.\]
We call $\sigma$ the \emph{principal eigenvalue} of $\mathfrak{L}$ and denote it $\prineigenvalue(\mathfrak{L})$.
\item \label{oblique dominant part} It always holds that $\prineigenvalue(\mathfrak{L})$ is simple and real.  Moreover, for any eigenvalue $\xi$ of $\mathfrak{L}$, we have that $\realpart{\xi} \leq \prineigenvalue(\mathfrak{L})$ with equality if and only if $\xi = \prineigenvalue(\mathfrak{L})$. 
\item \label{oblique classification part} The following are equivalent:
\begin{enumerate}[label=\rm(\roman*)]
\item $\sigma_0(\mathfrak{L}) < 0$;
\item $\mathfrak{L}$ possesses a positive strict supersolution; and 
\item If $v \in C^{2+\alpha}(\overline{\Omega^\prime})$ is a nonzero supersolution, then
\[ v > 0 \quad \textrm{on } \Omega^\prime \cup \Gamma_1^\prime, \qquad \nu \cdot \nabla v < 0 \quad \textrm{on } \Gamma_0^\prime \cap v^{-1}(0).\]  
\end{enumerate}
\end{enumerate}
\end{theorem}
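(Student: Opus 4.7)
My plan is to establish (a)--(b) via a Krein--Rutman argument on the resolvent operator, and (c) by a sliding/comparison argument anchored at $\varphi_0$. Since $\Gamma_0'$ and $\Gamma_1'$ are disjoint (this follows from $\Gamma_0 \cap \Gamma_1 = \emptyset$ assumed at the outset of Section~\ref{sec_intro}), the mixed boundary data cannot produce corner singularities, so classical Schauder theory for the oblique derivative problem applies without any modification.

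For parts (a) and (b), I would first shift by a large constant $\kappa > \|\mathfrak{c}\|_{C^0(\overline{\Omega'})}$ so that $\tilde{\mathfrak{L}}_1 v := \mathfrak{L}_1 v - \kappa v$ has strictly negative zeroth-order coefficient. The inhomogeneous problem $\tilde{\mathfrak{L}}_1 v = -f$ in $\Omega'$, $\mathfrak{L}_2 v = 0$ on $\Gamma_1'$, $v = 0$ on $\Gamma_0'$ is then uniquely solvable in $C^{k+2+\alpha}(\overline{\Omega'})$ for every $f \in C^{k+\alpha}(\overline{\Omega'})$, by the maximum principle together with the method of continuity and Schauder estimates; this defines the solution operator $Tf := v$. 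Hopf's boundary-point lemma ensures $T$ is strongly positive in the sense that $0 \le f \not\equiv 0$ implies $Tf > 0$ on $\Omega' \cup \Gamma_1'$ and $\nu \cdot \nabla(Tf) < 0$ on $\Gamma_0'$. Schauder estimates and Arzel\`a--Ascoli make $T$ compact on $C^1_0(\overline{\Omega'}) := \{v \in C^1(\overline{\Omega'}) : v|_{\Gamma_0'} = 0\}$, and the cone of nonnegative functions in $C^1_0$ has nonempty interior consisting of the strongly positive elements. Applying the Krein--Rutman theorem yields a simple dominant positive eigenvalue $\lambda_0 > 0$, equal to the spectral radius of $T$, with positive eigenfunction $\varphi_0$; elliptic regularity bootstraps $\varphi_0$ to $C^{k+2+\alpha}(\overline{\Omega'})$. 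Setting $\prineigenvalue(\mathfrak{L}) := \kappa - 1/\lambda_0$ produces the principal eigenvalue, and the claimed reality, simplicity, and dominance over all other eigenvalues $\xi$ of $\mathfrak{L}$ transfer directly through the spectral mapping $\xi \leftrightarrow 1/(\kappa - \xi)$.

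For part (c), the implication (i) $\Rightarrow$ (ii) is immediate: $\varphi_0$ is itself a positive strict supersolution, since $\mathfrak{L}_1 \varphi_0 = \prineigenvalue \varphi_0 < 0$ in $\Omega'$ and $\mathfrak{L}_2 \varphi_0 = 0$ on $\Gamma_1'$. For (ii) $\Rightarrow$ (iii), given a positive strict supersolution $u$ and a nonzero supersolution $v$, I would slide: set $t^\ast := \inf\{t \ge 0 : v + s u \ge 0 \ona \overline{\Omega'} \fora \asa s \ge t\}$, which is finite because $u$ is bounded below by a positive constant on $\overline{\Omega'}$. If $t^\ast > 0$, then $w := v + t^\ast u$ is a nonnegative supersolution touching zero at some point $y_\ast \in \overline{\Omega'}$; the strong maximum principle excludes $y_\ast \in \Omega'$, the standard Dirichlet Hopf lemma excludes $y_\ast \in \Gamma_0'$ (where $w$ vanishes identically), and the fact that $w$ inherits strict inequality $\mathfrak{L}_2 w > 0$ on $\Gamma_1'$ from $u$ combined with Hopf excludes $y_\ast \in \Gamma_1'$. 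Thus $t^\ast = 0$, so $v \ge 0$, and strict positivity with the normal-derivative inequality follow from applying the strong maximum principle and Hopf once more directly to $v$. Finally, (iii) $\Rightarrow$ (i) is by contradiction: if $\prineigenvalue(\mathfrak{L}) \ge 0$, then $-\varphi_0$ is a nonzero supersolution that is negative on $\Omega' \cup \Gamma_1'$, violating (iii).

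The main obstacle I anticipate is the sliding step in (ii) $\Rightarrow$ (iii), specifically handling a potential touch-down point on $\Gamma_1'$: the oblique boundary data is an inequality $\mathfrak{L}_2 w \ge 0$ rather than a pointwise value, so one must track carefully that the strict supersolution character of $u$ survives the linear combination and ensures $\mathfrak{L}_2 w > 0$ at $y_\ast$, so that Hopf yields a contradiction with $\nu \cdot \nabla w(y_\ast) = 0$. A subsidiary concern is the functional-analytic framework for Krein--Rutman, namely verifying that the positive cone in $C^1_0(\overline{\Omega'})$ has nonempty interior characterized by the same Hopf-type inequalities used to establish strong positivity of $T$; everything else amounts to standard Schauder theory and spectral translation.
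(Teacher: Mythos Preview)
Your overall strategy---Krein--Rutman on the resolvent for (a)--(b), and a supersolution comparison for (c)---is exactly what the cited references (Amann, L\'opez-G\'omez) do, and the paper itself simply invokes those references. However, two steps in your sketch break down as written.

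First, shifting only the interior zeroth-order coefficient by $\kappa$ does nothing to the boundary coefficient $\mathfrak{e}$, which carries no sign hypothesis. If $v$ attains a negative minimum at $y_0 \in \Gamma_1'$ with $\mathfrak{e}(y_0) < 0$, then Hopf yields $\mathfrak{d}^i\partial_i v(y_0) < 0$ while $\mathfrak{e}(y_0) v(y_0) > 0$, so the boundary condition $\mathfrak{L}_2 v(y_0) = 0$ is perfectly consistent and you get no contradiction. Thus neither injectivity (needed to define $T$) nor strong positivity of $T$ follows from the maximum principle alone. The standard remedy is a multiplicative substitution $v = h w$ with $h > 0$ chosen so that the transformed boundary operator has nonnegative zeroth-order part; this is precisely the ``division trick'' the paper spells out in the transmission analogue, Corollary~\ref{cor pe transmission}.

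Second, in (ii) $\Rightarrow$ (iii), a positive strict supersolution $u$ need not be bounded below by a positive constant on $\overline{\Omega'}$: your own candidate $\varphi_0$ from (i) $\Rightarrow$ (ii) vanishes on $\Gamma_0'$. So the finiteness of $t^\ast$ requires a more delicate argument near $\Gamma_0'$ using the Hopf inequality $\partial_\nu u < 0$ there, or else one bypasses sliding entirely via the division trick. Incidentally, your anticipated obstacle at $\Gamma_1'$ is not one: since $w(y_\ast) = 0$, Hopf gives $\mathfrak{d}^i \partial_i w(y_\ast) < 0$ and hence $\mathfrak{L}_2 w(y_\ast) < 0$ directly, contradicting $\mathfrak{L}_2 w \geq 0$ without any need for a strict boundary inequality inherited from $u$.
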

\begin{proof}
Parts~\ref{oblique existence part} and \ref{oblique dominant part} are essentially given by Amann \cite[Theorem 12.1]{amann1983dual}.  We are assuming more smoothness on the domain and coefficients, which by a straightforward elliptic regularity argument allows us to upgrade $\varphi_0$ to $C^{k+2+\alpha}$.  Part~\ref{oblique classification part} is due to L\'opez-G\'omez \cite[Theorem 6.1]{lopezgomez2003classifying}.  
\end{proof}

The next lemma is an obvious consequence of the above theory, recorded for convenience of the reader.

\begin{lemma} \label{continuity sigma0 lemma}
For all $\varepsilon > 0$, there exists $\delta > 0$ such that if $\mathfrak{L}$ and $\tilde{\mathfrak{L}}$ are linear elliptic operators of the form \eqref{appendix eigenvalue L} whose coefficients satisfy 
\[ \| \mathfrak{a}^{ij} - \tilde{\mathfrak{a}}^{ij} \|_{C^{k+\alpha}},\, \| \mathfrak{b}^{i} - \tilde{\mathfrak{b}}^{i} \|_{C^{k+\alpha}}, \, \| \mathfrak{c} - \tilde{\mathfrak{c}} \|_{C^{k+\alpha}}, \, \| \mathfrak{d}^i - \tilde{\mathfrak{d}}^i \|_{C^{k+1+\alpha}}, \, \| \mathfrak{e} - \tilde{\mathfrak{e}} \|_{C^{k+1+\alpha}}  < \delta , \]
then
\[ | \prineigenvalue(\mathfrak{L}) - \prineigenvalue(\tilde{\mathfrak{L}})| < \varepsilon. \]
\end{lemma}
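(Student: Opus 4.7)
I will argue by contradiction using compactness and the uniqueness part of Theorem~\ref{oblique prob eigenvalue theorem}. Suppose the statement fails, so there exist $\varepsilon_0 > 0$, a fixed operator $\mathfrak{L}$ of the form \eqref{appendix eigenvalue L}, and a sequence of operators $\mathfrak{L}_n$ whose coefficients converge to those of $\mathfrak L$ in the stated H\"older norms, yet
\[
   |\prineigenvalue(\mathfrak{L}_n) - \prineigenvalue(\mathfrak{L})| \geq \varepsilon_0 \qquad \textrm{for all } n.
\]
Let $\sigma_n := \prineigenvalue(\mathfrak{L}_n)$ and let $\varphi_n \in C^{k+2+\alpha}(\overline{\Omega'})$ be the corresponding principal eigenfunctions from Theorem~\ref{oblique prob eigenvalue theorem}\ref{oblique existence part}, normalized so that $\|\varphi_n\|_{C^0(\overline{\Omega'})} = 1$.

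The first task is to show that $\{\sigma_n\}$ is bounded. For an upper bound, fix $\sigma > \prineigenvalue(\mathfrak{L})$; then $\varphi_0 > 0$, the principal eigenfunction of $\mathfrak{L}$, is a positive strict supersolution of $\mathfrak{L} - \sigma$, and by continuity of the coefficients it remains a positive strict supersolution of $\mathfrak{L}_n - \sigma$ for all $n$ sufficiently large. Theorem~\ref{oblique prob eigenvalue theorem}\ref{oblique classification part} then forces $\prineigenvalue(\mathfrak{L}_n - \sigma) < 0$, i.e. $\sigma_n < \sigma$. A lower bound is obtained symmetrically: pick $\sigma < \prineigenvalue(\mathfrak{L})$, use the fact that $\mathfrak{L} - \sigma$ has no positive strict supersolution (since $\prineigenvalue(\mathfrak{L} - \sigma) > 0$) together with a contradiction argument against the eigenfunctions $\varphi_n$ themselves, to conclude $\sigma_n > \sigma$ for $n$ large. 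Passing to a subsequence, we may therefore assume $\sigma_n \to \sigma_\ast$ for some $\sigma_\ast \in \mathbb{R}$.

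Next I would extract a limit of the eigenfunctions. Since $\Omega'$ is bounded and the coefficients of $\mathfrak{L}_n$ are uniformly bounded in $C^{k+\alpha}(\overline{\Omega'})$ and $C^{k+1+\alpha}(\Gamma_1')$ with uniform ellipticity and obliqueness constants, the standard Schauder estimate for the oblique derivative problem yields
\[
   \|\varphi_n\|_{C^{k+2+\alpha}(\overline{\Omega'})}
   \le C\big(\|\mathfrak L_{n,1}\varphi_n\|_{C^{k+\alpha}} + \|\varphi_n\|_{C^0}\big)
   \le C(|\sigma_n|+1),
\]
with $C$ independent of $n$. By the compact embedding $C^{k+2+\alpha} \hookrightarrow C^{k+2+\alpha/2}$, a subsequence converges in $C^{k+2+\alpha/2}(\overline{\Omega'})$ to some $\varphi_\ast \in C^{k+2+\alpha}(\overline{\Omega'})$ with $\|\varphi_\ast\|_{C^0} = 1$ and $\varphi_\ast \geq 0$. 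Passing to the limit in the eigenvalue equation \eqref{appendix eigenvalue problem} for $\mathfrak L_n$ (which is justified because the coefficients converge in the relevant H\"older norms), I obtain that $\varphi_\ast$ is a nonzero nonnegative solution to \eqref{appendix eigenvalue problem} for $\mathfrak L$ with eigenvalue $\sigma_\ast$.

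The final step invokes uniqueness. Because $\varphi_\ast$ is a nonnegative nonzero eigenfunction, the strong maximum principle and Hopf boundary-point lemma applied to $\mathfrak{L}_1 \varphi_\ast - \sigma_\ast \varphi_\ast = 0$ force $\varphi_\ast > 0$ on $\Omega' \cup \Gamma_1'$. Thus $\sigma_\ast$ is an eigenvalue of $\mathfrak{L}$ admitting a strictly positive eigenfunction, and by Theorem~\ref{oblique prob eigenvalue theorem}\ref{oblique existence part}--\ref{oblique dominant part}, uniqueness yields $\sigma_\ast = \prineigenvalue(\mathfrak{L})$. This contradicts $|\sigma_n - \prineigenvalue(\mathfrak{L})| \geq \varepsilon_0$, completing the proof. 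The main technical obstacle in writing this out carefully is the uniform two-sided bound on $\sigma_n$; once this is in hand, the Schauder compactness argument and uniqueness from Theorem~\ref{oblique prob eigenvalue theorem} do the rest.
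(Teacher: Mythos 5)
Your compactness-plus-uniqueness strategy is a genuinely different route from the paper's, which instead notes that $\prineigenvalue(\mathfrak{L})$ is produced by the Krein--Rutman theorem applied to the resolvent $\mathfrak{R}$ of $(\mathfrak{L}_1-\sigma,\mathfrak{L}_2)$ for fixed large $\sigma$, and that both $\mathfrak{R}$ and hence its spectral radius depend continuously on the coefficients via Schauder theory. Once your a priori bound on $\sigma_n := \prineigenvalue(\mathfrak{L}_n)$ is in hand, the Schauder-compactness extraction and the uniqueness step (nonnegative nonzero eigenfunction $\Rightarrow$ positive on $\Omega'\cup\Gamma_1'$ via Hopf $\Rightarrow$ $\sigma_*=\prineigenvalue(\mathfrak{L})$ by Theorem~\ref{oblique prob eigenvalue theorem}\ref{oblique dominant part}) are sound and nicely self-contained, so your method is in principle viable and arguably more elementary than relying on spectral-radius continuity.

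However, your boundedness argument for $\sigma_n$ has a concrete gap. For the upper bound you assert that for fixed $\sigma > \prineigenvalue(\mathfrak{L})$, the eigenfunction $\varphi_0$ remains a strict supersolution of $\mathfrak{L}_n-\sigma$ once the coefficients are close. Write
\[
(\mathfrak{L}_{n,1}-\sigma)\varphi_0 \;=\; (\mathfrak{L}_{n,1}-\mathfrak{L}_1)\varphi_0 \;+\; (\prineigenvalue(\mathfrak{L})-\sigma)\,\varphi_0.
\]
The first term is $O(\delta)$ uniformly over $\overline{\Omega'}$ (it involves second derivatives of $\varphi_0$, which do \emph{not} vanish on $\Gamma_0'$), while the good term $(\prineigenvalue(\mathfrak{L})-\sigma)\varphi_0$ degenerates to zero on $\Gamma_0'$ because $\varphi_0|_{\Gamma_0'}=0$. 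Hence for every $\delta>0$ the sign condition $\mathfrak{L}_{n,1}\varphi_0 - \sigma\varphi_0 \le 0$ can fail in a neighborhood of $\Gamma_0'$, and the supersolution property does not transfer by ``continuity of the coefficients.'' You would need to bump $\varphi_0$ away from zero (e.g.\ $\varphi_0 + \varepsilon\psi$ with a suitable auxiliary $\psi$), while also ensuring the $\Gamma_1'$ condition $\mathfrak{L}_{n,2}(\varphi_0+\varepsilon\psi)\ge 0$ survives, which is not automatic. The lower bound is worse off: the sentence ``a contradiction argument against the eigenfunctions $\varphi_n$ themselves'' does not identify any actual mechanism, since $\varphi_n$ solves an equation for $\mathfrak{L}_n$, not for $\mathfrak{L}-\sigma$, and the same boundary degeneracy reappears. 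Without a correct two-sided bound on $\sigma_n$, the Schauder estimate $\|\varphi_n\|_{C^{k+2+\alpha}}\lesssim 1+|\sigma_n|$ does not yield precompactness, and the rest of your argument has nothing to work with. Either patch the supersolution construction as sketched above, or obtain the bound from a sup/inf characterization of the principal eigenvalue, or fall back on the resolvent/Krein--Rutman picture as in the paper.
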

\begin{proof}
Recall from \cite[Section 6]{lopezgomez2003classifying}, say, that the principal eigenvalue  is obtained by applying the Krein--Rutman theorem to the resolvent operator associated to the eigenvalue problem \eqref{appendix eigenvalue problem}.  That is, set 
\[ \mathfrak{R} \colon  f \in C^{k+1+\alpha}(\overline{\Omega^\prime})  \longmapsto  v \in C^{k+2+\alpha}(\overline{\Omega^\prime}) \hookrightarrow C^{k+1+\alpha}(\overline{\Omega^\prime})\]
where $v$ is the unique solution to
\[ \left\{ \begin{aligned} 
\mathfrak{L}_1 v - \sigma v & = f & \qquad & \textrm{in } \Omega^\prime \\
\mathfrak{L}_2 v & = 0 & \qquad &\textrm{on } \Gamma_1^\prime \\
v & = 0 & \qquad & \textrm{on } \Gamma_0^\prime
\end{aligned} \right. \]
and $\sigma > 0$ is chosen large enough so that the above inverse exists.   Then $\prineigenvalue(\mathfrak{L})$ is determined by  $\sigma$ and the spectral radius of $\mathfrak{R}$.  Both of these can be controlled uniformly in terms of the coefficients of $\mathfrak{L}$ according to Schauder theory, and so the lemma follows.    
\end{proof}

\subsection{Spectral degeneracy and the essential spectrum} \label{essential spectrum appendix} 

Recall that if $\mathscr{W}$ and $\mathscr{Z}$ are Banach spaces, a mapping $\mathscr{G} \colon \mathscr{W} \to \mathscr{Z}$ is said to be \emph{locally proper} provided that $\mathscr{G}^{-1}(\mathcal{K}) \cap \mathcal{D}$ is compact for any compact subset $\mathcal{K} \subset \mathscr{Z}$ and closed and bounded set $\mathcal{D} \subset \mathscr{W}$.  In the case of a bounded linear operator, this is equivalent to being semi-Fredholm with a finite-dimensional kernel.  Likewise, for a bounded linear operator $\mathscr{A} \colon \Xspace_\bdd \to \Yspace_\bdd$, we define the \emph{essential spectrum} to be the set
\begin{equation}
  \essspec(\mathscr{A}) := \left\{ \xi \in \mathbb{C} : (\mathscr{A}_1 - \xi, \, \mathscr{A}_2 ) \textrm{ is not locally proper } \Xspace_\bdd \to \Yspace_\bdd  \right\}.  \label{def essential spectrum} 
\end{equation}
In particular, if $0 \not\in \essspec(\mathscr{A})$, then $\mathscr{A}$ is semi-Fredholm with index $\ind\mathscr{A} < +\infty$.

Consider now the linear elliptic operator $\mathscr{L} \colon \Xspace_\bdd \to \Yspace_\bdd$ given by
\begin{equation}
  \label{appendix L operator} \begin{aligned}
    \mathscr{L}_1 v & := a^{ij}(x,y) \partial_i \partial_j v + b^i(x,y) \partial_i v + c(x,y) v, \\
    \mathscr{L}_2 v & := \left(  \beta^i(x,y) \partial_i v + \gamma(x,y) v\right)|_{\Gamma_1},
  \end{aligned} 
\end{equation}
with 
\[ a^{ij} = a^{ji}, \quad a^{ij}, b^i, c  \in C_\bdd^{k+\alpha}(\overline{\Omega}), \quad  \beta_i, \gamma \in C_\bdd^{k+1+\alpha}(\Gamma_1),\]
such that $\mathscr{L}_1$ is uniformly elliptic and $\mathscr{L}_2$ is uniformly oblique.  We assume that these coefficients have well-defined limits
\begin{equation}
  \label{appendix limiting coeff} 
  \begin{aligned}
    &a^{ij}(x, \placeholder) \to a^{ij}_\pm,~b^i(x,\placeholder) \to b^i_\pm, ~ c(x,\placeholder) \to c_\pm \\
    &\beta^i(x,\placeholder) \to \beta_\pm^i,~ \gamma(x,\placeholder) \to \gamma_\pm
  \end{aligned}
  \qquad \textrm{as } x \to \pm\infty
\end{equation}
 where $a_\pm^{ij}, b_\pm^i, c_\pm \in C^{k-2+\alpha}(\overline{\Omega^\prime})$ and $\beta_\pm^i, \gamma_\pm \in C^{k-1+\alpha}(\Gamma_1^\prime)$.  Let $\limL_\pm$ denote the corresponding limiting operator and $\limL_\pm^\prime$ the transversal limiting operator at $x = \pm\infty$.  Note in particular that $\limL_\pm^\prime$ will be of the form \eqref{appendix eigenvalue L}, and hence by Theorem~\ref{oblique prob eigenvalue theorem} it has a principal eigenvalue.
 
The injectivity of these limiting operators is in fact equivalent to the local properness of $\limL$, as the following well-known result shows.  See, for example, \cite{volpert2011book1} or \cite[Lemma~A.7]{wheeler2013solitary}.  We note in passing that the proof of \cite[Lemma~A.5]{wheeler2013solitary} incorrectly cites \cite[Theorem~9.19]{gilbarg2001elliptic}. Thankfully, this can be resolved by substituting \cite[Theorem~5.54]{lieberman2013book}, as was pointed out to us by one of the authors of \cite{lisai2020sqg}.

\begin{lemma}[Local properness] \label{local proper lemma} The elliptic operator $\mathscr{L}$ is locally proper $\Xspace_\bdd \to \Yspace_\bdd$ if and only both $\limL_+$ and $\limL_-$ are injective $\Xspace_\bdd \to \Yspace_\bdd$.
\end{lemma}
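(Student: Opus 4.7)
The plan is to prove each implication separately via translation-extraction arguments that exploit the $x$-translation invariance of $\limL_\pm$.

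For the backward direction, I take a bounded sequence $\{u_n\} \subset \Xspace_\bdd$ with $\mathscr{L} u_n \to f$ in $\Yspace_\bdd$. Arzela--Ascoli gives a subsequence $u_n \to u$ in $C^{k+2+\alpha'}_\loc(\overline\Omega)$ for any $\alpha' < \alpha$, with $u \in \Xspace_\bdd$ satisfying $\mathscr{L} u = f$. Setting $v_n := u_n - u$, the task reduces to showing $v_n \to 0$ in $\Xspace_\bdd$, given that $v_n \to 0$ locally and $\mathscr{L} v_n \to 0$ in $\Yspace_\bdd$. If this failed, along a subsequence $\n{v_n}_{\Xspace_\bdd} \geq \varepsilon > 0$; since $v_n \to 0$ on every compact set, the $\Xspace_\bdd$-norm must be (nearly) attained at points $(x_n, y_n)$ with $|x_n| \to \infty$, and without loss of generality $x_n \to +\infty$ (the case $x_n \to -\infty$ is symmetric and uses the injectivity of $\limL_-$). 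The translates $\tilde v_n := v_n(\placeholder + x_n, \placeholder)$ are uniformly bounded in $\Xspace_\bdd$ and, along a further subsequence, converge in $C^{k+2+\alpha'}_\loc$ to some nonzero $\tilde v \in \Xspace_\bdd$. Passing to the limit in
\[
  \mathscr{L}(\placeholder + x_n, \placeholder) \tilde v_n = (\mathscr{L} v_n)(\placeholder + x_n, \placeholder)
\]
and using that the coefficients of $\mathscr{L}$ converge in H\"older norm to those of $\limL_+$ as $x \to +\infty$, I conclude that $\limL_+ \tilde v = 0$, contradicting injectivity of $\limL_+$.

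For the forward direction, suppose $w \in \Xspace_\bdd \setminus \{0\}$ with $\limL_+ w = 0$ (the $\limL_-$ case is symmetric). I construct a bounded non-compact sequence with $\mathscr{L} u_n \to 0$ in $\Yspace_\bdd$ by setting $u_n(x,y) := \chi_n(x) w(x,y)$ where $\chi_n(x) := \chi((x - x_n)/L_n)$ for a fixed nontrivial bump $\chi \in C_c^\infty(\R)$ and scales $x_n, L_n \to \infty$ with $x_n - L_n \to \infty$, so that $\mathrm{supp}(\chi_n) \subset \{x \gg 1\}$. The sequence $\{u_n\}$ is uniformly bounded in $\Xspace_\bdd$, and writing
\[
  \mathscr{L} u_n = \chi_n (\mathscr{L} - \limL_+) w + [\mathscr{L}, \chi_n]\, w
\]
(using $\limL_+ w = 0$), I estimate: the first term is small on $\mathrm{supp}(\chi_n)$ because the coefficient differences vanish uniformly there as $x \to +\infty$, while the commutator is $O(1/L_n)$ in $\Yspace_\bdd$ because each $x$-derivative falling on $\chi_n$ contributes a factor $1/L_n$. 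Non-convergence of $\{u_n\}$ in $\Xspace_\bdd$ follows from $u_n \to 0$ pointwise (the supports escape to $+\infty$) combined with $\n{u_n}_\infty$ remaining bounded below.

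The delicate step, and main obstacle, is securing the uniform lower bound $\n{u_n}_\infty \geq c > 0$, which requires that $w$ does not decay to zero as $x \to +\infty$. This rests on the constant-coefficient structure of $\limL_+$ together with the compactness of $\Omega'$: Fourier analysis in $x$ identifies the bounded kernel of $\limL_+$ with the span of propagating modes $e^{i\xi x}\phi(y)$ for real $\xi$ and $\phi \in \ker \mathcal{M}(\xi)$, where $\mathcal{M}(\xi)$ is the transverse operator obtained by replacing $\partial_x$ with $i\xi$. Equivalently, any $w \in \Xspace_\bdd \cap L^2$ in the kernel of $\limL_+$ must vanish, so a nontrivial bounded kernel element cannot simultaneously decay at both $\pm\infty$. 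After possibly switching the roles of $\limL_\pm$ and replacing $w$ by a single real-valued Fourier mode $\cos(\xi x)\phi(y)$, the required uniform non-decay holds automatically, and the remaining translation and commutator estimates proceed as sketched.
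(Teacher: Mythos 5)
This lemma is quoted by the paper as a standard fact, with the references \cite{volpert2011book1} and \cite[Lemma A.7]{wheeler2013solitary} supplied in lieu of a proof, so strictly speaking there is no "paper proof" to compare against. Your translation-and-extraction argument is exactly the machinery those references use, and the overall plan in both directions is sound.

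Two technical points should be tightened. In the backward direction, the claim that the $\Xspace_\bdd$-norm ``must be (nearly) attained at points $(x_n,y_n)$'' and hence that the translated limit $\tilde v$ is nonzero does not quite go through as stated, because the norm on $\Xspace_\bdd = C^{k+2+\alpha}_\bdd$ is not a pointwise quantity: the H\"older seminorm could carry the $\varepsilon$ while every derivative is uniformly small. The clean fix is a second use of Schauder: after choosing $m_n$ with $\|v_n\|_{C^{k+2+\alpha}(\Omega_{m_n-1}^{m_n+1})} \ge c$ and passing to translates $\tilde v_n$, either $\tilde v \ne 0$ and you get the contradiction with injectivity, or $\tilde v = 0$, in which case the interior Schauder estimate $\|\tilde v_n\|_{C^{k+2+\alpha}(\Omega_{-1}^1)} \lesssim \|\mathscr L_n \tilde v_n\|_{\Yspace} + \|\tilde v_n\|_{C^0(\Omega_{-2}^2)} \to 0$ contradicts the lower bound $c$. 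Either way the argument closes, but your proposal as written skips over why the local H\"older concentration survives passage to the limit. In the forward direction, your Fourier identification of the bounded kernel with finite sums of propagating modes $e^{i\xi x}\phi(y)$ is correct (analytic Fredholm theory makes the singular set of $\mathcal M(\xi)$ discrete, ellipticity makes it bounded, and boundedness of $w$ kills the $x$-polynomial factors), but the paragraph conflates two different statements — ``cannot decay at both ends'' versus ``is a superposition of non-decaying modes.'' You do arrive at the right fix (replace $w$ by one Fourier mode, which automatically has a uniform lower bound in $L^\infty$), so the argument works; it is just worth stating the tempered-distribution version of the Fourier argument directly rather than routing through the weaker $L^2$-vanishing fact. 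With these two clarifications the proof is complete and is the standard one.
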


It is then possible to characterize the spectral condition \eqref{spectral assumption} through the essential spectrum.

\begin{proposition}[Principal eigenvalues and essential spectrum] \label{essential spec proposition} Consider the linear elliptic operator $\mathscr{L}$ with corresponding limiting transversal operators $\limL_\pm^\prime$.  It holds that 
  \begin{equation}
    \essspec{(\mathscr{L})} \subset \mathbb{C}_-   \quad \textrm{if and only if} \quad \prineigenvalue(\limL_-^\prime),~\prineigenvalue(\limL_+^\prime) < 0,   \label{no ripples iff stable} 
  \end{equation}
  where here $\mathbb{C}_- = \{z \in \mathbb{C} : \operatorname{Re} z < 0 \}$ is the left half-plane.
\end{proposition}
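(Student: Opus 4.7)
The plan is to reformulate the essential spectrum condition in terms of injectivity, handle the easy direction via a direct eigenfunction construction, and handle the converse through a comparison-function substitution followed by a maximum principle argument for $|w|^2$ on the unbounded cylinder. By the definition \eqref{def essential spectrum} and Lemma~\ref{local proper lemma}, the containment $\essspec(\mathscr{L}) \subset \mathbb{C}_- \setminus i\mathbb{R}$ is equivalent to the statement that for every $\xi \in \mathbb{C}$ with $\realpart \xi \geq 0$, both of the shifted limit operators $(\limL_{1+} - \xi, \limL_{2+})$ and $(\limL_{1-} - \xi, \limL_{2-})$ are injective on the complexification of $\Xspace_\bdd$.

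For the easy direction, suppose $\prineigenvalue^+ := \prineigenvalue(\limL_+^\prime) \geq 0$, the case of $\limL_-^\prime$ being symmetric. Let $\varphi_0 \in C^{k+2+\alpha}(\overline{\Omega'})$ be the associated principal eigenfunction furnished by Theorem~\ref{oblique prob eigenvalue theorem}, and set $v(x,y) := \varphi_0(y)$. Then $v \in \Xspace_\bdd$ with $v|_{\Gamma_0} = 0$, and by construction $\limL_{1+} v = \prineigenvalue^+ v$ and $\limL_{2+} v = 0$. Thus $\limL_+ - \prineigenvalue^+$ has a nontrivial kernel in $\Xspace_\bdd$, which places $\prineigenvalue^+ \in \essspec(\mathscr{L}) \cap [0,\infty)$ and contradicts the hypothesis.

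For the converse, fix $\xi$ with $\realpart \xi \geq 0$ and suppose $v \in \Xspace_\bdd$ (complexified) satisfies $(\limL_{1\pm} - \xi) v = 0$ in $\Omega$, $\limL_{2\pm} v = 0$ on $\Gamma_1$, and $v = 0$ on $\Gamma_0$. Using $\prineigenvalue(\limL_\pm^\prime) < 0$ together with Lemma~\ref{supersolution lemma}, choose a positive comparison function $\varphi_\pm \in C^{k+2+\alpha}(\overline{\Omega'})$ satisfying $(\limL_{1\pm}^\prime + \delta)\varphi_\pm = 0$, $\limL_{2\pm}^\prime \varphi_\pm = 1$, and $\varphi_\pm = 1$ on $\Gamma_0'$ for some $\delta > 0$. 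Substituting $v = \varphi_\pm w$ and dividing through by the strictly positive $\varphi_\pm$, a direct calculation (using $\partial_x \varphi_\pm = 0$ and the symmetry of $a^{ij}_\pm$) reduces the problem for $w$ to
\begin{equation*}
\left\{ \begin{aligned}
a^{ij}_\pm \partial_i \partial_j w + \tilde b^i \partial_i w &= (\xi + \delta) w &\quad& \textrm{in } \Omega, \\
\beta^i_\pm \partial_i w + \varphi_\pm^{-1} w &= 0 &\quad& \textrm{on } \Gamma_1, \\
w &= 0 &\quad& \textrm{on } \Gamma_0,
\end{aligned} \right.
\end{equation*}
where $\tilde b^i := b^i_\pm + 2 a^{ij}_\pm (\partial_j \varphi_\pm)/\varphi_\pm$. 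The crucial gain is that the interior zeroth-order coefficient $-(\xi+\delta)$ has strictly positive real part and the boundary coefficient $\varphi_\pm^{-1}$ is strictly positive. Forming $|w|^2 = w \bar w$, a further direct computation (using the equations for $w$ and $\bar w$ together with ellipticity) shows that this bounded nonnegative real-valued function obeys the subsolution-type inequality
\begin{equation*}
a^{ij}_\pm \partial_i \partial_j |w|^2 + \tilde b^i \partial_i |w|^2 - 2(\realpart \xi + \delta) |w|^2 \geq 0 \quad \textrm{in } \Omega,
\end{equation*}
together with $\beta^i_\pm \partial_i |w|^2 + 2 \varphi_\pm^{-1} |w|^2 = 0$ on $\Gamma_1$ and $|w|^2 = 0$ on $\Gamma_0$.

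From here the strong maximum principle (with strictly negative zeroth-order coefficient) rules out a positive interior maximum of $|w|^2$, the Hopf boundary-point lemma precludes one on $\Gamma_1$ via the oblique boundary identity, and $|w|^2$ vanishes on $\Gamma_0$; so any realized supremum of $|w|^2$ must be zero. The main obstacle is that on the unbounded cylinder $\Omega$, the supremum $M := \sup_\Omega |w|^2$ need not be attained. This I would handle by exploiting the $x$-translation invariance of $\limL_\pm$: pick $(x_n, y_n)$ with $|w(x_n, y_n)|^2 \to M$, pass to a subsequence with $y_n \to y_* \in \overline{\Omega'}$, and use interior Schauder estimates on the translates $w(\placeholder + x_n, \placeholder)$ to extract a $C^2_\loc$ subsequential limit $w_*$. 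Then $w_*$ solves the same system with $|w_*(0, y_*)|^2 = M$ actually realizing its supremum, reducing to the attained case and forcing $M = 0$. Hence $w \equiv 0$, so $v \equiv 0$ and the shifted operator is injective, completing the proof.
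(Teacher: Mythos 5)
Your proof is correct but takes a genuinely different route from the paper's. The paper periodizes the cylinder to $\mathbb{T}_m \times \Omega'$ for every $m>0$, uses the positive-eigenfunction characterization to identify the principal eigenvalue of each periodized operator with $\prineigenvalue(\limL_\pm')$, and then observes via the $x$-Fourier structure of the constant-coefficient limit operators that any nontrivial bounded element of $\ker(\limL_\pm - \xi)$ can be taken $x$-independent or periodic; together with the dominance property of Theorem~\ref{oblique prob eigenvalue theorem}\ref{oblique dominant part}, this pins down $\sup\{\realpart\xi : \xi \in \essspec(\mathscr{L})\} = \max\{\prineigenvalue(\limL_-'),\prineigenvalue(\limL_+')\}$ in one stroke. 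You instead bypass the Fourier/Krein--Rutman machinery entirely: the easy implication is a direct eigenfunction insertion, and for the converse you reuse the comparison function $\varphi_\pm$ from Lemma~\ref{supersolution lemma} to produce a conjugated problem for $w=v/\varphi_\pm$ with strictly damping zeroth-order coefficients, run a subsolution/Hopf maximum principle for $|w|^2$, and resolve non-attainment of the supremum on the unbounded cylinder by translation and Schauder extraction. Your route is more elementary and makes explicit the same damping mechanism that drives Lemma~\ref{asymptotic monotonicity lemma}, at the cost of the translation step and the $|w|^2$ computation; the paper's is shorter once the periodic spectral facts are in hand and more directly quantifies the whole essential spectrum rather than just the right half-plane. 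Two small presentational points: when you apply Hopf on $\Gamma_1$ you should record that the oblique vector $\beta_\pm$ is outward-pointing (so $\beta^i_\pm\partial_i|w|^2 \ge 0$ at a boundary maximum, against your sign $\le 0$), and in the translated-limit step you should note that $y_*\in\Gamma_0'$ forces $M=0$ immediately, so no case is lost.
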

\begin{proof}
For a fixed $m > 0$, consider the periodized cylindrical domain
\[ \Omega^{(m)} :=  \mathbb{T}_m \times \Omega^\prime, \qquad \textrm{where } \mathbb{T}_m := \mathbb{R} / \tfrac{2\pi}{m} \mathbb{Z}. \]
Denote by $\limL_\pm^{(m)}$ the restriction of $\limL_\pm$ to the subspace of $\Xspace_\bdd$ consisting functions which are $2\pi/m$-periodic in $x$.  By classical theory, the spectrum of $\limL_\pm^{(m)}$ consists of discrete eigenvalues with no finite accumulation points.  Moreover, each of these operators has a principal eigenvalue that, abusing notation slightly, we denote 
\[ \prineigenvalue(\limL_\pm^{(m)}) \in \mathbb{R}.\]
By definition, the principal eigenvalue is the unique eigenvalue for which the corresponding eigenfunction is positive on $\Omega^{(m)} \cup \Gamma_1^{(m)}$.     On the other hand, any eigenfunction of  $\limL_\pm^\prime$ is also an eigenfunction for $\limL_\pm^{(m)}$.  Therefore,
  \begin{equation}
    \prineigenvalue(\limL_\pm^{(m)}) = \prineigenvalue(\limL_\pm^\prime) \qquad \textrm{for all } m > 0.\label{periodic eigenvalue fact} 
  \end{equation}

In Lemma~\ref{local proper lemma}, we argued that $\xi \in \essspec(\limL)$ if and only if there exists a nontrivial solution $v_\pm \in \Xspace_\bdd$ (complexified in the usual way) to
\[ \left\{
\begin{aligned} 
\limL_{1\pm} v_\pm & = \xi v_\pm &\qquad &\textrm{in } \Omega \\
\limL_{2\pm} v_\pm & = 0 &\qquad& \textrm{on } \Gamma_1.
\end{aligned} \right.\] 
The coefficients above are independent of $x$, and so via Fourier analysis, it is clear that we can take $v_\pm$ to be either independent of $x$ or else $2\pi/m$-periodic in $x$ for some $m > 0$.  In the former case, $\xi$ will be an eigenvalue of $\limL_\pm^\prime$, whereas in the latter $\xi$ will be in the spectrum of $\limL_\pm^{(m)}$.  From this and \eqref{periodic eigenvalue fact} we conclude that 
\[ \max \left\{ \realpart{\xi} : \xi \in \essspec{(\limL)} \right\} = \max\left\{ \prineigenvalue(\limL_-^\prime), \, \prineigenvalue(\limL_+^\prime) \right\}.\]
The statement in \eqref{no ripples iff stable} follows. 
\end{proof}

\begin{lemma}[Asymptotic invertibility] \label{invertibility lemma}
If $\limL$ satisfies $\prineigenvalue(\limL_-^\prime), \prineigenvalue(\limL_+^\prime) < 0$, then $\limL_-$ and $\limL_+$ are invertible $\Xspace_\bdd \to \Yspace_\bdd$.
\end{lemma}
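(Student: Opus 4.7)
We focus on $\limL_+$; the argument for $\limL_-$ is identical. Since the coefficients of $\limL_+$ are independent of $x$, we can freely view the same coefficients as functions on the base $\Omega'$. As $\prineigenvalue(\limL_+') < 0$, Lemma~\ref{supersolution lemma} (with a small $\delta > 0$) supplies $\varphi_+ \in C^{k+2+\alpha}(\overline{\Omega'})$ that is strictly positive on $\overline{\Omega'}$ and satisfies $\limL_{1+}'\varphi_+ = -\delta \varphi_+ < 0$ in $\Omega'$, $\limL_{2+}'\varphi_+ = 1 > 0$ on $\Gamma_1'$, and $\varphi_+ = 1$ on $\Gamma_0'$. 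Regarded as an $x$-independent function on $\Omega$, this $\varphi_+$ will serve as a positive comparison function throughout the argument.

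\textbf{Injectivity.} Suppose $v \in \Xspace_\bdd$ satisfies $\limL_+ v = 0$. Writing $v = \varphi_+ w$ as in the proof of Lemma~\ref{asymptotic monotonicity lemma}, the bounded function $w \in C^{k+2+\alpha}_\bdd(\overline\Omega)$ solves an elliptic equation with interior zeroth-order coefficient $\limL_{1+}'\varphi_+/\varphi_+ \equiv -\delta < 0$, an oblique boundary condition on $\Gamma_1$ with boundary zeroth-order coefficient $1/\varphi_+ > 0$, and homogeneous Dirichlet condition on $\Gamma_0$. Assume for contradiction that $\sup_\Omega w > 0$. Since $w$ may not attain its supremum, pick a sequence $(x_n,y_n)$ with $w(x_n,y_n) \to \sup w$, translate so that $x_n = 0$, and use uniform Schauder bounds together with the $x$-translation invariance of the equation to extract (along a subsequence) a $C^{k+2}_\loc$ limit $\tilde w$ which solves the same equation and attains its positive supremum. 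The strong maximum principle excludes attainment in $\Omega$, Hopf's boundary-point lemma combined with the strict positive sign of the boundary coefficient excludes attainment on $\Gamma_1$, and the Dirichlet condition excludes $\Gamma_0$, a contradiction. Hence $w \le 0$, and the symmetric argument gives $w \ge 0$, so $v = 0$.

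\textbf{Surjectivity.} Given $(f_1,f_2) \in \Yspace_\bdd$, consider the truncated cylinders $\Omega_n := (-n,n) \times \Omega'$ and solve the mixed BVP
\begin{equation*}
  \limL_{1+} v_n = f_1 \text{ in } \Omega_n, \quad \limL_{2+} v_n = f_2 \text{ on } \Gamma_1 \cap \overline{\Omega_n}, \quad v_n = 0 \text{ on the rest of } \partial \Omega_n.
\end{equation*}
The same $\varphi_+$ is a strict positive supersolution on $\Omega_n$, so by Theorem~\ref{oblique prob eigenvalue theorem}\ref{oblique classification part} the principal eigenvalue of the corresponding operator on $\Omega_n$ is negative, and standard Schauder theory for oblique problems yields a unique $v_n \in C^{k+2+\alpha}(\overline{\Omega_n})$. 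Substituting $v_n = \varphi_+ w_n$ and applying the strong maximum principle and Hopf lemma to $w_n$ (using the strict signs on the zeroth-order coefficients as above together with the homogeneous Dirichlet data on the lateral cap $\{x = \pm n\} \cup \Gamma_0$) furnishes the uniform estimate
\begin{equation*}
  \|v_n\|_{L^\infty(\Omega_n)} \le C \bigl( \|f_1\|_{L^\infty(\Omega)} + \|f_2\|_{L^\infty(\Gamma_1)} \bigr),
\end{equation*}
with $C$ independent of $n$. Interior and boundary Schauder estimates then give uniform $C^{k+2+\alpha}$ control of $v_n$ on compact subsets of $\overline{\Omega}$, and a diagonal extraction produces a $C^{k+2}_\loc$ limit $v \in C^{k+2+\alpha}_\bdd(\overline{\Omega}) = \Xspace_\bdd$ satisfying $\limL_+ v = (f_1,f_2)$, proving surjectivity.

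\textbf{Main obstacle.} The delicate step is establishing the uniform $L^\infty$ bound on $v_n$ that is independent of the truncation parameter $n$. This requires carefully marrying the strong maximum principle (which exploits the strict negative sign of the interior zeroth-order coefficient arising from $\limL_{1+}'\varphi_+ < 0$) with Hopf's lemma at the oblique boundary (using the strict positive sign of $\limL_{2+}'\varphi_+ > 0$), while tracking how the forcing terms compare against $\varphi_+$. Without this bound, neither compactness nor identification of a bounded limit $v \in \Xspace_\bdd$ is available.
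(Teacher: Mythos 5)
Your strategy — prove injectivity by a barrier-and-translation maximum-principle argument and surjectivity by exhausting the infinite cylinder with finite cylinders — is a genuinely different route from the paper's. The paper instead invokes a black-box solvability theorem of Volpert to invert the shifted operator $(\limL_{1\pm}-\sigma,\limL_{2\pm})$ for $\sigma\gg1$ in uniformly-local Sobolev spaces, then runs two homotopies of semi-Fredholm operators to conclude $\ind\limL_\pm=0$, and finally uses the essential spectrum characterization (Proposition~\ref{essential spec proposition} built on Lemma~\ref{local proper lemma}) to obtain injectivity. Your injectivity argument is correct and in fact more self-contained than the paper's; the translation-and-extraction step to produce a limit $\tilde w$ that attains its positive supremum, and the exclusion of each boundary component, all check out. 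It is also essentially the same compactness-via-translation mechanism hidden inside Lemma~\ref{local proper lemma}.

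However, the surjectivity step has a genuine gap. You assert that ``standard Schauder theory for oblique problems yields a unique $v_n\in C^{k+2+\alpha}(\overline{\Omega_n})$.'' This is not correct: the truncated cylinder $\Omega_n=(-n,n)\times\Omega'$ is not a $C^{2+\alpha}$ domain — it has corners (in dimension $n=2$) or codimension-two edges (in general) where the flat caps $\{x=\pm n\}\times\overline{\Omega'}$ meet $\Gamma_0$ and, more seriously, meet $\Gamma_1$, where the boundary condition switches from Dirichlet to oblique. Classical Schauder existence theory (e.g., the type cited for Theorem~\ref{oblique prob eigenvalue theorem}) requires a $C^{k+2+\alpha}$ boundary, and even when existence for such a Dirichlet/oblique mixed problem can be obtained, the solution is generically \emph{not} $C^{k+2+\alpha}$ up to the corner. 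Your invocation of Theorem~\ref{oblique prob eigenvalue theorem}\ref{oblique classification part} to deduce a negative principal eigenvalue on $\Omega_n$ runs into the same difficulty. Note that the barrier estimate itself is fine — the weak maximum principle does not mind corners and the explicit comparison against $K\varphi_+$ gives the $n$-independent $L^\infty$ bound — but as written you have no $v_n$ to estimate. The gap is fixable: one can instead construct $v_n$ as a weak $W^{1,2}$ solution via Lax--Milgram or the method of continuity (the coercivity needed here follows from the strict signs of the zeroth-order coefficients after dividing by $\varphi_+$), then bootstrap to $C^{k+2+\alpha}$ on \emph{interior} compact subsets of $\Omega_n$ by Schauder, which is all the regularity your diagonal extraction actually uses. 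Alternatively one may adapt a mixed-boundary-value existence theorem. But the appeal to ``standard Schauder theory'' on the corner domain is precisely the technical obstacle the paper's approach (via Volpert's theorem for the full unbounded cylinder) is designed to avoid, and it needs to be replaced by a correct construction.
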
 
\begin{proof}
Consider first the elliptic operator $(\limL_{1\pm}-\sigma, \limL_{2\pm})$ for $\sigma \gg 1$ to be determined.  Let $f \in \Yspace_\bdd$ be given and set $p := n/(1-\alpha)$.  Then clearly we have that 
\[  \sup_{m \in \mathbb{Z}} \left( \| f_1 \|_{W^{k,p}(\Omega_m)} + \| f_2 \|_{W^{k+1-\frac{1}{p}, p}(\Gamma_{1m})}  \right) \lesssim \| f_1 \|_{C^{k+\alpha}(\Omega)} + \| f_2 \|_{C^{k+1+\alpha}(\Gamma_1)} =   \| f \|_{\Yspace},\]
where $\Omega_m := (m, m+1) \times \Omega^\prime$ and $\Gamma_{1m} := (m,m+1) \times \Gamma_1^\prime$. The quantity on the far left-hand corresponds to the $F_\infty$ norm in the notation of \cite{volpert2011book1}.  We may then apply   \cite[Theorem 7.5]{volpert2011book1} to conclude that, for $\sigma$ sufficiently large, there exists a unique $u \in W_\loc^{k+2,p}(\Omega)$ such that 
\[ \left\{ \begin{aligned} 
\limL_{1\pm} u - \sigma u &= f_1 & \qquad & \textrm{in } \Omega \\
\limL_{2\pm} u & = f_2 & \qquad & \textrm{on } \Gamma_1 \\
u & = 0 & \qquad & \textrm{on } \Gamma_0, 
\end{aligned} \right.  \]
with the boundary conditions satisfied in the trace sense.  Moreover, that same result states that $u$ obeys a ``uniformly local'' Sobolev regularity estimate
\[ \sup_{m \in \mathbb{Z}} \| u \|_{W^{k+2,p}(\Omega_m)} \lesssim \sup_{m \in \mathbb{Z}} \left( \| f_1 \|_{W^{k,p}(\Omega_m)} + \| f_2 \|_{W^{k+1-\frac{1}{p}, p}(\Gamma_{1m})}  \right),  \]
which by Morrey's inequality leads to the H\"older norm bound
\[ \| u \|_{C^{k+1+\alpha}(\Omega)} \lesssim \| f \|_{\Yspace}. \]
A straightforward Schauder theory argument allows us to infer the improved local H\"older regularity $u \in \Xspace$.     Taken together with the uniform $C^0$ bounds on $u$ in terms of $\| f \|_{\Yspace}$, this implies that $u \in \Xspace_\bdd$; see, for instance, \cite[Lemma A.1]{wheeler2013solitary}.  

The above reasoning shows that $(\limL_{1\pm} - \sigma, \limL_{2\pm})$ is invertible $\Xspace_\bdd \to \Yspace_\bdd$ if $\sigma \gg 1$.  On the other hand, because $\prineigenvalue(\limL_-^\prime)$, $\prineigenvalue(\limL_+^\prime) < 0$, it also holds that $(\limL_{1\pm} - t\sigma, \limL_{2\pm})$ is semi-Fredholm as a mapping between these spaces for all $t \in [0,1]$.  Continuity of the index then implies $\ind\limL_\pm = 0$.  The proof is now complete as the hypothesis also ensures that $\limL_\pm$ is injective.  
\end{proof}

With these tools in hand, we can now provide the proof of Lemma~\ref{fredholm Xb lemma} on the Fredholm properties of the linearized operator $\F_u(u,\Lambda)$ at a front in $u \in \Xspace_\bdd$.

\begin{proof}[Proof of Lemma~\ref{fredholm Xb lemma}]
  Fix a solution $(u,\Lambda) \in \F^{-1}(0) \cap \genU_\infty$ and let $\limL := \F_u(u,\Lambda)$. Clearly, $\limL$ is of the form \eqref{appendix L operator}, and since $u \in \Xspace_\infty$, the coefficients have well-defined far-field  limits as in \eqref{appendix limiting coeff}, with limiting operators $\limL_\pm$. By hypothesis, the corresponding transversal limiting operators $\limL_\pm'$ satisfy \eqref{spectral assumption} and so Lemma~\ref{invertibility lemma} shows that $\limL_\pm$ is invertible $\Xspace_\bdd \to \Yspace_\bdd$.  In particular, this means that $\limL \colon \Xspace_\bdd \to \Yspace_\bdd$ is semi-Fredholm in light of Lemma~\ref{local proper lemma}. We will calulate its index $\ind\limL < +\infty$ using a pair of homotopies.

  Let $\sigma > 0$ be a constant to be determined, and consider the one-parameter family of operators
  \begin{align*}
    (\limL_1 - t \sigma, \, \limL_2) \colon \Xspace_\bdd \to \Yspace_\bdd \qquad \textrm{for } t \in [0, 1].
  \end{align*}
  Each of these operators is uniformly elliptic and uniformly oblique, and has well-defined limiting transversal operators at $x = \pm\infty$.  As $t\sigma \geq 0$, \eqref{spectral assumption} guarantees that the corresponding principal eigenvalues are all strictly negative. Thus, arguing as above, the entire family is semi-Fredholm. By the continuity of the index, the operators at the endpoints $t=0$ and $t=1$ therefore have the same index, that is $\ind\limL=\ind(\limL_1-\sigma,\limL_2)$.

  Next we consider the one-parameter family of operators
  \begin{align}
    \label{homotopy with neumann laplacian}
    \big(t\Delta + (1-t)\limL_1 - \sigma, \, t\partial_\nu + (1-t)\limL_2\big) \colon \Xspace_\bdd \to \Yspace_\bdd \qquad \textrm{for } t \in [0, 1]
  \end{align}
  where $\partial_\nu = \nu \cdot \nabla$ is the outward pointing normal derivative. We easily check that each of these operators is uniformly elliptic and uniformly oblique. By Lemma~\ref{continuity sigma0 lemma}, the principal eigenvalues of the limiting transveral operators
    \begin{align}
      \label{principal eigenvalues for laplacian homotopy}
      \sigma_0\big(t\Delta' + (1-t)\limL_1' - \sigma, \, t\partial_\nu + (1-t)\limL_2'\big) 
      = \sigma_0\big(t\Delta' + (1-t)\limL_1', \, t\partial_\nu + (1-t)\limL_2'\big) - \sigma
    \end{align}
  depend continuously on $t \in [0,1]$. Choosing $\sigma > 0$ sufficiently large, we can therefore guarantee that the principal eigenvalues \eqref{principal eigenvalues for laplacian homotopy} are strictly negative, and hence that the entire family of operators \eqref{homotopy with neumann laplacian} is semi-Fredholm. Appealing to the continuity of the index for a second time, we deduce that 
  \begin{align*}
    \ind \limL = \ind (\limL_1 - \sigma,\limL_2) = \ind (\Delta-\sigma,\partial_\nu) = 0,
  \end{align*}
  where the last equality follows, for instance, from Lemma~\ref{invertibility lemma}.
\end{proof}

\subsection{Principal eigenvalues and spectral degeneracy for transmission problems}\label{principal eigenvalue appendix transmission} 

For our application to internal waves, we require analogous results for transmission boundary conditions. We start with the problem posed on the transversal domain $\Omega^\prime = \Omega^\prime_1 \cup \Omega^\prime_2$ which satisfies $\Omega^\prime_1 \cap \Omega^\prime_2 = \emptyset$, $\partial\Omega^\prime_1 \cap \partial\Omega^\prime_2 = \Gamma^\prime_1$, and $\Gamma^\prime_0 = \partial \Omega^\prime \backslash \Gamma^\prime_1$. The operator in \eqref{appendix eigenvalue L} now becomes
\begin{equation}
 \label{appendix transmission L} 
  \begin{aligned}
    \mathfrak{L}_1 v & := \partial_i(\mathfrak{a}^{ij}(y) \partial_j v + \mathfrak{a}^i(y) v) + \mathfrak{b}^i(y) \partial_i v + \mathfrak{c}(y) v, \\
    \mathfrak{L}_2 v & := \jump{-\nu_i \mathfrak{a}^{ij}(y) \partial_j v} + \jump{-\nu_i \mathfrak{a}^{i}(y)} v|_{\Gamma_1^\prime},
  \end{aligned}
\end{equation}
where $\nu = (\nu_1, \ldots, \nu_{n-1})$ is the normal vector field on $\Gamma^\prime_1$ pointing outward from $\Omega^\prime_1$. The coefficients satisfy the following regularity conditions
\begin{equation*}
\mathfrak{a}^{ij} = \mathfrak{a}^{ji}, \quad \mathfrak{a}^{ij}, \mathfrak{a}^i \in C^{k+1+\alpha}(\overline{\Omega_1^\prime}) \cap C^{k+1+\alpha}(\overline{\Omega_2^\prime}), \quad  \mathfrak{b}^i, \mathfrak{c} \in C^{k+\alpha}(\overline{\Omega_1^\prime}) \cap C^{k+\alpha}(\overline{\Omega_2^\prime}).
\end{equation*}
Therefore, from the uniform ellipticity of $\mathfrak{L}$, we know that $\nu_j\mathfrak{a}^{ij}(y)$ is an outward-pointing vector field for $\Omega^\prime_1$ along $\Gamma_1^\prime$.

We define a supersolution of $\mathfrak{L}$ in \eqref{appendix transmission L} to be a function $v\in C(\overline{\Omega^\prime}) \cap C^1(\overline{\Omega^\prime_i}) \cap C^2(\Omega^\prime_i)$ for $i = 1, 2$ satisfying \eqref{defn supersoln}. As before, $v$ is called a strict supersolution if any of these inequalities are strict. 

Since the boundary components of $\Omega^\prime$ are $C^{k+2+\alpha}$, it is straightforward to verify that when $\mathfrak{c} \le 0$ then $\mathfrak{L}$ satisfies the maximum principle, and the following Hopf's lemma also holds true.
\begin{lemma}\label{lem hopf}
Let $\mathfrak{c}\le 0$ and $v \in C(\overline{\Omega^\prime}) \cap C^2(\Omega^\prime_1) \cap C^2(\Omega^\prime_2)$ be nonconstant and satisfy
\[
\mathfrak{L}_1 v \ge 0 \qquad \text{and} \qquad v \le m \qquad \text{in }\quad \Omega
\]
for some $m\ge 0$, where $\mathfrak{L}$ is given in \eqref{appendix transmission L}. Suppose that $u = m$ at some $x_0\in \partial\Omega^\prime$. Then
\begin{enumerate}[label=\rm(\alph*)]
\item if $x_0 \in \Gamma^\prime_0$ then for any outward-pointing vector $\xi$ we have $\xi \cdot \nabla v(x_0) > 0$;
\item if $x_0 \in \Gamma^\prime_1$ then for any outward-pointing vector $\xi$ of $\Omega^\prime_1$ we have $\jump{\xi \cdot \nabla v (x_0)} < 0$.
\end{enumerate}
\end{lemma}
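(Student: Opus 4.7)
The plan is to reduce everything to the classical (one-phase) strong maximum principle and Hopf boundary-point lemma applied separately in each smooth subdomain $\Omega^\prime_1$ and $\Omega^\prime_2$. First I would expand $\mathfrak L_1$ into non-divergence form on each side: since $\mathfrak a^{ij},\mathfrak a^i \in C^{k+1+\alpha}(\overline{\Omega_i^\prime})$, we obtain a uniformly elliptic operator $\tilde{\mathfrak a}^{ij}\partial_i\partial_j + \tilde{\mathfrak b}^i\partial_i + \tilde{\mathfrak c}$ with bounded H\"older coefficients inside each $\overline{\Omega_i^\prime}$. The sign hypotheses $\mathfrak c\le 0$ and $m\ge 0$ will be used through the usual device of passing to $w:=v-m\le 0$, for which the Hopf--Oleinik boundary-point lemma applies at any boundary point where $w(x_0)=0$ without needing to further constrain the sign of the full zeroth-order coefficient.

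For part (a), the point $x_0$ lies on the outer boundary $\Gamma^\prime_0$, and in a full neighborhood of $x_0$ the function $v$ is $C^2$, the coefficients are smooth on one side only, and $\Gamma^\prime_0 \in C^{k+2+\alpha}$, so the interior sphere condition is satisfied. I would let $\Omega_i^\prime$ be the subdomain whose boundary contains $x_0$. The first step is to rule out $v\equiv m$ on $\overline{\Omega_i^\prime}$: if it held, continuity of $v$ on $\overline{\Omega^\prime}$ would force $v=m$ on the interface $\Gamma^\prime_1 \cap \partial \Omega_i^\prime$, and then the strong maximum principle on the other subdomain (combined with the nonconstancy of $v$ globally) would produce a contradiction. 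Once $v \not\equiv m$ in $\Omega_i^\prime$, the classical strong maximum principle gives $v<m$ throughout $\Omega_i^\prime$, and the classical Hopf lemma yields $\xi\cdot\nabla v(x_0) > 0$ for any outward vector $\xi$.

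For part (b), $x_0 \in \Gamma_1^\prime$, the argument is applied from each side. Generically, if $v\not\equiv m$ in each $\Omega_i^\prime$, the strong maximum principle gives $v<m$ strictly in each, and since $\Gamma_1^\prime$ has the interior sphere property from both sides, the one-phase Hopf lemma applied in $\overline{\Omega_1^\prime}$ delivers $\xi\cdot\nabla v_1(x_0) > 0$ for $\xi$ outward from $\Omega_1^\prime$, and applied in $\overline{\Omega_2^\prime}$ delivers $(-\xi)\cdot\nabla v_2(x_0) > 0$, i.e.\ $\xi\cdot\nabla v_2(x_0) < 0$. Adding these gives
\[
\jump{\xi\cdot\nabla v(x_0)} = \xi\cdot\nabla v_2(x_0) - \xi\cdot\nabla v_1(x_0) < 0,
\]
as claimed. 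The degenerate case $v\equiv m$ in one of the two components, say $\Omega_1^\prime$, is handled by noting that then $\nabla v_1(x_0)=0$ automatically, while continuity forces $v=m$ on $\Gamma_1^\prime\cap \partial\Omega_2^\prime$; nonconstancy of $v$ together with the strong maximum principle in $\Omega_2^\prime$ then give $v<m$ strictly there, and the one-sided Hopf yields $\xi\cdot\nabla v_2(x_0)<0$, so the jump is still strictly negative.

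The main obstacle is not really analytical — once the one-phase theory is in place, the argument is essentially bookkeeping — but rather the case analysis around whether $v$ might be identically equal to $m$ in one of the two subdomains. This must be ruled out carefully on the component containing $x_0$ in part (a), and in part (b) must be allowed as a genuine possibility that one of the two one-sided Hopf inequalities degenerates. In both parts the fact that $v$ is globally nonconstant on $\Omega^\prime$, rather than just on one side of the interface, is used together with continuity across $\Gamma^\prime_1$ to propagate nontriviality to the relevant subdomain.
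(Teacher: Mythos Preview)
The paper does not actually prove this lemma; it is stated and then asserted to be ``straightforward to verify.'' Your overall strategy---reduce to the one-phase strong maximum principle and Hopf lemma in each $\Omega_i'$, then combine the two one-sided normal-derivative inequalities into the jump inequality---is the natural one and is clearly what the paper has in mind. The case analysis around the possibility $v\equiv m$ on one component is handled correctly.

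There is, however, a genuine gap in your treatment of the zeroth-order term. When you expand $\mathfrak L_1$ to non-divergence form, the zeroth-order coefficient becomes $\tilde{\mathfrak c}=\mathfrak c+\partial_i\mathfrak a^i$, and the hypothesis $\mathfrak c\le 0$ says nothing about $\partial_i\mathfrak a^i$. Your proposed device of passing to $w=v-m$ does not rescue this: the version of Hopf that dispenses with a sign on $\tilde{\mathfrak c}$ when the maximum is zero still requires $\tilde{\mathfrak L}_1 w\ge 0$, but here $\tilde{\mathfrak L}_1 w=\mathfrak L_1 v - m\tilde{\mathfrak c}\ge -m\mathfrak c - m\,\partial_i\mathfrak a^i\ge -m\,\partial_i\mathfrak a^i$, and the right-hand side can be negative. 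Concretely, in one dimension with $\mathfrak a^{11}=1$, $\mathfrak a^1=y$, $\mathfrak b^1=\mathfrak c=0$, the function $v=m-\varepsilon(1-y)^2$ with $m\gg\varepsilon$ satisfies all the hypotheses of the lemma yet has $v'(1)=0$, so conclusion~(a) fails. In the paper's actual application (the linearized operators $\limL_\pm'$ in Section~\ref{conjugate flow section}) one has $\mathfrak a^i=0$, so $\tilde{\mathfrak c}=\mathfrak c\le 0$ and your argument goes through verbatim; for the general statement you would need either the stronger structural hypothesis $\mathfrak c+\partial_i\mathfrak a^i\le 0$ (the standard divergence-form condition) or the restriction $m=0$.
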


With the help of the above discussion, one can adapt the argument in \cite{lopezgomez2003classifying} to reproduce Theorem \ref{oblique prob eigenvalue theorem} in the setting of transmission problems. This is the content of the next theorem which, though straightforward, appears to be new. 
\begin{corollary}[Principal eigenvalues for transmission problems]\label{cor pe transmission}
The conclusions of Theorem \ref{oblique prob eigenvalue theorem} holds for the elliptic transmission operator $\mathfrak{L}$ defined in \eqref{appendix transmission L}, with obvious modification on the regularity of functions. Moreover, the principal eigenvalue $\prineigenvalue(\mathfrak{L})$ depends continuously on the coefficients in the same form as Lemma \ref{continuity sigma0 lemma}.
\end{corollary}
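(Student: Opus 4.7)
The plan is to mirror the arguments of Amann \cite{amann1983dual} and L\'opez-G\'omez \cite{lopezgomez2003classifying} used for Theorem~\ref{oblique prob eigenvalue theorem}, pinpointing the few places where the one-phase oblique boundary data must be replaced by the transmission condition. The three ingredients of those proofs are (i) unique solvability of the shifted resolvent equation in an appropriate H\"older space, (ii) compactness of the resolvent, and (iii) strong positivity of the resolvent on the cone of nonnegative functions. Ingredient (iii) we already have for free: the modification of the maximum principle and Hopf boundary-point lemma to transmission problems is precisely Lemma~\ref{lem hopf}.

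For (i), I would first choose $\sigma>0$ so large that $\mathfrak c - \sigma < 0$ uniformly on $\overline{\Omega'}$, and consider the shifted problem
\begin{equation*}
\mathfrak{L}_1 v - \sigma v = f_1 \ina \Omega', \qquad \mathfrak{L}_2 v = f_2 \ona \Gamma_1', \qquad v = 0 \ona \Gamma_0'.
\end{equation*}
Uniqueness follows directly from the maximum principle plus Lemma~\ref{lem hopf}. Existence in
\[
\widetilde{\mathscr X} := \{ v \in C^0(\overline{\Omega'}) \cap C^{k+2+\alpha}(\overline{\Omega_1'}) \cap C^{k+2+\alpha}(\overline{\Omega_2'}) : v|_{\Gamma_0'}=0\}
\]
is provided by the Schauder theory for linear elliptic transmission problems (e.g.\ \cite{ladyzhenskaya1968linear}): one first solves in $H^1_0(\Omega')$ via the Lax--Milgram theorem applied to the bilinear form associated with the divergence-form operator, and then bootstraps to H\"older regularity on each layer using interior and boundary Schauder estimates together with standard straightening of the interface $\Gamma_1'$. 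This yields a bounded resolvent $\mathfrak R_\sigma : \widetilde{\mathscr Y} \to \widetilde{\mathscr X}$ where $\widetilde{\mathscr Y} := (C^{k+\alpha}(\overline{\Omega_1'}) \cap C^{k+\alpha}(\overline{\Omega_2'})) \times C^{k+1+\alpha}(\Gamma_1')$, with an estimate of the form $\|v\|_{\widetilde{\mathscr X}} \leq C(\|f_1\|_{\widetilde{\mathscr Y}_1} + \|f_2\|_{C^{k+1+\alpha}(\Gamma_1')})$ whose constant $C$ is controlled by the H\"older norms of the coefficients and the ellipticity constant.

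Ingredient (ii) then follows by the compact embedding of H\"older spaces on the bounded sets $\overline{\Omega_i'}$: viewing $\mathfrak R_\sigma$ as a map $\widetilde{\mathscr X} \hookrightarrow \widetilde{\mathscr Y} \to \widetilde{\mathscr X}$ it is compact. Lemma~\ref{lem hopf} implies $\mathfrak R_\sigma$ is strongly positive on the nonnegative cone: it maps nonzero nonnegative data to functions that are strictly positive on $\Omega' \cup \Gamma_1'$ with strict inward derivative on $\Gamma_0'$. Invoking the Krein--Rutman theorem exactly as in \cite[Thm.~12.1]{amann1983dual}, its spectral radius $r(\mathfrak R_\sigma)$ is a simple eigenvalue with strictly positive eigenfunction $\varphi_0 \in \widetilde{\mathscr X}$, strictly dominating the rest of the spectrum, and $\prineigenvalue(\mathfrak L) := \sigma - 1/r(\mathfrak R_\sigma)$ is then the desired principal eigenvalue. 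Parts~\ref{oblique existence part} and \ref{oblique dominant part} follow, and higher regularity $\varphi_0 \in C^{k+2+\alpha}(\overline{\Omega_i'})$ is obtained from Schauder bootstrapping. For part~\ref{oblique classification part}, the three-way equivalence is proved exactly as in \cite[Thm.~6.1]{lopezgomez2003classifying}: the implication (i)$\Rightarrow$(iii) uses $\varphi_0$ as a positive test weight and the strong maximum principle/Hopf lemma, (iii)$\Rightarrow$(ii) is immediate from the positive eigenfunction itself, and (ii)$\Rightarrow$(i) follows by comparing a strict positive supersolution against $\varphi_0$ on the set where their ratio achieves its infimum.

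For the continuity statement analogous to Lemma~\ref{continuity sigma0 lemma}, the Schauder estimates for the shifted transmission problem show that $\mathfrak R_\sigma$ depends continuously (in the operator norm on $\widetilde{\mathscr Y}$) on perturbations of the coefficients in the respective H\"older norms. The spectral radius of a compact operator depends continuously on the operator in norm, so $r(\mathfrak R_\sigma)$ and hence $\prineigenvalue(\mathfrak L)$ vary continuously, yielding the desired quantitative statement. The main obstacle is really the first step: the Schauder theory for the transmission problem is standard but tedious to set up rigorously, and one must be careful that the jump condition in \eqref{appendix transmission L} genuinely produces a uniformly oblique mixed boundary condition after straightening $\Gamma_1'$, so that the a priori estimates have constants depending only on the stated norms of the coefficients.
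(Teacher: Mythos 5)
Your proposal follows essentially the same route as the paper: resolvent solvability for the transmission problem, Krein--Rutman for the principal eigenvalue, and an adaptation of L\'opez-G\'omez's classification for part~\ref{oblique classification part}, all built on the transmission Hopf lemma (Lemma~\ref{lem hopf}). The one technical step the paper isolates that you pass over is the ``division trick'' \eqref{dividing supersolution}: the paper explicitly verifies that if $h$ is a strictly positive supersolution and $v$ any supersolution, then $w = v/h$ is a supersolution of the conjugated operator $\mathfrak{L}^h$, and in particular that the \emph{transmission} jump condition transforms correctly, i.e.\ $\mathfrak{L}_2^h w = \jump{-\nu_j \mathfrak a^{ij}\partial_i w} + (\mathfrak{L}_2 h/h)\, w$ with $\mathfrak{L}_2 h/h \geq 0$ on $\Gamma_1'$. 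This is the only step in the L\'opez-G\'omez argument that is not literally boundary-condition-independent, so your sketch of $(\mathrm{i})\Rightarrow(\mathrm{iii})$ should flag and check it rather than assert it. Your remaining variations — Lax--Milgram plus Schauder bootstrapping for the resolvent instead of constructing the surrogate oblique vector field $\psi$ via the distance function and citing a ready-made transmission Schauder theorem, compactness of the resolvent, and continuity of the spectral radius — are all sound alternatives to what the paper does.
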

\begin{proof}
The statements corresponding to Theorem~\ref{oblique prob eigenvalue theorem}\ref{oblique existence part} and \ref{oblique dominant part} still follow largely from Amann \cite{amann1983dual} with a few small adjustments.   Observe first that the  $C^{2+\alpha}$ regularity of the domain ensures the existence of a function $\psi \in C(\overline{\Omega^\prime}) \cap C^{2+\alpha}(\overline{\Omega^\prime_i})$ and a constant $\gamma > 0$ such that 
\[
\jump{-\nu_j \mathfrak{a}^{ij}(y) \partial_i \psi} \ge \gamma \qquad \textrm{on } \Gamma_1^\prime.
\]
Indeed, in a neighborhood $\Gamma_1^\prime$, it works to take $\psi := \text{dist}( \placeholder, \Gamma^\prime_1)$, then extend smoothly to $\overline{\Omega}$.  The unique solvability of the spectral problem \eqref{appendix eigenvalue problem} for the transmission operator can now be inferred from the existence of $\psi$, \cite[Theorem 1.2]{le2018transmission}, and an argument similar to \cite[Theorem 4.1]{lopezgomez2003classifying}.

Next consider part~\ref{oblique classification part}.  One can then easily check that 
\begin{equation}\label{dividing supersolution}
\begin{split}
h \text{ is a strictly positive supersolution of } \mathfrak{L} \quad \Rightarrow \quad &
 \begin{aligned}
 &\text{if } v \text{ is a supersolution of } \mathfrak{L}, \\ & \text{then } {v \over h} \text{ is a  supersolution of $\mathfrak{L}^h$}, \end{aligned}
\end{split}
\end{equation}
where $\mathfrak{L}^h$ is the operator defined by
\[
\begin{aligned} 
\mathfrak{L}_1^h w &:= \partial_i(\mathfrak{a}^{ij}(y) \partial_j w + \mathfrak{a}^i(y) w) + \left( \mathfrak{b}^i(y) + {2\over h} \mathfrak{a}^{ij}(y)\partial_j h \right) \partial_i w + {\mathfrak{L}_1 h \over h} w \\
\mathfrak{L}_2^h w & := \jump{-\nu_j \mathfrak{a}^{ij}(y) \partial_i w} + \left( {\mathfrak{L}_2 h \over h} w\right)|_{\Gamma_1^\prime}.
\end{aligned}
\]
Indeed, if we take $w := v/h$, then $w \in C(\overline{\Omega^\prime}) \cap C^1(\overline{\Omega^\prime_i}) \cap C^2(\Omega^\prime_i)$, and $\mathfrak{L} v = h \mathfrak{L}^h w$.
Since $h > 0$ and is a supersolution of $\mathfrak{L}$, we have
\begin{align*}
\mathfrak{L}_1^h w \le 0, \quad {\mathfrak{L}_1 h \over h} \le 0 \  \text{ in } \Omega^\prime; \qquad \qquad
\mathfrak{L}_2^h w \ge 0, \quad \frac{\mathfrak{L}_2 h}{h} \ge 0 \ \text{ on } \Gamma^\prime_1,
\end{align*}
implying that $w$ is a supersolution of $\mathfrak{L}^h$.

Now, it follows from \eqref{dividing supersolution} and Lemma \ref{lem hopf} that the classification result \cite[Theorem 3.1]{lopezgomez2003classifying} generalizes to the transmission case. Together with the regularity assumption on the domain, this further implies a result as in \cite[Theorem 5.2]{lopezgomez2003classifying}, and so ultimately we see that the conclusions of \cite[Theorem 6.2]{lopezgomez2003classifying} hold for the transmission problem.  This leads precisely to the statements in \ref{oblique classification part}. Finally, the continuity of $\sigma_0(\mathfrak{L})$ is a consequence of Schauder estimates for transmission problems (see, for example,  \cite[Theorem 1.2]{le2018transmission}) and the Krein--Rutman theorem.
\end{proof}

Having established the the theory for the principal eigenvalues for the transversal problem, next we consider the problem on the full domain $\Omega = \Omega_1 \cup \Omega_2$, where recall that $\Omega_1 \cap \Omega_2 = \emptyset$, $\partial\Omega_1 \cap \partial\Omega_2 = \Gamma_1$, and $\Gamma_0 = \partial \Omega \backslash \Gamma_1$. The linear operator $\limL \colon \Xspace_\bdd \to \Yspace_\bdd$ is given by
\begin{equation}
  \label{appendix L operator transmission} \begin{aligned}
    \limL_1 v & := \partial_i \left(a^{ij}(x,y) \partial_j v + a^i(x,y) v\right) + b^i(x,y) \partial_i v + c(x,y) v, \\
    \limL_2 v & := \jump{-\nu_i a^{ij} \partial_j v} + \jump{-\nu_i a^i} v,
  \end{aligned} 
\end{equation}
with 
\[ a^{ij} = a^{ji}, \quad a^{ij}, a^i \in C_\bdd^{k+1+\alpha}(\overline{\Omega_1}) \cap C^{k+1+\alpha}(\overline{\Omega_2}), \quad  b^i, c  \in C_\bdd^{k+\alpha}(\overline{\Omega_1}) \cap C_\bdd^{k+\alpha}(\overline{\Omega_2}), \]
such that $\mathscr{L}_1$ is uniformly elliptic. Here the spaces $\Xspace_\bdd$ and $\Yspace_\bdd$ are naturally defined as in Section~\ref{sec:trans}. The limits of the coefficients are
\begin{equation}
  \label{appendix limiting coeff transmission} 
  \begin{aligned}
    a^{ij}(x, \placeholder) \to a^{ij}_\pm,~a^{i}(x, \placeholder) \to a^{i}_\pm,~b^i(x,\placeholder) \to b^i_\pm, ~ c(x,\placeholder) \to c_\pm \end{aligned}
  \qquad \textrm{as } x \to \pm\infty
\end{equation}
 where $a_\pm^{ij}, a_\pm^{i} \in C^{k-1+\alpha}(\overline{\Omega^\prime_1}) \cap C^{k-1+\alpha}(\overline{\Omega^\prime_2})$, $b_\pm^i, c_\pm \in C^{k-2+\alpha}(\overline{\Omega^\prime_1}) \cap C^{k-2+\alpha}(\overline{\Omega^\prime}_2)$. Following Section~\ref{essential spectrum appendix}, we introduce the limiting operator $\limL_\pm$, and the transversal limiting operator $\limL^\prime_\pm$ which is of the form \eqref{appendix transmission L}. From Corollary \ref{cor pe transmission} we know that $\prineigenvalue(\mathscr{L}^\prime_\pm)$ exist. 
 
Note that Lemma \ref{local proper lemma} and Proposition \ref{essential spec proposition} still hold for $\mathscr{L}$ in \eqref{appendix L operator transmission}. Moreover, the same Fredholm property of $\mathscr{F}_u(u, \Lambda)$ for a transmission problem $\mathscr{F}$ can be proved provided the asymptotic invertibility of $\mathscr{L}$.
\begin{lemma}[Asymptotic invertibility for transmission operator] \label{invertibility lemma transmission}
Let $\limL$ be given as in \eqref{appendix L operator transmission}. If $\limL$ satisfies $\prineigenvalue(\limL_-^\prime), \prineigenvalue(\limL_+^\prime) < 0$, then $\limL_-$ and $\limL_+$ are invertible $\Xspace_\bdd \to \Yspace_\bdd$.
\end{lemma}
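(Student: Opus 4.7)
The plan is to run the same homotopy argument as in the proof of Lemma~\ref{invertibility lemma}, with Volpert's uniformly local $W^{k+2,p}$ theory replaced by its transmission analogue. Throughout, $\Xspace_\bdd$ and $\Yspace_\bdd$ refer to the transmission spaces introduced in \eqref{transmission spaces}, and by Corollary~\ref{cor pe transmission} the transversal principal eigenvalues $\prineigenvalue(\limL_\pm^\prime)$ are well defined.

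First I would choose $\sigma > 0$ so large that $(\limL_{1\pm} - \sigma, \limL_{2\pm})$ satisfies a uniform strict maximum principle in each periodized cell $\Omega_m := (m,m+1) \times \Omega^\prime$. By Corollary~\ref{cor pe transmission}\ref{oblique classification part}, the existence of the positive comparison function built in Lemma~\ref{supersolution lemma} (now with the transmission operator) turns this into an $L^\infty$ a priori bound on solutions. For a given $f = (f_1,f_2) \in \Yspace_\bdd$, the classical weak-solution theory for second-order elliptic transmission problems then furnishes a unique $u \in W^{k+2,p}_\loc(\Omega_1) \cap W^{k+2,p}_\loc(\Omega_2)$, $p := n/(1-\alpha)$, satisfying
\[
\left\{
\begin{aligned}
\limL_{1\pm} u - \sigma u &= f_1 && \text{in } \Omega_1 \cup \Omega_2, \\
\limL_{2\pm} u &= f_2 && \text{on } \Gamma_1, \\
u &= 0 && \text{on } \Gamma_0,
\end{aligned}
\right.
\]
with the uniformly local estimate
\[
\sup_{m \in \mathbb{Z}} \| u \|_{W^{k+2,p}(\Omega_m)}
\lesssim \sup_{m \in \mathbb{Z}} \left( \| f_1 \|_{W^{k,p}(\Omega_m)} + \| f_2 \|_{W^{k+1-1/p,p}(\Gamma_{1m})} \right)
\lesssim \| f \|_\Yspace.
\]
Morrey's inequality upgrades this to $C^{k+1+\alpha}$ control of $u$ on each side of the interface, and the transmission Schauder theory of \cite[Theorem 1.2]{le2018transmission} then bootstraps to full $C^{k+2+\alpha}$ regularity in each $\overline{\Omega_i}$, so that $u \in \Xspace_\bdd$ with $\| u \|_{\Xspace} \lesssim \| f \|_{\Yspace}$. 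This proves that $(\limL_{1\pm}-\sigma, \limL_{2\pm}) \colon \Xspace_\bdd \to \Yspace_\bdd$ is an isomorphism.

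Next I would set up the homotopy $(\limL_{1\pm} - t\sigma, \limL_{2\pm})$ for $t \in [0,1]$. For every $t \in [0,1]$ the transversal principal eigenvalue is $\prineigenvalue(\limL_\pm^\prime) - t\sigma \leq \prineigenvalue(\limL_\pm^\prime) < 0$, so by the transmission version of Proposition~\ref{essential spec proposition} each operator in the family is locally proper, hence semi-Fredholm, as a map $\Xspace_\bdd \to \Yspace_\bdd$. Continuity of the Fredholm index along the homotopy then gives
\[
\ind \limL_\pm \;=\; \ind\bigl(\limL_{1\pm} - \sigma,\, \limL_{2\pm}\bigr) \;=\; 0.
\]
Finally, injectivity of $\limL_\pm$ follows from the positivity of the comparison function in Lemma~\ref{supersolution lemma} (or equivalently from the strict maximum principle characterization in Corollary~\ref{cor pe transmission}\ref{oblique classification part}), so $\limL_\pm$ is an isomorphism.

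The main obstacle is the first step, namely the unique solvability and uniformly local $W^{k+2,p}$ estimate for the shifted transmission problem on the whole cylinder: one has to verify that Volpert-type arguments work across the interface $\Gamma_1$, where the standard Schauder setup degenerates. Once that is secured, the homotopy and index-continuity portion is essentially cosmetic. Everything else---the positivity of the comparison function, continuity of $\prineigenvalue$, and local properness---has already been arranged in Corollary~\ref{cor pe transmission} and the transmission analog of Proposition~\ref{essential spec proposition}.
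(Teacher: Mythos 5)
Your overall architecture---prove invertibility of the shifted operator $(\limL_{1\pm}-\sigma,\limL_{2\pm})$, then run a homotopy in $t\sigma$ to conclude the Fredholm index of $\limL_\pm$ is $0$, and finally use the negative principal eigenvalue to deduce injectivity---matches the paper's. However, there is a real gap at the crucial first step, and you flag it yourself without closing it.

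You write that ``classical weak-solution theory for second-order elliptic transmission problems then furnishes a unique $u$\ldots with the uniformly local estimate'' $\sup_m \|u\|_{W^{k+2,p}(\Omega_m)} \lesssim \|f\|_{\Yspace}$. This is precisely what is \emph{not} classical here: the operators in \eqref{appendix L operator transmission} are posed on the unbounded slitted cylinder, the boundary datum $f_2$ lives on the interior interface $\Gamma_1$ as a jump condition, and Volpert's uniformly local machinery as stated in \cite{volpert2011book1} does not cover the transmission setting. Closing this requires real work, and the paper does it in several nontrivial steps. First, one rewrites the boundary data: choose $g = (g^1,\ldots,g^n)$ with $\jump{-\nu_i g^i} = f_2$, so that the transmission problem becomes a single divergence-form equation $\limL_{1\pm} u - \sigma u = f + \partial_i g^i$ holding weakly on $\Omega\cup\Gamma_1$, with a natural $W^{1,p}_0(\Omega\cup\Gamma_1)$ weak formulation in which the jump condition is automatic. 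Second, one invokes Dong--Kim's $W^{1,p}$ theory for divergence-form operators with partially BMO coefficients to get local solvability in $W^{1,p}(\Omega_m)$, then follows the strategy of \cite[Theorem 7.5]{volpert2011book1} to produce a unique uniformly local $W^{1,p}$ solution on the whole cylinder. Third, one bootstraps: after flattening $\Gamma_1$ with local charts, differentiate tangentially to get $W^{1,2}$ control on $\tau\cdot\nabla u$, upgrade to $W^{1,p}$ via Meyers (Theorem~\ref{thm meyers}), and iterate to reach $W^{k+2,p}$ on each side of the interface. Only then do Morrey embedding and transmission Schauder theory give $\Xspace_\bdd$ regularity. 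Your sketch also deviates slightly in how $\sigma$ is chosen: you want a strict maximum principle in each cell, whereas the paper only needs $\sigma$ large enough for the Dong--Kim solvability theorem to apply. That deviation is cosmetic; the absence of the reformulation-plus-$W^{1,p}$-plus-bootstrap argument is not.
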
 
\begin{proof}
Following the same idea as in Lemma \ref{invertibility lemma}, for $\sigma\gg 1$, we consider the solvability of 
\[ \left\{ \begin{aligned} 
\limL_{1\pm} u - \sigma u &= f_1 & \qquad & \textrm{in } \Omega \\
\limL_{2\pm} u & = f_2 & \qquad &  \textrm{on } \Gamma_1\\
u & = 0 & \qquad & \textrm{on } \Gamma_0. 
\end{aligned} \right.  \]
Note that one may find $g = (g^1, \ldots, g^n) \in \left( C_\bdd^{k+1+\alpha}(\overline{\Omega_1}) \cap C^{k+1+\alpha}(\overline{\Omega_2}) \right)^n$ with $\jump{-\nu_i g_i} = f_2$, and thus the above problem can be written as
\begin{equation}\label{new transmission} \left\{ \begin{aligned} 
\limL_{1\pm} u - \sigma u &= f + \partial_i g^i & \qquad & \textrm{in } \Omega \cup \Gamma_1 \\
\limL_{2\pm} u & = \jump{-\nu_i g^i} & \qquad &\textrm{on } \Gamma_1  \\
u & = 0 & \qquad & \textrm{on } \Gamma_0, 
\end{aligned} \right.  \end{equation}
where $f = f_1 - \partial_i g^i$. The advantage of \eqref{new transmission} is that it suggests the appropriate weak formulation of the problem. More precisely, let $p := n/(1-\alpha)$, and call $u\in W_0^{1,p}(\Omega\cup\Gamma_1)$  a weak solution to \eqref{new transmission} provided it satisfies
\[
\int_\Omega \left( -a^{ij}\partial_i u \partial_j \varphi - a^j u \partial_j \varphi + b^i \partial_x u \varphi + cu \varphi \right)\,dx \,dy = \int_\Omega \left( f \varphi - g^j \partial_j \varphi \right)\,dx \, dy
\]
for any $\varphi \in W_0^{1,p'}(\Omega\cup\Gamma_1)$. Defining $\Omega_m$ and $\Gamma_{1m}$ as in the proof of Lemma~\ref{invertibility lemma}, the local solvability of \eqref{new transmission} in $W^{1,p}(\Omega_m)$ for $\sigma$ sufficiently large can thus be obtained from \cite[Theorem 5]{dong2010partialBMO}. Following the strategy of \cite[Theorem 7.5]{volpert2011book1}, we conclude that there exists a unique solution $u \in W_\loc^{1,p}(\Omega)$ of \eqref{new transmission}. Using a standard partition of unity argument and local coordinate charts, it suffices to assume that $\Gamma_{1m}$ lies in a hyperplane. Letting $\tau$ be a tangent vector and differentiating the equation, one easily obtains  $W^{1,2}$ estimates for  $\tau \cdot \nabla u$.  Theorem \ref{thm meyers} then allows us to control $\tau \cdot \nabla u$ in $W^{1,p}$, and so we have the $W^{2,p}$ bound: 
\[ 
\| u \|_{W^{2,p}(\tilde\Omega^i_m)} \lesssim \| f_1 \|_{W^{1,p}(\Omega^i_m)} + \| f_2 \|_{W^{1-\frac{1}{p}, p}(\Gamma_{1m})} + \|u\|_{W^{1,2}(\Omega_m)},  \]
where $\Omega^i_m = (m, m+1) \times \Omega_i^\prime$ and $\tilde\Omega^i_m = (m+{1\over 4}, m+{3\over 4}) \times \Omega_i^\prime$, for $i = 1, 2$. The last term on the right-hand side can be dropped by the $W^{1,p}$ estimate of \cite[Theorem 5]{dong2010partialBMO}. Repeating this process yields
\[ 
\sup_{m \in \mathbb{Z}} \| u \|_{W^{k+2,p}(\tilde\Omega^i_m)} \lesssim \sup_{m \in \mathbb{Z}} \left( \| f_1 \|_{W^{k,p}(\Omega^i_m)} + \| f_2 \|_{W^{k+1-\frac{1}{p}, p}(\Gamma_{1m})} \right).  \]
The rest of the argument follows the same way as in the proof of Lemma \ref{invertibility lemma}.
\end{proof}

\section{Quoted results} \label{quoted results appendix}

Analytic global bifurcation theory was first introduced by Dancer \cite{dancer1973bifurcation,dancer1973globalstructure} in the late 1970s, and then refined and popularized by Buffoni and Toland \cite{buffoni2003analytic}.  As mentioned in the introduction, these results were developed with an eye towards problems on bounded domains, and thus took as given the fact that the nonlinear operator is Fredholm index $0$ and that the solution set is locally compact. In \cite{chen2018existence}, the authors offer a variant of the classical theory that removes those assumptions at the cost of additional alternatives along the global curve.   Though this was stated as a global continuation theorem similar to Theorem~\ref{general global bifurcation theorem}, it can be easily reconfigured as the following global implicit function theorem.

\begin{theorem} \label{homoclinic global ift} 
Let $\mathscr{W}$ and $\mathscr{Z}$ be Banach spaces, $\mathcal{W} \subset \mathscr{W} \times \mathbb{R}$ an open set containing a point $(w_0, \lambda_0)$. Suppose that  $\mathscr{G} \colon \mathcal{W} \to \mathscr{Z}$ is real analytic and satisfies 
  \begin{equation}
    \mathscr{G}(w_0, \lambda_0) = 0, \qquad \mathscr{G}_w(w_0, \lambda_0) \colon \mathscr{W} \to \mathscr{Z} \quad \textrm{is an isomorphism}.  \label{homoclinic global ift assumptions} 
  \end{equation}
 Then there exist a curve $\mathscr{K}$ that admits the global $C^0$ parameterization  
 \[ \mathscr{K} := \left\{ (w(s), \lambda(s)) : s \in \mathbb{R}  \right \} \subset \mathscr{G}^{-1}(0) \cap \mathcal{W},\]
  and satisfies the following.  
  \begin{enumerate}[label=\rm(\alph*)]
  \item \label{K well behaved} At each $s \in \mathbb{R}$, the linearized operator $\mathscr{G}_w(w(s), \lambda(s)) \colon \mathscr{W} \to \mathscr{Z}$ is Fredholm index $0$.
  \item \label{K alternatives} One of the following alternatives holds as $s \to \infty$ and $s \to -\infty$.
    \begin{enumerate}[label=\rm(A\arabic*$'$)]
    \item  \label{K blowup alternative}
      \textup{(Blowup)}  The quantity 
      \begin{align}
        \label{K blowup}
        N(s):= \n{w(s)}_{\mathscr{W}} + |\lambda(s)| +\frac 1{\dist((w(s),\lambda(s)), \, \dell \mathcal{W})}
      \end{align}
    \item \label{K loss of compactness alternative} \textup{(Loss of compactness)} There exists a sequence $s_n \to \pm\infty$ with $\sup_n N(s_n) < \infty$, but $( w(s_n), \lambda(s_n) )$ has no convergent subsequence in $\mathscr{W} \times \mathbb{R}$.  
      \item \label{K loss of fredholmness alternative} \textup{(Loss of Fredholmness)}    There exists a sequence $s_n \to \pm\infty$
        with $\sup_{n} N(s_n) < \infty$ and so that $(w(s_n), \lambda(s_n)) \to (w_*, \lambda_*) \in \mathcal{W}$ in $\mathscr{W} \times \mathbb{R}$, however $\mathscr{G}_w(w_*, \lambda_*)$ is not Fredholm index $0$.  
   \item \label{K loop} \textup{(Closed loop)} There exists $T > 0$ such that $(w(s+T), \lambda(s+T)) = (w(s), \lambda(s))$ for all $s \in (0,\infty)$. 
        \end{enumerate}
          \item \label{K reparam} Near each point $(w(s_0),\lambda(s_0)) \in \mathscr{K}$, we can locally reparametrize $\mathscr{K}$ so that $s\mapsto (w(s),\lambda(s))$ is real analytic.
 \item \label{K maximal part} The curve $\mathscr{K}$ is maximal in the sense that, if $\mathscr{J} \subset \mathscr{G}^{-1}(0) \cap \mathcal{W}$ is a locally real-analytic curve containing $(w_0,\lambda_0)$ and along which  $\mathscr G_w$ is Fredholm index 0, then $\mathscr{J} \subset \mathscr{K}$. 
  \end{enumerate}
\end{theorem}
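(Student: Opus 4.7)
The plan is to derive this result by invoking the analytic implicit function theorem to produce a local curve through $(w_0,\lambda_0)$ and then applying the global continuation framework of \cite{chen2018existence} to extend it in both directions. Since $\mathscr G(w_0,\lambda_0)=0$ and $\mathscr G_w(w_0,\lambda_0)\colon\mathscr W\to\mathscr Z$ is an isomorphism, the analytic IFT furnishes $\varepsilon>0$ and a real-analytic map $(-\varepsilon,\varepsilon)\ni s\mapsto(w(s),\lambda(s))\in\mathcal W$ with $(w(0),\lambda(0))=(w_0,\lambda_0)$ whose graph exhausts $\mathscr G^{-1}(0)$ near the base point. Parameterizing, for instance, by $s=\lambda-\lambda_0$ (or by any real-analytic local coordinate on the curve), I obtain a local analytic curve $\mathscr K_\loc$ along which, by openness of the isomorphism condition and continuity of the Fredholm index, $\mathscr G_w$ remains Fredholm of index $0$ for $|s|$ small.

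Next, I would split $\mathscr K_\loc$ into two half-curves, each terminating at $(w_0,\lambda_0)$, and apply the analytic global continuation theorem of \cite{chen2018existence} to each half. That theorem takes as input any locally real-analytic curve of solutions along which $\mathscr G_w$ is Fredholm index $0$ and extends it to a maximal locally real-analytic curve which, in the forward direction, must either blow up in the norm $N(s)$, fail to be precompact at a bounded point, limit to a point where Fredholmness is lost, or close up into a loop. Concatenating the two resulting maximal half-curves through $(w_0,\lambda_0)$ and reparameterizing by a single $s\in\mathbb R$ yields the global curve $\mathscr K$. The Fredholm-index-$0$ conclusion~\ref{K well behaved} is built into the construction, since each extension takes place in the open subset of $\mathscr G^{-1}(0)\cap\mathcal W$ where $\mathscr G_w$ is Fredholm index $0$; it is precisely at the boundary of this subset that alternative~\ref{K loss of fredholmness alternative} can fire. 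The local real-analytic reparametrization~\ref{K reparam} and the dichotomy of alternatives~\ref{K alternatives} are inherited directly from the quoted theorem, while~\ref{K loop} is symmetric in the two directions so one need only observe that if a periodic parameterization exists on one half-line it automatically extends to all of $\mathbb R$.

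The maximality statement~\ref{K maximal part} would be established by a standard analytic continuation argument: if $\mathscr J$ is another locally real-analytic curve through $(w_0,\lambda_0)$ on which $\mathscr G_w$ is Fredholm index $0$, then the two curves coincide in a neighborhood of $(w_0,\lambda_0)$ by the analytic IFT, and the set of points at which they admit a common local analytic parameterization is both open and closed in $\mathscr J$. The main obstacle is really bookkeeping rather than substance, namely matching the four-alternative list coming out of \cite{chen2018existence} (which is formulated in terms of the end of a one-sided maximal analytic half-curve) with the two-sided parameterization $s\in\mathbb R$ used here, and checking that the ``closed loop'' possibility is correctly transferred from the one-sided setting to the two-sided one. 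Since nothing in the construction uses concrete structure of $\mathscr G$ beyond real analyticity and the isomorphism hypothesis at the base point, this entire argument is essentially a reframing of the abstract results in \cite{chen2018existence}, and once the local curve is produced there is no additional analysis required.
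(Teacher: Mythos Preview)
Your proposal is correct and follows essentially the same approach as the paper: both obtain a local analytic curve via the analytic implicit function theorem and then invoke the machinery of \cite{chen2018existence} to extend it globally, with the four alternatives, local analytic reparametrization, and maximality all inherited from that source. The only cosmetic difference is that the paper organizes the extension arc-by-arc in the Buffoni--Toland style (distinguished arcs, with the structure theorem for analytic varieties invoked at the junctions), whereas you package the same argument as two applications of the one-sided continuation theorem followed by concatenation; you correctly flag the one nontrivial bookkeeping point, namely reconciling the closed-loop alternative across the two halves.
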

\begin{proof}
This result follows from a straightforward adaptation of \cite[Theorem 6.1]{chen2018existence}, which is in turn based on \cite[Theorem 9.1.1]{buffoni2003analytic}, and so we only provide a sketch of the details that differ.   Following \cite{buffoni2003analytic}, we call a (maximal) connected component of the set
\[ \mathcal{A} := \left\{ (w, \lambda) \in \mathcal{W} : \mathscr{G}(w,\lambda) = 0,~ \mathscr{G}_w(w,\Lambda) \textrm{ is an isomorphism } \mathscr{W} \to \mathscr{Z}  \right\} \] a \emph{distinguished arc}.  By \eqref{homoclinic global ift assumptions} and the analytic implicit function theorem, there exists a distinguished arc $A_0$ containing $(w_0,\lambda_0)$.  Moreover, it admits the parameterization
\[ A_0 = \left\{ (w(s), \lambda(s)) : s \in (-1,1) \right\},\]
with $s \mapsto (w(s),\lambda(s))$ real analytic $(-1,1) \to \mathscr{W} \times \mathbb{R}$, and $(w(0), \lambda(0)) = (w_0, \lambda_0)$.   Consider now the limit $s \to 1$.  Through the same argument as in \cite[Theorem 6.1]{chen2018existence}, we conclude that one of the alternatives \ref{K blowup alternative}--\ref{K loop} occurs, or else $A_0$ connects to another distinguished arc $A_1$.  In the former case, taking $\mathscr{K} = A_0$ completes the proof.  In the latter, we can continue inductively by applying the same logic to (a suitably reparameterized) $A_0 \cup A_1$, and so on.  From here the proof is exactly as before, except there is the additional possibility of a closed loop since $(w_0, \lambda_0)$ is in the interior of $\mathcal{W}$.  This gives us the alternatives claimed in part~\ref{K alternatives}.  Because $\mathscr{K}$ is connected, the above construction also shows that part~\ref{K well behaved} holds (in fact, the linearized operator is invertible except at the endpoints of the distinguished arcs).  In the interior of a distinguished arc, the analytic reparameterization asked for in part~\ref{K reparam} is an immediate consequence of the analytic implicit function theorem.  At an endpoint where two arcs meet, it follows from a deep result characterizing the structure of analytic varieties; see \cite[Chapter 7]{buffoni2003analytic} and the proof of \cite[Theorem 9.1.1(d)]{buffoni2003analytic}.  Likewise, the maximality claimed in part~\ref{K maximal part} is a consequence of the construction and in particular the uniqueness of analytic continuation.  
\end{proof}

The next result we quote is the celebrated Hopf boundary-point lemma, which relates the geometric properties of the boundary to the non-degeneracy of solutions to elliptic PDE set there. In its classical form, the regularity requirement on the domain amounts to an interior ball condition \cite{hopf1952remark,oleinik1952properties}, but this can be relaxed to $C^{1+\alpha}$ \cite{giraud1933problemes} and below.  
\begin{lemma}[Hopf boundary-point lemma]\label{Hopf lemma}
  Let $\Omega \subset \mathbb{R}^n$ be a connected, open set (possibly unbounded) and consider the second-order operator $\mathscr{L}$ given by
  \begin{equation}
    \mathscr{L} := a^{ij}(x) \partial_i \partial_j +  b^i(x) \partial_i + c(x), \label{appendix: def L} 
  \end{equation}
  where we are using the summation convention with $\partial_i := \partial_{x_i}$,  the coefficients satisfy
  \[ a^{ij} = a^{ji},  \quad a^{ij}, \, b^i, \, c \in L^\infty(\Omega),\]
  and $\mathscr L$ is uniformly elliptic. 
  Let $u \in C^2(\Omega) \cap C^0(\overline{\Omega})$ be a classical solution of $\mathscr{L}u = 0$ in $\Omega$.
  \begin{enumerate}[label=\rm(\roman*)]
\item \label{hopf ball} {\rm (Interior ball)} Suppose that $u$ attains its maximum value on $\overline{\Omega}$ at a point $x_0 \in \partial \Omega$ for which there exists an open ball $B \subset \Omega$ with $\overline{B} \cap \partial\Omega = \{ x_0 \}$.  Assume that either $c \leq 0$ in $\Omega$, or else $u(x_0) = 0$.  Let $\nu$ be the outward unit normal to $\Omega$ at $x_0$. Then $u$ is a constant function or 
    \[ \liminf_{t\to 0^+} {u(x_0 + t \mu) - u(x_0) \over t} < 0,\]
    where $\mu$ is an arbitrary vector such that $\mu \cdot \nu < 0$.
\item \label{hopf holder} {\rm ($C^{1+\alpha}$ boundary)} Suppose that $\Omega$ is $C^{1+\alpha}$ for some $\alpha \in (0,1)$ and $u$ attains its maximum value on $\overline{\Omega}$ at a point $x_0 \in \partial \Omega$. Assume that either $c \leq 0$ in $\Omega$, or else $u(x_0) = 0$. Then the same result as in \ref{hopf ball} holds.
\end{enumerate}
\end{lemma}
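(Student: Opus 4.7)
The plan splits naturally into the two parts, with part \ref{hopf holder} reducing to part \ref{hopf ball} via a local flattening of the boundary.

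For part \ref{hopf ball}, I would first normalize to the case $c \leq 0$. When $u(x_0) = 0$ we may replace $\mathscr L$ by $\tilde{\mathscr L} := \mathscr L - c^+$, which still satisfies $\tilde{\mathscr L} u = -c^+ u \geq 0$ in a neighborhood of $x_0$ (where $u \leq u(x_0) = 0$), and has nonpositive zeroth-order coefficient; otherwise $c \leq 0$ is assumed. Next I introduce the standard Hopf barrier on the annulus $A := B_R(y_0) \setminus \overline{B_{R/2}(y_0)}$, where $B_R(y_0)$ is the given interior ball with $\overline{B_R(y_0)} \cap \partial\Omega = \{x_0\}$:
\[
v(x) := e^{-\alpha|x-y_0|^2} - e^{-\alpha R^2}.
\]
A direct computation together with uniform ellipticity $a^{ij}\xi_i\xi_j \geq \lambda_0 |\xi|^2$ and the lower bound $|x-y_0| \geq R/2$ on $A$ shows that $\alpha$ may be chosen large enough (depending only on the ellipticity constant, $\|b^i\|_\infty$, $\|c\|_\infty$, and $R$) that $\mathscr L v \geq 0$ in $A$. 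The strong maximum principle, which itself follows from the weak maximum principle by a standard continuation argument, yields $u < u(x_0)$ on $\partial B_{R/2}(y_0)$. For $\varepsilon > 0$ small enough, the function $w := u - u(x_0) + \varepsilon v$ then satisfies $w \leq 0$ on $\partial A$ and $\mathscr L w \geq 0$ in $A$, so the weak maximum principle for operators with $c \leq 0$ gives $w \leq 0$ throughout $A$. Since $w(x_0) = 0$, this controls the one-sided difference quotient at $x_0$: for any $\mu$ with $\mu\cdot\nu < 0$,
\[
\liminf_{t \to 0^+} \frac{u(x_0 + t\mu) - u(x_0)}{t} \leq -\varepsilon\, \mu \cdot \nabla v(x_0),
\]
and a direct calculation gives $\mu \cdot \nabla v(x_0) = -2\alpha e^{-\alpha R^2} \mu\cdot(x_0-y_0) > 0$, producing the desired strict inequality.

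For part \ref{hopf holder}, the essential difficulty is that a $C^{1+\alpha}$ boundary need not admit an interior ball at $x_0$ — a cautionary example is the graph $x_n = |x'|^{1+\alpha}$, which is $C^{1+\alpha}$ but flat to higher than quadratic order at the origin — so part \ref{hopf ball} cannot be invoked directly. After a translation and rotation, locally $\partial\Omega = \{x_n = \phi(x')\}$ with $\phi \in C^{1+\alpha}$, $\phi(0) = 0$, $\nabla\phi(0) = 0$, and the map $\Phi(x',x_n) := (x', x_n - \phi(x'))$ is a $C^{1+\alpha}$ diffeomorphism sending a neighborhood of $x_0$ in $\overline\Omega$ onto a neighborhood of the origin in the closed upper half-space $\{z_n \geq 0\}$. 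The half-space manifestly satisfies the interior ball condition at $0$, so the strategy is to use a pulled-back barrier: define $v(x) := V(\Phi(x))$ with $V(z) = e^{-\alpha|z-z_0|^2} - e^{-\alpha R^2}$ for the half-space tangent ball $B_R(z_0) \subset \{z_n > 0\}$ at $0$, and run the argument of part \ref{hopf ball} with $v$ in place of the exponential barrier.

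The main obstacle — and where the real technical work lies — is verifying $\mathscr L v \geq 0$ on the image (under $\Phi^{-1}$) of a suitable annulus, because $\Phi$ is only $C^{1+\alpha}$: pulling $V$ back produces second-derivative terms in $v$ involving $D^2\phi$, which is only a distribution. I would avoid transforming the PDE itself and instead exploit the fact that $D\Phi \in C^\alpha$ with $D\Phi(0) = \mathrm{Id}$, so that on a small enough annular region the dominant quadratic-form contribution $4\alpha^2 a^{ij}(D\Phi^{-1})^i_k (D\Phi^{-1})^j_l (z - z_0)_k (z - z_0)_l$ remains bounded below by a positive multiple of $\alpha^2$ while all error terms coming from the non-smoothness of $\Phi$ remain linear in $\alpha$. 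Choosing $\alpha$ large then secures $\mathscr L v \geq 0$, and the rest of the argument — strong maximum principle, comparison of $u - u(x_0) + \varepsilon v$ on the annular region, evaluation of the one-sided derivative at $x_0$ — proceeds as in part \ref{hopf ball}. The quantitative estimate handling the $C^\alpha$-rather-than-Lipschitz modulus of continuity of the Jacobian of $\Phi$ is the step I expect to require the most care.
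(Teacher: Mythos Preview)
The paper does not prove this lemma at all: it is listed in Appendix~B among ``Quoted results,'' with only citations to Hopf and Oleinik for part~\ref{hopf ball} and to Giraud for the $C^{1+\alpha}$ case in part~\ref{hopf holder}. So there is no proof in the paper to compare against, and any complete argument you supply goes beyond what the authors provide.

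That said, your treatment of part~\ref{hopf ball} is the standard barrier argument and is correct. For part~\ref{hopf holder}, however, there is a genuine gap in the approach you outline. If $\Phi$ is only $C^{1+\alpha}$ and $V$ is smooth, then the pulled-back barrier $v = V\circ\Phi$ is itself only $C^{1+\alpha}$, not $C^2$: the chain rule for second derivatives produces a term $\partial_k V\,\partial_i\partial_j\Phi^k$, and $\partial_i\partial_j\Phi^k$ does not exist classically. So the inequality $\mathscr{L}v \ge 0$ is not even well defined pointwise, and you cannot invoke the classical weak maximum principle for the comparison $u - u(x_0) + \varepsilon v$. Your proposed workaround (``avoid transforming the PDE itself'') does not address this, since the problem lies with the regularity of $v$, not with the operator.

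The arguments in the literature that the paper cites proceed differently: one constructs a barrier directly in the original domain whose second derivatives are controlled without reference to a change of variables. A common choice exploits the fact that $C^{1+\alpha}$ regularity gives $|\phi(x')| \le C|x'|^{1+\alpha}$ after normalization, so a region of the form $\{x_n > K|x'|^{1+\alpha}\}$ lies inside $\Omega$ near $x_0$; one then builds a barrier adapted to this cusp-like region (for instance using a function of the ratio $x_n/(|x'|^{1+\alpha}+\delta)$ or a suitable power of a regularized distance), verifying $\mathscr{L}v \ge 0$ by direct computation. If you want to salvage the flattening idea, you would need to work in a weak or viscosity framework, or mollify $\Phi$ and control the resulting errors --- either route is substantially more involved than what you have sketched.
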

\begin{remark}\label{remark hopf}
In particular, if $u \in C^1(\Omega \cup \{x_0\})$, then the above result states $\nu \cdot \nabla u(x_0) > 0$, for any outward pointing vector $\nu$.  
\end{remark}

Finally, for completeness, we recall the following standard fact about bordering of Fredholm operators; see, for example, \cite[Lemma 2.3]{beyn1990numerical}. 

\begin{lemma}[Fredholm bordering] \label{bordering lemma}  Let $\mathscr{W}$ and $\mathscr{Z}$ be Banach spaces and suppose that $\mathscr{A} \colon \mathscr{W} \to \mathscr{Z}$, $\mathscr{B} \colon \mathbb{R}^m \to \mathscr{Z}$, $\mathcal{C} \colon \mathscr{W} \to \mathbb{R}^n$, and $\mathcal{D} \colon \mathbb{R}^m \to \mathbb{R}^n$ are bounded linear mappings.  If, in addition, $\mathscr{A}$ is Fredholm index $k$, then the operator matrix 
\[ \begin{pmatrix} \mathscr{A} & \mathscr{B} \\ \mathcal{C} & \mathcal{D} \end{pmatrix} \colon \mathscr{W} \times \mathbb{R}^m \to \mathscr{Z} \times \mathbb{R}^n \]
is Fredholm with index $k + m - n$.
\end{lemma}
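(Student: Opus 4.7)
The plan is to write the bordered operator as a finite-rank perturbation of a block-diagonal operator whose Fredholm properties are immediate from those of $\mathscr{A}$, and then invoke the stability of the Fredholm index under compact perturbations.

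First I would define
\[
  \mathscr{M} := \begin{pmatrix} \mathscr{A} & \mathscr{B} \\ \mathcal{C} & \mathcal{D} \end{pmatrix}, \qquad
  \mathscr{M}_0 := \begin{pmatrix} \mathscr{A} & 0 \\ 0 & 0 \end{pmatrix}, \qquad
  \mathscr{K} := \mathscr{M} - \mathscr{M}_0 = \begin{pmatrix} 0 & \mathscr{B} \\ \mathcal{C} & \mathcal{D} \end{pmatrix},
\]
all viewed as bounded linear maps $\mathscr{W} \times \mathbb{R}^m \to \mathscr{Z} \times \mathbb{R}^n$. A direct calculation gives
\[
  \ker \mathscr{M}_0 = \ker \mathscr{A} \times \mathbb{R}^m, \qquad
  \range \mathscr{M}_0 = \range \mathscr{A} \times \{0\},
\]
from which I read off
\[
  \dim \ker \mathscr{M}_0 = \dim \ker \mathscr{A} + m, \qquad
  \operatorname{codim} \range \mathscr{M}_0 = \operatorname{codim} \range \mathscr{A} + n.
\]
Since $\mathscr{A}$ is Fredholm of index $k$, both dimensions above are finite and $\range \mathscr{M}_0$ is closed (as a product of the closed subspace $\range \mathscr{A}$ with $\{0\}$). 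Thus $\mathscr{M}_0$ is Fredholm with
\[
  \ind \mathscr{M}_0 = \ind \mathscr{A} + m - n = k + m - n.
\]

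Next I would observe that $\mathscr{K}$ is finite-rank, hence compact. Indeed, the second component of $\mathscr{K}$ has range in the finite-dimensional space $\mathbb{R}^n$, while the first component has range contained in $\range \mathscr{B}$, which has dimension at most $m$. Consequently $\range \mathscr{K}$ has dimension at most $m + n$. The standard theorem that compact perturbations preserve the Fredholm index (see e.g.~\cite[Theorem~5.26]{buffoni2003analytic} or Kato) then yields
\[
  \ind \mathscr{M} = \ind(\mathscr{M}_0 + \mathscr{K}) = \ind \mathscr{M}_0 = k + m - n,
\]
and in particular $\mathscr{M}$ is Fredholm, which is the desired conclusion. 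There is no real obstacle here; the only point requiring any care is verifying that $\range \mathscr{M}_0$ is closed and computing its codimension, both of which follow at once from the product structure.
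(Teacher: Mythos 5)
Your proof is correct. Note, however, that the paper does not actually prove Lemma~\ref{bordering lemma}; it only quotes the result and refers the reader to \cite[Lemma 2.3]{beyn1990numerical}. So there is no in-paper proof to compare against, and you have supplied a complete, self-contained argument where the paper gives only a citation. Your argument is the standard one: split the bordered matrix as $\mathscr{M} = \mathscr{M}_0 + \mathscr{K}$ with $\mathscr{M}_0$ block-diagonal, read off $\ker\mathscr{M}_0 = \ker\mathscr{A} \times \mathbb{R}^m$ and $\range\mathscr{M}_0 = \range\mathscr{A} \times \{0\}$ (closed, of the stated codimension), conclude $\ind\mathscr{M}_0 = k + m - n$, and then observe that $\mathscr{K}$ has range inside the finite-dimensional subspace $\range\mathscr{B}\times\mathbb{R}^n$, hence is compact, so the index is unchanged. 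All the steps check out. The only cosmetic issue is the specific citation ``Theorem 5.26'' of \cite{buffoni2003analytic}, which may not be the exact location of the compact-perturbation stability theorem; the fact itself is classical (Atkinson/Kato) and the appeal is legitimate.
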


\bibliographystyle{siam}
\bibliography{projectdescription}

\end{document}